\definecolor{rred}{rgb}{0.7,0.0,0.2}
\definecolor{bblue}{rgb}{0.2,0.0,0.7}
\newcommand{\secref}[1]{Section~\ref{sec:#1}}
\newcommand{\seclab}[1]{\label{sec:#1}}
\newcommand{\eqlab}[1]{\label{eq:#1}}
\renewcommand{\eqref}[1]{(\ref{eq:#1})}
\newcommand{\eqsref}[2]{(\ref{eq:#1}) and~(\ref{eq:#2})}
\newcommand{\figref}[1]{Fig.~\ref{fig:#1}}
\newcommand{\figlab}[1]{\label{fig:#1}}
\newcommand{\propref}[1]{Proposition~\ref{proposition:#1}}
\newcommand{\proplab}[1]{\label{proposition:#1}}
\newcommand{\lemmaref}[1]{Lemma~\ref{lemma:#1}}
\newcommand{\lemmalab}[1]{\label{lemma:#1}}
\newcommand{\thmref}[1]{Theorem~\ref{theorem:#1}}
\newcommand{\thmlab}[1]{\label{theorem:#1}}
\newcommand{\appref}[1]{Appendix~\ref{app:#1}}
\newcommand{\applab}[1]{\label{app:#1}}
\title{{Relaxation oscillations in substrate-depletion oscillators close to the nonsmooth limit}} 
\author {Kristiansen, K. Uldall and Szmolyan, P.} 
\date {}
\begin{document}
\maketitle

%

\begin{abstract}
In this paper, we describe a novel type of relaxation oscillations occurring in a model of substrate-depletion oscillators. Using geometric singular perturbation theory, with blow-up as a key technical tool, we show that the oscillations in this planar model are produced by a complicated interplay between two stable nodes and a discontinuity set in the singular limit $\varepsilon\rightarrow 0$. This interplay produces a new mechanism for producing relaxation-type oscillations, which we also describe in a more general setting. 
\end{abstract}
%

\noindent
\textbf{Keywords}: relaxation oscillations, chemical oscillator, non-smooth system, blow-up method.

\section{Introduction}
Biochemical and biophysical rhythms are ubiquitous characteristics of living organisms, from rapid oscillations of membrane potential in nerve cells to slow cycles of ovulation in mammals, \cite{Goldbeter1997,Tyson2002}.
In ODE models these oscillations correspond to limit cycles. Due to time scale separation 
 the governing  ODEs often have the form of slow-fast systems 
\begin{align}
 \varepsilon \dot x &=f(x,y,\varepsilon),\eqlab{xy0Eps}\\
 \dot y&=g(x,y,\varepsilon).\nonumber
\end{align}
and the periodic behaviour is of ``relaxation type''
consisting of long periods of ``in-activity'' interspersed with short periods of ``rapid transitions''. Mathematically, relaxation oscillations 
are defined as limit cycles $\Gamma_\varepsilon$ of \eqref{xy0Eps} that for $\varepsilon\rightarrow 0$  approach a closed loop $\Gamma_0$ consisting of a union of (i) \textit{slow orbits} of the reduced problem \eqref{xy0Eps}$_{\varepsilon=0}$ and (ii) \textit{fast orbits} of the associated layer problem:
\begin{align}
 x' &=f(x,y,0),\eqlab{layerhere}\\
 y' &=0.\nonumber
\end{align}
Here $()'=\frac{d}{d\tau}$, $\dot{()}=\frac{d}{dt}$ with $\tau = \varepsilon^{-1}t$ the fast time and $t$ the slow one. During the last decades these oscillations have been studied intensively using the framework of Geometric Singular Perturbation Theory (GSPT), \cite{jones_1995,fen3}. 
 The prototypical example for planar systems is the van der Pol system \cite{pol1926a,dumortier1996a,krupa_relaxation_2001} where the critical manifold $\{f(x,y,0)=0\}$ is $S$-shaped. Another interesting and widely studied phenomenon in these systems, commonly referred to as a canard explosion,
 is the  the extremely rapid growth of a small amplitude limit cycle generated in a Hopf bifurcation to large amplitude relaxation oscillations as a system parameter varies in an exponentially small intervall  \cite{dumortier1996a,krupa_relaxation_2001}. Under certain generic conditions, the canard explosion occurs, to leading order, when the $y$-nullcline transversally intersects a fold of the critical manifold \cite{dumortier1996a,krupa_relaxation_2001}.
 
The importance of slow fast dynamics and relaxation oscillations is well recognized in many areas of mathematical biology, perhaps
most notably in mathematical neuroscience, see e.g. \cite{terman2010a,izhi}.

Oscillations in molecular regulatory networks typically arise due to several nonlinear positive or negative feedback loops.
In simulations such systems also often show clear signs of slow-fast behaviour, their mathematical analysis 
based on slow-fast dynamics is however less developed. 
The influential paper \cite{Tyson2003} presents several simple but important mathematical models for molecular regulatory networks.
 In particular, \cite[Box 1]{Tyson2003} and \cite[Fig. 2]{Tyson2003} show equations, molecular wiring diagrams and bifurcation diagrams for three different models: negative feedback control loop \cite[Fig. 2(a)]{Tyson2003}, activator-inhibitor \cite[Fig. 2(b)]{Tyson2003} and substrate-depletion oscillators \cite[Fig. 2(c)]{Tyson2003}. These oscillators have all been proposed as the basis for oscillations in many different biological contexts, see \cite{Tyson2003} for further description and additional references.

 Mathematically, each model in \cite[Fig. 2]{Tyson2003} is described by ordinary differential equations which undergo two Hopf bifurcations as the ``signal'' parameter $S$ is varied. The attracting limit cycles, emerging in between these bifurcations, display oscillations with different phases, similar to relaxation oscillations in slow-fast systems.
However, the three models in \cite{Tyson2003} cannot all be described as slow-fast systems of the form \eqref{xy0Eps}. In fact, only the relaxation oscillations in the activator-inhibitor system in \cite[Fig. 2(b)]{Tyson2003} can be explained by this framework based upon \eqref{xy0Eps}, see e.g. \cite[Fig. 2(b), center column]{Tyson2003} where the red nullcline is precisely $S$-shaped as in the van der Pol system. The bifurcation diagram in \cite[Fig. 2(b), right column]{Tyson2003} can also be explained by GSPT: The two canard explosions, where the amplitudes of the limit cycles undergo a rapid increase within a small parameter regime, occur approximately at the parameter values $S$ when  the blue nullcline in \cite[Fig. 2(b), center column]{Tyson2003} intersects the red nullcline at the folds.

These observations, along with previous work on the subject, provide motivation for referring to periodic solutions with different, clearly separated phases as relaxation oscillations in a wider sense, perhaps without giving a general mathematical definition. Several examples of such oscillations have been studied recently, see e.g. \cite{kosiuk2015a,kristiansen2019d,2019arXiv190312232U}. In \cite{kosiuk2015a}, for example, a minimal model of cell division is considered which has a nonsmooth limit as $\varepsilon\rightarrow 0$.  The nonsmoothness of the system in \cite{kosiuk2015a} is due to the discontinuous pointwise limit of Michaelis-Menten terms $x/(\epsilon +x)$ as the Michaelis-Menten constant $\epsilon\rightarrow 0$.  In such systems, where the notions of slow and fast orbits have to be generalised, the blow-up method \cite{dumortier_1996,krupa_extending_2001} has proven extremely useful \cite{kosiuk2015a,kristiansen2018a,kristiansen2019d,kosiuk2011a,Gucwa2009783}. By this method, the authors of \cite{kosiuk2015a} show the existence of a limit cycle $\Gamma_\varepsilon$ that sticks to the discontinuity set for a fraction of its period and in this sense resembles classical relaxation oscillations. 
The model in \cite{kosiuk2015a} is also closely related to the negative feedback model in \cite[Fig. 2(a)]{Tyson2003}, compare with \cite[Box 1]{Tyson2003} and terms like $Y_P/(K_{m4}+Y_P)$ for $K_{m4}=0.01$ small, and the oscillations of this system can therefore be described by the same methods. 

Now, whereas the models in \cite{kosiuk2015a} and  \cite[Fig. 2(a)]{Tyson2003} are naturally viewed as smooth models approaching nonsmooth ones, the inverse process where a piecewise smooth system is regularized has also been an active area of research \cite{Sotomayor96,reves_regularization_2014,kristiansen2018a,krihog,krihog2,kaklamanos2019a}. Here blow-up has also proven very useful, see e.g. \cite{kristiansen2018a,2019arXiv190806781U} where the piecewise smooth fold singularities are analyzed using this method. 

In this paper, we provide a complete description of the relaxation oscillations in the last model in \cite{Tyson2003}, the substrate-depletion oscillator \cite[Fig. 2(c)]{Tyson2003}, by perturbing away from its very degenerate nonsmooth limit. We find that the main mechanism for the oscillations is based on  two piecewise smooth ``boundary node bifurcations'', see \cite{Kuznetsov2003,hogan2016a}, where a node intersects the discontinuity set. We use blow-up to desingularize the system, similar to applications in GSPT, and prove existence and non-existence of the relaxation oscillations in this model. 

\subsection{The substrate-depletion oscillator}

The molecular wiring diagram for the substrate-depletion oscillator is shown in \figref{wiring}. The system involves two chemical species, substrate $y$ and product $x$. The substrate $y$ is converted into $x$ by an autocatalytic process, i.e. a process which is further activated by $x$ itself.
 A possible mechanism for this would be an enzyme (not explicitly modelled) which is activated by $x$.
This autocatalytic reaction therefore accelerates the production  of $x$ until the concentration of the substrate $y$ is depleted. 
\begin{figure}[h!] 
\begin{center}
{\includegraphics[width=.49\textwidth]{./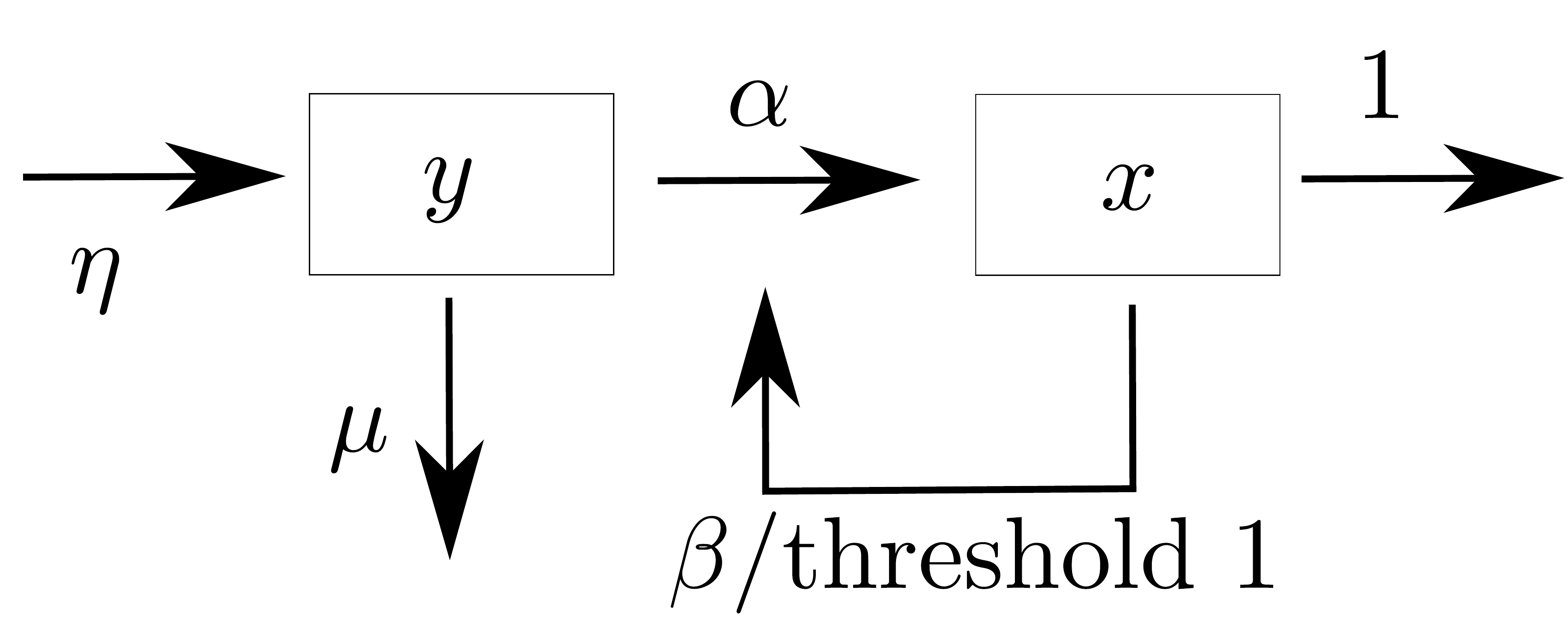}}
\end{center}
 \caption{Wiring diagram for the substrate-depletion oscillator. }
\figlab{wiring}
\end{figure}

In further details, we follow \cite{Tyson2003,casso} and assume that the substrate $y$ is
produced at a constant rate $\eta$,  is degraded at a constant rate $\mu$,  and converted into the product $x$ with a basic reaction rate $\alpha$
which is increased up to $\alpha + \beta$ by the autocatalytic process if $x$ is well above a critical threshold (which we normalize to $1$).
We assume that the product $x$ itself is degraded at a rate $k$ which we also normalise to $1$. For further reference, see the corresponding system of differential
equations in \eqref{xy}.

 The critical threshold for the autocatalytic process is in \cite{Tyson2003} modelled by the Goldbeter-Koshland, sigmoidal function:
\begin{align}
G_\varepsilon(x) = \frac{2x\varepsilon}{1-x+\varepsilon(1+x)+\sqrt{(1-x+\varepsilon(1+x))^2-4(1-x)x\varepsilon}},\quad x\ge 0,\eqlab{GK}
\end{align}
see \cite[Box 1, Fig. 2(c)]{Tyson2003}. The graph of $G_\varepsilon$ is shown in \figref{GK}(a) for three different values of $\varepsilon$: $\varepsilon=10^{-k}$, $k=1,2,3$.   Algebraic manipulations show the pointwise convergence
\begin{align*}
 G_\varepsilon(x) \rightarrow \left\{\begin{array}{cc}
                                        0& \quad \text{for}\quad x\in [0,1),\\
                                        1&\quad \text{for}\quad x>1,
                                       \end{array}\right.
\end{align*}
for $\varepsilon\rightarrow 0^+$.
\begin{figure}[h!] 
\begin{center}
\subfigure[]{\includegraphics[width=.49\textwidth]{./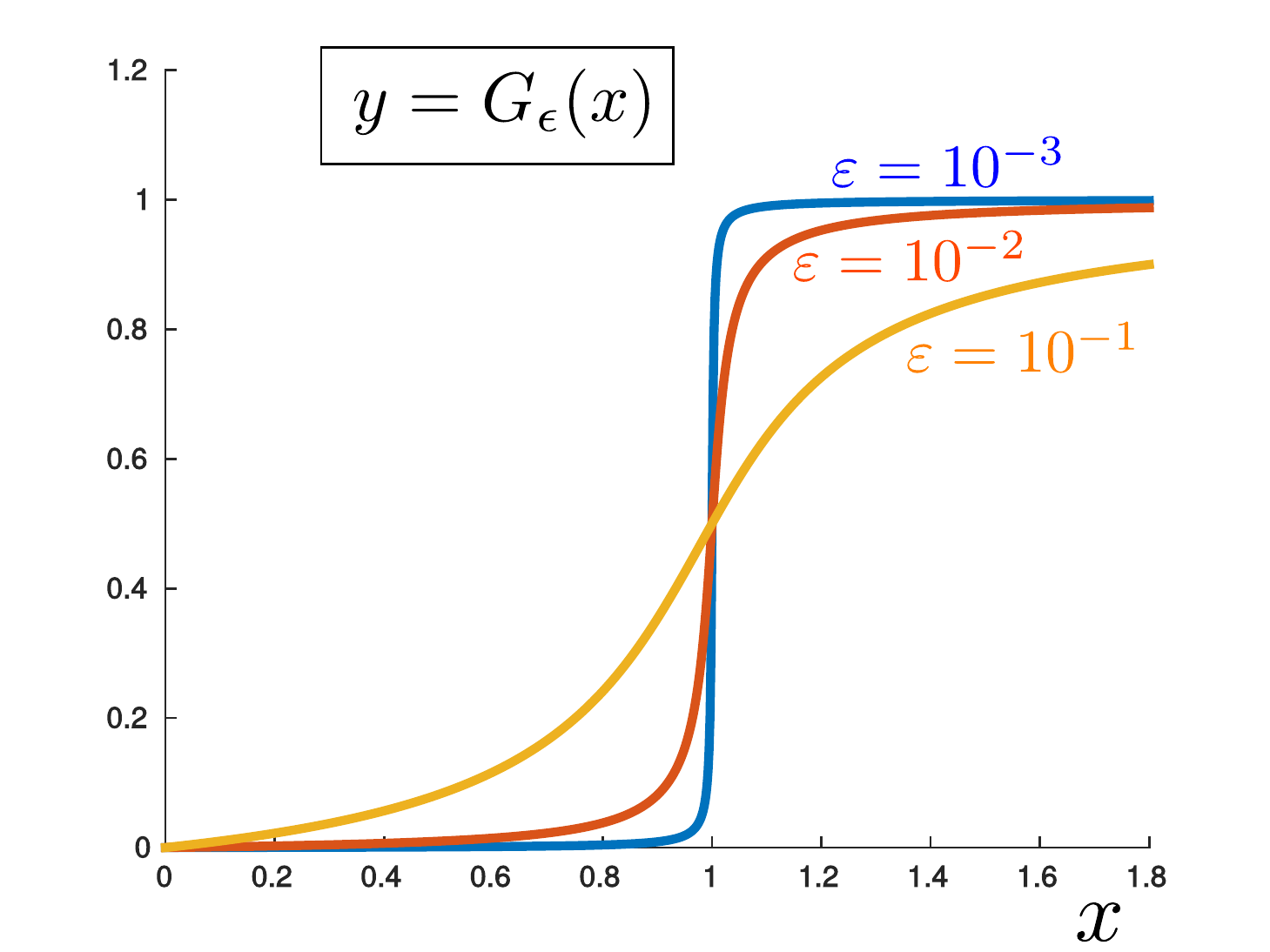}}
\subfigure[]{\includegraphics[width=.49\textwidth]{./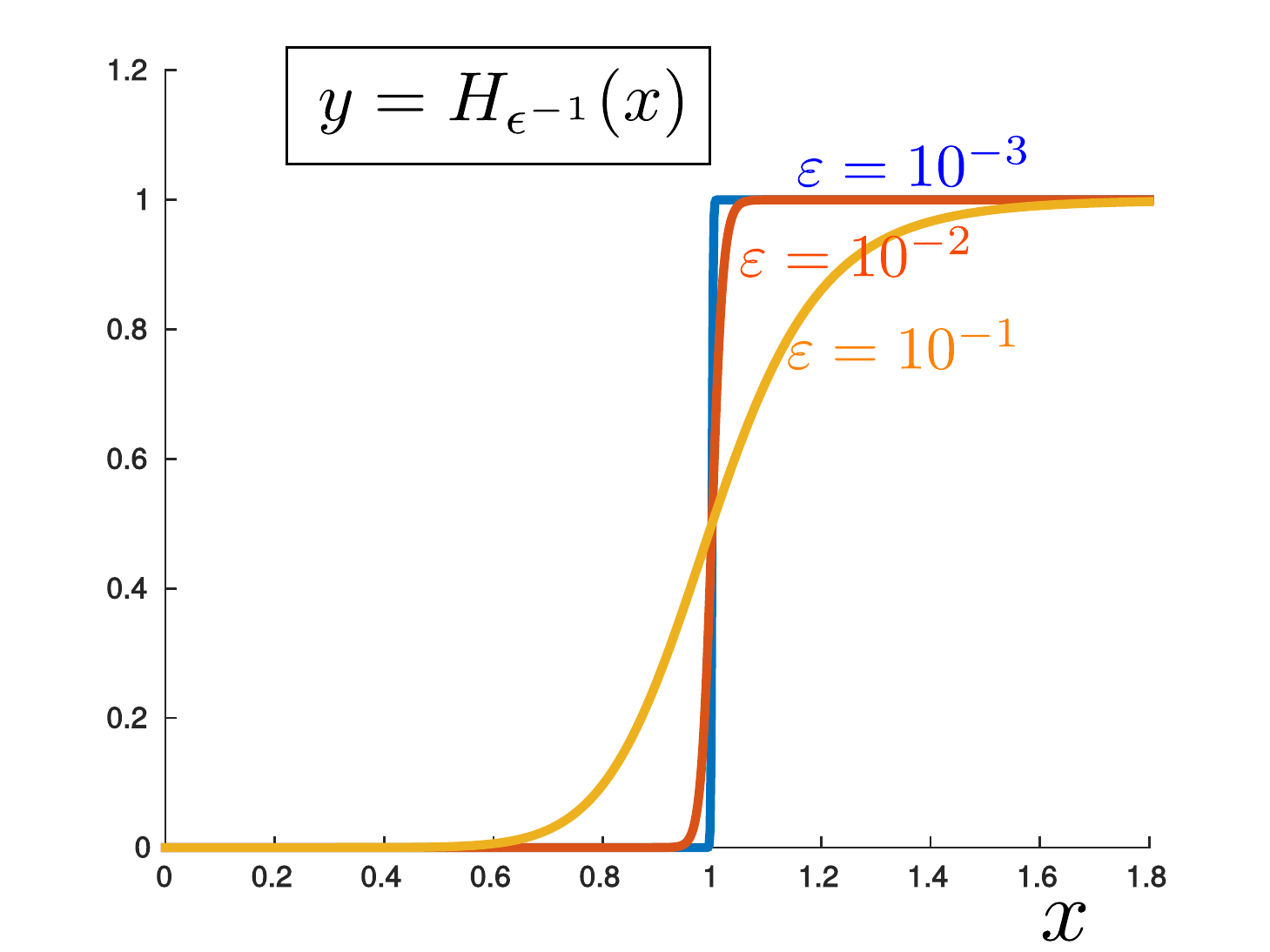}}
\subfigure[]{\includegraphics[width=.49\textwidth]{./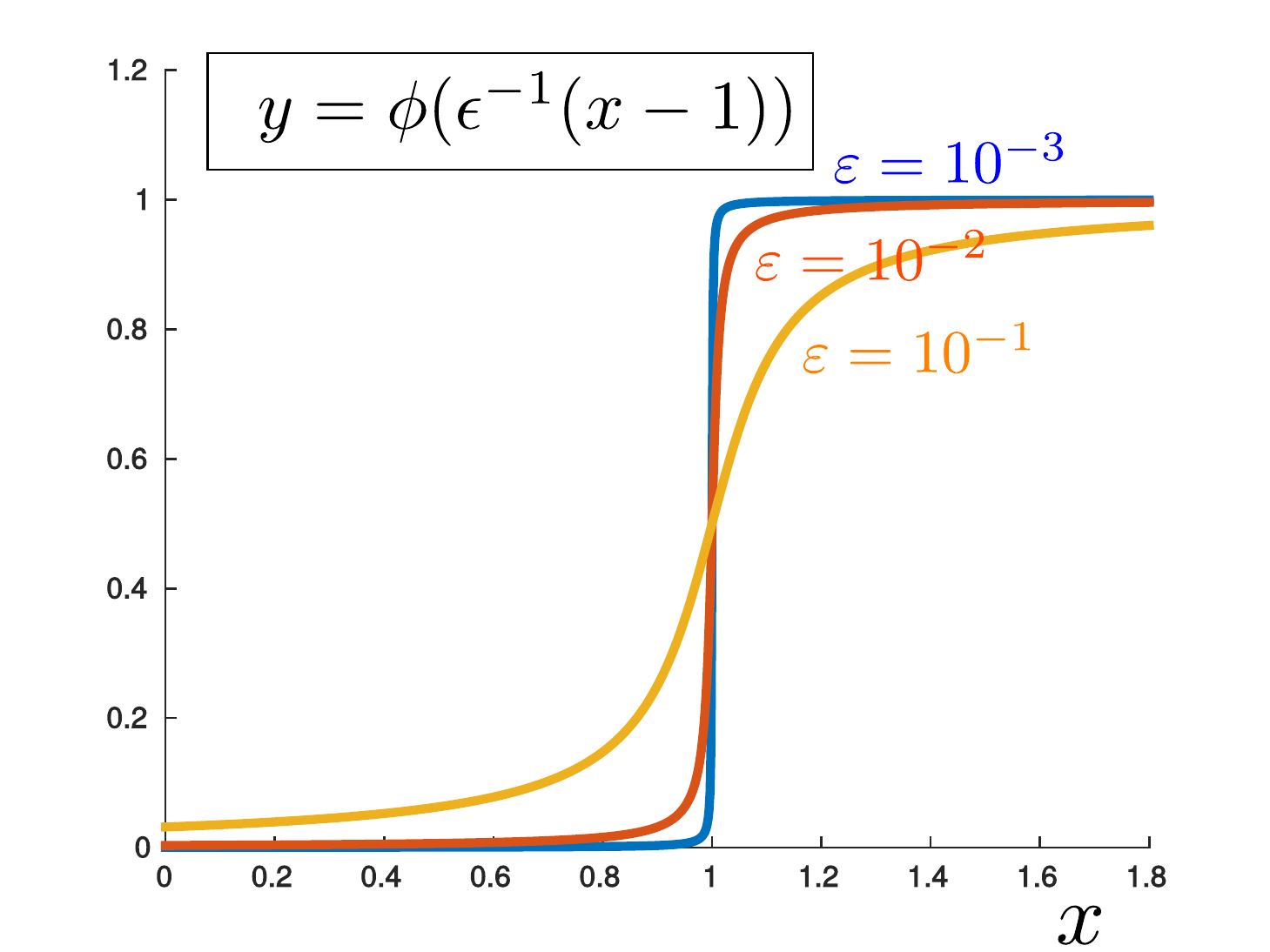}}
\end{center}
 \caption{Graphs of three different sigmoidal functions. In (a): $G_\varepsilon$ is the Goldbeter-Koshland function \eqref{GK}. In (b): $H_n$ is the Hill's function \eqref{hill}. In (c): $\phi$ is $\frac12 + \frac{1}{\pi}\arctan$. }
\figlab{GK}
\end{figure}
In \cite[Eq. (2.1.1)]{casso} the autocatalytic regulation is modelled using a Hill's function:
\begin{align}
 H_n(x) = \frac{x^n}{1+x^n},\eqlab{hill}
\end{align}
with $n\gg 1$. See the graph of $H_n$ in \figref{GK}(b) for $n=10^k$, $k=1,2,3$. Abstracting from this, we shall in this paper consider a general sigmoidal function of the following form:
\begin{align}
 x\mapsto  \phi(\varepsilon^{-1} (x-1)),\eqlab{phi}
\end{align}
with $\phi:\mathbb R \rightarrow \mathbb R$ a smooth function satisfying: 
\begin{align*}
\phi(z) &\in (0,1),\, \phi'(z) >0,\quad \text{for all}\quad z\in \mathbb R,
\end{align*}
and 
\begin{align}
 \phi(z) &\rightarrow 1^-\quad \text{for}\quad z\rightarrow \infty,\eqlab{philimit}\\
 \phi(z) &\rightarrow 0^+\quad \text{for}\quad z\rightarrow -\infty.\nonumber
 \end{align}
The function \eqref{phi} therefore gives the desirable sigmoidal function with threshold at $x=1$ for $0<\varepsilon\ll 1$, similar to $G_\varepsilon$ and $H_n$. Hence, we consider the following equations for the substrate-depletion oscillator:
\begin{align}
 \dot x &=(\alpha + \beta \phi(\varepsilon^{-1}(x-1)))y-x,\eqlab{xy}\\
 \dot y &=\eta -(\mu+\alpha+\beta \phi(\varepsilon^{-1}(x-1)))y.\nonumber
\end{align}

 We also make the following technical assumption
\begin{itemize}
 \item[(A)] The function $\phi$  has algebraic decay at $\pm \infty$: There exists a $k\in \mathbb N$ and smooth functions $\phi^{L} :[0,\infty)\rightarrow [0,\infty)$ and $\phi^{R} :[0,\infty)\rightarrow [0,\infty)$ (compactly: $\phi^{L/R}$, a notation we adapt frequently in the following)
\begin{align*}
 \phi(z) &= (-z)^{-k} \phi^L((-z)^{-1}),\quad \text{for}\quad z<0,\\
 \phi(z) &=1-z^{-k}\phi^R(z^{-1}),\quad \text{for}\quad z>0,
  \end{align*}
 and 
 \begin{align*}
  \phi^{L/R} (0) >0.
  \end{align*}
  \end{itemize}
  The number $k$ is the (algebraic) decay rate of $\phi(z)$ as $z\rightarrow \pm \infty$. It is also possible to consider different decay rates at $\pm \infty$ but we find that this complicates the notation further and only leads to minor technical and somewhat unessential changes to our approach. 
 We shall use $\phi(z) = \frac12 + \frac{1}{\pi}\arctan(z)$ in our numerical computations below. Here $k=1$ and $\phi^R(0)=\phi^L(0)=\frac{1}{\pi}$. See \figref{GK}(c). As opposed to $G_\varepsilon$ and $H_n$, the graph of $x\mapsto \phi(\varepsilon^{-1}(x-1)))$ does not pass through the origin. This, however, has no biological significance and is not important for our approach. The algebraic decay in (A) excludes a function like $\frac12 \tanh(z)+\frac12$ from consideration. Here $\phi^{L/R}(0)=0$ for any $k\in \mathbb N$ due to the exponential decay of $\tanh$. Similar issues arise when studying Hill's function $H_n$ for $n\rightarrow \infty$. To study these functions we would have to use different methods, see e.g. \cite{kristiansen2017a}. Assumption (A) makes the analysis substantially easier.
 
 \begin{remark}
  A simple calculation shows that the Goldbeter-Koshland function $G_\varepsilon$ can be written in the following form
  \begin{align}
   G_\varepsilon(x) = \psi(\varepsilon^{-1}(x-1),\varepsilon),\eqlab{GKNew}
  \end{align}
where the function
\begin{align}
\psi(z,\epsilon) = {\frac {2+\epsilon\,\sqrt {4+z^2+2\,\epsilon\,z^2+4\,\epsilon\,{z}+ {\epsilon}^{2}{{z
}}^{2}}+2\,\epsilon+\epsilon\,{z}+{\epsilon}^{2}{z}}{ 
 \left(2-z+ \epsilon\,{z}+\sqrt {4+z^2+2\,
\epsilon\,z^2+4\,\epsilon\,{z}+{\epsilon}^{2}z^2} \right)\left( 1+\epsilon \right)  }}.\eqlab{gkfunc}
\end{align}
is
smooth in each of its arguments. Here we have normalized $G_\epsilon$ appropriately (without loss of generality) such that $\psi(\cdot,\epsilon)$ satisfies \eqref{philimit} for all $0\le \epsilon\ll 1$. Notice in particular that 
\begin{align*}
                         \psi(z,0) = \frac{2}{2-z + \sqrt{z^2+4}}.
                        \end{align*}
Direct calculation then also shows that the function $\psi$ satisfies $\psi'_z(z,\varepsilon)>0$ for all $0\le \epsilon\ll 1$ and the following modified version of (A):
\begin{align*}
 \psi(\epsilon_1^{-1},r_1\epsilon_1) = 1-\epsilon_1\psi^R(r_1,\epsilon_1), \\
 \psi(-\epsilon_3^{-1},r_3\epsilon_3) = \epsilon_3\psi^L(r_3,\epsilon_3),
\end{align*}
with $\psi^{L/R}$ smooth and satisfying $\psi^R(0,0)=\psi^L(0,0)=1$. As a result, we can also study the Goldbeter-Koshland function by our methods (setting $k=1$), see  \cite{2019arXiv190806781U}. But since the notation is slightly more complicated for regularization functions of the type \eqref{GKNew} we will henceforth only focus on the simpler functions $\phi$, see \eqref{phi}, that are independent of $\varepsilon$. 
 \end{remark}

 We suppose that 
 $\alpha$ and $\beta$ in \eqref{xy} satisfy
\begin{align}
 \alpha\in (0,1),\,\alpha+\beta>1.\eqlab{abasymption}
\end{align}
Otherwise relaxation-type oscillations can easily be excluded.
In this paper we will fix $\alpha$ and $\beta$ satisfying \eqref{abasymption} and use $\varepsilon$, $\mu$ and $\eta$ as our bifurcation parameters. In \cite{Tyson2003} $\mu=0$, but we shall see that $\mu>0$ changes the bifurcation diagram significantly.  In particular, we shall study \eqref{xy} near $\varepsilon=\mu=0,\,\eta=1$, restricting attention to the biological meaningful regime where $\varepsilon> 0$, $\mu\ge 0$. In fact, we will most frequently think of $\eta$ as the primary bifurcation parameter and then study how the bifurcation diagram changes with $\mu\ge 0$ for $0<\varepsilon\ll 1$. We therefore consider \eqref{xy} as a ``nonsmooth'' perturbation problem, being singular along $x=1$ for $\varepsilon=0$.
%



\subsection{Numerics}
Using the bifurcation-software AUTO we obtain bifurcation diagrams for $\alpha=0.5$, $\beta=2$, $\varepsilon=0.0064$ and $\phi = \frac12 +\frac{1}{\pi}\arctan$ similar to those in \cite{Tyson2003}. In \figref{muEq0} for example, we take $\mu=0$, the scenario considered in \cite[Fig. 2 (c)]{Tyson2003}. In \figref{muEq0} (a) we present a bifurcation diagram, using $\text{max}\,x$ as a norm on the $y$-axis. There are two sub-critical Hopf bifurcations around $\eta\approx 0.93$ and $\eta\approx 1.02$. The repelling limit cycles born in these local bifurcations are observed to belong to the same family of periodic orbits. More precisely, each local limit cycle bifurcates along near vertical segments ($\eta\approx 0.92$ and $\eta\approx 1.025$) into attracting limit cycles, in a phenomenon that resembles the canard explosion phenomenon \cite{krupa_relaxation_2001,broens1991a,dumortier1996a} known from e.g. the van der Pol oscillator \cite{pol1926a}. Through this analogy, the attracting limit cycles appearing as the almost straight line for $\eta\in (0.92,1.025)$ with amplitude around $\text{max}\,x\approx 1.6-1.8$, would be of relaxation-type. \figref{muEq0} (b), shows an example of an attracting limit cycle (in red) on this branch for $\eta=1$ using a phase portrait. Figure (c) shows the corresponding functions $x(t)$, $y(t)$ (dashed). For comparison, figure (d) shows $x(t)$, $y(t)$ (dashed) for a smaller value of $\varepsilon=10^{-5}$. The effect of decreasing $\epsilon$ is shown to extend the period of the periodic orbit. In particular, the time spent close to the discontinuity line $x=1$ increases by decreasing the value of $\varepsilon$. Consequently, we can clearly view the limit cycles as relaxation oscillations in a broader sense of the word, where a period of ``inactivity'' near $x=1$ is interspersed by periods of ``activity'' for $x>1$ and $x<1$. 

\begin{figure}[h!] 
\begin{center}
\subfigure[Bifurcation diagram]{\includegraphics[width=.49\textwidth]{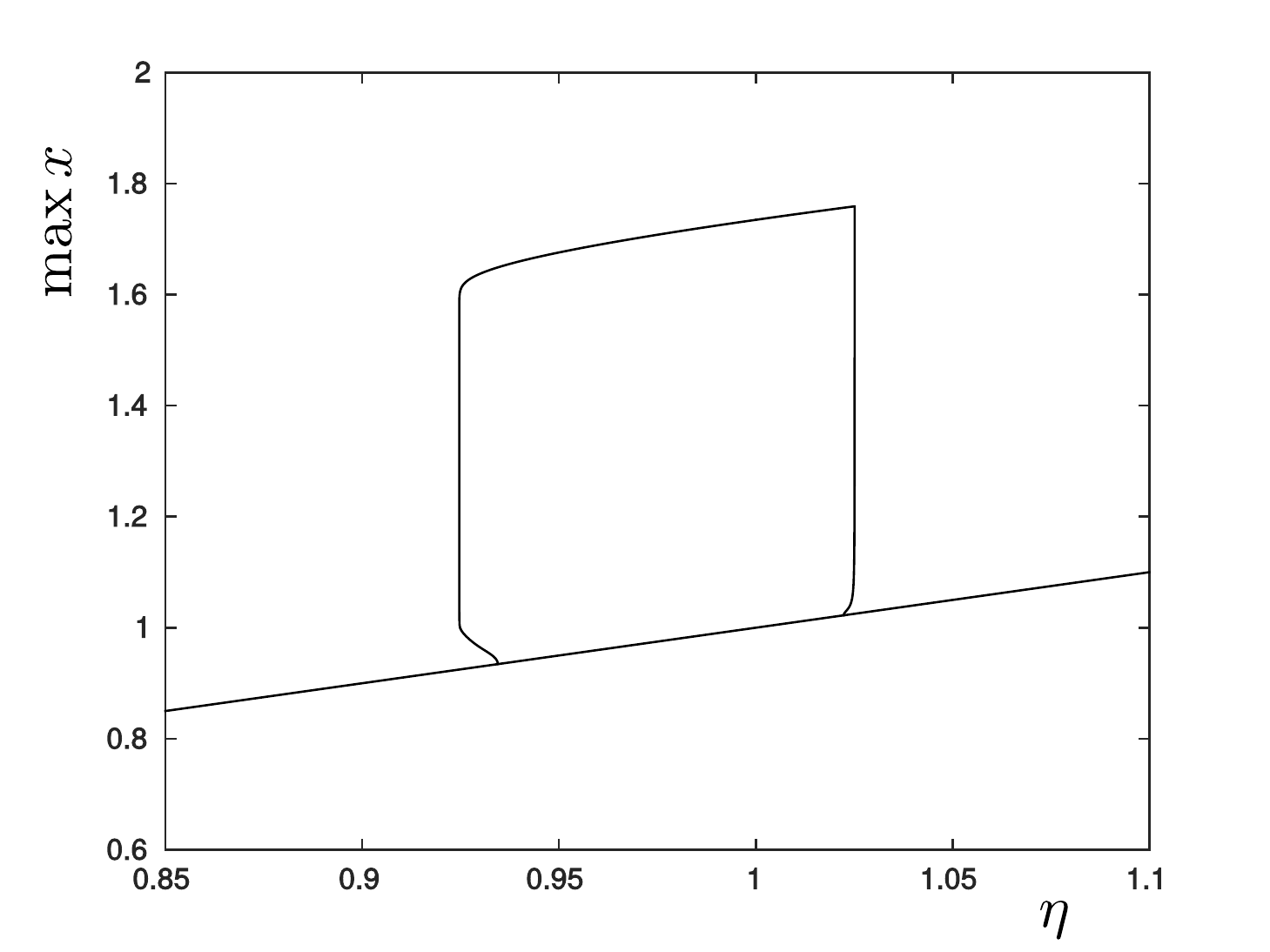}}
\subfigure[Phase portrait for $\eta=1$]{\includegraphics[width=.49\textwidth]{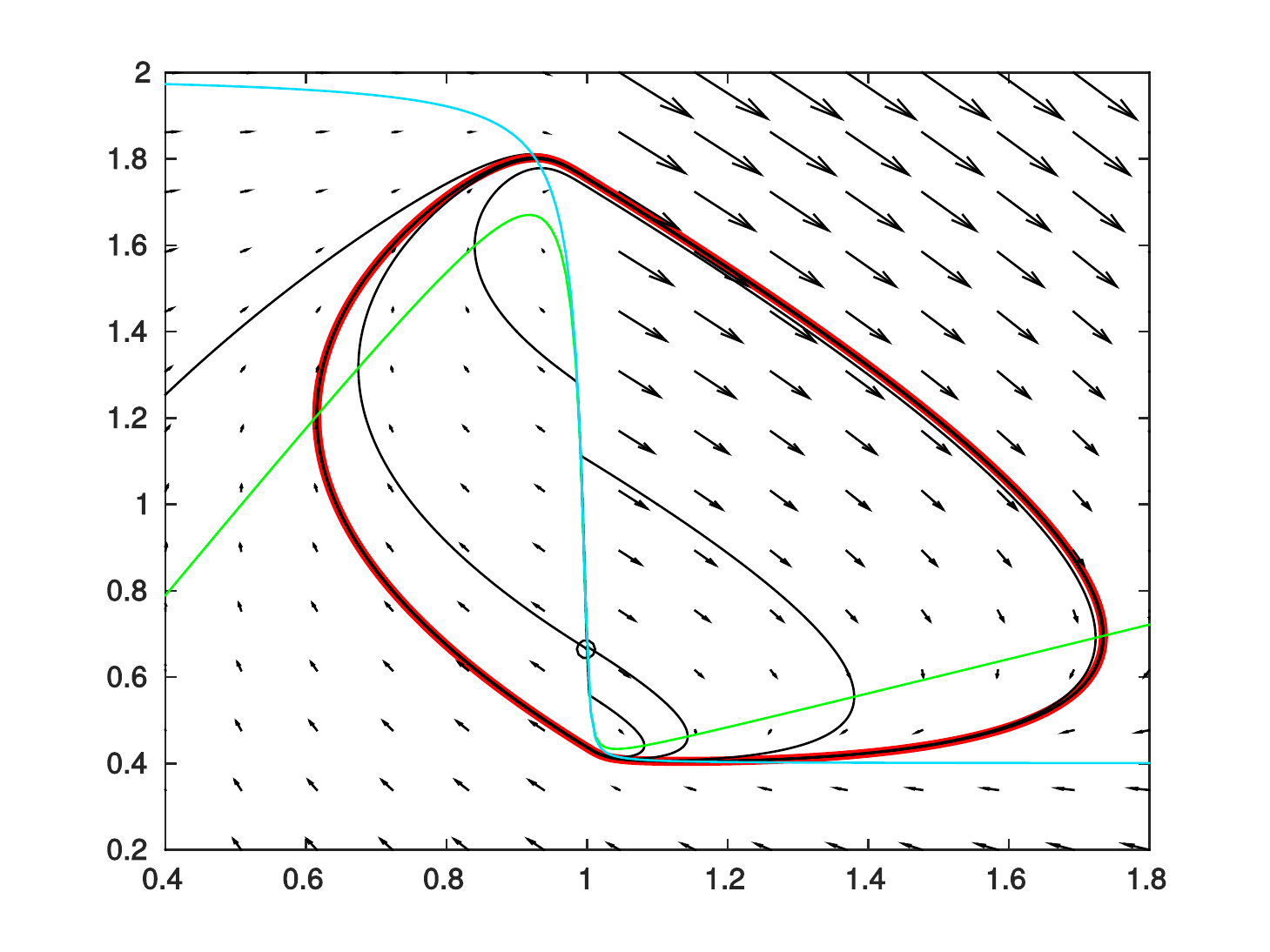}}
\subfigure[$x(t)$, $y(t)$ (dashed) for $\eta=1$ and $\varepsilon = 0.0064$]{\includegraphics[width=.49\textwidth]{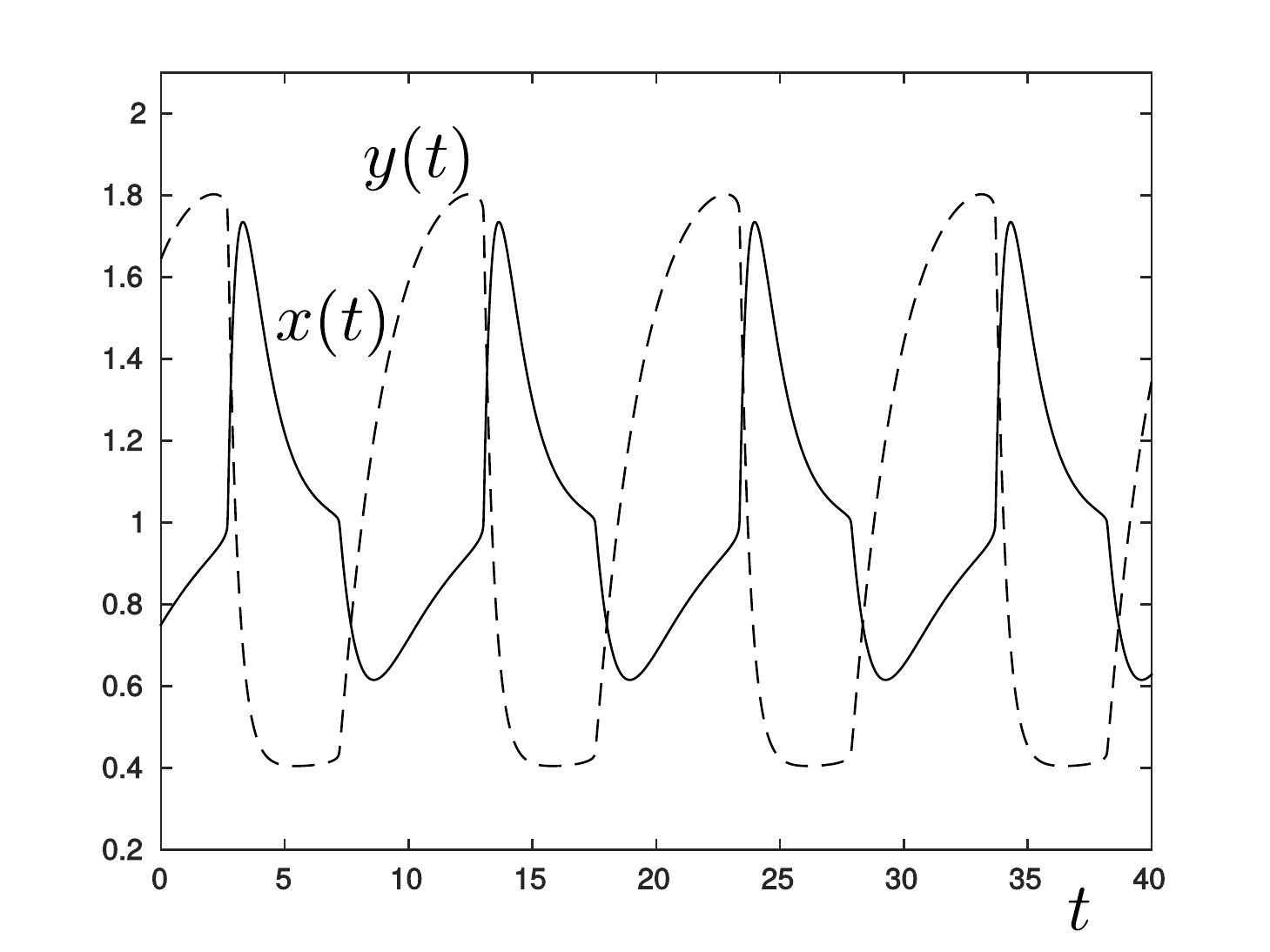}}
\subfigure[$x(t)$, $y(t)$ (dashed) for $\eta=1$ and $\varepsilon = 10^{-5}$]{\includegraphics[width=.49\textwidth]{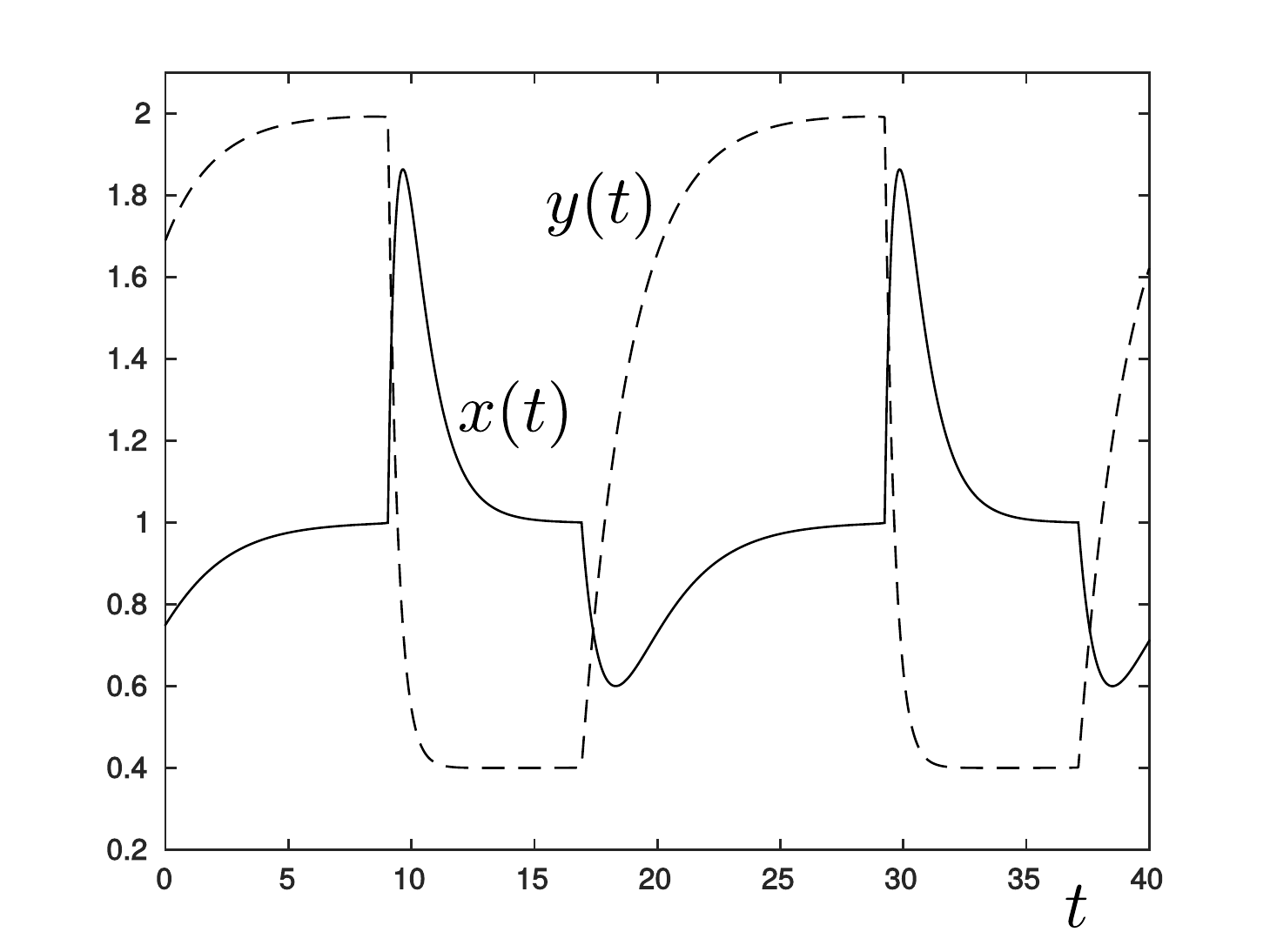}}
\end{center}
 \caption{Computations of relaxation oscillations using the bifurcation-software AUTO for $\alpha=0.5,\beta=2,\,\phi = \frac12 + \frac{1}{\pi}\arctan$ and $\mu=0$. Also in (a)-(c): $\varepsilon = 0.0064$ whereas $\varepsilon=10^{-5}$ in (d). Figure (a) shows the bifurcation diagram using $\text{max}\, x$ as a norm on the $y$-axis and $\eta$ as the bifurcation parameter. AUTO detects two sub-critical Hopf bifurcations around $\eta\approx 0.93$ and $\eta\approx 1.02$. The repelling limit cycles born in these local bifurcations are observed to belong to the same family of periodic orbits. Each local limit cycle bifurcates along near vertical segments ($\eta\approx 0.92$ and $\eta\approx 1.025$) into attracting limit cycles, in a phenomenon that resembles the canard explosion phenomenon known from e.g. the van der Pol oscillator. (b) shows the unique limit cycle, which is attracting, for $\eta=1$ in red. The green and blue curves are the nullclines. Black curves are different orbits of the system that all are asymptotic to the stable limit cycle. Points within the region confined by the red, closed curve are all backwards asymptotic to the unstable node indicated by a circle. (c) shows the limit cycle in (b) for $\eta=1$ and $\varepsilon=0.0064$ as functions $x(t)$, $y(t)$ (dashed) of time. For comparison, figure (d) shows $x(t)$, $y(t)$ (dashed) for a smaller value of $\varepsilon=10^{-5}$. }
\figlab{muEq0}
\end{figure}

However, if we increase $\mu$ slightly to $0.08$ then the relaxation oscillations disappear altogether. See \figref{muEq008}. Now the local repelling limit cycles born in the Hopf bifurcations further bifurcate in two separate homoclinics (see Figs. (a) and (b)). It is possible to combine the phenomena in \figref{muEq0} and \figref{muEq008}. For this we only have to reduce $\mu$ slightly from $0.08$ to $0.07936$. We illustrate this in \figref{muEq0079} using a phase portrait for $\eta=1.05940$. In this case, three limit cycles co-exist: two repelling ones (dashed and red) and one attracting, relaxation-like oscillation (full red). 
\begin{figure}[h!] 
\begin{center}
\subfigure[Bifurcation diagram using $\text{max}$-norm]{\includegraphics[width=.49\textwidth]{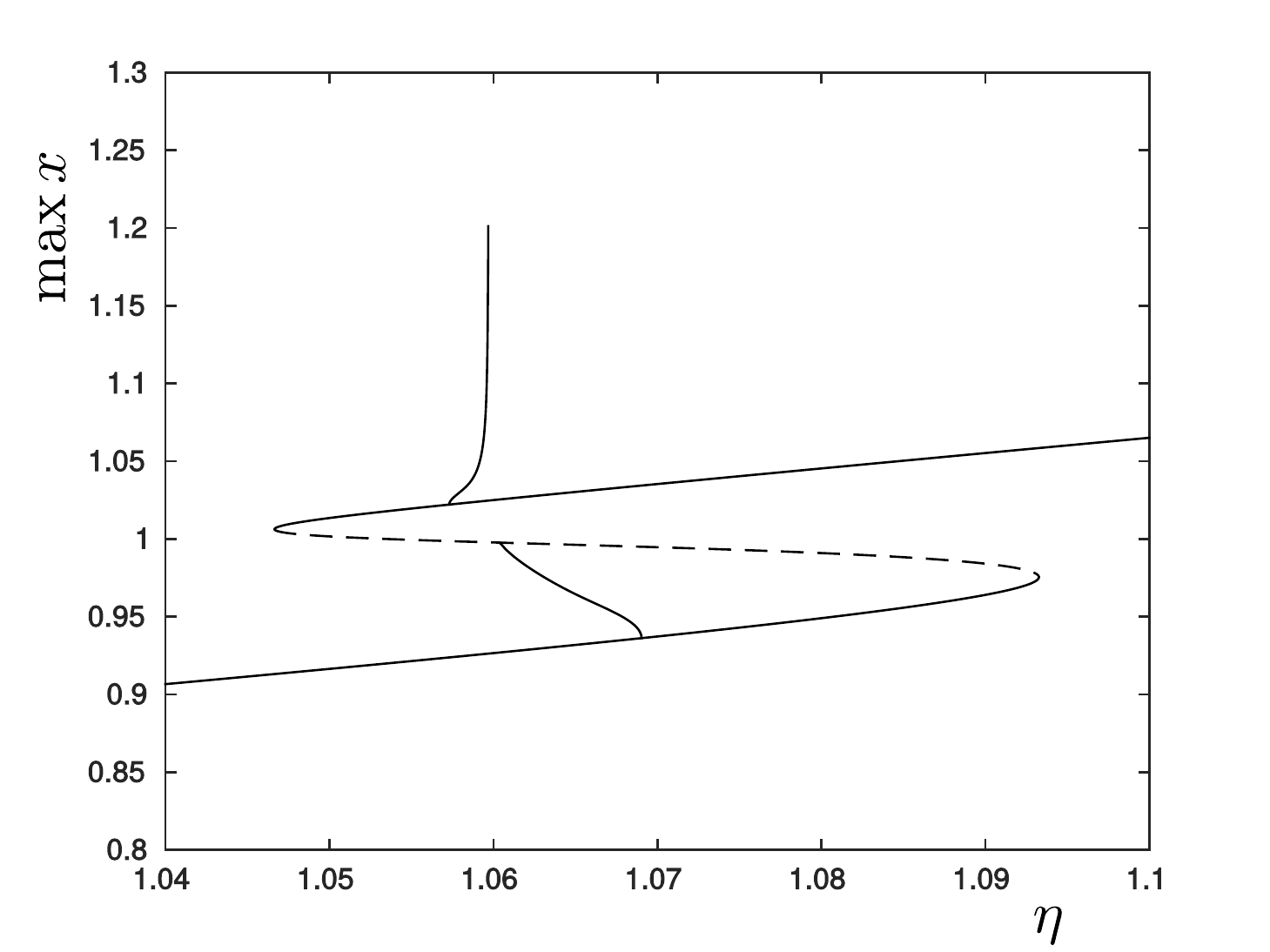}}
\subfigure[Bifurcation diagram using $L^2$-norm]{\includegraphics[width=.49\textwidth]{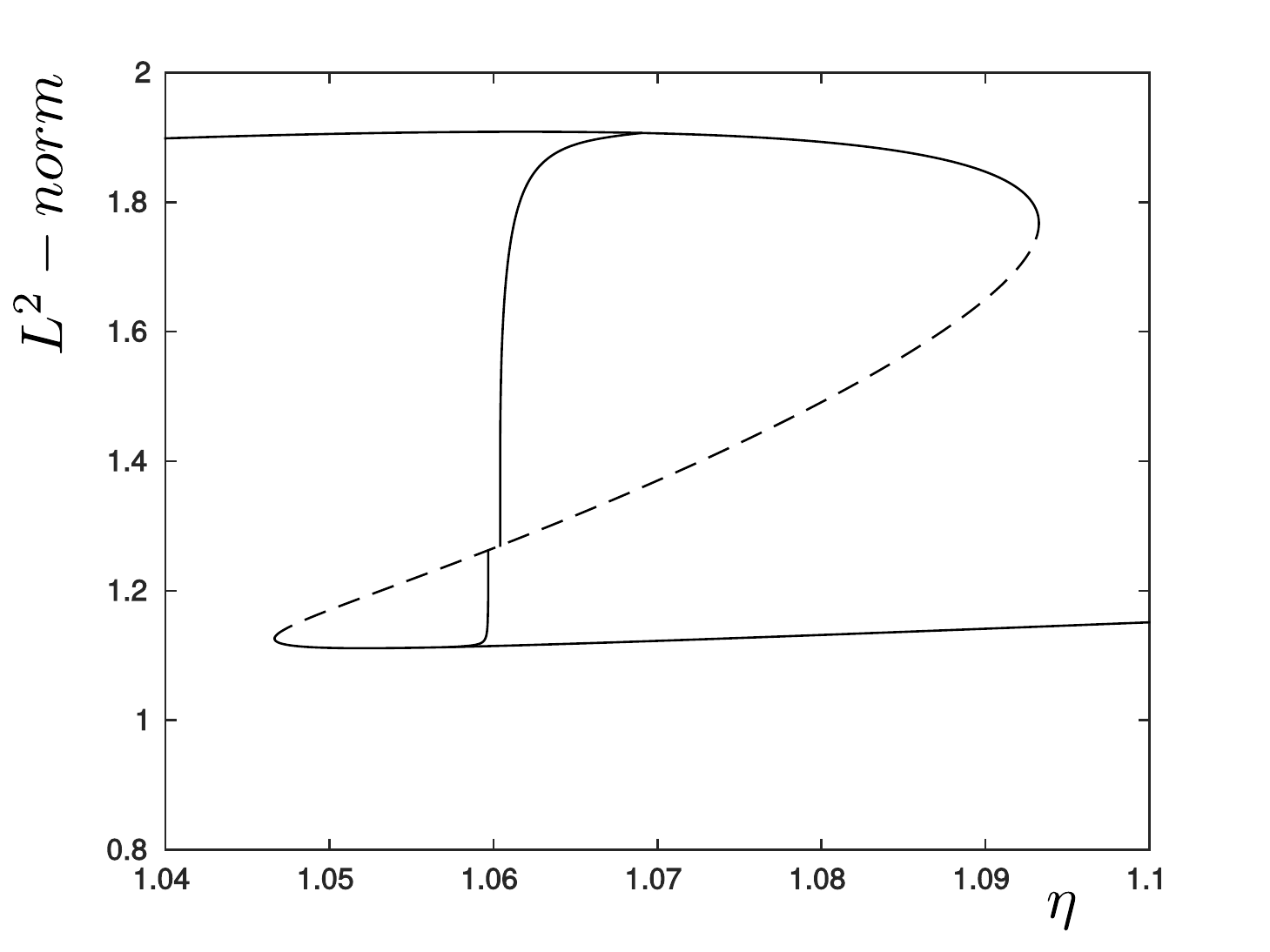}}
\subfigure[Phase portrait for $\eta\approx 1.0597$]{\includegraphics[width=.49\textwidth]{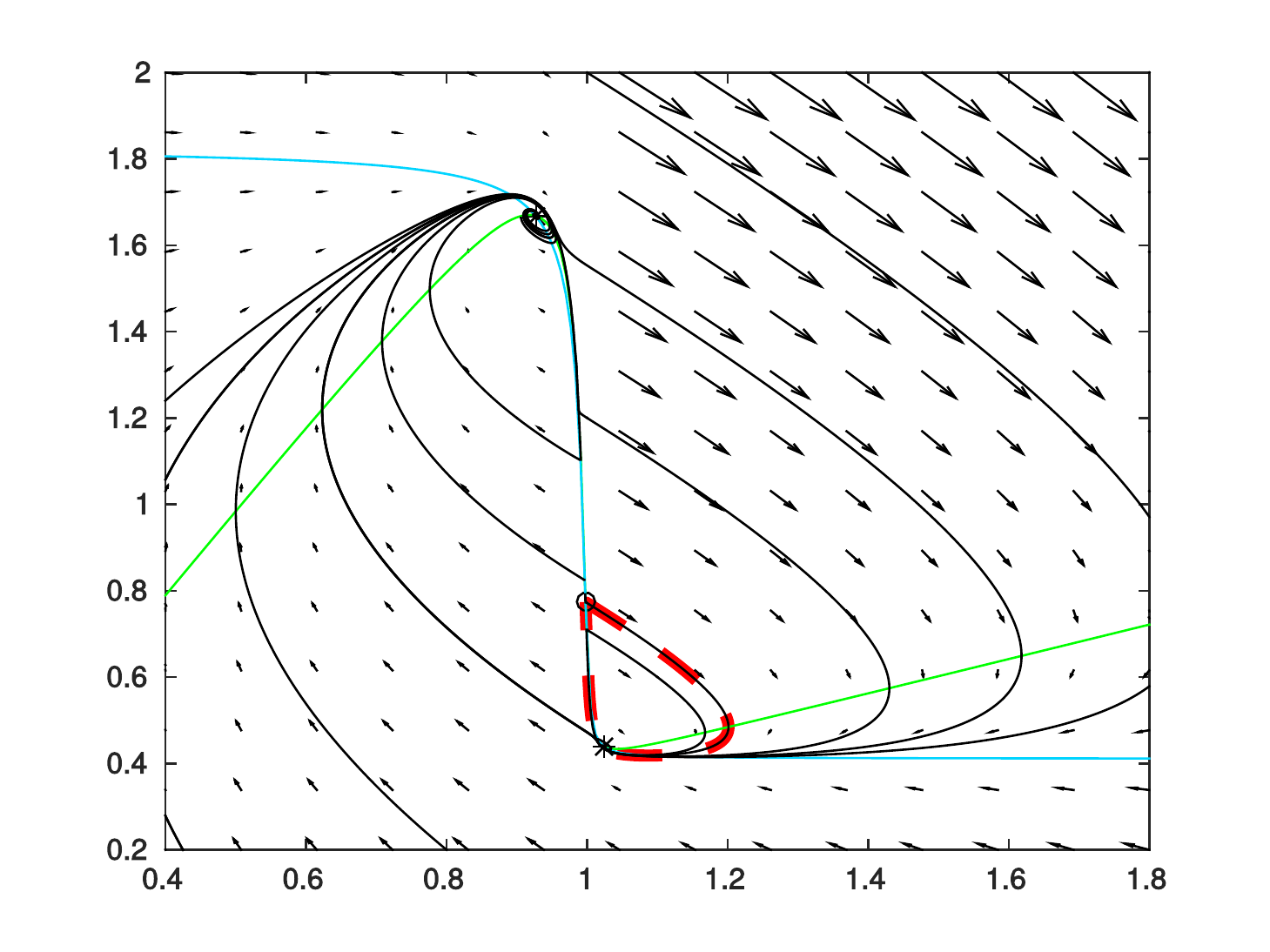}}
\subfigure[Phase portrait for $\eta\approx 1.0604$]{\includegraphics[width=.49\textwidth]{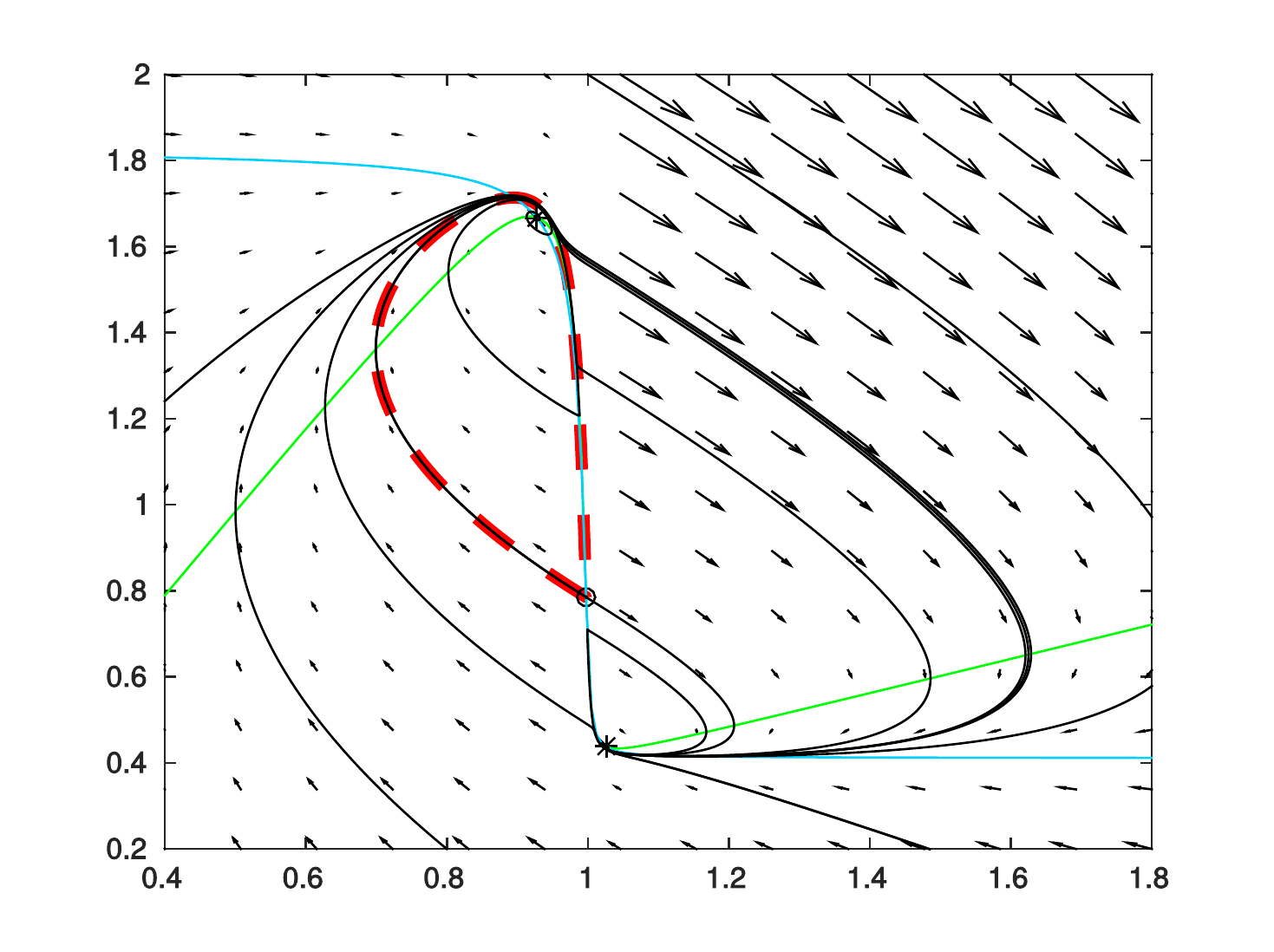}}
\end{center}
\caption{Computations of relaxation oscillations using bifurcation-software AUTO for the same parameter values as in \figref{muEq0}(a), (b) and (c) but now with $\mu=0.08$. (a) shows the bifurcation diagram using (as in \figref{muEq0}) $\text{max}\,x$ as a norm on the $y$-axis. $\eta$ is again the bifurcation parameter. In comparison with \figref{muEq0}, the branch of equilibria now bifurcates in an $S$-shaped fashion, given rise to a segment (dotted) of unstable (saddles) equilibria. As a result, the local limit cycles now bifurcate in two separate homoclinic bifurcations. This is more clearly visualized in (b) where we use AUTO's ``$L^2$-norm'' on the $y$-axis. Here the near-vertical branches of limit cycles end along the line of saddles, an indication of two homoclinic bifurcations. (c) and (d) show repelling limit cycles (dashed and red) near the two homoclinics. The orbits in black in (c) and (d) also show that generic points are forward asymptotic to two stable foci near $y\approx 1.6$ and $y\approx 0.4$ indicated by black stars. Therefore relaxation oscillations do not exist.}
\figlab{muEq008}
\end{figure}

%

\begin{figure}[h!] 
\begin{center}
{\includegraphics[width=.49\textwidth]{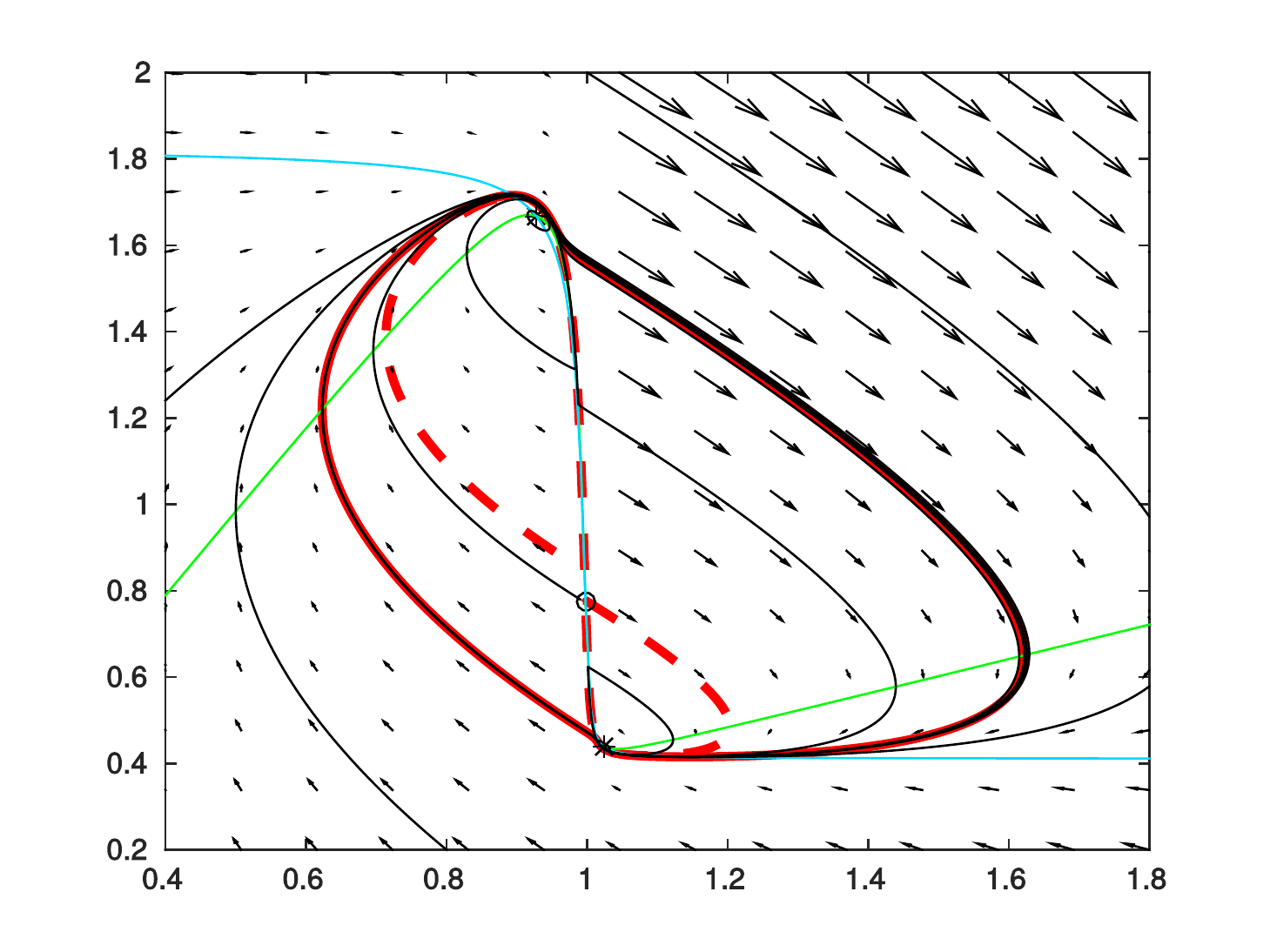}}
\end{center}
\caption{Phase portrait for $\mu=0.07936$ and $\eta=1.05940$. In this case three limit cycles co-exist: two repelling ones (dashed and red) and one attracting, relaxation-like oscillation (full red). The description of the more complicated bifurcation diagram is part of future work.  }
\figlab{muEq0079}
\end{figure}

\subsection{Aim of paper}
In this paper, we aim to describe a mathematical mechanism leading to the dynamics and the bifurcations in \figref{muEq0}, \figref{muEq008}, \figref{muEq0079}. We will focus on the existence and non-existence of the relaxation-type oscillations, leaving the full description of the complete bifurcation diagrams as part of future work. The latter will include an explanation of the explosive change in amplitude that is visible in \figref{muEq0} through a novel canard phenomena that occurs without the presence of attracting slow manifolds.  In the following, we will assume some familiarity with piecewise smooth systems, see e.g. \cite{Bernardo08,filippov1988differential,jeffrey_geometry_2011}, and the blow-up method in geometric singular perturbation theory, see e.g. \cite{krupa_extending_2001,kuehn2015,kristiansen2018a}.

\section{A preliminary blow-up analysis of \eqref{xy}}\seclab{singular0}

Setting $\varepsilon=0$ in \eqref{xy}$_{x\ne 1}$ gives by (A) a piecewise linear system where
\begin{align}
\dot x &= \alpha y-x,\eqlab{xN1system}\\
                 \dot y&=\eta-(\mu + \alpha)y,\nonumber
                 \end{align}
                 within $x<1$ and 
                 \begin{align}
 \dot x &=(\alpha+\beta)y-x,\eqlab{xP1system}\\
                               \dot y&=
                  \eta-(\mu + \alpha +\beta)y,\nonumber                
\end{align}
within $x>1$. The set $x=1$ is called a switching manifold in the literature of piecewise smooth dynamical systems \cite{Bernardo08}. However, to describe \eqref{xy} as a perturbation problem, it is useful to append a trivial equation for $\varepsilon$. We therefore consider the extended system
\begin{align}
\dot x &=(\alpha + \beta \phi(\varepsilon^{-1}(x-1)))y-x,\eqlab{xySlowExt}\\
 \dot y &=\eta -(\mu+\alpha+\beta \phi(\varepsilon^{-1}(x-1)))y,\nonumber\\
 \dot \varepsilon&=0,\nonumber
 \end{align}
  in the following. We let $y\in J$ with $J$ a sufficiently large interval throughout. Since \eqref{xySlowExt} is singular along $x=1$ for $\varepsilon=0$ it is useful to consider a separate time, the fast time $\tau = \varepsilon^{-1} t$. With respect to this time, \eqref{xySlowExt} becomes
 \begin{align}
x' &=\varepsilon((\alpha + \beta \phi(\varepsilon^{-1}(x-1)))y-x),\eqlab{xyFastExt}\\
 y' &=\varepsilon(\eta -(\mu+\alpha+\beta \phi(\varepsilon^{-1}(x-1)))y),\nonumber\\
 \varepsilon'&=0,\nonumber
 \end{align}
  Notice that for $\varepsilon=0$ this produces a vector-field which vanishes everywhere. Since \eqref{xyFastExt} has a lack of smoothness at $x=1$, the equilibria at $x=1$ are more singular and must be treated by blow-up. We therefore blow-up the singular line $x=1$, $y\in J$, $\varepsilon=0$ by setting 
\begin{align}
 \left. \begin{matrix} x&=& 1+r\bar x\\ y&=&\bar y\\ 
\varepsilon &= &r \bar \epsilon         
        \end{matrix}\right\}\quad r\ge 0,\,\bar y\in J,\,(\bar x,\bar \epsilon)\in S^1. \eqlab{initialBlowup}
\end{align}
The associated transformation $(r,\bar y,(\bar x,\bar \epsilon)) \mapsto (x,y,\varepsilon)$ defined by \eqref{initialBlowup} is a polar blow-up, its inverse blowing up $x=1$, $y\in J$ to a cylinder in the extended phase space $(x,y,\varepsilon)$-space. See \figref{xEq1Blowup}. 
Clearly, we can study any point $(x,\varepsilon)$, $\varepsilon\le \varepsilon_0$, by studying $(r,(\bar x,\bar \epsilon))$ with $r\ge 0$. Since $\varepsilon\ge 0$, only the points of the circle with $\bar \epsilon\ge 0$ are relevant.

We could describe \eqref{initialBlowup} using polar coordinates 
\begin{align}
r\ge 0,\,(\bar x,\bar \epsilon) = (\cos \theta,\sin \theta),\eqlab{polar}
\end{align}
but it is more useful to consider directional charts. We define these charts by requiring that \eqref{initialBlowup} in local coordinates $(r_1,y_1,\epsilon_1)$, $(x_2,r_2,y_2)$, $(r_3,y_3,\epsilon_3)$, takes the form that corresponds to setting $\bar x=-1$, $\bar \epsilon=1$, $\bar x=1$ in \eqref{initialBlowup}:
\begin{align}
 \bar x=-1:\quad &x = 1-r_1,&y&=y_1,&\varepsilon&=r_1 \epsilon_1,&r_1&\ge 0,\,\epsilon_1\ge 0,\eqlab{barXN1}\\
 \bar \epsilon=1:\quad &x = 1+r_2x_2,&y&=y_2,&\varepsilon&=r_2,&x_2&\in \mathbb R,\,r_2\ge 0,\eqlab{barEps1}\\
 \bar x =1:\quad &x = 1+r_3,&y&=y_3&\varepsilon&=r_3 \epsilon_3,& r_3&\ge 0,\,\epsilon_3\ge 0,\eqlab{barX1}
\end{align}
respectively.
We will henceforth refer to these charts as $\bar x=-1$, $\bar \epsilon=1$ and $\bar x=1$, respectively. Notice by \eqref{polar} that 
\begin{align*}
 \epsilon_1 = -\tan \theta,\,x_2= \cot \theta ,\, \epsilon_3 = \tan \theta,
\end{align*}
for $\theta  \in (\pi/2,3\pi/2)$, $\theta\in (0,\pi)$ and $\theta \in (-\pi/2,\pi/2)$, respectively.
These charts therefore describe parts of the circle $(\bar x,\bar \epsilon)\in S^1$ with $\bar x<0$, $\bar \epsilon>0$ and $\bar x>0$, respectively, by central projections from the respective planes at $\bar x=-1$, $\bar \epsilon=1$, $\bar x=1$. We will consider $(x_2,r_2)\in I_2\times [0,\varepsilon_0]$ with $I_2$ large but fixed in chart $\bar \epsilon=1$. We then fix small $U_1$ and $U_3$ accordingly so that in the charts $\bar x=\pm 1$, we have $(r_1,\epsilon_1)\in U_1$ and $(r_3,\epsilon_3)\in U_3$ and the charts $\bar x=-1,\,\bar \epsilon=1,\,\bar x=1$ cover a full neighborhood of $x=1,\,\varepsilon=0$. Since $y$ does not transform we will henceforth drop the subscript on $y$ in each of the charts. As is typical in singular perturbed systems of the form \eqref{xy}, see \cite{kristiansen2017a,kristiansen2018a}, we find that the pullback of the vector-field in \eqref{xyFastExt} by the blow-up transformation $(r,y,(\bar x,\bar \epsilon))\mapsto (x,y,\varepsilon)$ given by \eqref{initialBlowup} is well-defined, even for $r=0$. In fact, in the charts $\bar x=\mp 1$, the local forms of the vector-field, obtained from \eqref{xyFastExt} by inserting the expressions in \eqsref{barXN1}{barX1}, have $\epsilon_1$ and $\epsilon_3$, respectively, as common factors. This is a consequence of $\{\varepsilon=0\}$ being a set of equilibria for \eqref{xyFastExt}. We will therefore apply desingularization and divide the corresponding right hand sides by the common factors $\epsilon_1$ and $\epsilon_3$ in the local charts. In this way, we will recover the piecewise linear flows \eqref{xN1system} and \eqref{xP1system} within  $\epsilon_1=0$ and $\epsilon_3=0$, respectively, which allow for application of perturbation techniques.  Notice that, in contrast to the more usual blow-up approach of nonhyperbolic points \cite{dumortier2006a,dumortier1996a,krupa_extending_2001}, we do not divide by $r$. 

We will adopt the usual convention for blow-up \cite{krupa_extending_2001}: Objects, i.e points/orbits/manifolds, will in charts be enumerated by subscripts, say $M_i$, where $i$ coincides with the subscripts used in the corresponding coordinates. For example, an object in the chart $\bar x=-1$ \eqref{barXN1} will (typically) be written as $M_1$. Also, 
if $M_1$, under the application of the change of coordinates $K_{21}:(r_1,y_1,\epsilon_1)\mapsto (x_2,y_2,r_2)$, is (partially) covered by the chart $\bar \epsilon=1$, then we will denoted by $M_2$ there.
We adopt this convention throughout. Similarly, we will by $\overline M$ denote the global object obtained from the local versions $M_i$, $i=1,2,3$, in terms of blow-up variables $r\ge 0,\bar y\in J,(\bar x,\bar \epsilon)\in S^1$. 
\begin{figure}[h!] 
\begin{center}
{\includegraphics[width=.69\textwidth]{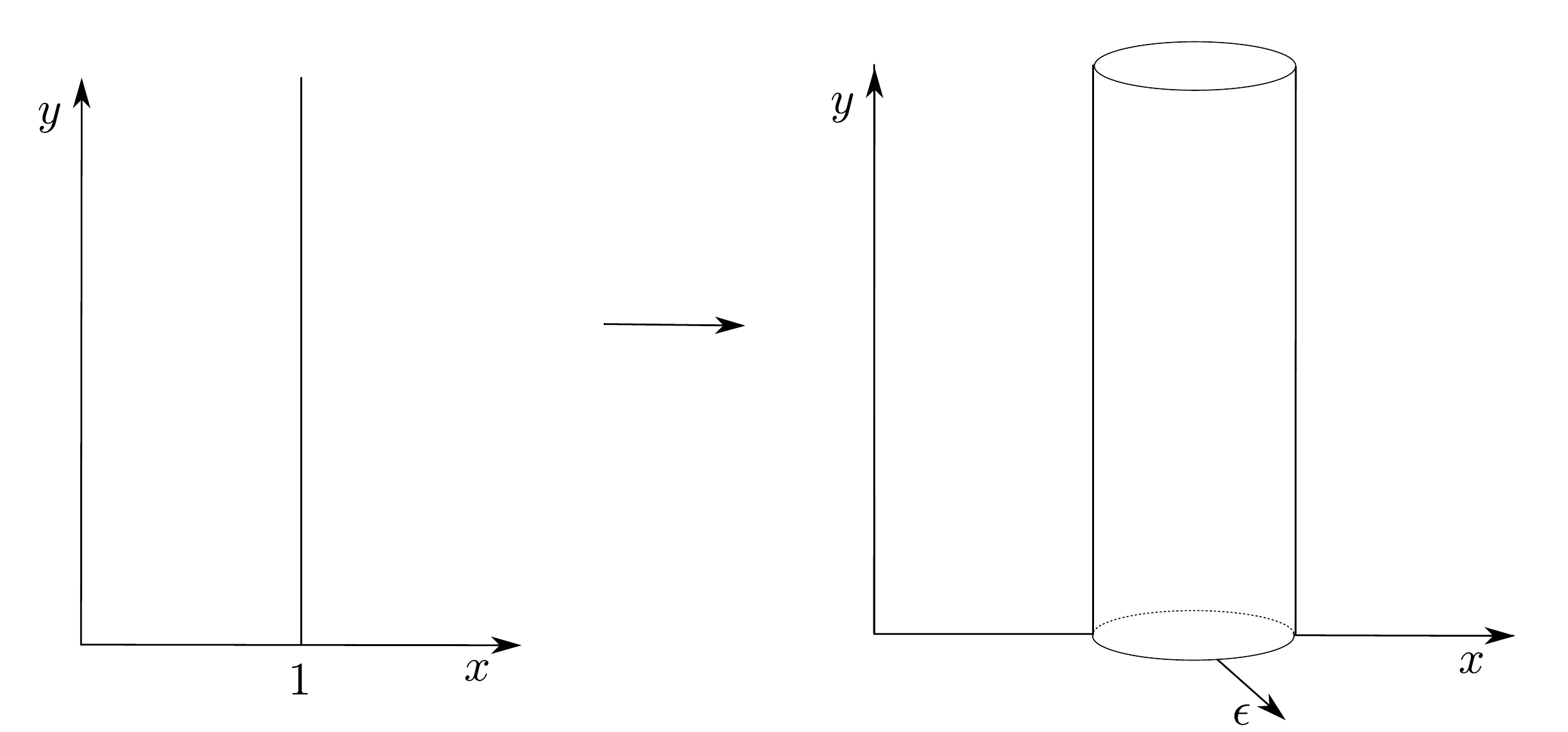}}
\end{center}
\caption{Blowup of $x=1$.}
\figlab{xEq1Blowup}
\end{figure}

In the following section, we will describe the dynamics in each of the directional charts. 
\subsection{Chart $\bar \epsilon=1$}
Inserting \eqref{barEps1} into \eqref{xyFastExt} gives a slow-fast system \cite{jones_1995}
\begin{align}
 x_2' &=(\alpha + \beta \phi(x_2))y-1-r_2x_2,\eqlab{fast}\\
 y '&=r_2(\eta -(\mu+\alpha+\beta \phi(x_2))y),\nonumber
\end{align}
and $r_2'=0$.
With respect to the slow time $t$, we obtain the equivalent form
\begin{align}
r_2 \dot x_2 &=(\alpha + \beta \phi(x_2))y-1-r_2x_2,\eqlab{slow}\\
 \dot y&=\eta -(\mu+\alpha+\beta \phi(x_2))y.\nonumber
 \end{align}
 This system is called the slow formulation of \eqref{fast}. Here $()'=r_2 \dot {()}$.
$x_2$ is called the fast variable while $y$ is called the slow variable. 

Setting $r_2=0$ in \eqref{fast} gives the layer problem
\begin{align}
 x_2'&=(\alpha + \beta \phi(x_2))y-1,\eqlab{layerHere}\\
 y'&=0.\nonumber
\end{align}
We illustrate the result of the following lemma in \figref{Sr2}.
\begin{lemma}
The critical manifold of \eqref{layerHere}
\begin{align*}
 S_{r,2}=\{(x_2,y)\in [0,\infty)\times J\vert y = (\alpha+\beta \phi(x_2))^{-1}\},
\end{align*}
is normally hyperbolic and repelling but noncompact. 

Let
\begin{align}
y^L = \alpha^{-1},\quad y^R = (\alpha+\beta)^{-1}.\eqlab{yNP}
\end{align}
Then the manifold $S_{r,2}$ has horizontal asymptotes
\begin{align*}
 y\rightarrow y^{L/R},
\end{align*}
for $x_2\rightarrow \mp \infty$, respectively, 
and it is therefore by assumption (A) contained within the strip $y\in (y^R,y^L)$.
\end{lemma}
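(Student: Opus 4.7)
The plan is to verify the three claims in turn from the explicit formula for the critical manifold, using only the sign conditions on $\phi$ from assumption (A) and the normalizations in \eqref{abasymption}.

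First, I would establish the shape of $S_{r,2}$. Solving $(\alpha+\beta\phi(x_2))y=1$ for $y$ gives $y=(\alpha+\beta\phi(x_2))^{-1}$, which is well-defined and positive for all $x_2\in\mathbb{R}$ because $\phi(z)\in(0,1)$ and $\alpha>0$. Since $\phi$ is strictly increasing (by the hypotheses preceding \eqref{philimit}), the map $x_2\mapsto(\alpha+\beta\phi(x_2))^{-1}$ is strictly decreasing. Taking the limits $\phi(x_2)\to 0^+$ as $x_2\to-\infty$ and $\phi(x_2)\to 1^-$ as $x_2\to+\infty$ from \eqref{philimit} immediately yields the horizontal asymptotes $y\to y^L=\alpha^{-1}$ and $y\to y^R=(\alpha+\beta)^{-1}$ respectively, and strict monotonicity then shows that $S_{r,2}$ lies in the open strip $y\in(y^R,y^L)$ (noting that \eqref{abasymption} guarantees $y^R<y^L$).

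Next, for normal hyperbolicity and the repelling property I would compute the linearization of the fast vector field $F(x_2,y):=(\alpha+\beta\phi(x_2))y-1$ at a point of $S_{r,2}$. Since $y$ is frozen in the layer problem, only the $x_2$-derivative is relevant, and
\begin{equation*}
\partial_{x_2} F(x_2,y)\big|_{S_{r,2}} = \beta\phi'(x_2)\, y.
\end{equation*}
Using $\phi'(z)>0$ for all $z\in\mathbb{R}$, $\beta>0$ (from $\alpha+\beta>1>\alpha$), and $y>y^R>0$ on $S_{r,2}$, this quantity is strictly positive. Hence the single nonzero eigenvalue of the layer linearization is positive at every point of $S_{r,2}$, proving that $S_{r,2}$ is a normally hyperbolic repelling critical manifold.

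Noncompactness is immediate since $S_{r,2}$ is the graph of a smooth function defined on all of $[0,\infty)$ (and in fact on $\mathbb{R}$), and the asymptotes $y^{L/R}$ are not attained, so $S_{r,2}$ has no endpoint in $[0,\infty)\times J$. I do not anticipate a genuine obstacle here: the lemma is essentially an unpacking of \eqref{philimit} together with the sign of $\phi'$, and the only mild care required is keeping track of which asymptote corresponds to which infinity (the inversion flips the monotonicity, hence the $\mp$ in the statement).
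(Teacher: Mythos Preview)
Your proof is correct and is exactly the kind of direct verification the paper has in mind; the paper's own proof reads only ``Simple calculations,'' and what you have written is precisely those calculations spelled out. The one cosmetic point is that the noncompactness comes from the unboundedness of the $x_2$-domain rather than from the asymptotes not being attained, but this does not affect the argument.
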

\begin{proof}
 Simple calculations. 
\end{proof}

Now, setting $r_2=0$ in the slow system \eqref{slow} gives the reduced problem
\begin{align}
 0 &=(\alpha + \beta \phi(x_2))y-1,\eqlab{reduced}\\
 \dot y&=\eta -(\mu+\alpha+\beta \phi(x_2))y.\nonumber
\end{align}
For this system, we obtain the following.
\begin{lemma}
$S_{r,2}$ carries a reduced slow flow, described by the reduced problem:
\begin{align}
 \dot y &=\eta-1-\mu y,\quad (x_2,y)\in S_{r,2}.\eqlab{reducedNew}
\end{align}
For $\mu>0$, let
\begin{align*}
 y^S &= \mu^{-1}(\eta-1),
\end{align*}
and 
\begin{align*}
x_2^S = \phi^{-1}\left(-\beta^{-1}\left(\alpha-1/y^S\right)\right).
\end{align*}
Furthermore, we set
 \begin{align}
 \eta^L(\mu) = 1+\frac{\mu}{\alpha},\quad \eta^R(\mu) = 1+\frac{\mu}{\alpha+\beta}.\eqlab{etaN}
\end{align}
%
\begin{enumerate}[label=(\alph*)]
%
\item For any $\mu>0$ and
\begin{align*}
 \eta \in (\eta^R,\eta^L), 
\end{align*}
there exists a stable node of \eqref{reducedNew}
at 
\begin{align}
(x_2,y)=(x_2^S,y^S)\in S_{r,2}\eqlab{saddle}.
\end{align}
\item For $\eta^L(\mu)<\eta$ then $\dot y>0$ and $(x_2,y)\rightarrow (-\infty,y^L)$ in finite forward time and $(x_2,y)\rightarrow (+\infty,y^R)$ in finite backward time. 
\item For $\eta<\eta^R(\mu)$ then $\dot y<0$ and $(x_2,y)\rightarrow (+\infty,y^R)$ in finite forward time and $(x_2,y)\rightarrow (-\infty,y^L)$ in finite backward time. 
\item For $\mu=0$, $\eta^L=\eta^R=1$ and for $\eta=1$ the critical manifold $S_{r,2}$ is also a manifold of equilibria for the reduced problem. 
\end{enumerate}
\end{lemma}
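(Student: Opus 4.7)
The plan is to derive the reduced flow by exploiting that on $S_{r,2}$ the algebraic constraint $(\alpha+\beta\phi(x_2))y=1$ holds. Substituting this directly into the equation for $\dot y$ in \eqref{reduced} collapses the term $(\alpha+\beta\phi(x_2))y$ to $1$, leaving $\dot y=\eta-1-\mu y$. This is a scalar linear ODE along $S_{r,2}$, and since $S_{r,2}$ is parametrized as a graph $y=(\alpha+\beta\phi(x_2))^{-1}$ over $x_2\in\mathbb R$, the flow on $x_2$ is recovered a posteriori by implicit differentiation of the constraint, so I do not need to track it separately.

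For part (a), I would solve $\eta-1-\mu y=0$ to obtain $y=y^S$; this $y$-value corresponds to a point on $S_{r,2}$ precisely when $y^S\in(y^R,y^L)$, i.e.\ when $(\alpha+\beta)^{-1}<\mu^{-1}(\eta-1)<\alpha^{-1}$. A quick rearrangement of these two inequalities using the definition \eqref{etaN} gives $\eta^R(\mu)<\eta<\eta^L(\mu)$, and under this condition the equation $\phi(x_2)=-\beta^{-1}(\alpha-1/y^S)$ has a unique solution $x_2^S$ by monotonicity of $\phi$ on $\mathbb R$ and the fact that the right-hand side lies in $(0,1)$. Stability follows from the linearization $-\mu<0$.

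For parts (b) and (c), the key observation is that when $\eta>\eta^L(\mu)$, the equilibrium $y^S=\mu^{-1}(\eta-1)$ lies strictly above $y^L=\alpha^{-1}$, so $\eta-1-\mu y>0$ for every $y\in(y^R,y^L)$; the linear ODE then drives $y$ monotonically toward $y^L$, and since the right-hand side stays bounded away from $0$ on compact subsets of $(y^R,y^L)$, $y$ reaches $y^L$ in finite forward time. At $y=y^L=\alpha^{-1}$, the constraint forces $\phi(x_2)\to 0^+$, hence $x_2\to -\infty$ by \eqref{philimit}; the analogous argument gives $x_2\to+\infty$ in backward time. The case $\eta<\eta^R(\mu)$ is entirely symmetric.

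Finally, for (d), setting $\mu=0$ in \eqref{etaN} immediately yields $\eta^L(0)=\eta^R(0)=1$, and substituting $\mu=0,\,\eta=1$ into the reduced equation gives $\dot y\equiv 0$ so that every point of $S_{r,2}$ is an equilibrium. The only potential obstacle I anticipate is verifying carefully that $y$ reaches the asymptotic values $y^{L/R}$ in \emph{finite} time (rather than merely asymptotically) and that $x_2$ correspondingly diverges; this requires invoking assumption (A) to control the blow-up rate of $x_2$ near the asymptotes, but since $\dot y$ is bounded below by a positive constant along the relevant orbit segment, the finite-time arrival for $y$ follows from direct integration of the linear ODE, and divergence of $x_2$ is automatic from $y\to y^{L/R}$ combined with monotonicity of $\phi$.
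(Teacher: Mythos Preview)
Your proposal is correct and follows exactly the approach the paper takes: the paper's proof consists of the single line ``Simple calculations. In particular, simplifying \eqref{reduced} gives \eqref{reducedNew}. The point defined by $y=y^S$ is a hyperbolic and attracting for \eqref{reducedNew} if $\mu\ne 0$. It is contained within $(y^R,y^L)$ if and only if $\eta\in (\eta^R,\eta^L)$.'' Your write-up simply fills in those calculations explicitly, including the finite-time arrival argument that the paper leaves implicit.
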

\begin{proof}
Simple calculations. In particular, simplifying \eqref{reduced} gives \eqref{reducedNew}. The point defined by $y=y^S$ is a hyperbolic and attracting for \eqref{reducedNew} if $\mu\ne 0$. It is contained within $(y^R,y^L)$ if and only if $\eta\in (\eta^R,\eta^L)$.
\end{proof}
Now, suppose $\eta \in (\eta^R,\eta^L)$ and consider a compact neighborhood $U$ of \eqref{saddle} on $S_{r,2}$. Then by Fenichel's theory \cite{fen1,fen2,fen3,jones_1995}, $S_{r,2}\cap U$ perturbs smoothly to a locally invariant slow manifold of \eqref{slow} for all $0<\varepsilon\ll 1$. On the slow manifold, the stable node \eqref{saddle} of \eqref{reducedNew} becomes a saddle
\begin{align*}
p_2^S = \left(x_2^S,y^S\right),
\end{align*}
with $S_{r,2}\cap U $ as the local stable manifold and the critical fiber $\{x_2\in \mathbb R,\,y=y^S\}\cap U$ as the local unstable manifold as $r_2\rightarrow 0$. The perturbation of these objects for $0<r_2\ll 1$ is smooth by Fenichel's theory. We set 
\begin{align*}
 p^S = (1,y^S),
\end{align*}
in the $(x,y)$-variables, henceforth. We illustrate the results obtained in the scaling chart in \figref{Sr2}.

\begin{figure}[h!] 
\begin{center}
{\includegraphics[width=.49\textwidth]{./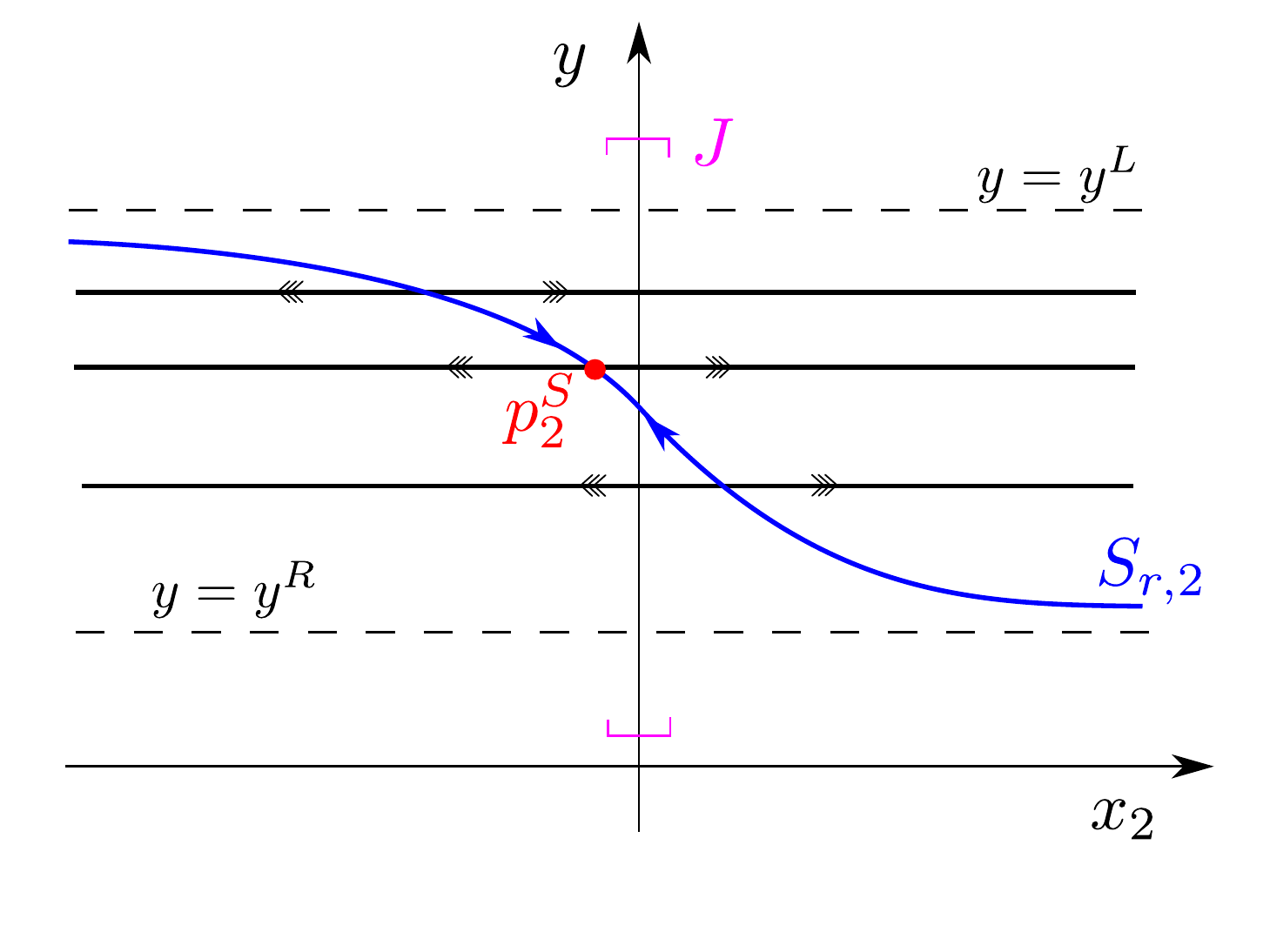}}
\end{center}
 \caption{Slow-fast dynamics in the scaling chart $\bar \epsilon=1$. The critical manifold $S_{r,2}$ is normally hyperbolic and repelling everywhere but noncompact. The lines $y=y^{L/R}$ are asymptotes. The reduced flow on $S_{r,2}$ has a stable node at $p_2^S$ for any $\mu>0$ and $\eta \in (\eta^R,\eta^L)$.}
\figlab{Sr2}
\end{figure}
\subsection{Chart $\bar x=-1$}\seclab{xbarN1sec}
Inserting \eqref{barXN1} into \eqref{xyFastExt} gives
\begin{align}
\dot r_1 &=-r_1 \epsilon_1\left((\alpha+\beta\epsilon_1^k \phi^L(\epsilon_1))y-1+r_1\right),\nonumber\\
\dot y&=r_1 \epsilon_1\left[\eta-(\mu+\alpha+\beta \epsilon_1^k\phi^L(\epsilon_1))y\right],\nonumber\\
\dot \epsilon_1&=\epsilon_1^2 \left((\alpha+\beta\epsilon_1^k \phi^L(\epsilon_1))y-1+r_1\right).\nonumber
\end{align}
As promised, $\epsilon_1\ge 0$ is a common factor on the right hand side. Division of the right hand side by this common factor produces the following desingularized system
\begin{align}
\dot r_1 &=-r_1\left((\alpha+\beta\epsilon_1^k \phi^L(\epsilon_1))y-1+r_1\right),\eqlab{barxN1Eqns}\\
\dot y&=r_1\left[\eta-(\mu+\alpha+\beta \epsilon_1^k\phi^L(\epsilon_1))y\right],\nonumber\\
\dot \epsilon_1&=\epsilon_1\left((\alpha+\beta\epsilon_1^k \phi^L(\epsilon_1))y-1+r_1\right),\nonumber
\end{align}
which we study in the following. 
Here $\{r_1=0\}$ and $\{\epsilon_1=0\}$ are two invariant sets. Their intersection $\{r_1=\epsilon_1=0\}$ corresponds to the edge $\overline E^L = \{(\bar x,\bar \epsilon)=(-1,0),\,y\in J\}$ of the blow-up cylinder. A simple calculation shows that it is a normally hyperbolic set of equilibria for \eqref{barxN1Eqns} for all $y\ne y^L$. Recall \eqref{yNP}. The point 
\begin{align}
 p^L_1 = (0,y^L,0),\eqlab{pN1}
\end{align}
is, however, fully nonhyperbolic, the linearization having only zero eigenvalues, and this point will therefore play an important role in the following. In the $(x,y)$-plane, it becomes 
\begin{align*}
 p^L = (1,y^L).
\end{align*}

On $r_1=0$, which corresponds to points on the cylinder, we obtain
\begin{align}
\dot y&=0,\eqlab{barxN1Eqnsr1Eq0}\\
\dot \epsilon_1&=\epsilon_1 \left((\alpha+\beta\epsilon_1^k \phi^L(\epsilon_1))y-1\right).\nonumber
\end{align}
Here we re-discover $S_{r,2}\cap \{x_2<0\}$ from chart $\bar \epsilon=1$ as a set of critical points of \eqref{barxN1Eqnsr1Eq0}:
\begin{align*}
 S_{r,1}= \{(r_1,y,\epsilon_1)\in \{0\}\times U_1 \vert y = (\alpha+\beta\epsilon_1^k \phi^L(\epsilon_1))^{-1},\,\epsilon_1>0\}.
\end{align*}
It retains its hyperbolicity properties and carries a reduced slow flow described by 
\begin{align*}
 \dot y = \eta-1-\mu y.
\end{align*}
The manifold $S_{r,1}$ ends at $p^L_1$ on $\{r_1=\epsilon_1=0\}$.


Within $\{\epsilon_1=0\}$ we just re-discover the $x<1$ PWL system \eqref{xN1system}. Indeed, if we divide the right hand side by $r_1$ then 
\begin{align*}
 \dot r_1 &= -(\alpha y-1+r_1),\\
 \dot y &=\eta-(\mu+\alpha)y.
 \end{align*}
Setting $r_1=x-1$ then gives \eqref{xN1system}. In terms of the PWS system, the point $p^L$ is a tangency point in the $(x,y)$-plane for the $x<1$ system with the discontinuity set $x=1$, \cite{Bernardo08,jeffrey_geometry_2011}. System \eqref{xN1system} has a unique stable node at a point $z^L$ defined by
\begin{align}
(x,y) = \left(\frac{\alpha \eta}{\mu+\alpha},\frac{\eta}{\mu+\alpha}\right),\eqlab{nodeLeft}
\end{align}
with associated eigensolutions $(\lambda_i,v_i)$, $i=1,2$, of the linearization:
\begin{align*}
 \lambda_1 &=-1,v_1 = (1,0)^T,\\
 \lambda_2 &=-(\mu+\alpha),v_2 = (\alpha,1-(\mu+\alpha))^T.
\end{align*}
By \eqref{abasymption}, $\lambda_1<\lambda_2<0$ for $\mu$ sufficiently small and the eigenvector $v_2$ is therefore weak while $v_1$ is strong. 
Furthermore, $z^L$ \eqref{nodeLeft} is contained within $x<1$ if and only if 
\begin{align}
 \eta < \eta^L(\mu),
\end{align}
recall \eqref{etaN}.
It is on $x=1$ if $\eta=\eta^L(\mu)$ (in which case $z^L=p^L$) and within $x>1$ (and therefore ``virtual'') if $\eta>\eta^L(\mu)$.
\subsection{Chart $\bar x=1$}
The analysis in this chart is similar to chart $\bar x=-1$. $\epsilon_3\ge 0$ is a common factor on the right hand side. As in chart $\bar x=-1$ we therefore apply a desingularization through the division of the right hand sides by this common factor. Then the edge $E^R_3=\{r_3=\epsilon_3=0,y\in J\}$ is normally hyperbolic for all $y\ne y^R$. Recall \eqref{yNP}. The point 
\begin{align*}
 p^R_3 = (0,y^R,0),
\end{align*}
in the $(r_3,y,\epsilon_3)$-variables, is, however, fully nonhyperbolic, the linearization having a tripple zero eigenvalue. It corresponds to
\begin{align*}
 p^R = (1,y^R),
\end{align*}
in the $(x,y)$-plane. For the PWS system, it is a tangency point of the $x>1$ system with the discontinuity set. 
Similarly, we re-discover $S_{r,2}$ from the chart $\bar \epsilon=1$ as a set of normally hyperbolic and repelling critical points.  It is a graph over $\epsilon_3>0$ and ends at $p^R_3$.  Within $\{\epsilon_3=0\}$ we re-discover the $x>1$ PWL system \eqref{xP1system} setting $r_3=x-1$. This system has a unique stable node at a point $z^R$ defined by
\begin{align}
(x,y)=\left(\frac{(\alpha+\beta) \eta}{\mu+\alpha+\beta},\frac{\eta}{\mu+\alpha+\beta}\right),\eqlab{nodeRight}
\end{align}
with associated eigensolutions $(\lambda_i,v_i)$, $i=1,2$, of the linearization:
\begin{align*}
 \lambda_1 &=-1,v_1 = (1,0)^T,\\
 \lambda_2 &=-(\mu+\alpha+\beta),v_2 = (\alpha+\beta,1-(\mu+\alpha+\beta))^T.
\end{align*}
By \eqref{abasymption}, $\lambda_1<\lambda_2<0$ for $\mu\ge 0$ and the eigenvector $v_2$ is therefore weak while $v_1$ is strong. 
$z^R$ \eqref{nodeLeft} is contained within $x>1$ if and only if 
\begin{align}
 \eta > \eta^R(\mu),
\end{align}
recall \eqref{etaN}. 
It is on $x=1$ if $\eta=\eta^R(\mu)$ (in which case $z^R=p^R$) and within $x>1$ (and therefore ``virtual'') if $\eta>\eta^R(\mu)$. 

\subsection{Singular picture for $\varepsilon=0$}
Following the analysis in the charts we can now present the dynamics obtained by blow-up. See \figref{pwsblowup}. Upon blow-up we obtain a hyperbolic but noncompact critical manifold $\overline S_r$. It is asymptotic to fully nonhyperbolic points $\overline p^{L}$ and $\overline p^R$, respectively. These points lie on the edges $\overline E^L$ and $\overline E^R$, which away from $\overline p^L$ and $\overline p^R$, consist of partially hyperbolic equilibria. Within $\bar \epsilon=0$ we recover the piecewise linear flows. In \figref{bif} we also present a ``singular'' bifurcation diagram of the equilibria using $\eta$ as a bifurcation parameter. (a) is for $\mu>0$ and (b) is for $\mu=0$.  Notice that for $\mu>0$, two stable equilibria on either side of $x=1$ co-exist for $\eta \in(\eta^R,\eta^L)$. For these values of $\eta$, the reduced flow on $\overline S_r$ also has a fix-point, being of saddle-type for the full system, see \figref{pwsblowup}(b). In the piecewise smooth literature, the piecewise smooth system \eqref{xN1system} and \eqref{xP1system} is said to undergo ``boundary node bifurcations'' at $\eta=\eta^R$ and $\eta=\eta^L$, see \cite{Kuznetsov2003}.
\begin{figure}[h!] 
\begin{center}
\subfigure[$\eta<\eta^R<\eta^L$, $\mu>0$]{\includegraphics[width=.49\textwidth]{./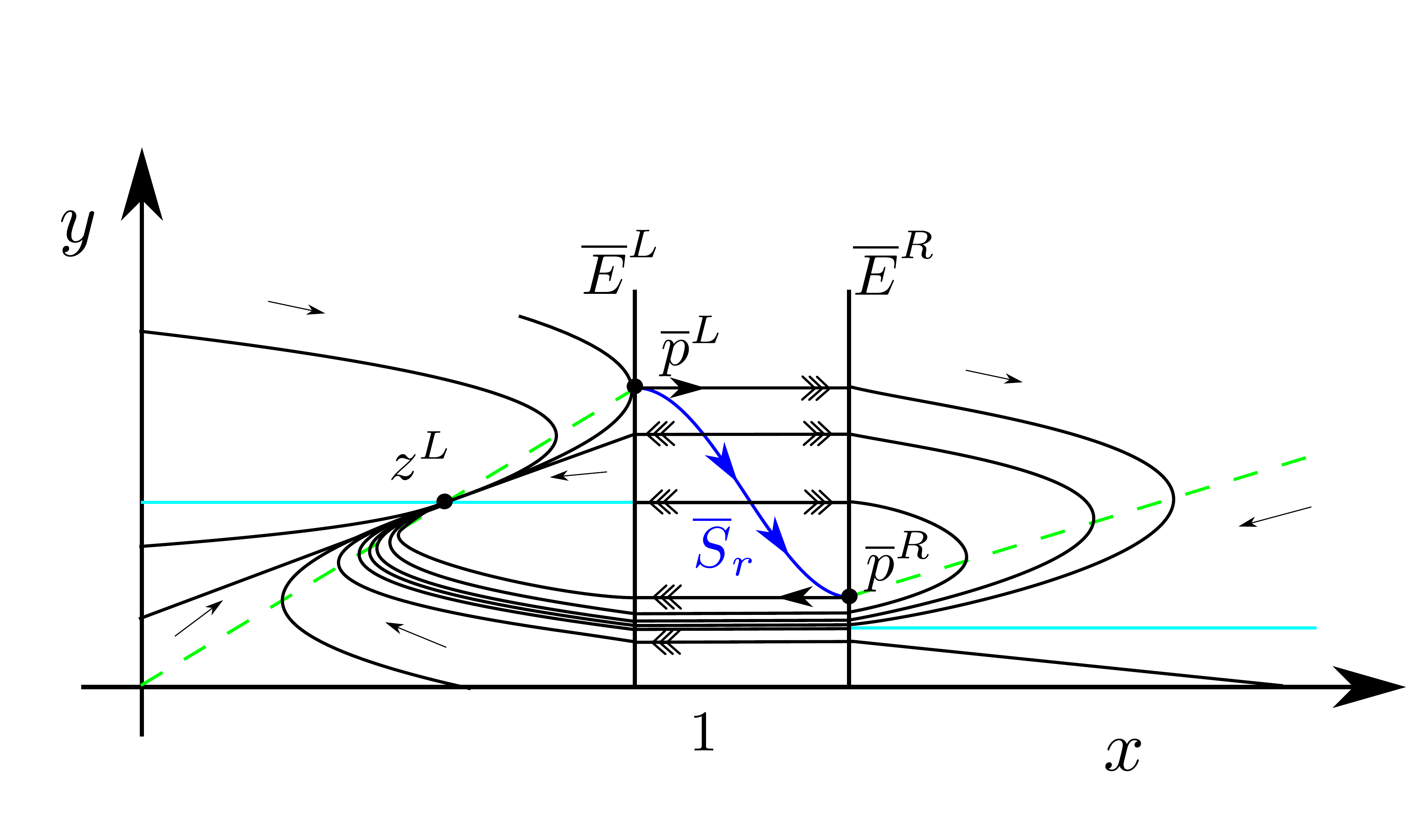}}
\subfigure[$\eta^R<\eta<\eta^L$, $\mu>0$]{\includegraphics[width=.49\textwidth]{./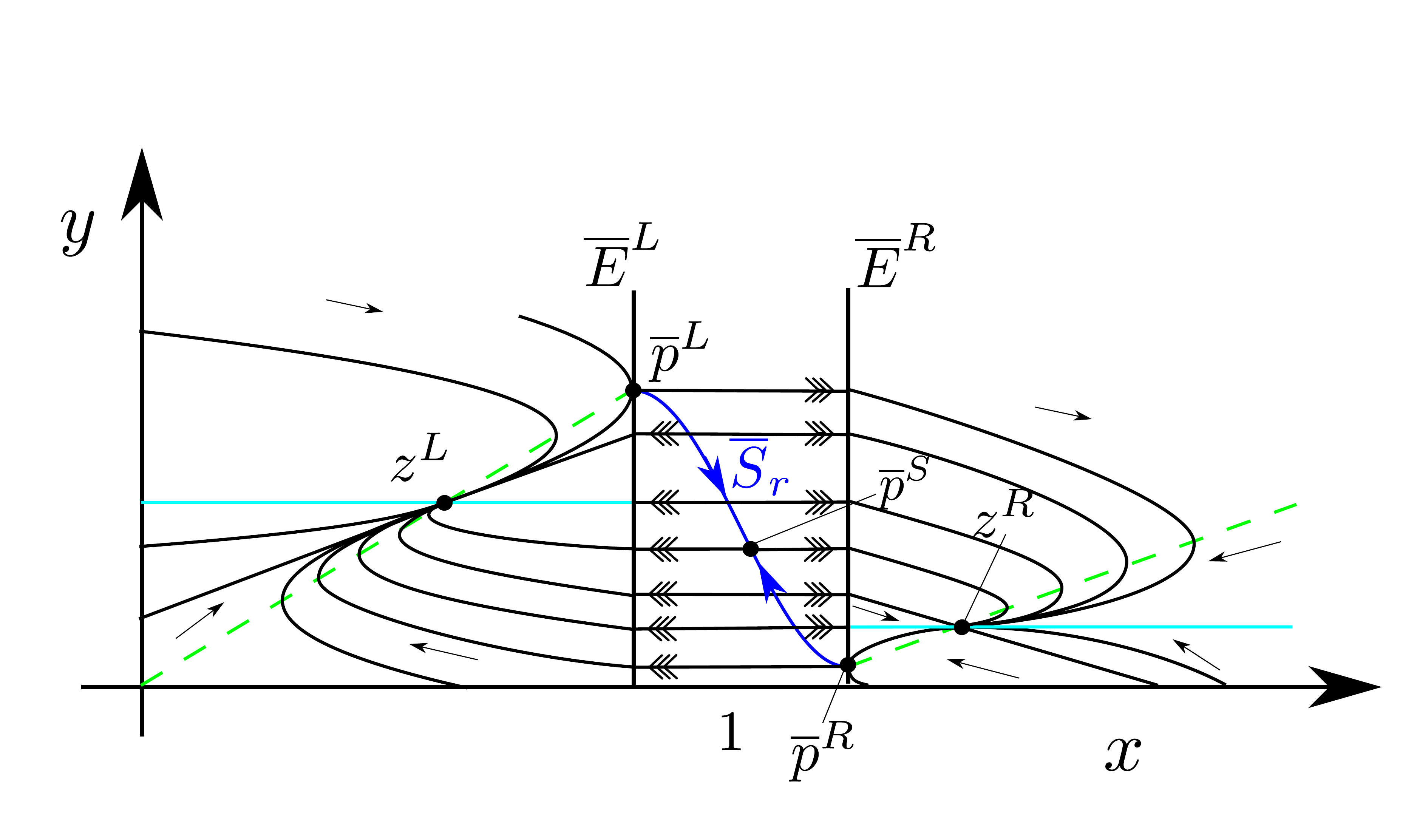}}
\subfigure[$\eta^R<\eta^L<\eta$, $\mu>0$]{\includegraphics[width=.49\textwidth]{./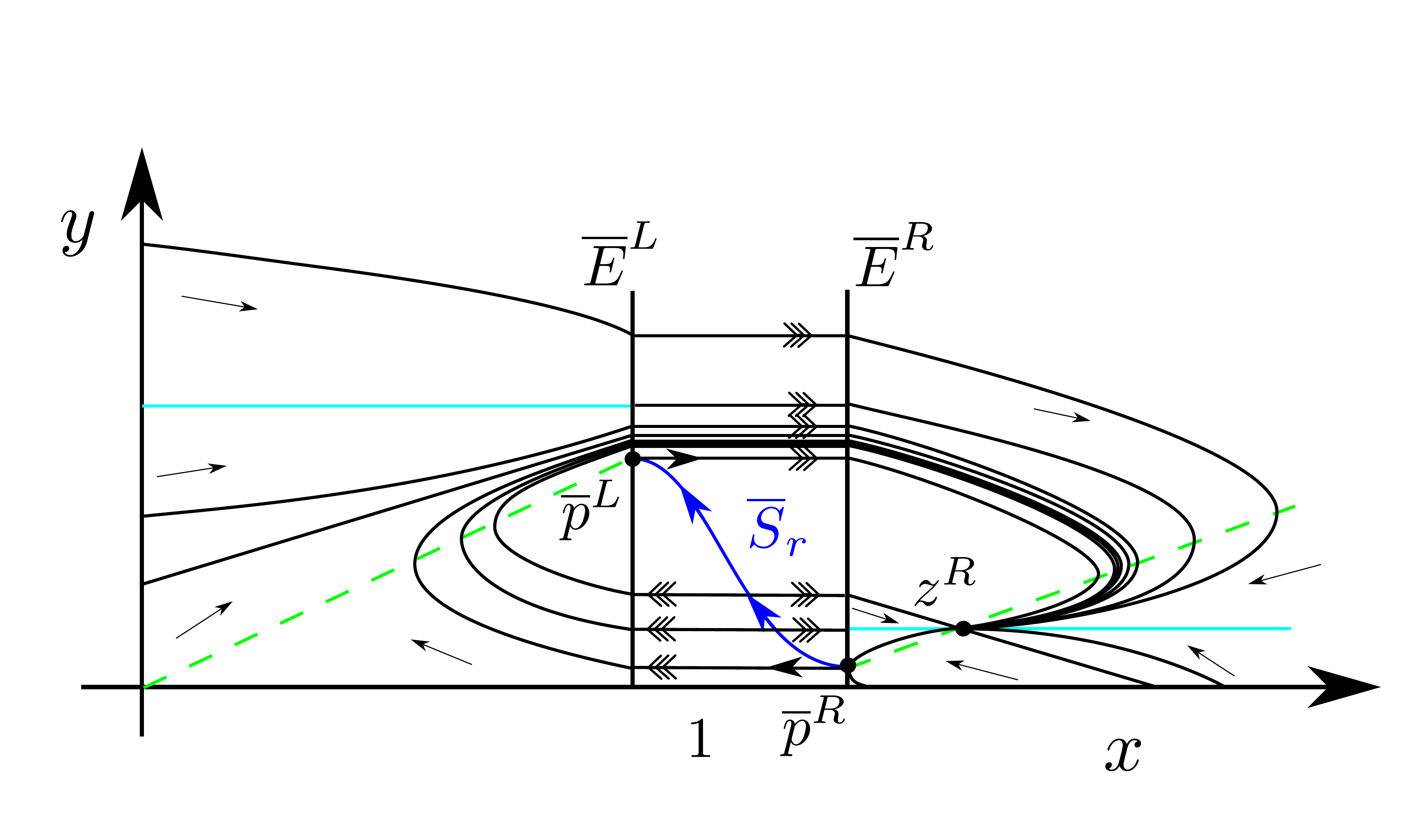}}
\subfigure[$\eta=\eta^R=\eta^L=1$, $\mu=0$]{\includegraphics[width=.49\textwidth]{./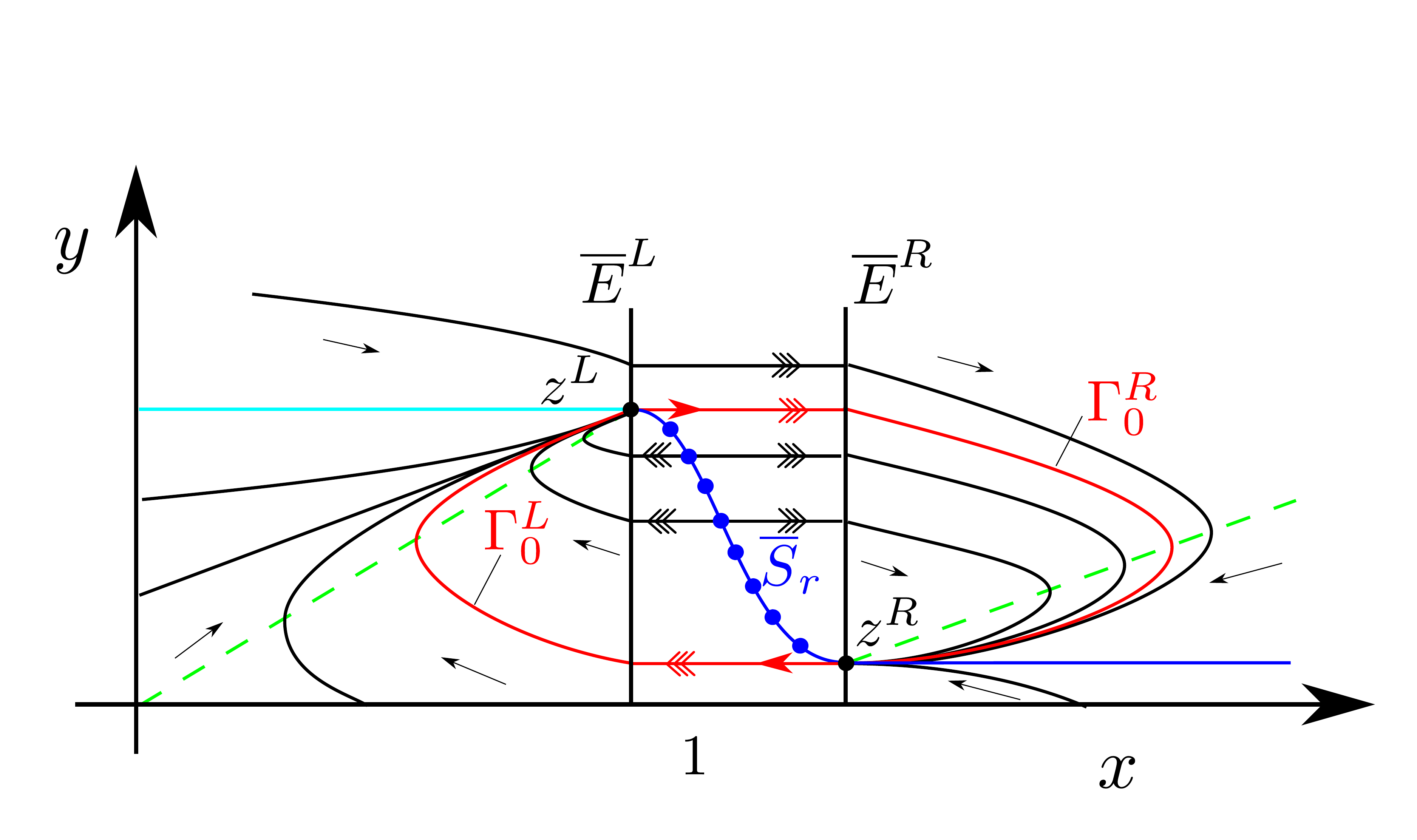}}
\end{center}
 \caption{Dynamics of the blow-up of \eqref{xyFastExt} for different values of $\eta$ and $\mu$. The normally hyperbolic and repelling critical manifold $\overline S_r$ is in blue. The edges $\overline E^{L}$ and $\overline E^R$ are lines of normally hyperbolic equilibria away from the degenerate points at $\overline p^L$ and $\overline p^R$, respectively. In (a), (b) and (c) at least one stable node ($z^L$ or $z^R$) exists within $x\ne 1$. The dashed green curve is the $x$-nullcline whereas the cyan line is the $y$-nullcline. Notice that for $\mu=0$, $z^L$ and $z^R$ coincide with $\overline p^L$ and $\overline p^R$ on $\overline E^{L}$ and $\overline E^R$, respectively, simultaneously for $\eta=\eta^R=\eta^L=1$. This case is shown in (d). Here we also illustrate the singular cycle (in red) which we perturb to an actual limit cycles in \thmref{mainThm}, depending on further conditions on $\eta$ and $\mu$, for $0<\varepsilon\ll 1$.}
\figlab{pwsblowup}
\end{figure}

\begin{figure}[h!] 
\begin{center}
\subfigure[$\mu>0$]{\includegraphics[width=.45\textwidth]{./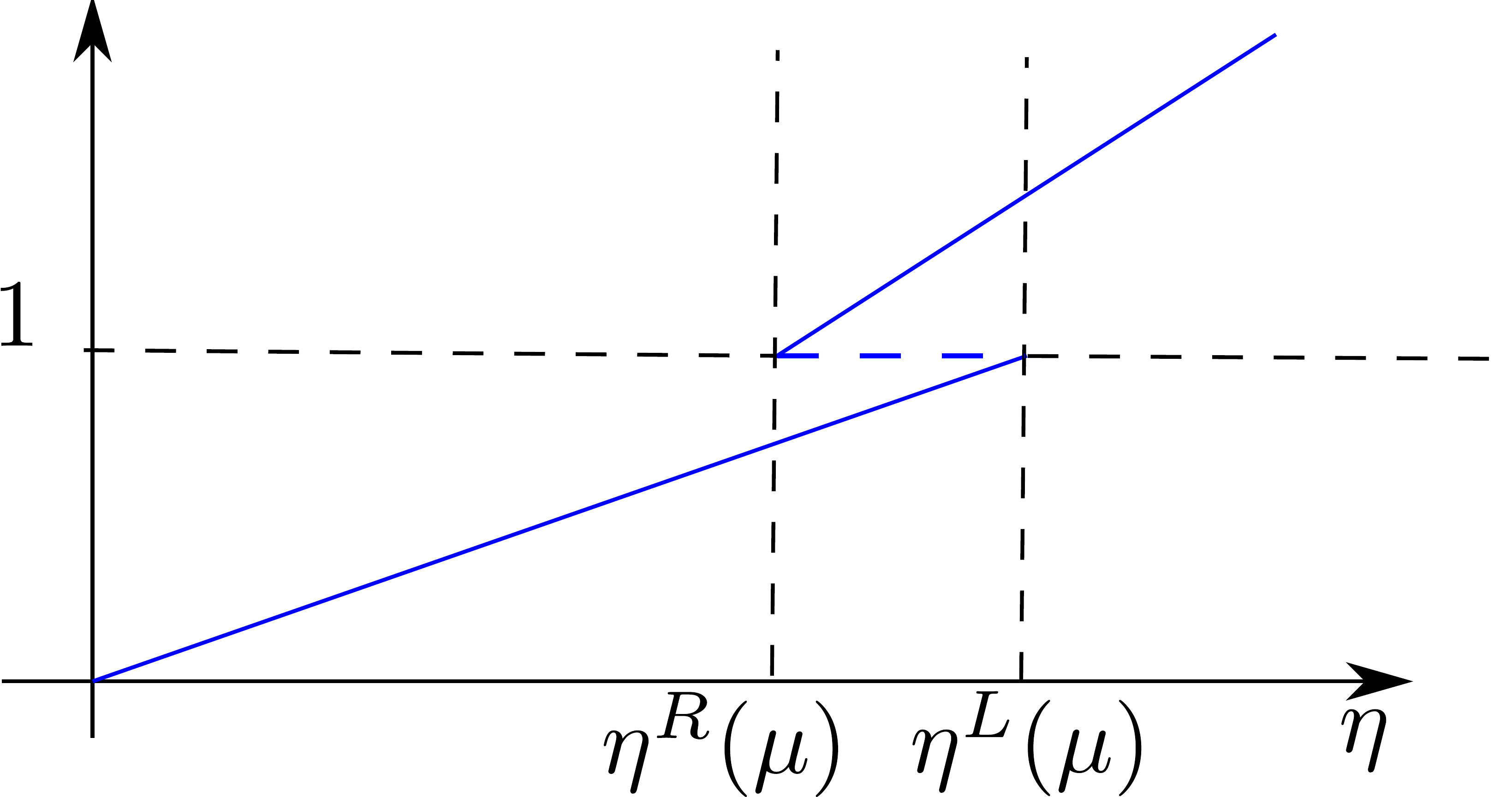}}
\subfigure[$\mu=0$]{\includegraphics[width=.45\textwidth]{./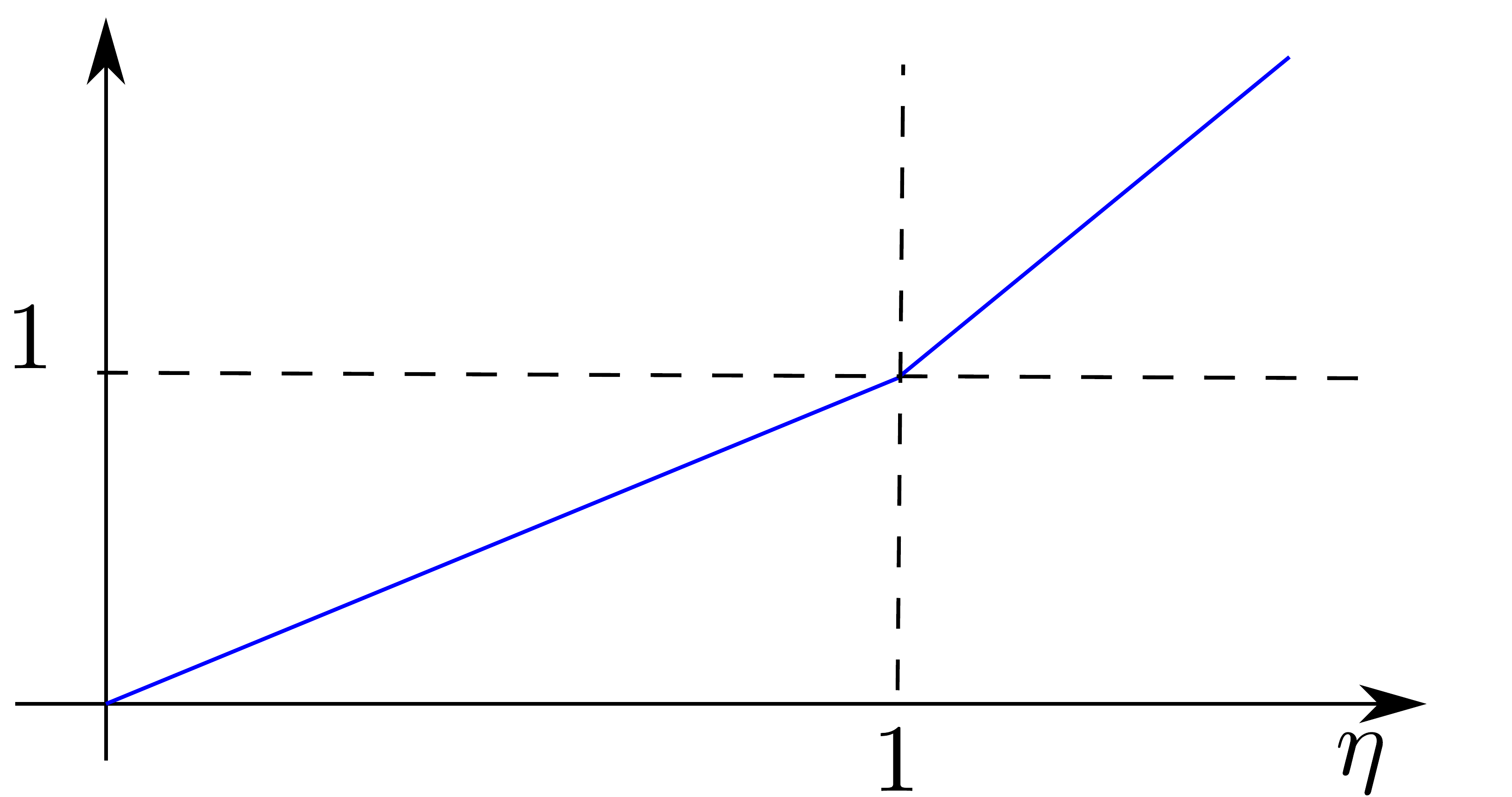}}
\end{center}
 \caption{Bifurcation of equilibria for $\varepsilon=0$ using $\eta$ as the bifurcation parameter. In (a): $\mu>0$. In (b): $\mu=0$.}
\figlab{bif}
\end{figure}
One cannot perturb away from the singular picture in \figref{pwsblowup} directly due to the loss of hyperbolicity at the points $\overline p^{L/R}$. For this we have to apply a further blow-up of these points, including $\eta=\eta^{L/R}$ in the blow-up when the equilibria intersect the cylinder at $x=1$. However, away from $\eta=\eta^{L/R}$ the global picture is not surprising.
\begin{proposition}\proplab{noLimit}
 Suppose $\eta \ne \eta^L(\mu)$, $\eta\ne \eta^R(\mu)$ and $\mu\ge 0$. Then there exists an $\varepsilon_0>0$ sufficiently small such that \eqref{xyFastExt} has no limit cycles for any $\varepsilon \in (0,\varepsilon_0)$ and the omega limit set consists entirely of equilibria. 
\end{proposition}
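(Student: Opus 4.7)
The plan is to combine the Poincar\'e--Bendixson theorem (applicable because \eqref{xyFastExt} is planar for fixed $\varepsilon$) with the singular phase portrait of \figref{pwsblowup} to show that every forward trajectory of \eqref{xy} converges to one of the equilibria catalogued below. Dissipativity is straightforward: the bound $\dot y\le \eta-(\mu+\alpha)y$ confines $y$ eventually to $[0,\eta/(\mu+\alpha)]$, after which $\dot x=(\alpha+\beta\phi)y-x$ forces $x$ into a bounded interval, so every positive orbit enters a compact rectangle $K\subset\mathbb R_{\ge 0}^2$ independent of $\varepsilon$.

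I would first catalogue the equilibria in $K$ for $0<\varepsilon\ll 1$. Because $\eta\ne \eta^L(\mu),\eta^R(\mu)$, the singular nodes $z^L$ (present when $\eta<\eta^L$) and $z^R$ (present when $\eta>\eta^R$) sit at uniformly positive distance from the singular line $x=1$. The implicit function theorem applied to \eqref{xy} therefore yields unique hyperbolic stable nodes $z^L_\varepsilon,z^R_\varepsilon$ perturbing them. In the regime $\mu>0$, $\eta\in(\eta^R,\eta^L)$, the hyperbolic attractor $p_2^S$ of the reduced flow on the repelling Fenichel manifold $S_{r,2}$ perturbs to a hyperbolic saddle $p^S_\varepsilon$ of the full flow by standard Fenichel theory; the algebraic fixed-point equation for \eqref{xy} admits no further solutions in $K$.

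To exclude non-equilibrium $\omega$-limit sets I would argue by contradiction in the blown-up phase space. Suppose limit cycles $\Gamma_{\varepsilon_n}$ exist along a sequence $\varepsilon_n\to 0^+$. Uniform hyperbolicity of $z^L_\varepsilon,z^R_\varepsilon,p^S_\varepsilon$ prevents $\Gamma_{\varepsilon_n}$ from shrinking to an equilibrium, so a Hausdorff-convergent subsequence produces a nontrivial closed invariant set $\Gamma_0$ in the blow-up. Away from $\overline p^{L,R}$, standard slow--fast arguments (using the repelling normal hyperbolicity of $\overline S_r$ together with the PWL structure at $\bar\epsilon=0$) force $\Gamma_0$ to be a concatenation of orbits of \eqref{xN1system}--\eqref{xP1system} with fast fibers of \eqref{layerHere}, possibly meeting the saddle $p^S$. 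Inspecting \figref{pwsblowup} in each of the three cases $\eta<\eta^R$, $\eta>\eta^L$, and $\eta\in(\eta^R,\eta^L)$ with $\mu>0$, and using that the PWL flow in $x<1$ (resp.~$x>1$) either attracts globally to $z^L$ (resp.~$z^R$) or drives all trajectories monotonically into the crossing region $y<y^R$ (resp.~$y>y^L$), one verifies that no closed singular concatenation exists. The only candidate heteroclinic graph is a homoclinic loop at $p^S_\varepsilon$ in the saddle case, excluded by the same argument.

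The main obstacle is the noncompactness of $\overline S_r$ together with the non-hyperbolicity of $\overline p^{L,R}$: Fenichel estimates degenerate as $y\to y^{L,R}$, and one must show that trajectories of interest stay uniformly bounded away from these points. This is exactly where $\eta\ne \eta^{L,R}(\mu)$ intervenes: at $y=y^L$ the drift in \eqref{xN1system} equals $\eta-\eta^L(\mu)\ne 0$, and at $y=y^R$ the drift in \eqref{xP1system} equals $\eta-\eta^R(\mu)\ne 0$. These nonzero drifts carry trajectories past the vicinity of $\overline p^{L,R}$ in time bounded uniformly in $\varepsilon$, which allows the Fenichel/PWL analysis to close up globally. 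Making this quantitative via transition maps through the charts $\bar x=\mp 1$ is the main technical step.
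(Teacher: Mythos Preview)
Your proposal is correct and rests on the same core observation as the paper: the hypothesis $\eta\ne\eta^{L/R}(\mu)$ produces nonzero drift at $y=y^{L/R}$, which keeps all the relevant dynamics a uniform distance from the nonhyperbolic points $\overline p^{L/R}$, so that the singular picture of \figref{pwsblowup} persists by routine perturbation and every trajectory is absorbed by a stable node. The paper's own proof is a two-line sketch that simply notes (i) no equilibria survive in small neighbourhoods $V^{L/R}$ of $\overline p^{L/R}$, hence no local cycles there, and (ii) away from $V^{L/R}$ one ``follows the flow'' to a stable node by perturbation.

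Your write-up differs only in packaging: you invoke dissipativity plus Poincar\'e--Bendixson and argue by contradiction through Hausdorff limits of hypothetical cycles, whereas the paper argues directly. Your version is more explicit (the drift computation $\dot y\vert_{y=y^{L/R}}=\eta-\eta^{L/R}(\mu)\ne 0$, the catalogue of equilibria, the check that no closed singular concatenation exists case by case) and would constitute an actual proof rather than a sketch. The transition-map analysis you flag as the ``main technical step'' is indeed what the paper sweeps under the phrase ``perturbation arguments''.
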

\begin{proof}
 First, by the assumption on $\eta$, we realise that no equilibria exist in sufficiently small neighborhoods $V^{L/R}$ of the nonhyperbolic points $\overline p^{L/R}$ for all $0<\varepsilon\ll 1$. This excludes local limit cycles near $V^{L/R}$. To exclude global limit cycles we use perturbation arguments; basically the  structure away from $\overline p^{L/R}$ in \figref{pwsblowup} persists for all $0<\varepsilon\ll 1$. Hence we can follow the flow of any point outside $V^{L}$ and $V^R$ towards a stable node, see \figref{pwsblowup} (a),(b) and (c) for all $0<\varepsilon\ll 1$. 
\end{proof}
For $\mu>0$, local limit cycles can exists near $\eta =\eta^{L}$ and $\eta=\eta^R$. But the existence of a stable node on either side of $x=1$ prevents existence of global limit cycles, of the relaxation-type in \figref{muEq0}, for all $0<\varepsilon\ll 1$.  Following this analysis, we can therefore conclude that relaxation oscillations can only occur near $(\eta,\mu)=(1,0)$ as $\varepsilon\rightarrow 0$.

\section{Main result}
We are now ready to present our main result on the existence of relaxation oscillations of \eqref{xy} for $0<\varepsilon \ll 1$. For this, consider first the $x<1$ ($x>1$) system in \eqref{xN1system}$_{\eta=1,\,\mu=0}$ (\eqref{xP1system}$_{\eta=1,\,\mu=0}$, respectively) and define $\Gamma_0^L$ ($\Gamma_0^R$) as the forward orbit of $p^R$ ($p^L$, respectively) for the parameter values 
\begin{align}
\eta = 1,\quad \mu=0.\eqlab{parameter}
\end{align}
Then $\Gamma_0 \equiv \Gamma_0^L\cup \Gamma_0^R$ is a closed curve. See \figref{pws}.  In particular, the trajectories $\Gamma_0^{L/R}$ are asymptotic to the (one-sided) stable nodes $z^{L/R}$, respectively. These points coincide with $p^{L/R}$ on $x=1$ for the parameter values of $\eta$ and $\mu$ in \eqref{parameter}.

We know from \propref{noLimit} and the discussion proceeding it that relaxation oscillations only exist close to $(\eta,\mu)=(1,0)$. It turns out that it is useful to define $\eta_1$ and $\mu_1$ as follows
\begin{align}
 \eta = 1+\varepsilon^{k/(k+1)} \eta_1,\quad 
 \mu =\varepsilon^{k/(k+1)}\mu_1.\eqlab{eta1mu1}
\end{align}
and set
\begin{align}
 \eta_1^L(\mu_1) = \frac{\mu_1}{\alpha},\quad \eta_1^R(\mu_1) = \frac{\mu_1}{\alpha+\beta}.\eqlab{eta1Leta1R}
\end{align}
Here $k\in \mathbb N$ is the order of decay to $0$ and $1$ of the sigmoidal function $\phi$ at $\mp \infty$, respectively, see assumption (A).
Notice that (a): $\varepsilon=0$ in \eqref{eta1mu1} gives \eqref{parameter} and (b): $\eta=\eta^{L/R}(\mu) \Leftrightarrow \eta_1=\eta_1^{L/R}(\mu_1)$, respectively, for any $\varepsilon>0$, recall \eqref{etaN}. 
For simplicity, we also group all of the numbers $k$, $\alpha$, $\beta$, $\phi^L(0)$ and $\phi^R(0)$ into a single parameter vector
\begin{align}
 \gamma = (k,\alpha,\beta,\phi^L(0),\phi^R(0)).\eqlab{gamma}
\end{align}

We then prove the following.
\begin{figure}[h!] 
\begin{center}
{\includegraphics[width=.496\textwidth]{./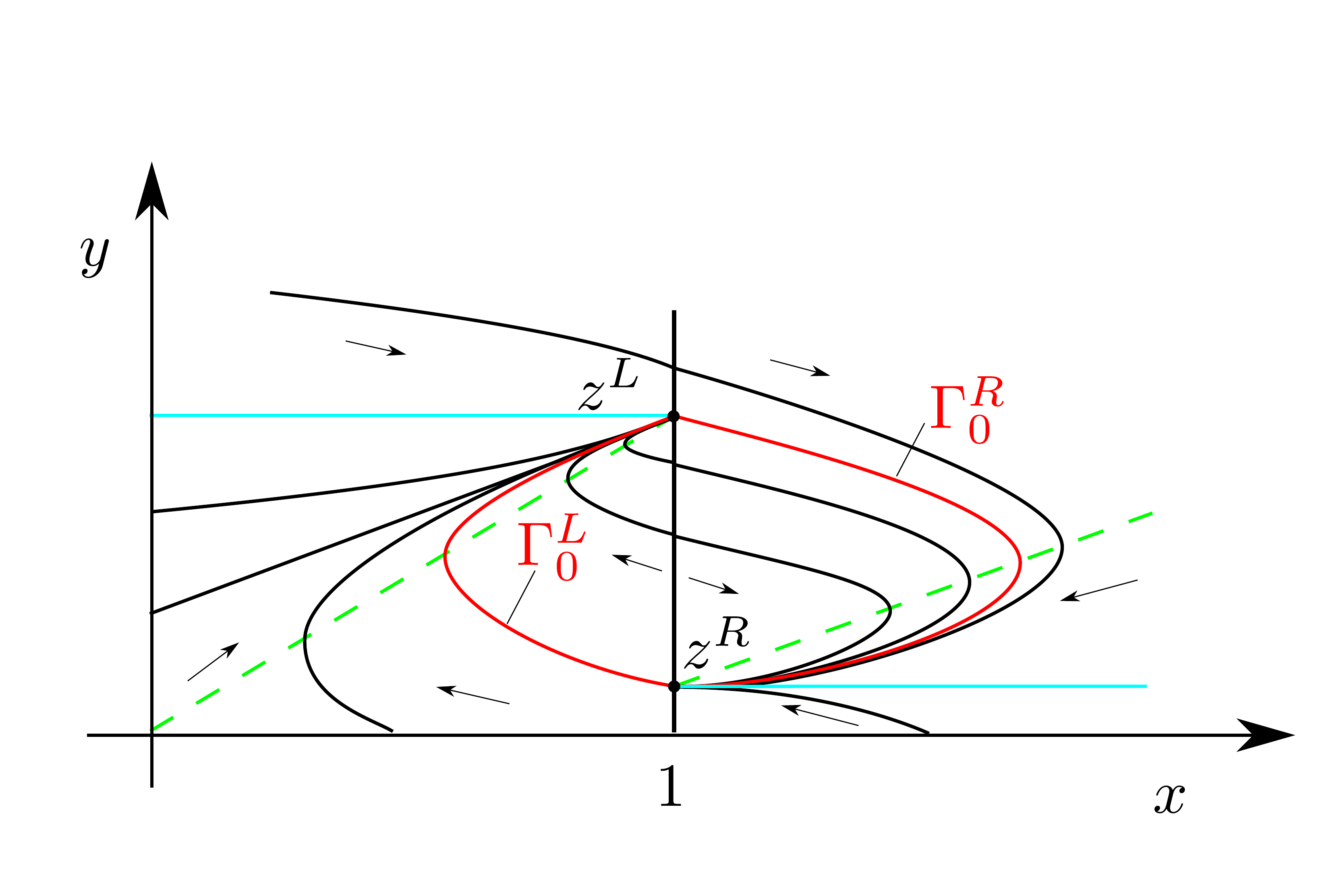}}
\end{center}
 \caption{Phase portrait of the PWL system \eqsref{xN1system}{xP1system} for $\mu=0$ and $\eta=\eta^R=\eta^L=1$.}
\figlab{pws}
\end{figure}
\begin{theorem}\thmlab{mainThm}
Consider \eqref{xy} and suppose \eqref{abasymption} and (A).  Then there exist two numbers 
\begin{align}
\eta^L_{\text{Het},0}<0,\quad \eta^R_{\text{Het},0}>0,\eqlab{numbers}
\end{align} 
 such that the following holds for any $\gamma$.
\begin{enumerate}[label=(\alph*)]
%
\item \label{item3} Let 
\begin{align*}
  \eta^L_{\text{Het}}(\mu_1) &:= \eta^L_{\text{Het},0}+\eta^L_1(\mu_1),\\
  \eta^R_{\text{Het}}(\mu_1) &:=\eta^R_{\text{Het},0}+\eta^R_1(\mu_1).
  \end{align*}
Then there exists a unique $\mu_{1*}>0$ such that 
\begin{enumerate}[label=(\alph{enumi}.\arabic*)]
\item  \label{item3a} $\eta^L_{\text{Het}}(\mu) < \eta^R_{\text{Het}}(\mu)$ for all $\mu \in [0,\mu_{1*})$.
\item $\eta^L_{\text{Het}}(\mu_{1*})=\eta^R_{\text{Het}}(\mu_{1*})$.
\item \label{item3c} $\eta^L_{\text{Het}}(\mu) > \eta^R_{\text{Het}}(\mu)$ for all $\mu >\mu_{1*}$.
\end{enumerate}
\end{enumerate}
Furthermore,
\begin{enumerate}[resume*]
\item \label{item4} Fix $\mu_1\in [0,\mu_{1*})$ so that $\eta^L_{\text{Het}}(\mu_1)<\eta^R_{\text{Het}}(\mu_1)$ by \ref{item3a} and let $\eta_1\in (\eta^L_{\text{Het}}(\mu_1),\eta^R_{\text{Het}}(\mu_1))$. Then there exists an $\varepsilon_0$ such that for all $\varepsilon\in (0,\varepsilon_0)$, there exists an attracting limit cycle $\Gamma_{\varepsilon}^{\text{Relax}}$ of the system \eqref{xy} with $(\eta,\mu)$ as in \eqref{eta1mu1}. Here $\Gamma_{\varepsilon}^{\text{Relax}}\rightarrow \Gamma_0$ in Hausdorff distance as $\varepsilon\rightarrow 0$. On the other hand, if $\eta_1\notin [\eta^L_{\text{Het}}(\mu_1),\eta^R_{\text{Het}}(\mu_1)]$ then there exist (i) an $\varepsilon_0>0$ and (ii) a constant $c>0$ such that for all $\varepsilon\in (0,\varepsilon_0)$ and $(\eta,\mu)$ as in \eqref{eta1mu1}, there exist no limit cycles for the system \eqref{xy}  closer to $\Gamma_0$ than $c$ in Hausdorff distance.
 \item \label{item5} Fix any $\mu_1>\mu_{1*}$ so that $\eta^L_{\text{Het}}(\mu_1)>\eta^R_{\text{Het}}(\mu_1)$ by \ref{item3c}. Then there exist (i) an $\varepsilon_0>0$, (ii) a constant $c>0$ and (iii) a neighborhood $I$ of $\eta=1$ such that for all $\varepsilon\in (0,\varepsilon_0)$, any $\eta \in I$ and $\mu$ as in \eqref{eta1mu1}, there exist no limit cycles for the system \eqref{xy} closer to $\Gamma_0$ than $c$ in Hausdorff distance.
\end{enumerate}

%
%

\end{theorem}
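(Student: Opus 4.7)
The strategy is to extend the blow-up analysis of \secref{singular0} by performing a further, quasi-homogeneous blow-up of the fully nonhyperbolic points $\overline p^L$ and $\overline p^R$, together with an unfolding of the parameters. Near $\overline p^L$, in chart $\bar x=-1$, balancing the nonlinear terms of \eqref{barxN1Eqns} against the algebraic tail $\beta\epsilon_1^k\phi^L(\epsilon_1)$ and against the unfoldings $\eta-1$ and $\mu$ forces the weights $r_1=\rho^k\tilde r$, $\epsilon_1=\rho\tilde\epsilon$, $y-y^L=\rho^k\tilde y$, together with $\eta-1=\rho^k\tilde\eta$ and $\mu=\rho^k\tilde\mu$. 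The relation $\varepsilon=r_1\epsilon_1=\rho^{k+1}\tilde r\tilde\epsilon$ then pins down $\rho=\varepsilon^{1/(k+1)}$ in the scaling chart $\tilde r\tilde\epsilon=1$, which is precisely the origin of the rescaling \eqref{eta1mu1}. Desingularization by $\rho^k$ produces a smooth, non-trivial planar system in $(\tilde r,\tilde y)$ depending regularly on $(\eta_1,\mu_1)$. The mirror construction at $\overline p^R$ yields an analogous inner system.

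In the inner system at $\overline p^R$, two distinguished one-dimensional invariant objects compete for $\mu_1=0$: an ``inner fast fibre'' $\mathcal F^R(\eta_1)$ that prolongs the endpoint of $\Gamma_0^L$ into the inner region, and an ``inner slow manifold'' $\mathcal S^R(\eta_1)$ obtained as the extension of the repelling critical manifold $\overline S_r$ from the cylinder. Both depend smoothly on $\eta_1$, and the explicit linear dependence of the inner vector field on $\eta_1$ yields a rotated-vector-field structure that moves $\mathcal F^R$ monotonically past $\mathcal S^R$ as $\eta_1$ varies. Combined with a trapping-region argument controlling the noncompact ends of the inner phase plane via the algebraic asymptotics $\phi^R$, this gives a unique $\eta^R_{\mathrm{Het},0}>0$ at which $\mathcal F^R$ and $\mathcal S^R$ form a transverse heteroclinic connection; the symmetric analysis at $\overline p^L$ gives $\eta^L_{\mathrm{Het},0}<0$. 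Turning on $\mu_1$ is a regular perturbation, and because $\mu_1$ enters the inner problems exactly as $\mu$ enters the outer linear systems \eqref{xN1system}--\eqref{xP1system} through the reduced slow flow \eqref{reducedNew}, the heteroclinic condition merely translates: it persists along $\eta_1=\eta^{L/R}_{\mathrm{Het},0}+\eta^{L/R}_1(\mu_1)$ with slopes $1/\alpha$ and $1/(\alpha+\beta)$ matching \eqref{eta1Leta1R}. Item \ref{item3} is then the elementary observation that two lines with distinct positive slopes and intercepts of opposite sign meet at a unique $\mu_{1*}>0$.

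For items \ref{item4} and \ref{item5}, I will construct a Poincar\'e return map on a small section transverse to $\Gamma_0$ placed inside $x<1$, away from the switching line. The map decomposes chart-by-chart as (i) a strongly contracting flow in $x<1$ toward the perturbed stable node near $z^L$, (ii) an inner transition at $\overline p^L$ whose exit profile is governed by the position of $\eta_1$ relative to $\eta^L_{\mathrm{Het}}(\mu_1)$, (iii) an $O(1)$-time passage across the cylinder tracking a Fenichel extension of $\overline S_r$, (iv) a dual inner transition at $\overline p^R$ governed by $\eta^R_{\mathrm{Het}}(\mu_1)$, and (v) contraction toward the perturbed stable node near $z^R$ in $x>1$. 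The outer contractions are exponential in $1/\varepsilon$ along the strong stable directions of $z^{L/R}$, while the inner transitions are $O(\varepsilon^c)$-Lipschitz graph transforms obtained from the hyperbolic saddle structure of the inner desingularized systems at the heteroclinic values. Composing, the return map is a contraction with a unique attracting fixed point precisely when both inner transitions succeed, i.e.\ when $\eta^L_{\mathrm{Het}}(\mu_1)<\eta_1<\eta^R_{\mathrm{Het}}(\mu_1)$; this produces the attracting limit cycle $\Gamma_\varepsilon^{\mathrm{Relax}}\to\Gamma_0$ in the first half of \ref{item4}. Outside this window one of the inner transitions steers the orbit into the basin of the stable node (which for $\mu_1\ne 0$ sits strictly off $x=1$), blocking return and giving the non-existence clauses of \ref{item4}; uniformly in a small neighborhood of $\eta=1$, the same blocking argument, applied whenever $\mu_1>\mu_{1*}$ so that no $\eta_1$ lies in the window, gives \ref{item5}.

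The main obstacle is the inner heteroclinic analysis: establishing existence, uniqueness, and transversality of the $\mathcal F^{R/L}$--$\mathcal S^{R/L}$ connection in a non-compact phase plane whose flow at infinity is set by the algebraic tails $\phi^{L/R}$. I plan to extract existence and uniqueness from the monotone rotated-vector-field dependence on $\eta_1$ together with an absorbing-annulus argument based on the sign of $\dot{\tilde\epsilon}$ on the annulus boundary; transversality in $\eta_1$ follows by differentiating the family. Once the inner heteroclinic is controlled, matching it to the outer charts and promoting the singular cycle to a genuine limit cycle for $0<\varepsilon\ll 1$ is by now a standard exercise in GSPT with blow-up, following the template of \cite{kristiansen2018a,kosiuk2015a}.
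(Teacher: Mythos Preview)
Your overall architecture --- secondary blow-up of $\overline p^{L/R}$ with the parameter scaling \eqref{eta1mu1}, identification of $\eta^{L/R}_{\mathrm{Het},0}$ as heteroclinic values in the desingularized inner problems, the affine translation in $\mu_1$ via the combination $\eta-\eta^{L/R}(\mu)$, and a contracting return map built from local hyperbolic transitions --- matches the paper's proof. Two points, however, need correction.

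\textbf{The rotated-vector-field claim is false.} You write that the inner field depends on $\eta_1$ through a rotated-vector-field structure, and propose to extract existence, uniqueness, and transversality of the heteroclinic from this. The paper explicitly notes (just after \propref{prop1}) that the desingularized field $\widehat X$ on the sphere is \emph{not} a rotated family in the sense of Perko. Concretely, in the chart $\bar y=-1$ one computes
\[
\widehat X_4\wedge\partial_\eta \widehat X_4=\tfrac{1}{k+1}r_4^{2k+1}\delta_4^{k}\bigl(-\delta_4' r_4+r_4'\delta_4\bigr),
\]
whose sign is not definite off the heteroclinic. The paper instead proves existence by a continuity argument between two controlled regimes (one using Cherkas' theorem after a Li\'enard reduction for $\eta\ll\eta^L$, the other a local center-manifold analysis for $\eta$ near $\eta^L$), and then gets uniqueness and transversality from a Melnikov integral that is sign-definite \emph{only along} the heteroclinic, after first proving that the connecting orbit itself is monotone, $r_4'(t)<0$, $\delta_4'(t)>0$. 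That monotonicity step requires a detailed nullcline analysis and is the real work. Your absorbing-annulus idea does not substitute for this.

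\textbf{The cylinder passage is mischaracterized.} Your step (iii), ``an $O(1)$-time passage across the cylinder tracking a Fenichel extension of $\overline S_r$,'' is not what happens: $\overline S_r$ is repelling and is \emph{not} part of the singular cycle. After exiting the sphere $\overline S^L$ at the stable node $\overline q_f^L$, the orbit crosses the cylinder via the layer flow at $y\approx y^L$, lands on the hyperbolic edge $\overline E^R$ near $y=y^L$, and only then enters $x>1$ to follow $\Gamma_0^R$ (see \figref{relax} and \lemmaref{Plocal3}). Similarly, the outer ``contractions'' are not exponential in $1/\varepsilon$: the decisive contraction comes from the saddle passage near $\overline q_w^{L/R}$ and is algebraic, of order $\delta_1^{k|B|}$ with $B=-\alpha^{-1}(1-\alpha)$ (\lemmaref{Plocal1}). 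These are fixable once you write the charts out, but as stated your decomposition would lead you to the wrong local maps.
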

To prove this theorem we will blow-up the points $\overline p^{L/R}$ to spheres.
The numbers $\eta^{L/R}_{\text{Het},0}$ will then appear as heteroclinic bifurcation values of $\eta_1-\eta^{L/R}_1(\mu_1)$ for two separate connection problems (that only depend on $\eta_1$ and $\mu_1$ through $\eta_1-\eta^{L/R}_1(\mu_1)$) on these spheres, respectively. In \figref{etaLRHet}, we have computed these heteroclinics numerically (using a simple shooting method) for $\alpha=0.5,\beta=1$ and $\phi^{L/R}(0)=\frac{1}{\pi}$. The shaded region where $\eta^L_{\text{Het}}(\mu_1)<\eta^R_{\text{Het}}(\mu_1)$ corresponds to the region where relaxation oscillations exists.

It follows from \eqref{numbers} that for $\mu_1=0$ we have $\eta^L_{Het}(0)<0<\eta^R_{Het}(0)$.  Therefore relaxation oscillations always exist when $\mu=0$, in agreement with \cite{Tyson2003} and \figref{muEq0}. Finally, we note that since the nullclines lie very close near $p^{L/R}$, $x(t)$ and $y(t)$ slow down as they come close to these points. This gives rise to the relaxation-type oscillations seen in e.g. \figref{muEq0}(c) and (d).  

In contrast, for the system in \figref{muEq008} where $\mu=0.08$, $\varepsilon=0.006$ we find that $\mu_1=1.0$. Consequently, by \figref{etaLRHet}, $\mu_1>\mu_{1*}\approx 0.8$ and the nonexistence of relaxation oscillations in \figref{muEq008} is therefore in agreement with \thmref{mainThm} \ref{item4}. 
\begin{remark}
Near the lines $\eta=\eta^{L/R}_{Het}(\mu)$ for $\mu<\mu_*$ a canard-like explosion occurs where the Hopf cycles grow to the relaxation oscillations described in \thmref{mainThm} within a tiny parameter regime by following the repelling manifold $S_r$. For $\mu>\mu_*$ incomplete canard explosions occur where the Hopf cycles terminate in homoclinics to $p_S$. The description of this is part of future work. 
\end{remark}

\begin{figure}[h!] 
\begin{center}
{\includegraphics[width=.7\textwidth]{./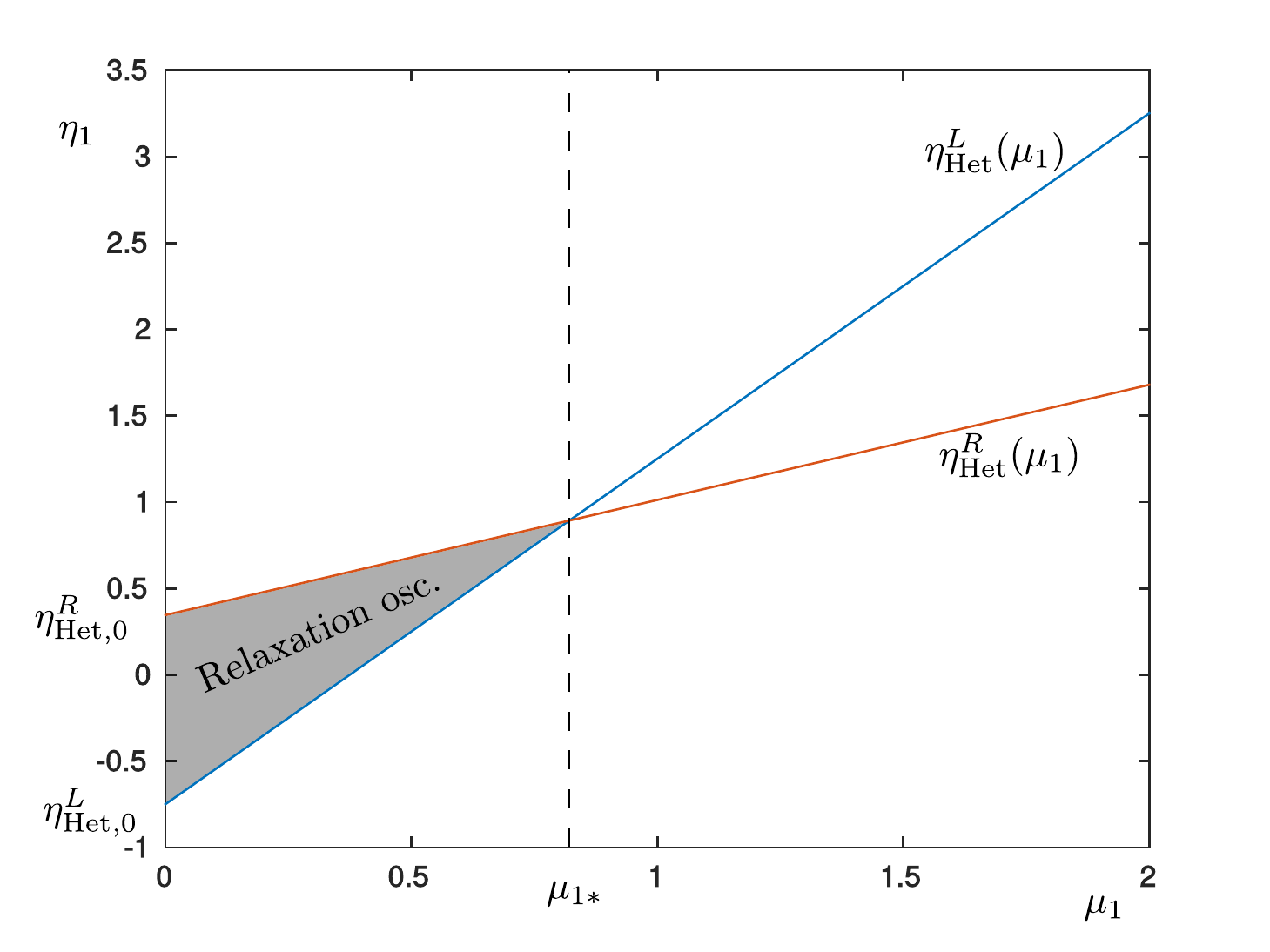}}
\end{center}
\caption{Computed functions $\eta_{Het}^{L/R}(\mu_1)$ for $\alpha=0.5,\beta=1$ and $\phi^{L/R}(0)=\frac{1}{\pi}$. By our main theorem, relaxation oscillations only exists within the shaded region for $0<\varepsilon\ll 1$.}\figlab{etaLRHet}
\end{figure}

\subsection{Outline}
The remainder of the paper is mainly devoted to the proof of \thmref{mainThm}. In the following \secref{blowup} we describe several blow-ups used in our analysis to prove \thmref{mainThm}. In \secref{scalingParameters}, for example, we use the scaling \eqref{eta1mu1} of parameters used to zoom $(\eta,\mu)$ in on the degenerate parameter value $(\eta,\mu)=(1,0)$ in \eqref{parameter}. Within this scaled system, using now $\eta_1$ and $\mu_1$ as the main bifurcation parameters,  we then redo the blow-up of $x=1$, $y\in J$, $\varepsilon=0$, see \secref{blowupXEq1New}. As above, we describe this blow-up using local directional charts $\bar x=-1$, $\bar \varepsilon=1$ and $\bar x=1$. But now in \secref{blowupPLR}, we then further blow-up the points $p^{L}$ and $p^R$ in the charts $\bar x=\pm 1$ to spheres $\overline S^L$ and $\overline S^R$. In \secref{sphere}, we describe the dynamics on these sphere in two propositions, see \propref{prop1} and \propref{prop2}, that we prove in \appref{proofprop1}. There we demonstrate that the numbers $\eta^L_{\text{Het}}$ and  $\eta^R_{\text{Het}}$ in \thmref{mainThm} are values of the scaled $\eta$ that give rise to heteroclinic connections on the spheres $\overline S^L$ and $\overline S^R$, respectively. Using this blow-up analysis we complete the proof of the theorem in \secref{proof}. In \secref{discussion}, we present a generalization of \thmref{mainThm} to a more general scenario of a single boundary node bifurcation. 

\section{Blowup}\seclab{blowup}

\subsection{Blowup of parameters}\seclab{scalingParameters}
Following \eqref{eta1mu1} we now scale the parameters $(\epsilon,\mu,\eta)$ by setting 
\begin{align}
(\varepsilon,\mu,\eta) = (\sigma_1^{k+1},\sigma_1^k \mu_1,1+\sigma_1^k\eta_1),\quad \sigma_1\in I_1,\,(\mu_1,\eta_1)\in V_1,\eqlab{blowupParameters1}
\end{align}
Here $I_1=[0,\nu]$ with $\nu$ sufficiently small and $V_1$ is a sufficiently large but fixed compact neighborhood of $(0,0)$ in $\mathbb R^2$. Notice also that $\sigma_1 = \varepsilon^{1/(k+1)}$ and it is therefore possible to write \eqref{blowupParameters1} as $\mu = \varepsilon^{k/(k+1)}\mu_1$ and $\eta = 1+\varepsilon^{k/(k+1)}\eta_1$, in agreement with \eqref{eta1mu1}.  Inserting \eqref{blowupParameters1} into \eqref{xyFastExt} gives the following system
 \begin{align}
x' &=\sigma_1^{k+1}((\alpha + \beta \phi(\sigma_1^{-(k+1)}(x-1)))y-x),\eqlab{xyFastExtDelta}\\
 y' &=\sigma_1^{k+1}(1+\sigma_1^k \eta_1 -(\sigma_1^k \mu_1+\alpha+\beta \phi(\sigma_1^{-(k+1)}(x-1)))y),\nonumber\\
 \sigma_1'&=0,\nonumber
\end{align}
that we shall study henceforth. Let $X$ denote the corresponding vector-field. 
It is natural to think of \eqref{blowupParameters1} as a chart ($\bar \epsilon=1$) associated with the blow-up (in parameter space)
\begin{align}
 (\varepsilon,\mu,\eta) = (\sigma^{k+1}\bar \epsilon,\sigma^k\bar \mu,\sigma^k\bar \eta),\quad \sigma\ge 0,\,(\bar \epsilon,\bar \mu,\bar \eta)\in S^2.\eqlab{blowupParameters}
\end{align}
We can study a small neighborhood of $(\varepsilon,\mu,\eta)=0$ by studying each $(\sigma,(\bar \epsilon,\bar \mu,\bar \eta))$ with $\sigma\ge 0$ small. 
\eqref{blowupParameters1} then describes parameter values on $S^2$ with $\bar \epsilon>0$. To describe points near the equator $\bar \epsilon=0$ one could use the chart $\bar \mu=1$, which would correspond to
\begin{align}
(\varepsilon,\mu,\eta) = (\sigma_2^{k+1}\epsilon_2,\sigma_2^k,\sigma_2^k\eta_2),\quad \sigma_2\in I_2,\,(\epsilon_2,\eta_2)\in U_2.\nonumber
\end{align}
But following the previous analysis (recall also \propref{noLimit}), all oscillations that we wish to describe will all be visible in \eqref{blowupParameters1} and we shall therefore focus on this chart henceforth. We therefore also drop the subscripts in \eqref{blowupParameters1}.

\begin{remark}
 The weights/exponents of $\sigma$ in \eqref{blowupParameters1}, are found by the blowup approach, the defining condition being that the transformed vector-field, working in directional charts, has a power of $\sigma$ (or more precisely: a power of $\rho$ below in \secref{blowupPLR}, see also \eqref{widehatXi}), as a common factor. The desingularization, obtained by division of the vector-field by this common factor, gives rise to improved hyperbolicity properties. This is central to our application of hyperbolic methods from dynamical systems theory \cite{perko2001a,wiggins2003a}. 
\end{remark}

\subsection{Blowup of $x=1$, $y\in J$, $\sigma=0$}\seclab{blowupXEq1New}
Now, $0\le \sigma\ll 1$ is our new small parameter for \eqref{xyFastExtDelta}. Similar to system \eqref{xyFastExt} for $\varepsilon=0$, system \eqref{xyFastExtDelta} is singular along $x=1$ for $\sigma=0$. In line with \eqref{initialBlowup}, we therefore also blow-up $x=1$, $y\in J$, $\sigma=0$ as 
 \begin{align}
 x= 1+r^{k+1}\bar x,\quad y=\bar y,\quad  \sigma = r \bar \delta,\quad r\ge 0,\,\bar y \in J,\,(\bar x,\bar \delta)\in S^1. \eqlab{initialBlowup2}
\end{align}
The transformation $\Phi:\,(r,\bar y,(\bar x,\bar \delta))\mapsto (x,y,z,\sigma)$, defined by \eqref{initialBlowup2}, gives rise to a vector-field $\overline X=\Phi^{*}(X)$ on $r\ge 0$, $\bar y\in J$, $(\bar x,\bar \delta)\in S^1$ by pull-back. 

As with \eqref{initialBlowup}, we describe \eqref{initialBlowup2} and the vector-field $\overline X$ using three different charts
\begin{align}
 \bar x=-1:\quad &x = 1-r_1^{k+1},&y&=y_1,&\sigma&=r_1 \delta_1,&r_1&\ge 0,&\delta_1&\ge 0,\eqlab{barXN1New}\\
 \bar \delta=1:\quad &x = 1+r_2^{k+1}x_2,&y&=y_2,&\sigma& =r_2,& x_2&\in \mathbb R,&r_2&\ge 0,\eqlab{baru1}\\
 \bar x =1:\quad &x = 1+r_3^{k+1},&y&=y_2,&\sigma&=r_3 \delta_3,&r_3&\ge 0,&\delta_3&\ge 0,\eqlab{barX1New}
\end{align}
As above, we will consider $(x_2,r_2)\in I_2\times [0,\nu]$ with $I_2$ large but fixed in the chart $\bar \delta=1$. We then fix small compact sets $U_1$ and $U_3$ accordingly so that in charts $\bar x=\pm 1$, we have $(r_1,\delta_1)\in U_1$ and $(r_3,\delta_3)\in U_3$ and the charts $\bar x=-1,\,\bar \epsilon=1,\,\bar x=1$ cover a full neighborhood of $x=1,\,\sigma=0$. We let $y_i\in J$, $i=1,2,3$, with $J$ a sufficiently large interval throughout. We keep the subscripts on $y_i$, $i=1,2,3$ in the following, because the $y$-variables are treated slightly different in the charts $\bar x=\pm 1$.

The local versions of $\overline X$ in $\bar x=\mp 1$ will have $\delta_1^{k+1}$ and $\delta_3^{k+1}$, respectively, as common factors. We will therefore again divide the right hand sides, that appear by substituting \eqsref{barXN1New}{barX1New} into \eqref{xyFastExtDelta}, by these factors, obtaining desingularized local vector-fields. In the following, we will re-use many of the symbols introduced in \secref{singular0} for related objects appearing from the blow-up of \eqref{xyFastExtDelta}.

\subsection{Chart $\bar x=-1$}
By inserting \eqref{barXN1New} into \eqref{xyFastExtDelta}, we obtain the following equations 
\begin{align*}
 \dot r_1 &=-\frac{1}{k+1}r_1\delta_1^{k+1}\left(r_1^{k+1}-F_1(\delta_1^{k+1},y_1)\right),\\
 \dot y_1&=r_1^{k+1}\delta_1^{k+1} \left(F_1(\delta_1^{k+1},y_1)+r_1^k\delta_1^k \left(\eta-\mu y_1\right)\right),\\
 \dot \delta_1 &=\frac{1}{k+1} \delta_1^{k+2} \left(r_1^{k+1}-F_1(\delta_1^{k+1},y_1)\right),
 \end{align*}
 where
 \begin{align*}
  F_1(\delta_1^{k+1},y_1) = 1-(\alpha+\beta \delta_1^{k(k+1)}\phi^L(\delta_1^{k+1}))y_1.
 \end{align*}
As promised, we apply a desingularization through the division of the right side by the common factor $\delta_1^{k+1}$ and shall henceforth study the following system:
\begin{align}
 \dot r_1 &=-\frac{1}{k+1}r_1\left(r_1^{k+1}-F_1(\delta_1^{k+1},y_1)\right),\eqlab{X1}\\
 \dot y_1&=r_1^{k+1} \left(F_1(\delta_1^{k+1},y_1)+r_1^k\delta_1^k \left(\eta-\mu y_1\right)\right),\nonumber\\
 \dot \delta_1 &=\frac{1}{k+1} \delta_1\left(r_1^{k+1}-F_1(\delta_1^{k+1},y_1)\right).\nonumber
\end{align}
As above, the ``edge'' 
\begin{align*}
 E^L_1 = \{(r_1,y_1,\delta_1)\vert r_1=\delta_1 =0,\,y_1\in J\},
\end{align*}
of the cylinder, is a set of normally hyperbolic equilibria for \eqref{X1} for $y_1\ne y^L$. The point 
\begin{align*}
 p_1^L:\,(r_1,y_1,\delta_1) = (0,y^L,0),
\end{align*}
is fully nonhyperbolic for \eqref{X1}. 
\subsection{Chart $\bar \delta=1$}
Similarly, in chart \eqref{baru1} we have
\begin{align}
 \dot x_2 &=-r_2^{k+1}x_2-F_2(x_2,y_2),\eqlab{X2}\\
 \dot y_2 &=r_2^{k+1}\left(F_2(x_2,y_2)+r_2^k(\eta-\mu y_2)\right),\nonumber
\end{align}
where $r_2\ge 0$ is a parameter: $\dot r_2=0$, and
\begin{align*}
 F_2(x_2,y_2)=1-(\alpha+\beta \phi(x_2))y_2.
\end{align*}
For $r_2=0$ the set
\begin{align*}
 C_{r,2}=\{(x_2,y_2)\in [0,\infty)\times J\vert y_2 = (\alpha+\beta \phi(x_2))^{-1}\},
\end{align*}
is a normally hyperbolic and repelling critical manifold. We have the following. 
\begin{lemma}
Any compact submanifold (with boundary) $S_{r,2}$ of $C_{r,2}$ perturbs into a repelling slow manifold
\begin{align*}
 S_{r,r_2,2}=\{(x_2,y_2)\vert x_2 = \phi^{-1}\left(-\beta^{-1}(\alpha-y_2^{-1})\right)\left(1 + r_2^{k+1}m_2(y_2,r_2,\eta,\mu)\right),\quad y_2\in I_2\},
\end{align*}
with $I_2\subset(y^R,y^L)$ the sufficiently large closed interval in \secref{blowupPLR}, for $r_2\le r_{20}$ sufficiently small. Here $m_2$ is a scalar-valued smooth function. 

$S_{r,r_2,2}$ carries the reduced flow 
\begin{align}
 \dot y_2 &=r_2^{2k+1}\left(\eta-\mu y_2+r_2 n_2(y_2,r_2,\eta,\mu)\right),\eqlab{slowFlow}
\end{align}
for some smooth function $n_2$. Hence, there exists a saddle for $\mu>0$, $\eta \in (\eta^R,\eta^L)$ where 
\begin{align*}
\eta^R = \frac{\mu}{\alpha+\beta},\quad \eta^L = \frac{\mu}{\alpha},
\end{align*}
recall also (with subscripts) \eqref{eta1Leta1R}, 
at $p_2^S\in S_{r,r_2,2}$ with $y_2$-coordinate close to $\mu^{-1}\eta$ for any $0<r_2\le r_{20}$. 

\end{lemma}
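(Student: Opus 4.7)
The plan is to derive the lemma from Fenichel's theorem combined with an explicit power-series reduction. Normal hyperbolicity of the repelling critical manifold $C_{r,2}$, obtained by setting $r_2=0$ in \eqref{X2}, follows from $-\partial_{x_2}F_2=\beta\phi'(x_2)y_2>0$, as already observed. Fenichel's theorem then yields, for any compact $S_{r,2}\subset C_{r,2}$, a smooth, locally invariant, repelling slow manifold $S_{r,r_2,2}$ for $0<r_2\le r_{20}$ with smooth dependence on the parameters $(r_2,\eta,\mu)$.

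The particular multiplicative form $x_2=\bar x_2(y_2)(1+r_2^{k+1}m_2(y_2,r_2,\eta,\mu))$, with $\bar x_2(y_2):=\phi^{-1}(-\beta^{-1}(\alpha-y_2^{-1}))$ the graph representation of $C_{r,2}$, arises because the $r_2$-dependence of \eqref{X2} enters only through the prefactor $r_2^{k+1}$ in both the $-r_2^{k+1}x_2$ term of $\dot x_2$ and the entire right-hand side of $\dot y_2$. Hence the displacement of $S_{r,r_2,2}$ from $C_{r,2}$ in the $x_2$-direction is $O(r_2^{k+1})$, and on the interval $I_2$ (assumed bounded away from the zero of $\bar x_2$) this displacement can be written multiplicatively as stated, with $m_2$ smooth.

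To derive \eqref{slowFlow}, I would Taylor-expand $F_2$ about $x_2=\bar x_2(y_2)$ using $F_2(\bar x_2(y_2),y_2)=0$, which yields $F_2=r_2^{k+1}\bar x_2\,m_2\,\partial_{x_2}F_2(\bar x_2,y_2)+O(r_2^{2(k+1)})$ on $S_{r,r_2,2}$. Substituting into $\dot y_2=r_2^{k+1}(F_2+r_2^k(\eta-\mu y_2))$ then produces $\dot y_2=r_2^{2k+1}(\eta-\mu y_2)+O(r_2^{2k+2})$, which has the claimed form with smooth $n_2$. The function $m_2$ itself (and hence $n_2$) is determined by imposing invariance: matching $\dot x_2$ computed from \eqref{X2} with the chain-rule expression $\bar x_2'(y_2)(1+r_2^{k+1}m_2)\dot y_2+r_2^{k+1}\bar x_2\,\partial_{y_2}m_2\,\dot y_2$ and solving order by order in $r_2$. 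This formal expansion is justified by the smoothness of $S_{r,r_2,2}$ already guaranteed by Fenichel.

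For the saddle, the implicit function theorem applied to $\eta-\mu y_2+r_2 n_2=0$ produces, for $\mu>0$, a unique root $y_2=\mu^{-1}\eta+O(r_2)$, which lies in $I_2$ precisely when $\eta\in(\eta^R,\eta^L)$, giving an equilibrium $p_2^S\in S_{r,r_2,2}$. Its saddle character follows from the Jacobian of \eqref{X2} at $p_2^S$: the $(1,1)$-entry is $\beta\phi'(\bar x_2)y_2+O(r_2^{k+1})>0$ while the remaining entries are $O(r_2^{k+1})$, so the determinant is negative to leading order and the two eigenvalues have opposite sign. The main technical obstacle is the careful bookkeeping of $r_2$-powers in the invariance expansion (where one must not mistake the prefactor $r_2^{k+1}$ of $\dot y_2$ for an additional slow-time rescaling); everything else is a routine application of Fenichel theory and the implicit function theorem.
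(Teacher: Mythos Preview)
Your argument is correct in approach and essentially what the paper summarises as ``Simple calculation'': Fenichel gives the slow manifold, the $r_2^{k+1}$-structure of \eqref{X2} forces the displacement to be $O(r_2^{k+1})$, and substituting back into $\dot y_2$ yields \eqref{slowFlow}.

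One slip in the saddle verification: the $(1,2)$-entry of the Jacobian of \eqref{X2} is $-\partial_{y_2}F_2=\alpha+\beta\phi(x_2)\approx y_2^{-1}$, which is \emph{not} $O(r_2^{k+1})$. Your conclusion that the determinant is negative is nonetheless correct, because the leading $O(r_2^{k+1})$ contributions from the two products cancel and the surviving term is $-r_2^{2k+1}\mu\,\beta\phi'(\bar x_2)y_2<0$. A cleaner route, avoiding this computation entirely, is to note that $p_2^S$ is attracting for the reduced flow \eqref{slowFlow} (eigenvalue $\approx -r_2^{2k+1}\mu<0$) while the slow manifold itself is normally repelling (transverse eigenvalue $\approx \beta\phi'(\bar x_2)y_2>0$); Fenichel's fibration then gives the saddle structure directly.
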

\begin{proof}
 Simple calculation.
\end{proof}

With respect the (new) slow time $\tau = r_2^{2k+1} t$, where $t$ is the fast time in \eqref{slowFlow}, \eqref{slowFlow} becomes 
\begin{align}
 y_2' = \eta-\mu y_2,\eqlab{slowFlow2}
\end{align}
for $r_2=0$. Hence, we will refer to the flow of \eqref{slowFlow2} as the slow flow on $C_{r,2}$. 

\subsection{Chart $\bar x=1$}
Finally, in chart \eqref{barX1New}
\begin{align}
\dot r_3 &=-\frac{1}{1+k}r_3 \left(r_3^{k+1} +F_3(\delta_3^{k+1},y_3)\right),\eqlab{X3}\\
\dot y_3 &=r_3^{k+1} \left(F_3(\delta_3^{k+1},y_3) +r_3^k\delta_3^k (\eta-\mu y_3)\right),\nonumber\\
\dot \delta_3 &=\frac{1}{1+k}\delta_3 \left(r_3^{k+1} +F_3(\delta_3^{k+1},y_3)\right),\nonumber
\end{align}
after division by the common factor $\delta_3^{k+1}$ of the right hand side. Here
\begin{align*}
 F_3(\delta_3^{k+1},y_3) = 1-(\alpha+\beta(1-\delta_3^{k(k+1)} \phi^R(\delta_3^{k+1})))y_3.
\end{align*}
As above, the ``edge'' 
\begin{align*}
 E^R_3 = \{(r_3,y_3,\delta_3)\vert r_3=\delta_3 =0,\,y_3\in J\},
\end{align*}
of the cylinder is a set of normally hyperbolic equilibria for \eqref{X3} for $y_3\ne y^R$. The point 
\begin{align*}
 p_3^R:\,(r_3,y_3,\delta_3) = (0,y^R,0),
\end{align*}
is fully nonhyperbolic for \eqref{X1}. 
We combine the results in  \figref{pwsblowupNew}. In comparison with \figref{pwsblowup}, we now see closed cycles for all values of the (scaled) $\eta$ and $\mu$. However, we still cannot perturb away from the singular limit due to the degeneracies at $p^L$ and $p^R$. 

\begin{figure}[h!] 
\begin{center}
\subfigure[$\eta<\eta^R<\eta^L$, $\mu>0$]{\includegraphics[width=.49\textwidth]{./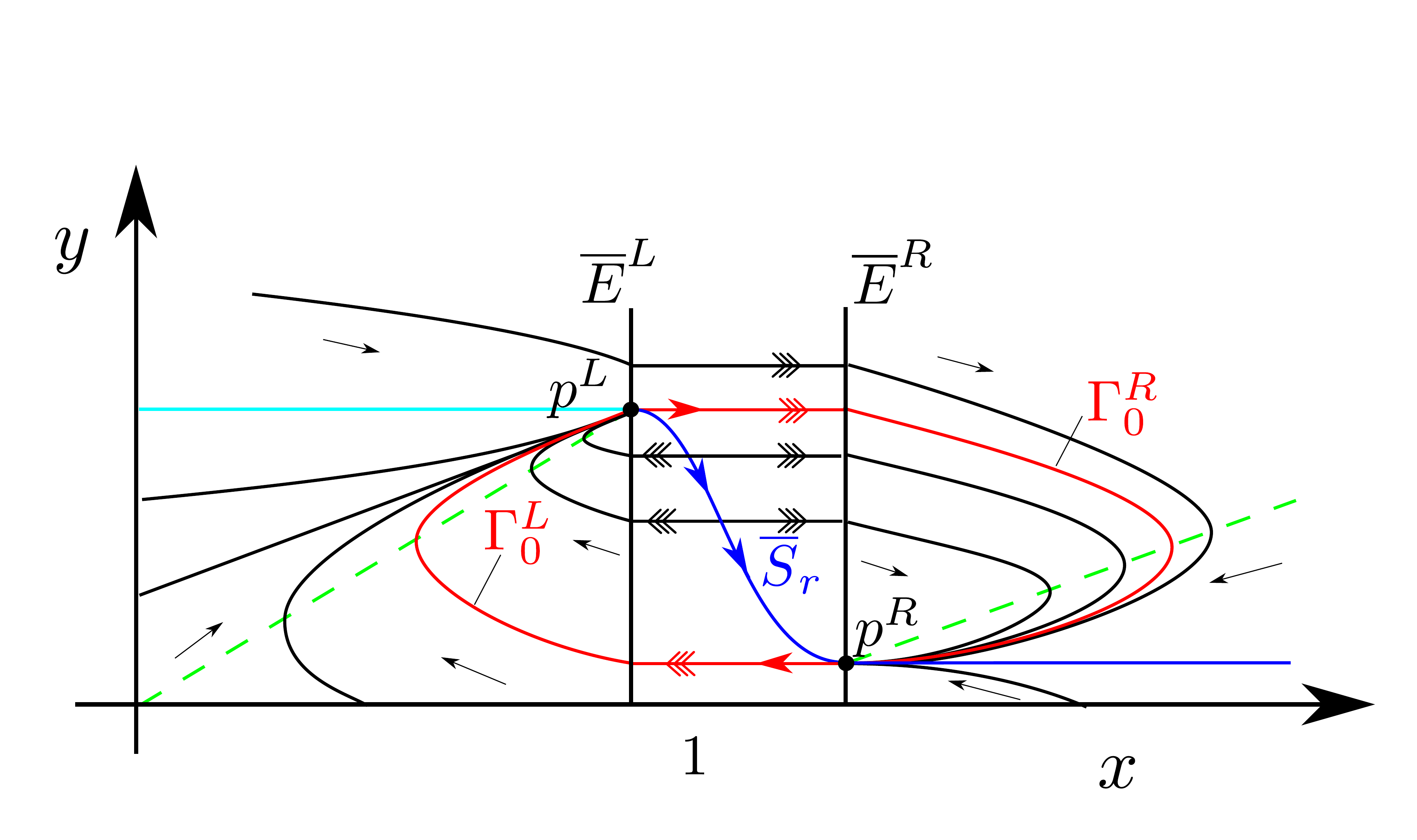}}
\subfigure[$\eta^R<\eta<\eta^L$, $\mu>0$]{\includegraphics[width=.49\textwidth]{./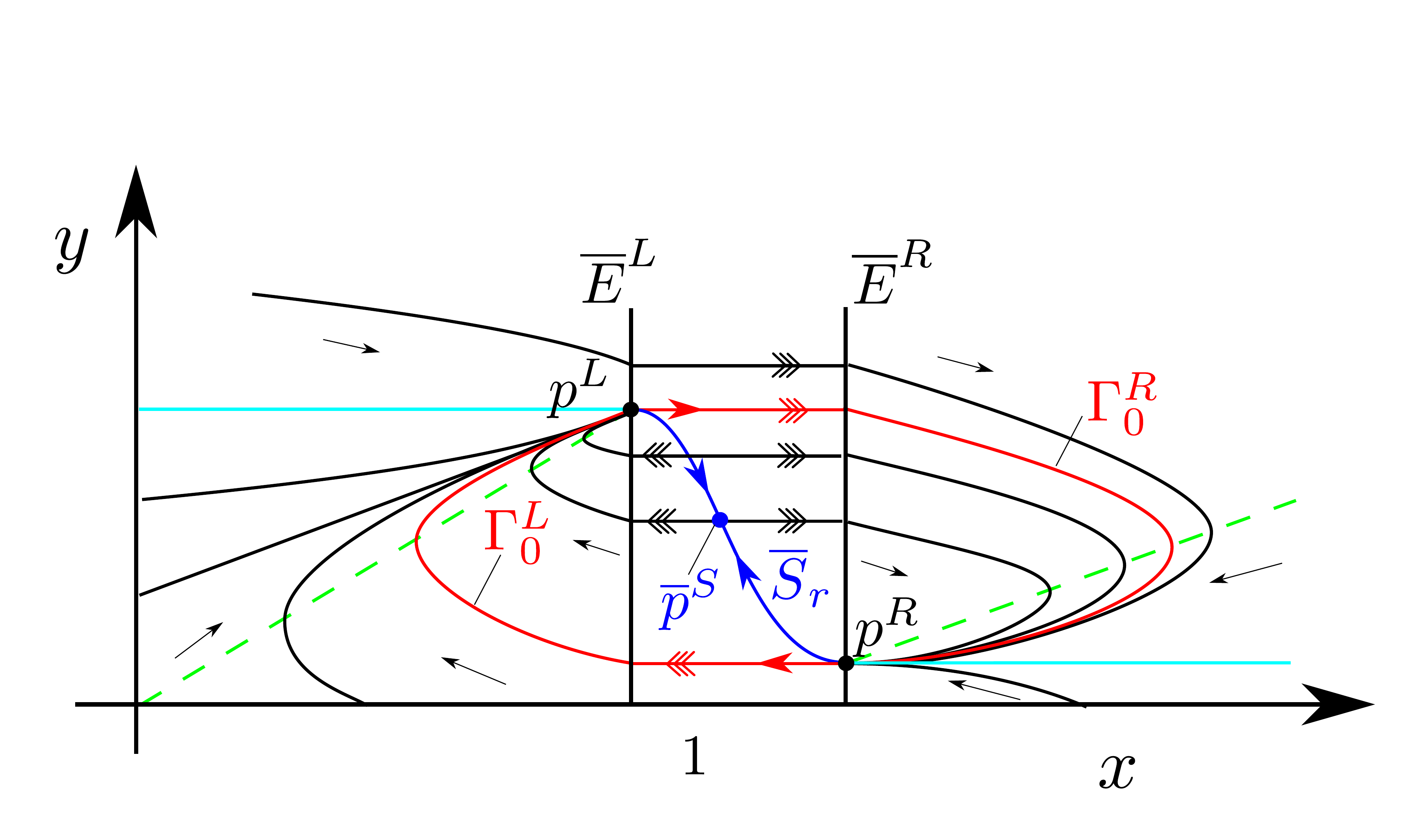}}
\subfigure[$\eta^R<\eta^L<\eta$, $\mu>0$]{\includegraphics[width=.49\textwidth]{./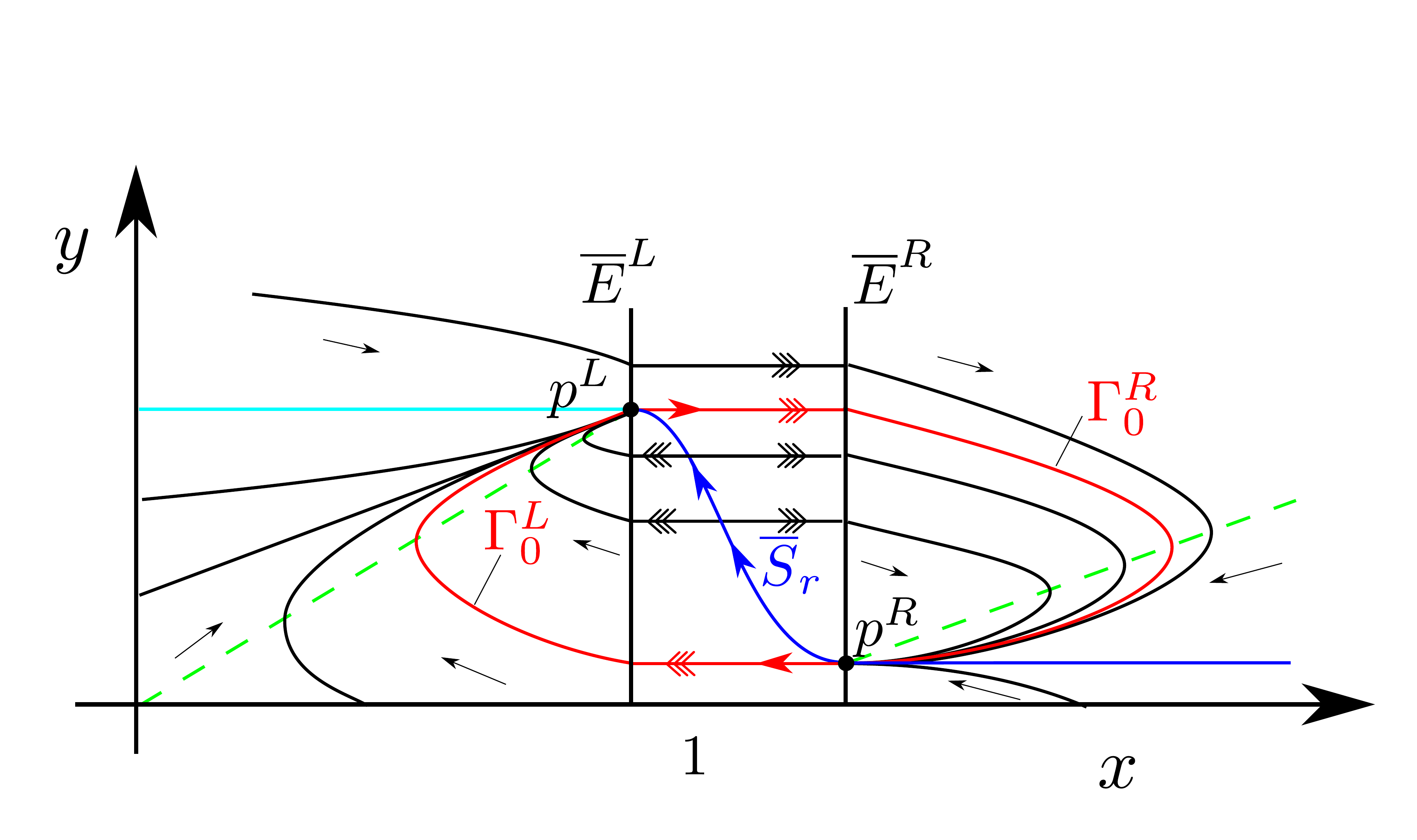}}
\subfigure[$\eta=\eta^R=\eta^L=0$, $\mu=0$]{\includegraphics[width=.49\textwidth]{./pwsblowup4New.pdf}}
\end{center}
 \caption{Blowup dynamics for different values of $\eta$ and $\mu\ge 0$. Here $(\eta,\mu)$ is $(\eta_1,\mu_1)$ in \eqref{blowupParameters1} with subscripts dropped. (a): For $\eta<\eta^R$ and $\mu>0$, $y$ decreases uniformly for the slow flow on $\overline C_{r}$ defined by \eqref{slowFlow2}. (b): For $\eta\in (\eta^R,\eta^L)$ and $\mu>0$, there exists a saddle $\overline p^S$ on $C_{r}$, $\overline p^S$ being attracting for the slow flow on the repelling critical manifold $\overline C_{r}$. (c): For $\eta>\eta^L$ and $\mu>0$, $y$ increases uniformly for the slow flow on $\overline C_r$. (d): For $\eta=0$ and $\mu=0$, the slow flow is the constant flow and hence $\overline C_r$ is a set of equilibria for \eqref{slowFlow2}. The case where $\eta=\eta^{L/R}$ (not shown) is similar to (a) and (c) but now the slow flow is forward complete, with any point being forward asymptotic to $p^{L/R}$, respectively. We see a singular cycle (in red) for all parameter values, but we cannot perturb away from it due to the degeneracies at $p^L$ and $p^R$.}
\figlab{pwsblowupNew}
\end{figure}
\subsection{Blowup of $p^{L/R}=(1,y^{L/R},0)$}\seclab{blowupPLR}
Let $X_{1,3}$ denote the desingularized vector-field $\overline X$ in the charts $\bar x=-1,\,\bar x=1$
as given in \eqsref{X1}{X3}, respectively. We then proceed to blow-up the nonhyperbolic points $p^{L/R}$: $x=1$, $y=y^{L/R}$, $\sigma=0$ in the directional charts $\bar x=\mp 1$, \eqsref{barXN1New}{barX1New}, respectively, by setting
\begin{align}
r_1 = \rho_1^k \bar r_1,\quad y_1=y^L+\rho_1^{k(k+1)} \bar y_1,\quad \delta_1 = \rho_1 \bar \delta_1,\quad \rho_1\ge 0,\,(\bar r_1,\bar \delta_1,\bar y_1)\in S^2,\eqlab{Psi1}
\end{align}
and
\begin{align}
r_3 = \rho_3^k \bar r_3,\quad y_3=y^R+\rho_3^{k(k+1)} \bar y_3,\quad \delta_3 = \rho_3 \bar \delta_3,\quad \rho_3\ge 0,\,(\bar r_3,\bar \delta_3,\bar y)\in S^2,\eqlab{Psi3}
\end{align}
In this way, $p^{L/R}=(1,y^{L/R},0)$ is blown up to two quarter-spheres 
\begin{align*}
\overline{S}^L &= \{(\bar r_1,\bar \delta_1,\bar y_1)\in S^2\vert \bar r_1\ge 0,\,\bar \delta_1\ge 0 \},\\
\overline{S}^R &= \{(\bar r_3,\bar \delta_3,\bar y_3)\in S^2\vert \bar r_3\ge 0,\,\bar \delta_3\ge 0 \}.
\end{align*}
See \figref{x1Eq1BlowupSphere}. 
\begin{figure}[h!] 
\begin{center}
{\includegraphics[width=.7\textwidth]{./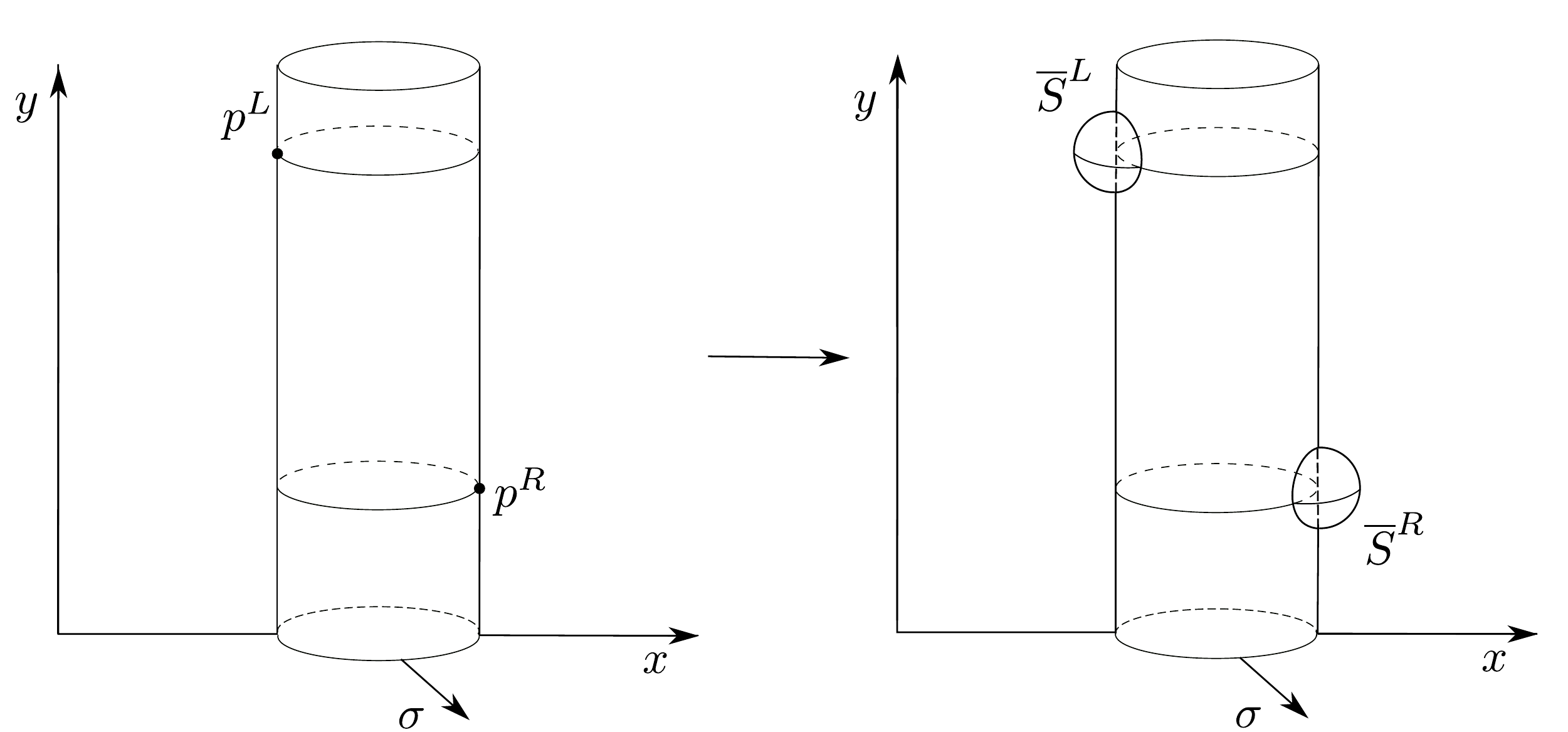}}
\end{center}
\caption{Blowup of $p^L$ and $p^R$ to two quarter-spheres $\overline S^L$ and $\overline S^R$, respectively.}\figlab{x1Eq1BlowupSphere}
\end{figure}

The transformations $\Psi_{i}:\,(\rho_i,(\bar r_i,\bar \delta_i,\bar y_i))\mapsto (r_i,y_i,\delta_i)$, $i=1,3$, defined by \eqsref{Psi1}{Psi3}, give rise to vector-fields $\overline X_{i} = \Psi_i^*(X_i)$ on $\{\rho_1\ge 0\}\times \overline S^L$ and $\{\rho_3\ge 0\}\times\overline S^R$, respectively. Here $\overline X_i\vert_{\rho_i=0}=0$ but the weights on $\rho_i$ have been chosen such that 
\begin{align}
 \widehat X_i = \rho_i^{-k(k+1)} \overline X_i,\eqlab{widehatXi}
\end{align}
is well-defined and non-trivial. In particular, the desingularized vector-field $\widehat X_i$ has improved hyperbolicity properties which will be important for our perturbation technique. It is therefore $\widehat X_i$ that we shall study in the following. 


%


\section{Dynamics on the blow-up spheres}\seclab{sphere}
We now describe the dynamics of $\widehat X_i$ on the two spheres $\overline S^L$ and $\overline S^R$, respectively, which is needed for the proof of \thmref{mainThm} in \secref{proof}.
\subsection{Dynamics on $\overline S^L$}
Henceforth we drop the subscript $1$ for simplicity. Recall also that
\begin{align*}
 \eta^L(\mu) = \mu y^L = \frac{\mu}{\alpha}.
\end{align*}
The results of the following proposition are summarized in \figref{pLblowup}, representing $\overline S^L$ as a half-disk (looking down along the $\bar \delta$-axis).
\begin{proposition}\proplab{prop1}

On $\overline S^L:\,(\bar r_1,\bar \delta_1,\bar y_1)\in S^2,\,\bar r_1\ge 0,\,\bar \delta_1\ge 0$ there exists $6$ or $7$ equilibria of $\widehat X$ \eqref{widehatXi} (the precise number depending on the value of $\eta$, see item \ref{zL} below), including: 
\begin{enumerate}[label=(\alph*)]
 \item $\overline q_{w}^L:\,\bar r^{-(k+1)} \bar y = -\alpha/(1-\alpha),\,\bar \delta=0$ is a hyperbolic saddle with a stable manifold $W^s(\overline q_{w}^L)$ along the invariant half-circle $\bar \delta=0$ and an unstable manifold $$\overline U^L=W^{u}(\overline q_{w}^L),$$ entering $\bar \delta>0$. 
 \item $\overline q_{s}^L:\,(\bar r,\bar y,\bar \delta) = (1,0,0)$ is a hyperbolic unstable node.
 \item $\overline q_{f}^L:\,(\bar r,\bar y,\bar \delta) = (0,0,1)$ is a hyperbolic stable node.
 \item $\overline q_{r}^L:\,\bar r = 0,\bar \delta^{-k(k+1)}\bar y = -\alpha^{-2}\beta \phi^L(0)$ is a nonhyperbolic saddle with a strong unstable manifold $W^u(\overline q_{r}^R)$ along the invariant half-circle $\bar r=0$ and a local center manifold $$\overline C_{loc}^L\equiv W^{c}_{loc}(\overline q_{r}^L),$$ entering $\bar r>0$. For $\eta<\eta^L(\mu)$ the local center manifold is unique as a local (nonhyperbolic) stable manifold of $\overline q_r^L$. 
 \item $\overline a^L:(\bar r,\bar y,\bar \delta) = (0,1,0)$ and $\overline b^L:(\bar r,\bar y,\bar \delta) = (0,-1,0)$ are both hyperbolic saddles.
\item \label{zL} And for any $\eta<\eta^L(\mu)$: 
 \begin{align*}
 \overline z^L:\,\bar \delta^{-k(k+1)}\bar y &= -\alpha^{-2}\beta \phi^L(0)-\alpha^{-1} (\eta^L-\eta)^{k+1},\\
 \bar \delta^{-k}\bar r &=\eta^L-\eta.
 \end{align*}
\end{enumerate}
Furthermore:
\begin{enumerate}[resume*]
 \item There exists a unique number $\eta_H^L(\mu)$, given by
 \begin{align}\eqlab{etaHL}
 \eta_H^L(\mu) = \eta^L(\mu)-\left(\frac{\beta k\phi^L(0)}{\alpha(\alpha+1)}\right)^{1/(k+1)},
 \end{align}
 such that for $\eta=\eta_H^L(\mu)$ and any $\mu\ge 0$, the equilibrium $\overline z^L$ undergoes a sub-critical Hopf bifurcation. Therefore:
 \begin{enumerate}[label=(\alph{enumi}.\arabic*)]
 \item \label{limitcycleHopf} There exists a $c>0$ sufficiently small such that for $\eta \in [\eta_H^L(\mu)-c,\eta_H^L(\mu))$ there exists a family of locally unique hyperbolic and repelling limit cycles. 
 \item $\overline z^L$ is hyperbolic and unstable (stable) for $\eta\in (\eta_H^L(\mu),\eta^L(\mu))$ ($\eta<\eta_H^L(\mu)$, respectively).
 \end{enumerate}
 \item \label{etaHet0LNeg} There exists a unique number $\eta_{Het,0}^L<0$ (independent of $\mu$) such that if $\eta_{Het}^L(\mu) = \eta^L(\mu)+\eta_{Het,0}^L$ then the following holds for any $\mu\ge 0$: For $\eta=\eta_{Het}^L(\mu)$ the system undergoes a heteroclinic bifurcation where the unique center/stable manifold $\overline C^L$ of $\overline q_{r}^L$ coincides with the unstable manifold $\overline U^L$ of $\overline q_{w}^L$. The intersection is transverse in the $((\bar r,\bar y,\bar \delta),\eta)$-space and:
  \begin{enumerate}[label=(\alph{enumi}.\arabic*)]
 \item  \label{limitcycleHom} For $\eta>\eta_{Het}^L(\mu)$ the unstable manifold $\overline U^L$ is forward asymptotic to $\overline q^L_f$. Furthermore, there exists a $c>0$ sufficiently small such that for $\eta \in (\eta_{Het}^L(\mu),\eta_{Het}^L(\mu)+c]$ there exists a family of locally unique hyperbolic and repelling limit cycles.
 \item For $\eta<\eta_{Het}^L(\mu)$ the center/stable manifold $\overline C^L$ is backward asymptotic to $\overline q_{s}^L$. 
\end{enumerate}
\end{enumerate}
\end{proposition}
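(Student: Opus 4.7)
The plan is to exploit directional charts on the quarter-sphere $\overline{S}^L$ to analyze $\widehat X_1$ piece by piece, then patch the local information together via the diffeomorphic chart transitions away from the edges. I would introduce three charts: $\bar r=1$ (covering the interior face with $\bar r>0$), $\bar \delta=1$ (covering the face with $\bar \delta>0$), and $\bar y=\pm 1$ (covering the invariant circle $\bar r=\bar \delta=0$). In each chart the pull-back of $\widehat X_1$ is polynomial in the local coordinates up to the smooth $\phi^L$-dependence, so equilibria are located by solving algebraic equations and their linearizations are read off by direct computation. The seven candidate equilibria listed in (a)--(f) are then verified one at a time, with the classification coming from the eigenvalues. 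The hyperbolic saddles $\overline q_w^L$, $\overline a^L$, $\overline b^L$ are visible in $\bar \delta=0$, where the equations reduce (after the additional blow-up) to the linearized flow of \eqref{xN1system} near $z^L=p^L$. The nodes $\overline q_s^L$ and $\overline q_f^L$ live in the $\bar r=1$ and $\bar \delta=1$ charts respectively, their hyperbolicity coming from the leading-order terms of \eqref{X1}.

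For the partially hyperbolic $\overline q_r^L$ on $\bar r=0$, the unstable direction is tangent to the invariant circle $\bar r=0$ while the remaining direction is neutral at the linear level. To produce a unique center manifold $\overline C^L_{loc}$ for $\eta<\eta^L(\mu)$, I would pass to the $\bar \delta=1$ chart, do a secondary expansion in $\bar r$, and show that the reduced flow on the formal center manifold is attracting along $\bar r$; this turns $\overline C^L_{loc}$ into a genuine nonhyperbolic stable manifold, hence locally unique. Existence and uniqueness of $\overline z^L$ for $\eta<\eta^L(\mu)$ follows by solving the algebraic system directly in the $\bar \delta=1$ chart.

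The Hopf bifurcation at $\overline z^L$ reduces to a computation on the $2\times 2$ Jacobian in the $\bar \delta=1$ chart. The trace is (after straightforward algebra) proportional to $\bigl(\eta^L(\mu)-\eta\bigr)^{k+1}-\frac{\beta k\phi^L(0)}{\alpha(\alpha+1)}$, which vanishes precisely at $\eta=\eta_H^L(\mu)$ defined by \eqref{etaHL}, while the determinant stays positive there. Transversality of the eigenvalue crossing in $\eta$ is immediate from this expression. Subcriticality requires the sign of the first Lyapunov coefficient: I would put the linearization into standard Hopf normal form and evaluate the coefficient using the formula from \cite{Kuznetsov2003}, obtaining a positive value. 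Standard Hopf theory then gives the repelling limit cycle family of \ref{limitcycleHopf}.

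The hardest part is \ref{etaHet0LNeg}: the transverse heteroclinic connection between the one-dimensional $\overline U^L=W^u(\overline q_w^L)$ and the one-dimensional $\overline C^L=W^c(\overline q_r^L)$. I would set up a shooting argument in a fixed transversal section inside the $\bar \delta=1$ chart. Both manifolds persist smoothly in $\eta$, so their traces in this section are smooth curves parametrized by $\eta$, and the heteroclinic corresponds to their intersection. The proof proceeds in three steps: (i) rescale via $\tilde \eta=\eta-\eta^L(\mu)$ and check that the $\mu$-dependence drops out of the leading-order equations on $\overline S^L$, reducing the connection problem to one depending only on $\tilde\eta$ and hence delivering the desired splitting $\eta_{Het}^L(\mu)=\eta^L(\mu)+\eta^L_{Het,0}$ with $\eta^L_{Het,0}$ a universal number; (ii) establish existence by comparing the two curves for $\tilde\eta$ large negative, where $\overline C^L$ is backward asymptotic to $\overline q_s^L$ so that the two tracks lie on opposite sides of the section, with $\tilde\eta$ close to $0$, where $\overline U^L$ already flows to the attractor $\overline q_f^L$ and so has crossed to the other side; (iii) get transversality by computing the derivative in $\eta$ of the signed distance between the two tracks in the section, either by a Melnikov-type integral or from the monotone crossing in (ii). That $\eta^L_{Het,0}<0$ follows because the crossing occurs on the $\overline q_s^L$-side of the Hopf value, and the repelling limit cycles of \ref{limitcycleHom} are then obtained by a standard breaking argument for homoclinic-like loops on the blown-up sphere, with the cycle period controlled by the passage near the saddle $\overline q_w^L$.
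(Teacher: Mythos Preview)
Your overall architecture matches the paper's: the same directional charts, the same linearization/Hopf computation for (a)--(g), and the same shooting plus Melnikov strategy for (h). Two places in your treatment of \ref{etaHet0LNeg} are genuine gaps, however, and they are precisely where the paper spends most of its effort.

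\textbf{Existence of the extreme configurations.} You assert that for $\tilde\eta$ large negative, $\overline C^L$ is backward asymptotic to $\overline q_s^L$, and for $\tilde\eta$ near $0$, $\overline U^L$ flows to $\overline q_f^L$. These are exactly the conclusions you are trying to prove (items (h.1) and (h.2)), so using them as the endpoints of a continuity argument is circular unless you establish them independently at the extremes. The paper does this, but the $\tilde\eta\ll 0$ endpoint is nontrivial: one rescales $\eta=\eta^L-\nu^{-k}$, $\delta_1=\nu\tilde\delta_1$, lets $\nu\to 0$, and obtains a system that can be rewritten as a second-order Li\'enard equation. Cherkas' theorem is then invoked to exclude limit cycles in the relevant strip, forcing the bounded $\overline U^L$ to converge to the stable node $\overline z^L$. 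Without this (or an equivalent global argument), you cannot rule out that $\overline U^L$ spirals onto a limit cycle rather than reaching $\overline z^L$, and the shooting argument collapses. Your proposal gives no mechanism for this step.

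\textbf{Sign of the Melnikov integral and uniqueness.} You offer ``Melnikov-type integral or from the monotone crossing in (ii)'' for transversality. The second option does not give uniqueness: a continuity argument tells you there is at least one crossing, not that every crossing has the same sign of derivative. The paper uses the Melnikov integral, but computing its sign requires knowing that the heteroclinic orbit itself is monotone in the $\bar y=-1$ chart, i.e.\ $\tilde r_4'(t)<0$ and $\tilde\delta_4'(t)>0$ for all $t$. This monotonicity is not automatic and the paper devotes a full appendix section to it, via a detailed analysis of the $r_4$- and $\delta_4$-nullclines (including classifying the possible fold configurations of the $\delta_4$-nullcline). Only with monotonicity in hand does the integrand $\widehat X_4\wedge\partial_\eta\widehat X_4$ have a definite sign along the orbit, yielding $M<0$ and hence both transversality and uniqueness of $\eta_{Het}^L$.
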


We prove \propref{prop1} in \appref{proofprop1}. The main difficulty lies in \ref{etaHet0LNeg} and the existence of a unique $\eta_{Het,0}^L$. For existence, we study two ``extreme'' cases with $\eta\ll \eta^L(\mu)$ and $\eta$ close to $\eta^L(\mu)$, where we can determine the limit sets of the manifolds $\overline C^L$ and $\overline U^L$. Specifically, for $\eta\ll \eta^L(\mu)$, after having transformed the system into a perturbation of a Lienard system, we apply Cherkas' theorem, see e.g. \cite[Theorem 3 p. 265]{perko2001a}, to exclude existence of limit cycles. The existence of $\eta_{Het,0}^L$ is then based on a continuity argument that shows that at least one heteroclinic intersection exists in between these extreme cases. For uniqueness we use a mononicity argument based on a Melnikov computation. This is all laid out in the proof of \lemmaref{essentiallem}, see further details in \appref{existence} and \appref{uniqueness}. 

In \figref{pLblowup}, we suppose that $\eta^L_{Het}(\mu)<\eta_H^L(\mu)$. We do not have a proof of this global property but our computations seem to suggest that this is the case. We emphasize that this missing detail is not important for the proof of our main theorem. Our computations also suggest that the limit cycles in \ref{limitcycleHopf} and \ref{limitcycleHom} belong to the same family of repelling periodic orbits. (It is tempting to prove this using \cite[Theorem 1, p. 386]{perko2001a} but $\widehat X$ is not a rotated vector-field, see \cite[Definition 1, p. 384]{perko2001a}.)  

\begin{figure}[h!] 
\begin{center}
\subfigure[$\eta<\eta_{Het}^L<\eta_H^L<\eta^L$]{\includegraphics[width=.4\textwidth]{./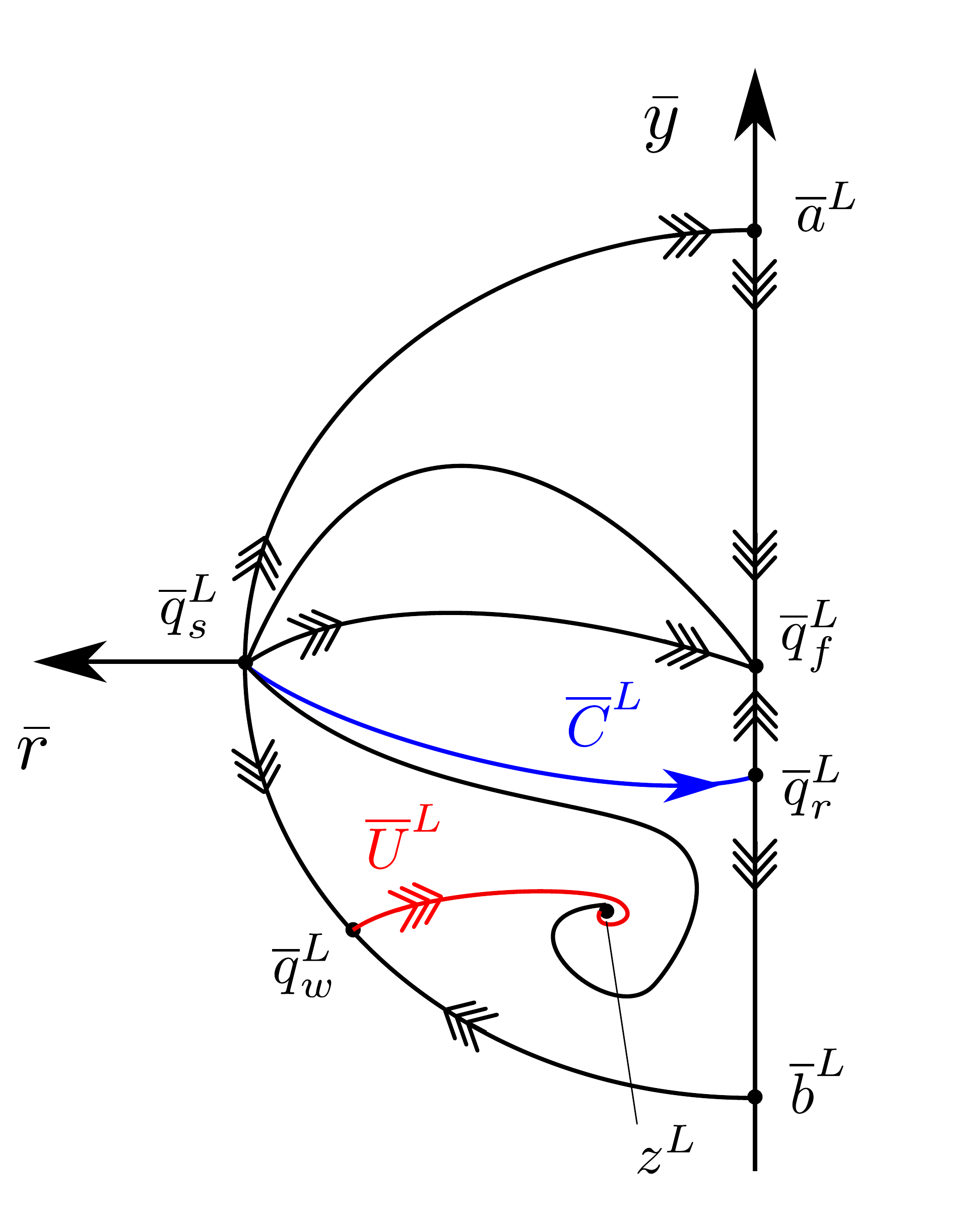}}
\subfigure[$\eta=\eta_{Het}^L<\eta_H^L<\eta^L$]{\includegraphics[width=.4\textwidth]{./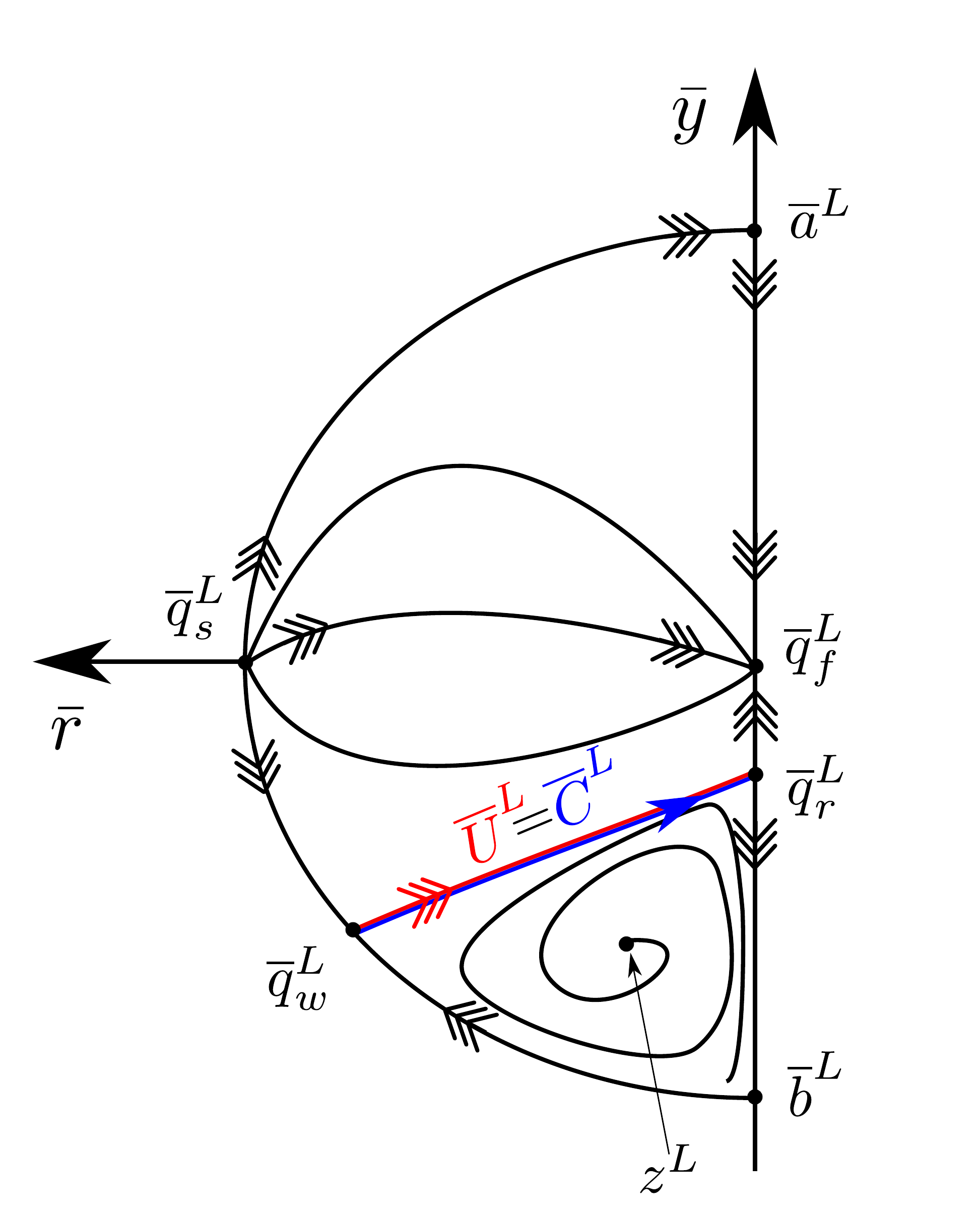}}
\subfigure[$\eta_{Het}^L<\eta<\eta_H^L<\eta^L$]{\includegraphics[width=.4\textwidth]{./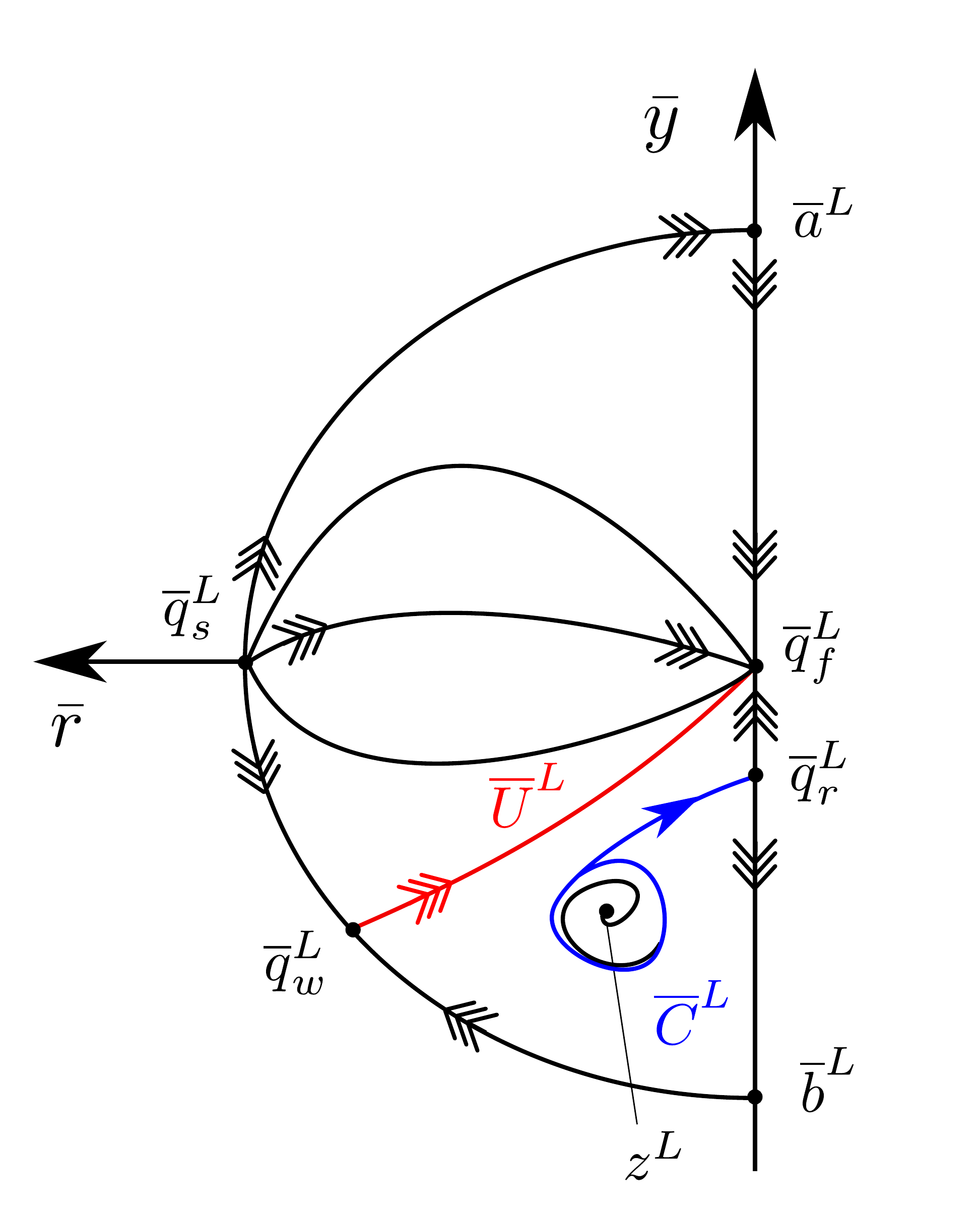}}
\subfigure[$\eta_{Het}^L<\eta_H^L<\eta<\eta^L$]{\includegraphics[width=.4\textwidth]{./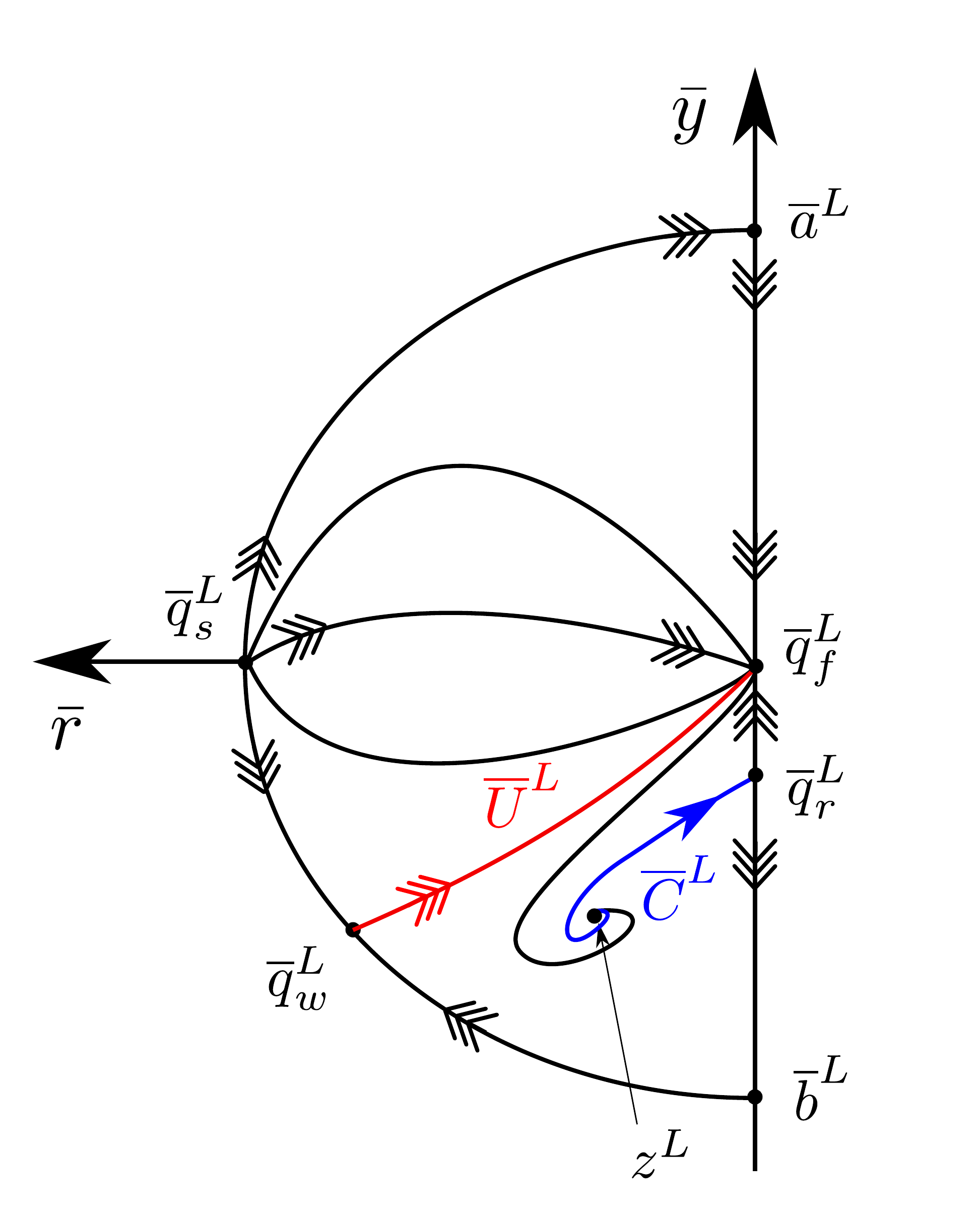}}
\end{center}
 \caption{Dynamics on $\overline S^L$ for different values of $\eta$ as described by \propref{prop1}.}
\figlab{pLblowup}
\end{figure}

\subsection{Dynamics on $\overline S^R$}
We now drop the subscript $3$ for simplicity and recall that
\begin{align*}
 \eta^R(\mu) = \mu y^R = \frac{\mu}{\alpha+\beta}.
\end{align*}
The results of the following proposition are summarized in \figref{pRblowup}.
\begin{proposition}\proplab{prop2}
On $\overline S^R:\,(\bar r,\bar y,\bar \delta)\in S^2,\,\bar r\ge 0,\,\bar \delta\ge 0$ there exists $6$ or $7$ equilibria of $\widehat X$ \eqref{widehatXi} (the precise number depending on the value of $\eta$, see item \ref{zR} below), including: 
\begin{enumerate}[label=(\alph*)]
 \item $\overline q_{w}^R:\,(\bar r,\bar y,\bar \delta) = (1,0,0)$ is a hyperbolic saddle with a stable manifold $W^s(\overline q_{w}^R)$ along the invariant half-circle $\bar \delta=0$ and an unstable manifold $\overline U^R\equiv W^{u}(\overline q_{w}^R)$ entering $\bar \delta>0$. 
 \item $\overline q_{s}^R:\,\bar r^{-(k+1)} \bar y = -(\alpha+\beta-1)/(\alpha+\beta),\,\bar \delta=0$ is a hyperbolic unstable node.
 \item $\overline q_{f}^R:\,(\bar r,\bar y,\bar \delta) = (0,0,1)$ is a hyperbolic stable node.
 \item $\overline q_{r}^R:\,\bar r = 0,\bar \delta^{-k(k+1)}\bar y = (\alpha+\beta)^{-2}\beta \phi^R(0)$ is a nonhyperbolic saddle with a strong unstable manifold $W^u(\overline q_{r}^R)$ along the invariant half-circle $\bar r=0$ and a local center manifold $\overline C_{loc}^R\equiv W^{c}_{loc}(\overline q_{r}^R)$ entering $\bar r>0$. For $\eta>\eta^R(\mu)$ the local center manifold is unique as a (nonhyperbolic) stable manifold of $\overline q_r^R$. 
 \item $\overline a^R:(\bar r,\bar y,\bar \delta) = (0,1,0)$ and $\overline b^R:(\bar r,\bar y,\bar \delta) = (0,-1,0)$ are both hyperbolic saddles.
  \item \label{zR} And for any $\eta>\eta^R(\mu)$: 
 \begin{align*}
 \overline z^R:\,\bar \delta^{-k(k+1)}\bar y &= (\alpha+\beta)^{-2}\beta \phi^R(0)+(\alpha+\beta)^{-1} (\eta-\eta^R)^{k+1},\\
 \bar \delta^{-k}\bar r &=\eta-\eta^R.
 \end{align*}
\end{enumerate}
Furthermore:
\begin{enumerate}[resume*]
 \item There exists a unique number $\eta_H^R(\mu)$, given by 
 \begin{align}\eqlab{etaHR}
 \eta_H^R(\mu) = \eta^R(\mu)+\left(\frac{\beta k\phi^R(0)}{(\alpha+\beta)(\alpha+\beta+1)}\right)^{1/(k+1)}.
 \end{align}
 such that for $\eta=\eta_H^R(\mu)$ and any $\mu\ge 0$ the equilibrium $\overline z^R$ undergoes a sub-critical Hopf bifurcation. Therefore:
  \begin{enumerate}[label=(\alph{enumi}.\arabic*)]
 \item  \label{limitcycleHopfR} There exists a $c>0$ sufficiently small such that for $\eta \in (\eta_H^R(\mu),\eta_H^R(\mu)+c]$ there exists a family of locally unique hyperbolic and repelling limit cycles. 
 \item $\overline z^R$ is hyperbolic and attracting (repelling) for $\eta>\eta^R_H(\mu)$ ($\eta\in (\eta^R(\mu),\eta_H^R(\mu))$, respectively). 
 \end{enumerate}
 \item  \label{etaHet0RPos} There exists a unique number $\eta_{Het,0}^R>0$ (independent of $\mu$)  such that if $\eta_{\text{Het}}^R(\mu) = \eta^R(\mu)+\eta_{\text{Het},0}^L$ then the following holds: For $\eta=\eta_{Het}^R(\mu)$ the system undergoes a heteroclinic bifurcation where the unique center/stable manifold $\overline C^R$ of $\overline q_{r}^R$ coincides with the unstable manifold $\overline U^R$ of $\overline q_{w}^R$. The intersection is transverse in the $((\bar r,\bar y,\bar \delta),\eta)$-space and:
  \begin{enumerate}[label=(\alph{enumi}.\arabic*)]
 \item For $\eta<\eta_{Het}^R(\mu)$ the unstable manifold $\overline U^R$ is forward asymptotic to $\overline q^R_f$. Furthermore, there exists a $c>0$ sufficiently small such that for $\eta \in [\eta_{Het}^R(\mu)-c,\eta_{Het}^R(\mu))$ there exists a family of locally unique hyperbolic and repelling limit cycles.
 \item For $\eta>\eta_{Het}^R(\mu)$ the center/stable manifold $\overline C^R$ is backward asymptotic to $\overline q_{s}^R$. 
\end{enumerate}
%
\end{enumerate}
\end{proposition}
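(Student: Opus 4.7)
The plan is to mirror the proof of \propref{prop1} step by step, exploiting the fact that the system on $\overline S^R$ has the same geometric structure as on $\overline S^L$ after an orientation reversal in $\bar y$ and the replacement $\alpha \leftrightarrow \alpha+\beta$, together with the exchange $\overline q_w \leftrightarrow \overline q_s$. Concretely, I would first write out the vector field $\widehat X_3 = \rho_3^{-k(k+1)} \Psi_3^*(X_3)$ in each of the three directional sub-charts of the sphere $\overline S^R$, namely $\bar r_3=1$ (exterior), $\bar \delta_3=1$ (the scaling chart that covers the interior of $\overline S^R$), and $\bar y_3 = \pm 1$ (polar caps). The weights in \eqref{Psi3} have been chosen so that after division by $\rho_3^{k(k+1)}$ the vector field is polynomial and nontrivial on $\rho_3=0$, and the flow on the invariant sphere $\{\rho_3=0\}$ is then obtained by setting $\rho_3=0$ in each sub-chart. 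Direct substitution of each of the candidate equilibria $\overline q_w^R, \overline q_s^R, \overline q_f^R, \overline q_r^R, \overline a^R, \overline b^R, \overline z^R$ then verifies their location, and computation of the Jacobians (in the appropriate sub-chart where the point is visible) classifies them as announced; the computations for $\overline q_r^R$ being only partially hyperbolic on account of the blow-up weights on the $\bar y$ direction, so that its center manifold must be constructed via a formal series expansion in $\bar r$, and uniqueness of $\overline C^R_{\text{loc}}$ for $\eta > \eta^R(\mu)$ follows by tracking the sign of the reduced flow on $\overline C_{r,2}$ near $\overline q_r^R$.

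Next, for the Hopf bifurcation at $\overline z^R$, I would work in the scaling chart $\bar \delta_3=1$, where $\overline z^R$ is an interior equilibrium of the reduced system on $\{\rho_3=0\}$. Linearizing and imposing the trace-zero condition yields the explicit formula \eqref{etaHR} for $\eta_H^R(\mu)$. Sub-criticality then amounts to verifying that the first Lyapunov coefficient at $\eta = \eta_H^R(\mu)$ is positive; this is a standard, if lengthy, algebraic computation on the normal form that proceeds exactly as in the proof of \propref{prop1}, since the only role of $\alpha$ there (as opposed to $\alpha+\beta$ here) is a shift of coefficients. The existence of the family of repelling periodic orbits in \ref{limitcycleHopfR} is then an immediate consequence of the sub-critical Hopf theorem.

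The main obstacle, as in \propref{prop1}, is item \ref{etaHet0RPos}: the existence, uniqueness, and transversality of a value $\eta_{Het,0}^R > 0$ (independent of $\mu$) at which $\overline C^R$ coincides with $\overline U^R$. Observe first that after subtracting the $\mu$-dependent drift through the scaling $\eta - \eta^R(\mu)$ the connection problem on $\overline S^R$ depends only on this shifted variable, so it suffices to study it at $\mu=0$. For existence I would study two extremes: when $\eta$ is close to $\eta^R(\mu)$ from above, the equilibrium $\overline z^R$ is born from $\overline q_r^R$ and $\overline U^R$ is forward asymptotic to $\overline q_f^R$ (winding past a small-amplitude, repelling Hopf cycle); for $\eta \gg \eta^R(\mu)$ a Lienard-type transformation reduces the dynamics on $\overline S^R$ to a perturbation of a system for which Cherkas' criterion \cite[Thm.~3, p.~265]{perko2001a} excludes limit cycles, forcing $\overline U^R$ and $\overline C^R$ to separate in the opposite topological sense. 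A continuity argument across this family then yields at least one heteroclinic value. For uniqueness and transversality I would compute a Melnikov integral along the heteroclinic, analogous to the one used for $\overline S^L$, and show that its derivative with respect to $\eta$ has a definite sign; this combined monotonicity will give both transversality of the intersection in $((\bar r,\bar y,\bar\delta),\eta)$-space and uniqueness of $\eta_{Het,0}^R$.

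Once \ref{etaHet0RPos} is secured, items (h.1) and (h.2) follow by standard $\lambda$-lemma arguments near the heteroclinic: on one side of $\eta_{Het,0}^R$ the unstable manifold $\overline U^R$ falls into the basin of the stable node $\overline q_f^R$, while on the other side $\overline C^R$ crosses the invariant half-circle $\bar\delta=0$ and becomes backward asymptotic to the unstable node $\overline q_s^R$. The family of repelling limit cycles in the heteroclinic case comes from a standard homoclinic-like bifurcation analysis at the saddle $\overline q_w^R$ (with nearby sub-critical Hopf), which follows verbatim from the corresponding argument for $\overline S^L$ given in \appref{proofprop1}.
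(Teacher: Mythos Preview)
Your plan is sound and would work: the chart-by-chart analysis, the trace-zero computation for $\eta_H^R$, the Lyapunov coefficient, the two-extreme continuity argument (Cherkas for $\eta\gg\eta^R(\mu)$, near-$\eta^R$ for the other side), and the Melnikov monotonicity all carry over from the $\overline S^L$ case with only sign and coefficient changes. However, the paper takes a genuinely shorter route. Rather than redoing the entire analysis on $\overline S^R$, it proves in \appref{transformation} a single lemma: there is an explicit parameter substitution (essentially sending $(\alpha,\beta,\eta,\mu,\phi^R(0))$ to $(\alpha^{-1}-\beta,\beta,-\eta,-\mu\alpha^{-2},\phi^L(0)\alpha^{-3})$, together with a linear change of $(x,y)$ and a time rescaling) under which the desingularized vector field $\widehat X_3$ on $\overline S^R$ becomes smoothly conjugate to $\widehat X_1$ on $\overline S^L$. \propref{prop2} is then an immediate corollary of \propref{prop1}, and the remark following the lemma checks that \eqref{etaHR} is indeed the image of \eqref{etaHL} under this substitution.

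What your approach buys is self-containment: every claim about $\overline S^R$ is verified directly, and the informal symmetry you allude to (``$\alpha\leftrightarrow\alpha+\beta$ plus $\bar y\mapsto-\bar y$'') is made rigorous by brute force. What the paper's approach buys is economy and structural insight: once the conjugacy is established, all of items (a)--(h) transfer automatically, including the delicate global parts (existence, uniqueness, and transversality of the heteroclinic, and the sub-criticality of the Hopf) without repeating any of the nullcline analysis, the Lienard reduction, or the Melnikov computation. If you were to write this up, it would be worth first attempting to formalize your opening observation about the symmetry as an actual conjugacy, since that is exactly what the paper does and it collapses the rest of your outline to a one-line deduction.
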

In \figref{pRblowup} we suppose that $\eta_{H}^R(\mu)<\eta_{Het}^R(\mu)$. We also do not have a proof of this global property but computations again suggest that this is the case. 
The proof of \propref{prop2} follows from \propref{prop1}, see \appref{transformation}.


\begin{figure}[h!] 
\begin{center}
\subfigure[$\eta^R<\eta_{H}^R<\eta_{Het}^R<\eta$]{\includegraphics[width=.4\textwidth]{./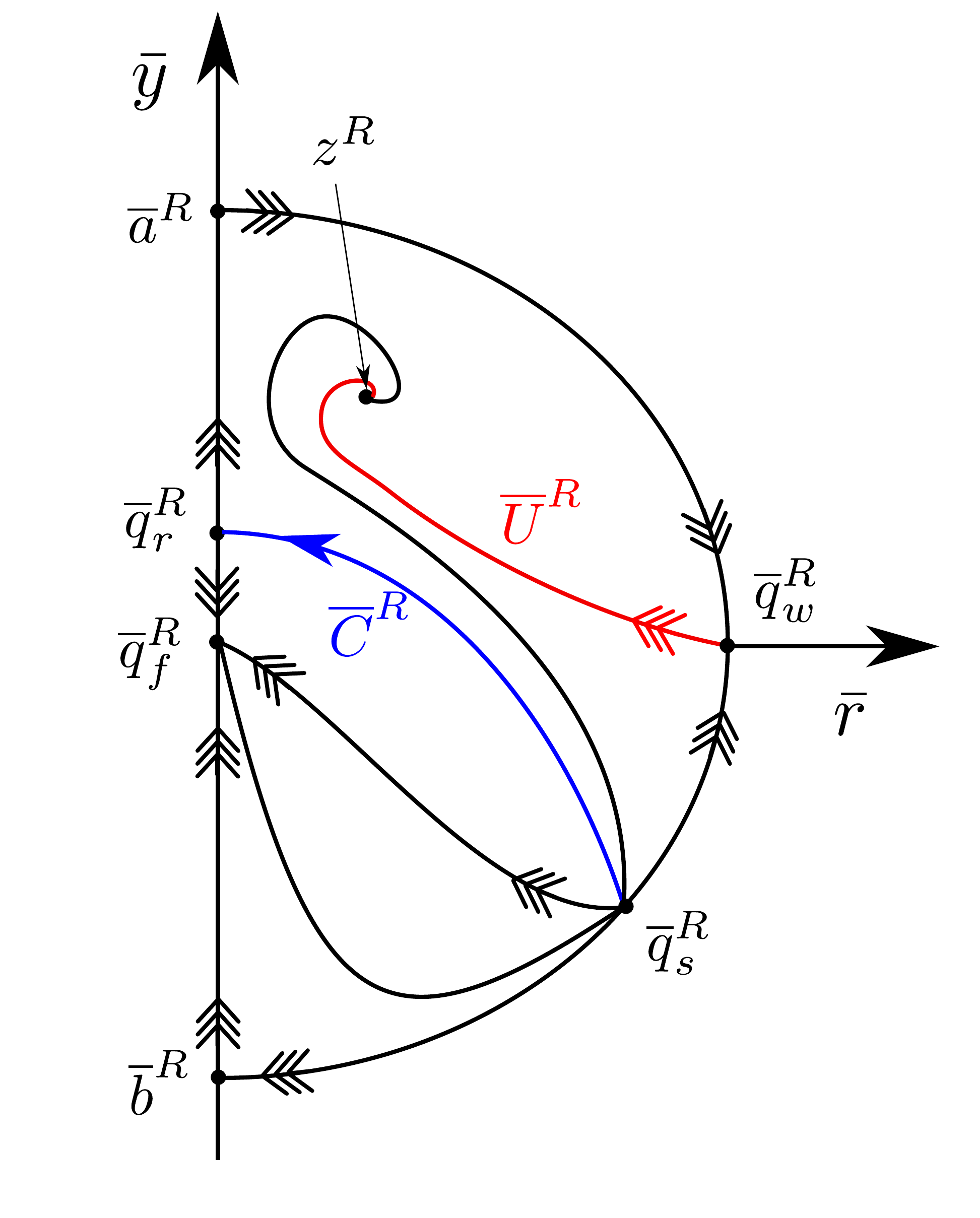}}
\subfigure[$\eta^R<\eta_{H}^R<\eta=\eta_{Het}^R$]{\includegraphics[width=.4\textwidth]{./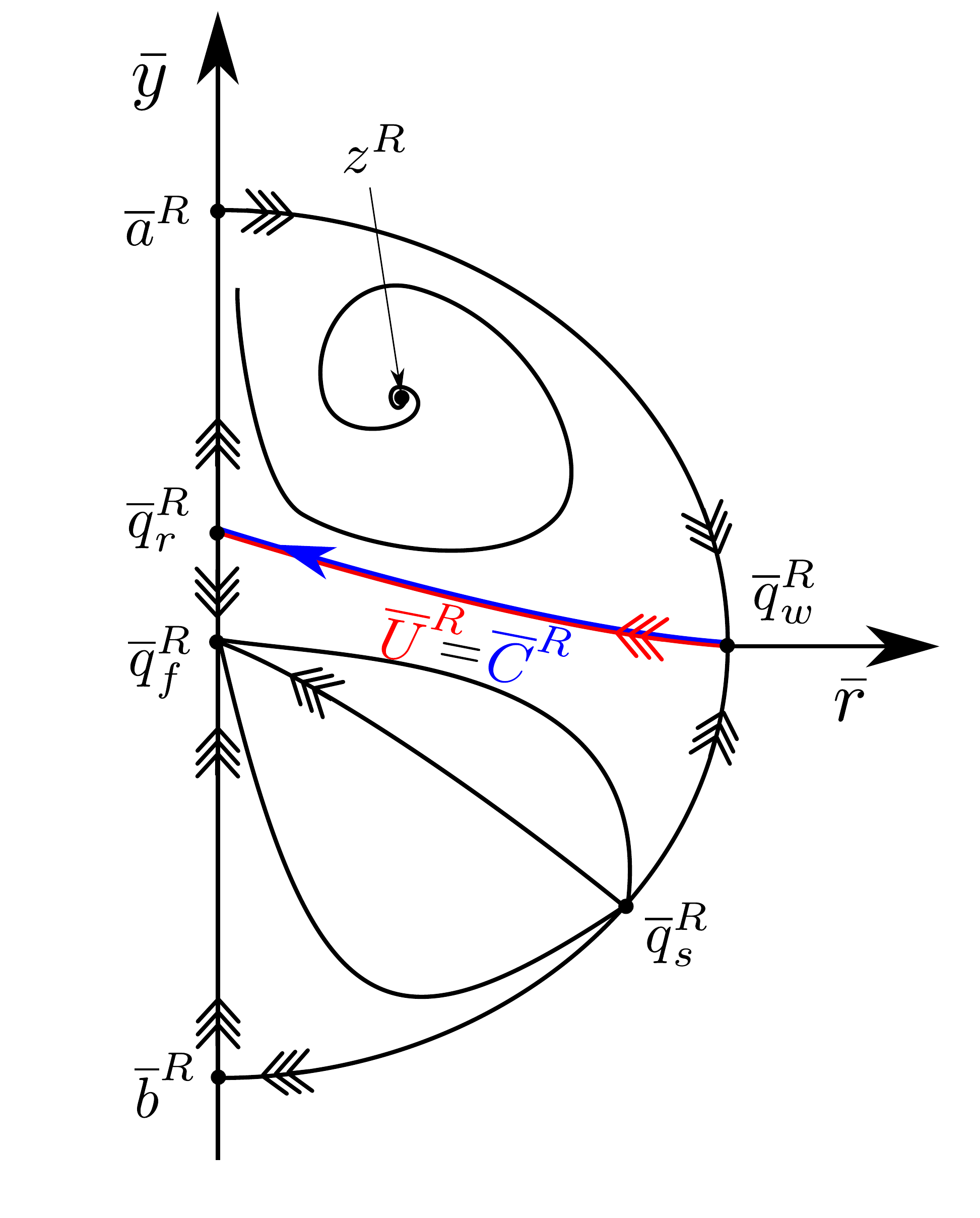}}
\subfigure[$\eta^R<\eta_{H}^R<\eta<\eta_{Het}^R$]{\includegraphics[width=.4\textwidth]{./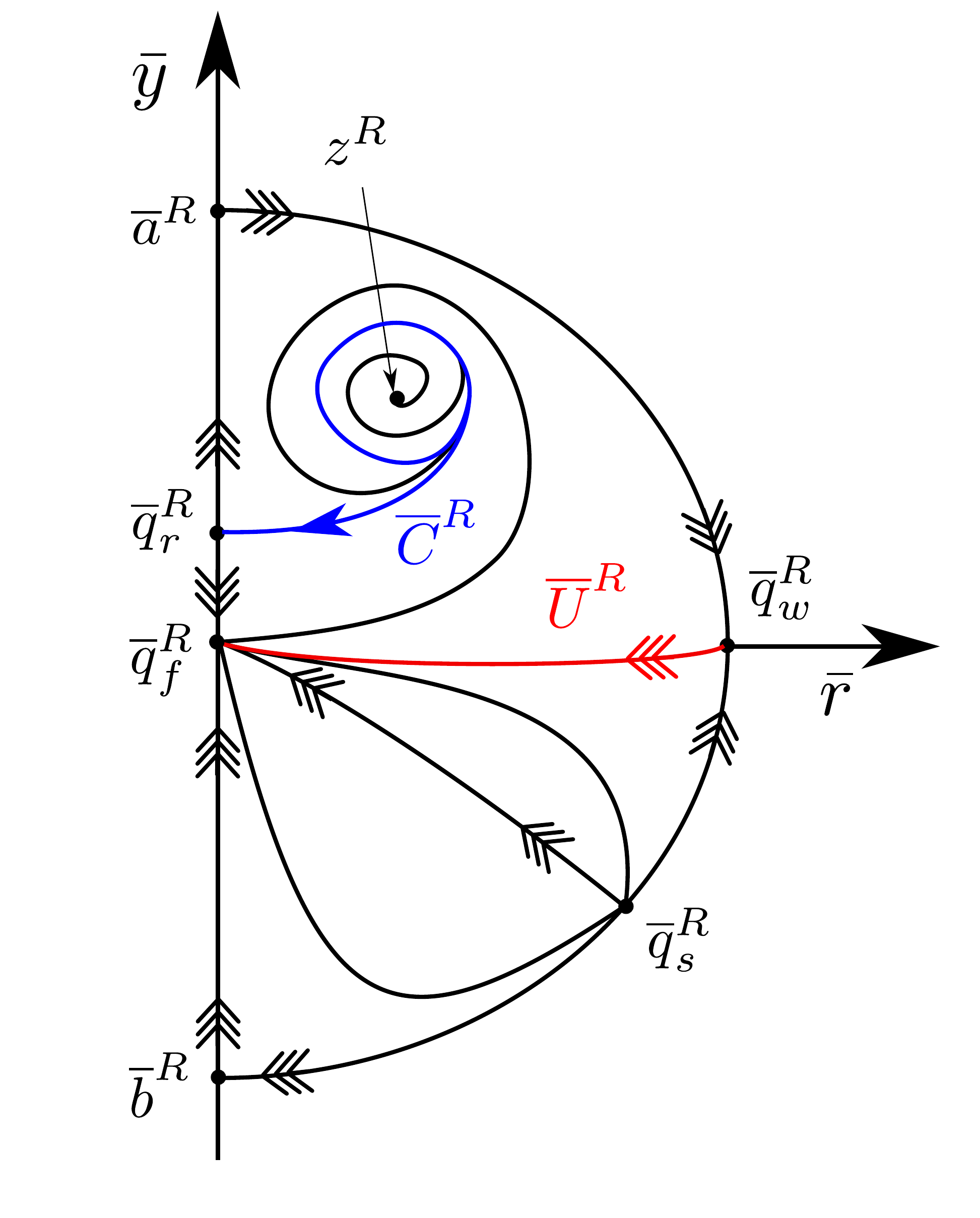}}
\subfigure[$\eta^R<\eta<\eta_{H}^R<\eta_{Het}^R$]{\includegraphics[width=.4\textwidth]{./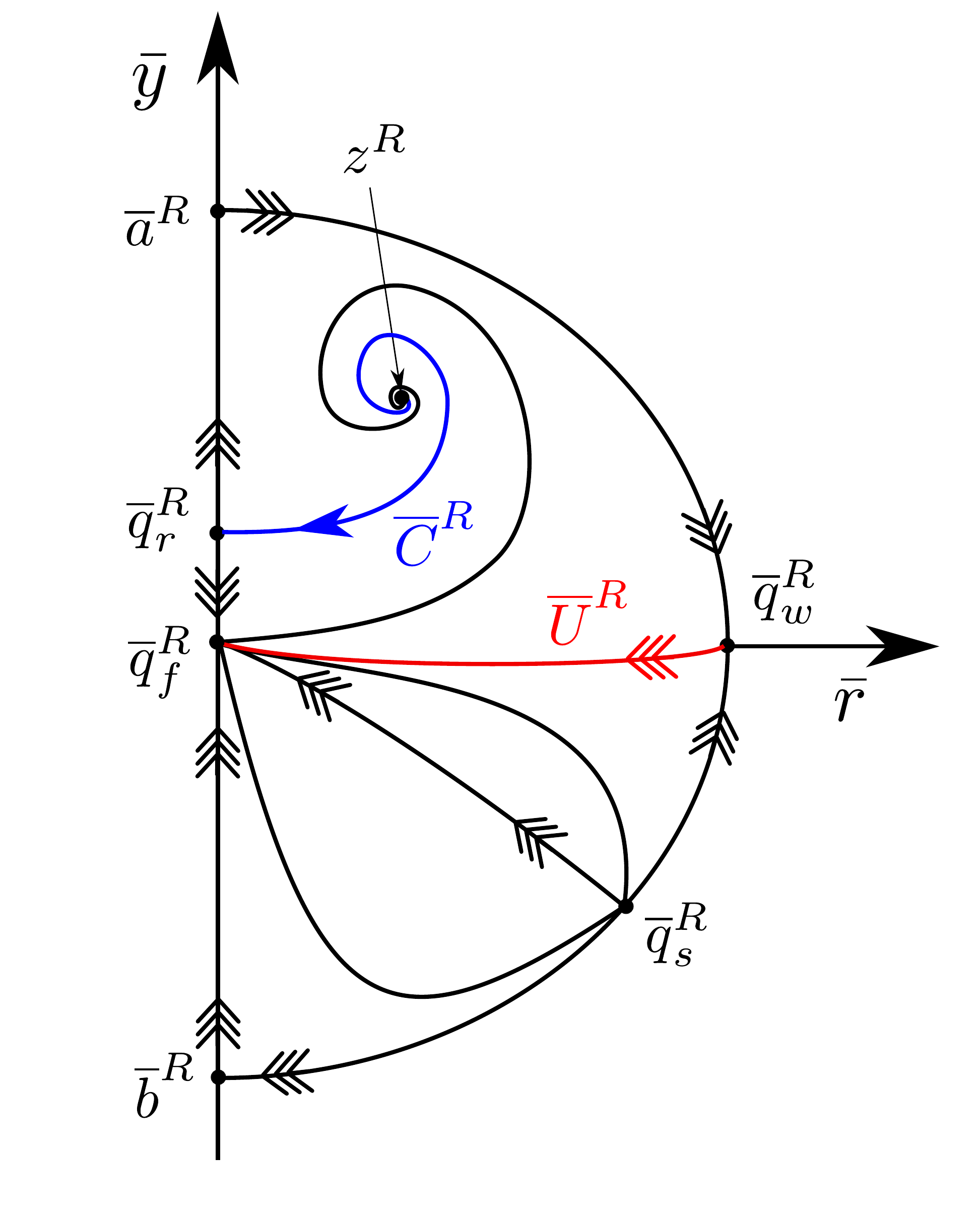}}
\end{center}
 \caption{Dynamics on $\overline S^R$ for different values of $\eta$ as described by \propref{prop2}.}
\figlab{pRblowup}
\end{figure}

\begin{remark}
In \figref{pLR}(a) and (b), we illustrate the dynamics on $\overline S^L$ and $\overline S^R$ in the cases where $\eta>\eta^L(\mu)$ and $\eta<\eta^R(\mu)$. In these cases, $\overline z^L$ and $\overline z^R$ have disappeared following the bifurcation that occurs at $\eta=\eta^L(\mu)$ and $\eta = \eta^R(\mu)$, respectively, where $\overline z^{L/R}$ coincide with $\overline q_{r}^{L/R}$, respectively, for any $\mu\ge 0$. As a consequence, the direction of the flow on the center manifolds change at $\eta=\eta^{L/R}(\mu)$. Recall also that the slow flow on the critical manifold changes, see \eqref{slowFlow2}, at the same value of $\eta=\eta^{L/R}(\mu)$. Therefore the center manifolds $\overline C^{L/R}$ are nonunique for $\eta \gtrless \eta^{L/R}(\mu)$, respectively.  In either of these cases, the unstable manifolds $\overline U^{L/R}$, as trajectories, are always forward asymptotic to $\overline q_f^{L/R}\in \overline p^{L/R}$, respectively.
\end{remark}
\begin{figure}[h!] 
\begin{center}
\subfigure[$\eta^L<\eta$]{\includegraphics[width=.4\textwidth]{./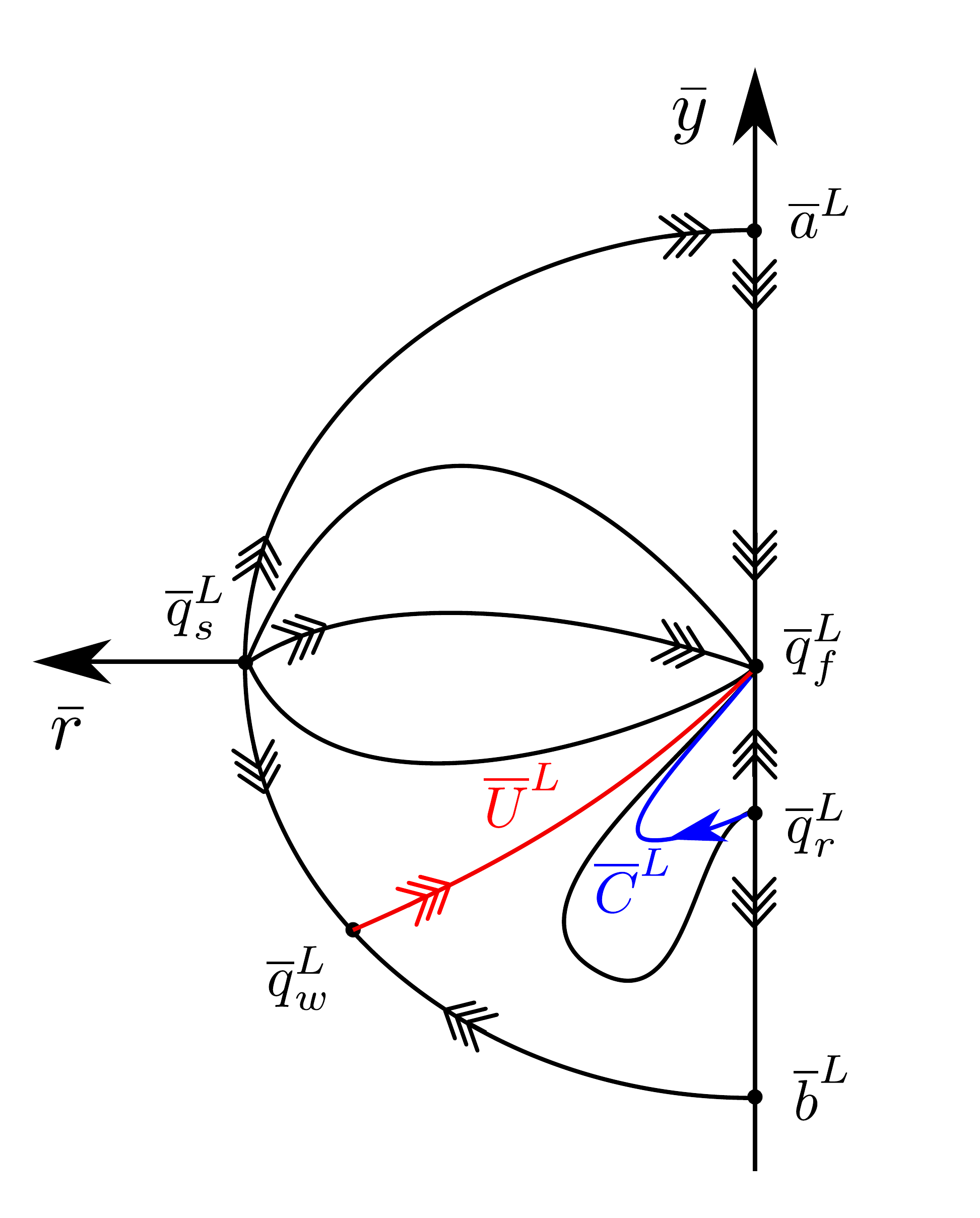}}
\subfigure[$\eta<\eta^R$]{\includegraphics[width=.4\textwidth]{./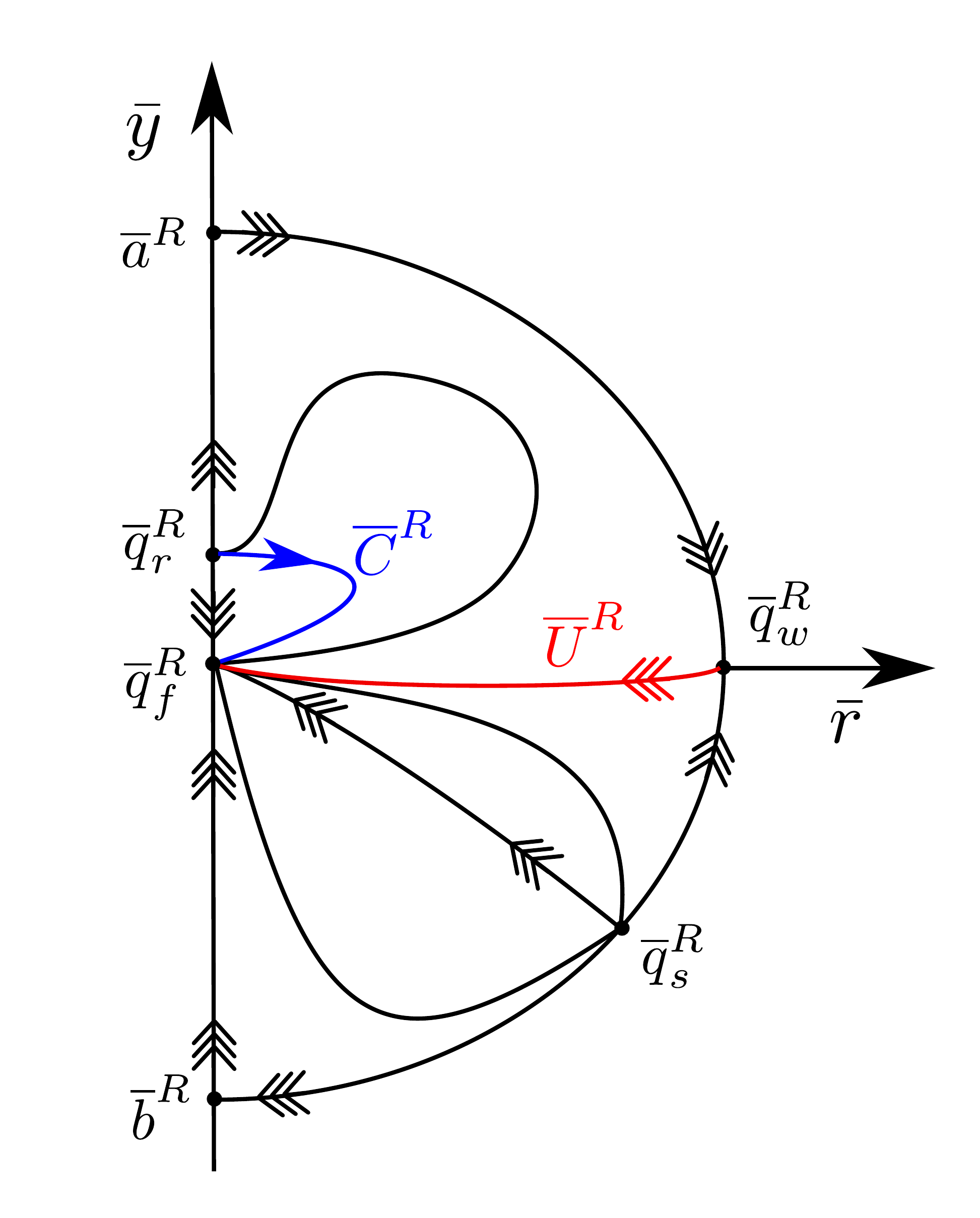}}
\end{center}
 \caption{Dynamics on $\overline S^{L}$ (a) and $\overline S^R$ (b) in the cases $\eta>\eta^L$ and $\eta<\eta^R$, respectively. Here $\overline C^{L/R}$ are both nonunique. Also $\overline U^{L/R}$, $\overline C^{L/R}$ all converge as trajectories to $\overline q_f^{L/R}$, respectively. }
\figlab{pLR}
\end{figure}
%
%
%
%
%

%


\section{Proof of \thmref{mainThm}}\seclab{proof}
To prove \thmref{mainThm} we now combine the results in \secref{blowup} and \secref{sphere} which are illustrated in \figref{relax}. 
First, we notice that \eqref{numbers} follows from \propref{prop1} \ref{etaHet0LNeg} and \propref{prop2} \ref{etaHet0RPos}. The existence of a unique $\mu_*$ : $\eta^L_{Het}(\mu_*)=\eta^R_{Het}(\mu_*)$ in \thmref{mainThm} \ref{item3} is then a simple geometric consequence of the slope $(\alpha+\beta)^{-1}$ of $\eta^R_{\text{Het}}(\mu)$ being smaller than the positive slope $\alpha^{-1}$ of $\eta^L_{Het}(\mu)$, see \eqref{eta1Leta1R} and \figref{etaLRHet}.


%
We will now prove item \ref{item4} and therefore consider the case when $\eta_1\in (\eta^L_{\text{Het}}(\mu_1),\eta^R_{\text{Het}}(\mu_1))$. In the following, we again suppress the subscripts on $\eta_1$ and $\mu_1$. 
By our blow-up approach we identify a closed cycle $\overline \Gamma_0$ with improved hyperbolicity properties. See \figref{relax}. The points $\overline q_w^{L/R}$ and $q_{f}^{L/R}$ are each hyperbolic, recall \propref{prop1} and \propref{prop2}. We therefore let $\Sigma$ be a small section transverse to $\Gamma_0^L$ within $x<1$ and then describe the return mapping $P$ to $\Sigma$ using local hyperbolic methods near $\overline q_w^{L/R}$, $\overline q_{f}^{L/R}$ and $\overline E^{L/R}$. Before providing more details, we note that from the diagram in \figref{relax}, it is almost obvious that $P$ is well-defined for $0<\varepsilon\ll 1$. In particular, for $0<\varepsilon\ll 1$ the forward flow of $\Sigma$ is initially close to the stable manifold of $\overline q_w^L$. Therefore by the contraction within this $2D$ stable manifold, along with similar contractions near $\overline q_f^L$, $\overline q_w^R$, and $\overline q_f^R$, it is clear that for any $c>0$: (i) the image of $P(\Sigma)$ is closer to $\Gamma_0\cap \Sigma$ than $c$ and (ii) $P$ is a Lipschitz map with Lipschitz constant smaller than $c$, for all $\varepsilon>0$ sufficiently small. The attracting limit cycle (i.e. the relaxation oscillation) is then obtained as the unique fix-point of $P$ on $\Sigma$, by the contraction mapping theorem.  


We now provide further details. For this we first use regular perturbation theory up close to $\overline q_w^L$. We do this in charts $\bar x=-1$ and $\bar r=1$, see \eqref{barXN1New} and \eqref{barR1pL} in \appref{proofprop1}, using the local coordinates $(\rho_1,y_1,\delta_1)$ to describe the blow-up \eqref{Psi1}:
\begin{align*}
 x &= 1-\rho_1^{k(k+1)},\\
 y &= y^L+\rho_1^{k(k+1)} y_1,\\
\sigma  &= \rho_1^{k+1} \delta_1.
\end{align*}
In these coordinates, $\overline q_w^L$ is given by $(\rho_1,y_1,\delta_1)=(0,-\alpha^{-1}(1-\alpha),0)$ and denoted $q_{w,1}^L$. 
Here we then apply the following local result.
\begin{figure}[h!] 
\begin{center}
{\includegraphics[width=.9\textwidth]{./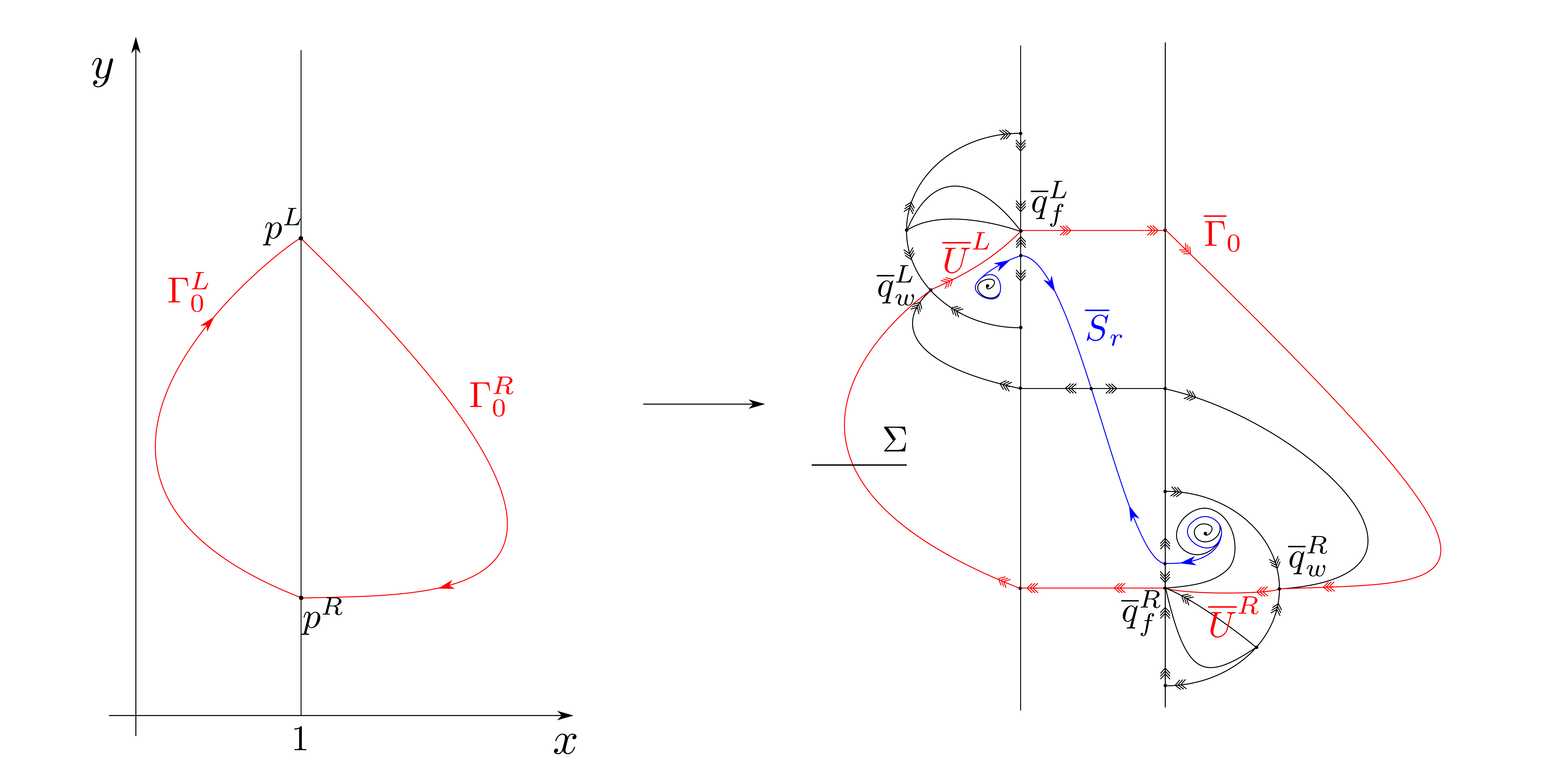}}
\end{center}
 \caption{Blowup of the singular cycle for $\eta\in (\eta_{Het}^L(\mu),\eta_{Het}^R(\mu))$. Here $\eta$ and $\mu$ are actually $\eta_1$ and $\mu_1$ in \eqref{eta1mu1}.}
\figlab{relax}
\end{figure}
\begin{lemma}\lemmalab{Plocal1}
 Let $B=-\alpha^{-1}(1-\alpha)$ be the $y_1$-value of $q_{w,1}^L$,  and set 
 \begin{align*}
 \Sigma_{w,1}^{L,\text{in}} &= \{(\rho_1,y_1,\delta_1) \vert \rho_1 =\xi,\,y_1 \in [B-\upsilon,B+\upsilon], \delta_1 \in [0,\nu]\},\\
 \Sigma_{w,1}^{L,\text{out}}&= \{(\rho_1,y_1,\delta_1) \vert \rho_1 \in [0,\xi],\,y_1 \in [B-\upsilon,B+\upsilon], \delta_1 =\nu\},
  \end{align*}
  for appropriately small $\xi>0,\upsilon>0$ and $\nu>0$ and consider the associated mapping $$P_1:\Sigma_{w,1}^{L,\text{in}}\rightarrow \Sigma_{w,1}^{L,\text{out}},\,(\rho_1,y_1,\delta_1)\mapsto (\rho_{1+},y_{1+},\delta_{1+})$$ obtained by the first intersection of the forward flow. Then there exists two locally defined $C^1$-functions $H(y_1,\delta_1^k,\sigma)$ and $\tilde H(y_1,\delta_1^k,\sigma)$ such that 
  \begin{enumerate}[label=(\alph*)]
   \item $y_1=H(\tilde y_1,\delta_1^k,\sigma) \Leftrightarrow \tilde y_1 = \tilde H(y_1,\delta_1^k,\sigma)$ locally.
     \item $H(0,0,0)=B$ and the smooth graph $y_1 = H(0,\delta_1^k,0)$, $\rho_1=0$, $\delta_1\in [0,\nu]$ is $U^L_{loc,1}$.
  \item  
  \begin{align*}
   P_1(\rho_1,y_1,\delta_1) = \begin{pmatrix}
                               \left(\delta_1\nu^{-1}\right)^{1/(k+1)}\xi\\
                               H\left(\left(\delta_1\nu^{-1}\right)^{k\vert B\vert}\tilde H(y_1,\delta_1^k,\rho_1^{k+1}\delta_1),\nu^k,\rho_1^{k+1}\delta_1\right)\\
                               \nu
                              \end{pmatrix}.
                                \end{align*}
                   \item             In particular,
                                \begin{align*}
                                 P_1(\rho_1,y_1,0) = U_{loc,1}^L\cap \Sigma_{w,1}^{L,\text{out}}
                                \end{align*}
and the restricted mapping: $y_1\mapsto P(\rho_1,y_1,\delta_1)$, for each fixed $\delta_1$, has a Lipschitz constant $L_1(\delta_1)$ satisfying
\begin{align*}
 L_1(\delta_1)\le c \delta_1^{k\vert B\vert},
\end{align*}
with $c>0$ sufficiently large, for all $\delta_1 \in (0,\nu]$.  
\end{enumerate}
\end{lemma}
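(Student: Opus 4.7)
The plan is to prove Lemma~\ref{lemma:Plocal1} by a standard hyperbolic-saddle transition-map analysis in the blown-up coordinates, organized around the observation that $\sigma = \rho_1^{k+1}\delta_1$ is a conserved quantity for $\widehat X_1$ (since $\sigma = \varepsilon^{1/(k+1)}$ is a parameter of \eqref{xyFastExtDelta}). Thus the local three-dimensional flow near $q_{w,1}^L$ is foliated by the two-dimensional invariant leaves $\{\sigma = \text{const}\}$, on each of which $q_{w,1}^L$ acts as a planar hyperbolic saddle.

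First I would linearize $\widehat X_1$ at $q_{w,1}^L=(0,B,0)$ and extract the three eigenvalues: an unstable eigenvalue $\lambda_u > 0$ in the $\delta_1$-direction, a stable eigenvalue $\lambda_s < 0$ in the $y_1$-direction (tangent to the invariant half-circle $\bar\delta=0$), and, forced by the conservation law, a third eigenvalue $-\lambda_u/(k+1)$ in the $\rho_1$-direction. An explicit computation starting from \eqref{X1} together with the blow-up \eqref{Psi1} yields $|\lambda_s|/\lambda_u = k|B|$, the source of the exponent in the statement. The $\rho_{1+}$-component of $P_1$ then comes for free from conservation of $\sigma$: $\rho_{1+}^{k+1}\nu = \xi^{k+1}\delta_1$, so $\rho_{1+} = \xi(\delta_1/\nu)^{1/(k+1)}$.

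Second, for the $y_1$-component I would introduce a local $C^1$ change of coordinates $\tilde y_1 = \tilde H(y_1,\delta_1^k,\sigma)$, with local inverse $y_1 = H(\tilde y_1,\delta_1^k,\sigma)$, that straightens the local unstable manifold $U_{loc,1}^L$ to $\{\tilde y_1 = 0\}$ and, along each $\sigma$-leaf, reduces the $y_1$-dynamics to the linear contraction $\dot{\tilde y}_1 = \lambda_s\tilde y_1$. The dependence on $\delta_1^k$ and $\sigma$ (rather than on $\delta_1$ and $\rho_1$ separately) reflects the structure of \eqref{X1} after desingularization: the perturbation enters only through the combinations $\phi^L(\delta_1^{k+1})$ and $\rho_1^{k+1}\delta_1 = \sigma$. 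In the straightened coordinates the transit time $T$ to $\delta_{1+} = \nu$ satisfies $e^{\lambda_u T} = \nu/\delta_1$, hence
\[
\tilde y_{1+} = e^{\lambda_s T}\tilde y_1 = (\delta_1/\nu)^{k|B|}\tilde H(y_1,\delta_1^k,\sigma),
\]
and $y_{1+} = H(\tilde y_{1+},\nu^k,\sigma)$ reproduces the formula in the statement. Item (d) follows by setting $\delta_1 = 0$, which collapses the prefactor and leaves $y_{1+} = H(0,\nu^k,0)$, parametrizing $U_{loc,1}^L\cap\Sigma_{w,1}^{L,\text{out}}$. The Lipschitz bound on $y_1\mapsto P_1(\rho_1,y_1,\delta_1)$ is then the $y_1$-derivative of this composition, controlled by $(\delta_1/\nu)^{k|B|}$ times bounded derivatives of $H$ and $\tilde H$.

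The main technical difficulty lies in establishing the $C^1$ straightening with the stated polynomial dependence on $\delta_1^k$ and $\sigma$: a plain Hartman--Grobman argument only gives continuity, and $C^1$ conjugacy requires care about possible resonances between $\lambda_s$, $\lambda_u$, and $-\lambda_u/(k+1)$. My strategy would be to avoid requiring a full smooth linearization: one can always $C^\infty$-straighten the local stable and unstable manifolds, and a Shilnikov-type estimate comparing the nonlinear flow to its linear part along each $\sigma$-leaf then delivers the contraction factor $(\delta_1/\nu)^{k|B|}$ directly, as is standard in blow-up analyses such as \cite{krupa_extending_2001,kristiansen2018a}. This yields $C^1$-functions $H,\tilde H$ with the claimed dependence without invoking a full conjugacy and provides the Lipschitz estimate in (d).
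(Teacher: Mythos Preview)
Your approach is essentially that of the paper: exploit the conservation law $\sigma=\rho_1^{k+1}\delta_1$ to reduce to a planar $(y_1,\delta_1)$-system with $\sigma$ as a parameter, recognize the hyperbolic saddle at $(B,0)$, and read off the transition map from the linearized flow. The paper additionally divides the right-hand side by $(1-F_1)\approx\alpha$ so that $\dot\delta_1=\delta_1/k$ exactly, and then simply invokes the fact that \emph{every} planar hyperbolic equilibrium is locally $C^1$-linearizable (citing \cite{sternberg1958a,sell1985a}); this renders your final paragraph's concern about resonances among three eigenvalues unnecessary, since after passing to the $\sigma$-leaf only $\lambda_s$ and $\lambda_u$ remain and no non-resonance hypothesis is required in dimension two. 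Your Shilnikov-estimate workaround would also succeed but is not needed here.
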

\begin{proof}
By \propref{prop1} the mapping is described by a passage near a hyperbolic saddle. The result follows from partial linearization using \cite{sternberg1958a}, see further details in \appref{lemma5}.
\end{proof}
Subsequently, we use regular perturbation to track orbits up close to $\overline q_f^L$ by following $\overline U^L$. Near $\overline q_f^L$ we use charts $\bar x=-1$,  $\bar \delta=1$, see \eqref{barXN1New} and \eqref{bard1pL}, and the local coordinates $(\rho_2,r_2,y_2)$ to describe the blow-up \eqref{Psi1}:
\begin{align*}
 x &= 1+\rho_2^{k(k+1)}r_2,\\
 y &= y^L+\rho_2^{k(k+1)} y_2,\\
\sigma  &= \rho_2^{k+1}.
\end{align*}
In these coordinates, $\overline q_f^L$ is given by $(\rho_2,r_2,y_2)=(0,0,0)$  and denoted $q_{f,2}^L$. 
Here we then have the following local result.
\begin{lemma}\lemmalab{Plocal2}
 Let 
 \begin{align*}
 \Sigma_{f,2}^{L,\text{in}} &= \{(\rho_2,r_2,y_2) \vert \rho_2 \in [0,\xi],\,y_2 \in [-\upsilon,\upsilon], r_2 =\nu\},\\
 \Sigma_{f,2}^{L,\text{out}}&= \{(\rho_2,r_2,y_2) \vert \rho_2 =\xi,\,y_2 \in [-\upsilon,\upsilon], r_2 \in [0,\nu]\},
  \end{align*}
  for appropriately small $\xi>0,\upsilon>0$ and $\nu>0$ and consider the associated mapping $$P_2:\Sigma_{f,2}^{L,\text{in}}\rightarrow \Sigma_{f,2}^{L,\text{out}},\,(\rho_2,r_2,y_2)\mapsto (\rho_{2+},r_{2+},y_{2+})$$ obtained by the first intersection of the forward flow. Then there exists two (new) locally defined smooth functions $H(y_2,r_2)$ and $\tilde H(y_2,r_2)$ such that 
  \begin{enumerate}[label=(\alph*)]
%
   \item $y_2=H(\tilde y_2,r_2) \Leftrightarrow \tilde y_2 = \tilde H(y_2,r_2)$ locally.
  \item $H(\tilde y_2,r_2) = \tilde y_2+\mathcal O(r_2^{k+1})$ and $\tilde H(y_2,r_2) = y_2+\mathcal O(r_2^{k+1})$
  \item  \begin{align*}
   P_2(\rho_2,r_2,y_2) = \begin{pmatrix}
                           \xi\\
                              \left(\rho_2\xi^{-1}\right)^{k+1}r_2\\
                               H\left(\left(\rho_2\xi^{-1}\right)^{k(k+1)}\tilde H(y_2,r_2)+\mathcal O (\rho_2^{k+1} \log \rho_2^{-1}) ,\left(\rho_2\xi^{-1}\right)^{k+1}r_2\right)
                              \end{pmatrix},                              
  \end{align*}
  with the $\mathcal O$-term being Lipschitz with respect to $y_2$ with a Lipschitz constant $\mathcal O (\rho_2^{k+1} \log \rho_2^{-1})$.
  \item In particular, 
  \begin{align*}
   P_2(0,r_2,y_2) = \begin{pmatrix}
                     \xi\\
                     0\\
                     0
                    \end{pmatrix},
  \end{align*}
and the restricted mapping $y_2\mapsto P_2(\rho_2,r_2,y_2)$, for each fixed $\rho_2$, has a Lipschitz constant $L_2(\rho_2)$ satisfying
\begin{align*}
L_2(\rho_2) \le c \rho_2^{k+1} \log \rho_2^{-1},
\end{align*}
with $c>0$ sufficiently large, for all $\rho_2 \in (0,\xi]$. 
\end{enumerate}

\end{lemma}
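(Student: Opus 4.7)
I will follow the template established in the proof of Lemma~\ref{lemma:Plocal1}: write down the desingularized vector field in the coordinates $(\rho_2,r_2,y_2)$ near $q_{f,2}^L$, identify the spectrum, apply a partial linearization, and then compute the transition map explicitly from the resulting flow.

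First, I would substitute the local coordinate expressions $x=1+\rho_2^{k(k+1)}r_2$, $y=y^L+\rho_2^{k(k+1)}y_2$, $\sigma=\rho_2^{k+1}$ into \eqref{xyFastExtDelta} and divide out the common factor arising from the blow-up weights in \eqref{Psi1} (the analogue of dividing by $\delta_1^{k+1}$ in chart $\bar x=-1$ to obtain \eqref{X1}). The linearization of the resulting vector field at the origin is diagonal, with one positive eigenvalue $\lambda_\rho>0$ in the $\rho_2$-direction (the direction transverse to the blow-up sphere $\overline S^L$) and two negative eigenvalues $\lambda_r=-(k+1)\lambda_\rho$ and $\lambda_y=-k(k+1)\lambda_\rho$ along the $r_2$- and $y_2$-axes, respectively; these ratios are forced by the weights in \eqref{Psi1} and are consistent with the exponents $(\rho_2/\xi)^{k+1}$ and $(\rho_2/\xi)^{k(k+1)}$ that appear in the statement. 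The fact that $\overline q_f^L$ is a hyperbolic stable node on $\overline S^L$, asserted in \propref{prop1}, guarantees these signs for the two contracting directions.

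Next, because $\lambda_y=k\lambda_r$ (and $\lambda_r=(k+1)\lambda_\rho$), integer resonances among the eigenvalues rule out a fully smooth linearization. I would therefore invoke Sternberg's theorem \cite{sternberg1958a} to produce a $C^N$ (with $N$ as large as needed) change of coordinates that linearizes the $\rho_2$-direction, reduces the $(r_2,y_2)$-block to a resonant normal form retaining finitely many monomials such as $r_2^k$ and $\rho_2^{k+1}r_2^{k+1}$ in the $y_2$-equation, and straightens the local stable manifold of $q_{f,2}^L$. In normal-form coordinates the flow integrates explicitly: $\rho_2(t)=\rho_2(0)e^{\lambda_\rho t}$, $r_2(t)=r_2(0)e^{\lambda_r t}$, while $y_2(t)$ is recovered by variation of parameters with the resonant monomials acting as inhomogeneities. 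The transition time $T=\lambda_\rho^{-1}\log(\xi/\rho_2(0))$ is determined by the exit condition $\rho_2(T)=\xi$; substitution yields the factors $(\rho_2/\xi)^{k+1}$ and $(\rho_2/\xi)^{k(k+1)}$ claimed, while the resonant monomials (each contributing a factor of $T$ through Duhamel) produce precisely the logarithmic correction $\mathcal O(\rho_2^{k+1}\log \rho_2^{-1})$ that appears additively inside $H$.

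Finally, the functions $H$ and $\tilde H$ encode the (near-identity) nonlinear change between normal form and original coordinates restricted to the flow-invariant manifold $\{\rho_2=0\}$; the bound $H(\tilde y_2,r_2)=\tilde y_2+\mathcal O(r_2^{k+1})$ follows because the Sternberg transformation is the identity to leading order and the first nontrivial correction in $r_2$ appears at the order dictated by the smallest resonant monomial surviving in the normal form. The Lipschitz bound on $y_2\mapsto P_2(\rho_2,r_2,y_2)$ is then obtained by differentiating the explicit normal-form solution with respect to $y_2(0)$ and summing the contributions from all resonant monomials coupling $y_2$ to itself. The hard part is this last step: a purely linear calculation produces only a $\rho_2^{k(k+1)}$ Lipschitz estimate, and the coarser bound $c\,\rho_2^{k+1}\log \rho_2^{-1}$ in the statement is dictated by the resonance structure; one must verify that this dominates all genuinely present $\log$ contributions uniformly in $(\eta,\mu)\in V_1$, using smooth parameter dependence of the Sternberg transformation. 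This is the main technical obstacle, but it parallels the estimate obtained in Lemma~\ref{lemma:Plocal1} for $\overline q_w^L$ and should therefore succeed by the same methods.
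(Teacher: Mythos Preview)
Your approach is correct in outline but takes a more elaborate route than the paper. You attempt a full three-dimensional Sternberg normal form at $q_{f,2}^L$, cataloguing the resonances among $(\lambda_\rho,\lambda_r,\lambda_y)=(\tfrac{1}{k+1},-1,-k)$ and integrating the surviving resonant monomials by Duhamel. The paper instead performs only a \emph{partial} linearization, following \cite[Prop.~2.11]{krupa_extending_2001}: it first divides by $r_2^{k+1}-F_2(\rho_2^{k+1},y_2)\approx \tfrac{\beta}{\alpha}\phi^L(0)$, so that $\dot\rho_2=\tfrac{1}{k+1}\rho_2$ and $\dot r_2=-r_2$ become \emph{exactly} linear, and then linearizes only the two-dimensional $(r_2,y_2)$-subsystem on the invariant set $\{\rho_2=0\}$. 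Since the nonlinearity in the $y_2$-equation starts at order $r_2^{k+1}$, the sole potential resonance $r_2^k$ (from $-k=k\cdot(-1)$) is absent, and a smooth linearizing transformation $y_2=H(\tilde y_2,r_2)=\tilde y_2+\mathcal O(r_2^{k+1})$ exists. Applying this same $\rho_2$-independent transformation to the full system leaves a remainder $\dot{\tilde y}_2=-k\tilde y_2+r_2^{k+1}\rho_2^{k+1}G(\rho_2,r_2,y_2)$ with $G$ bounded, and the claimed estimate follows by direct integration over the transition time $T=(k+1)\log(\xi/\rho_2)$.

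The paper's route is shorter because it never needs the three-dimensional resonance lattice at all: the invariance of $\{\rho_2=0\}$ together with smooth linearizability of the restricted stable node suffices. Your route would also succeed, but note a small slip: the monomial $r_2^k$ you list as ``retained'' is in fact absent from the original equations and is not generated by the normal-form procedure, so it should not appear in your final normal form (its absence is precisely why the paper can claim a \emph{smooth} $H$). This does not damage your argument, since including it would only strengthen the estimate.
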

\begin{proof}
By \propref{prop1} the mapping is described by a passage near a hyperbolic saddle. The result follows from partial linearization using \cite{sternberg1958a}, see further details in \appref{lemma6}.
\end{proof}
At $\Sigma_{f,2}^{L,\text{out}}$ we can blow down to the coordinates $(r_1,y_1,\delta_1)$ in chart $\bar x=-1$, see \eqref{barXN1New}.
Then transforming the result into $\bar \delta=1$, see \eqref{baru1}, and applying regular perturbation theory we end up in chart $\bar x=1$ with coordinates $(r_3,y_3,\delta_3)$, see \eqref{barX1New} and the equations \eqref{X3}, near the line of equilibria $E^R_3 =\{r_3=\delta_3=0,\,y_3\ne y^R\}$. This line is normally hyperbolic and has stable and unstable manifolds $W^s(E_3^R)$ and $W^u(E_3^R)$ and we can therefore describe the passage near this manifold as follows.
\begin{lemma}\lemmalab{Plocal3}
 Let 
 \begin{align*}
 \Sigma_{e,2}^{R,\text{in}} &= \{(r_3,y_3,\delta_3) \vert r_3 \in [0,\xi],\,y_3 \in [y^L-\upsilon,y^L+\upsilon], \delta_3 =\nu\},\\
 \Sigma_{e,2}^{R,\text{out}}&= \{(r_3,y_3,\delta_3) \vert r_3 =\xi,\,y_3 \in [y^L-\upsilon,y^L+\upsilon], \delta_3 \in [0,\nu]\},
  \end{align*}
  for appropriately small $\xi>0,\upsilon>0$ and $\nu>0$ and consider the associated mapping $$P_3:\Sigma_{e,2}^{R,\text{in}}\rightarrow \Sigma_{e,2}^{R,\text{out}},\,(r_3,y_3,\delta_3)\mapsto (r_{3+},y_{3+},\delta_{3+})$$ obtained by the forward flow.  Then there exists (new) locally defined smooth functions $H(y_3,r_3)$ and $\tilde H(y_3,r_3)$ such that
  \begin{enumerate}[label=(\alph*)]
   \item $y_3=H(\tilde y_3,r_3)\Leftrightarrow \tilde y_3 = \tilde H(y_3,r_3)$ locally. 
   \item Fix any $\tilde y_3\in [y^L-\upsilon,y^L+\upsilon]$. Then the graph $y_3=H(\tilde y_3,r_3)$, $\delta_3=0$, $r_3\in [0,\xi]$ is the local unstable manifold $W_{loc}^u(0,\tilde y_3,0)$of $(r_3,y_3,\delta_3)=(0,\tilde y_3,0)$ within $W^u(E_3^R)$. 
   \item $H(y_3,r_3) = y_3+\mathcal O(r_3^{k+1})$ and $\tilde H(y_3,r_3)=y_3+\mathcal O(r_3^{k+1})$. 
   \item \begin{align*}
          P_3(r_3,y_3,\delta_3) = \begin{pmatrix}
          \xi\\
                                   H\left(\tilde H(y_3,r_3)+\mathcal O(r_3^{k+1} \log r_3^{-1} ),\xi\right)\\
                                   (r_3 \xi^{-1}) \delta_3
                                  \end{pmatrix}.
         \end{align*}
           with the $\mathcal O$-term being Lipschitz with respect to $y_3$ with a Lipschitz constant $\mathcal O (r_3^{k+1} \log r_3^{-1})$.
         \item In particular,
         \begin{align*}
          P_3(0,y^L,\delta_3) = W^u_{loc}(0,y^L,0) \cap \Sigma_{e,3}^{R,\text{out}} 
         \end{align*}
and the restricted mapping $y_3\mapsto P_3(r_3,y_3,\delta_3)$, for each fixed $r_3$, has a Lipschitz constant $L_3(r_3)$ satisfying
\begin{align*}
L_3(r_3) \le c,
\end{align*}
with $c>0$ sufficiently large, for all $r_3\in (0,\xi]$. 
  \end{enumerate}

\end{lemma}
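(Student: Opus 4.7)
The transition described by $P_3$ is a passage near the normally hyperbolic line of equilibria $E_3^R\subset\{r_3=\delta_3=0\}$, in spirit analogous to the hyperbolic saddle passages of \lemmaref{Plocal1} and \lemmaref{Plocal2}. My plan is to exploit an exact conservation law of the system \eqref{X3}, describe the invariant foliations of $E_3^R$ explicitly, and then apply Sternberg smooth linearization \cite{sternberg1958a} together with a Dulac-type integration for the center coordinate $y_3$, following the blueprint of the previous two lemmas.

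The first step is the observation that $r_3\delta_3$ is an exact first integral of \eqref{X3}: a direct computation gives $\frac{d}{dt}(r_3\delta_3)=\dot r_3\delta_3+r_3\dot\delta_3=0$, the $r_3$- and $\delta_3$-contributions cancelling identically. Since the entry section is at $\delta_3=\nu$ and the exit section at $r_3=\xi$, the exit value of $\delta_3$ is determined immediately as $\delta_{3+}=(r_3\xi^{-1})\delta_3$, furnishing the first and third components of $P_3$ exactly.

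For the invariant manifold structure, note that for $y_3$ near $y^L=1/\alpha$ the constant $F_3(0,y_3)=1-(\alpha+\beta)y_3$ is strictly negative and of order one, so the linearization of \eqref{X3} at $(0,y_3,0)\in E_3^R$ has eigenvalues $\pm\lambda(y_3)=\pm\frac{1}{1+k}|F_3(0,y_3)|$ in the $(r_3,\delta_3)$ directions and a zero along $E_3^R$. The coordinate planes $\{r_3=0\}$ and $\{\delta_3=0\}$ are invariant and realize $W^s(E_3^R)$ and $W^u(E_3^R)$. Within $W^u(E_3^R)$ the strong unstable fiber through $(0,\tilde y_3,0)$ is the solution of the scalar ODE $dy_3/dr_3=-(1+k)r_3^k F_3(0,y_3)/(r_3^{k+1}+F_3(0,y_3))$, which defines the smooth function $H(\tilde y_3,r_3)$; since the right-hand side is $\mathcal O(r_3^k)$ with $k\geq 1$, this yields $H(\tilde y_3,r_3)=\tilde y_3+\mathcal O(r_3^{k+1})$. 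Local invertibility of $H$ in its first argument via the implicit function theorem defines $\tilde H$ with the same type of expansion.

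For the $y_3$-component of $P_3$ I would then follow the template of \lemmaref{Plocal2}: apply Sternberg's theorem to straighten $W^{s/u}(E_3^R)$ into coordinate planes and render the $(r_3,\delta_3)$-dynamics linear modulo resonant monomials in $r_3\delta_3$ compatible with the conservation law, and then decompose the transition into contraction along $W^s$, passage along an invariant hyperbola $r_3\delta_3=\sigma$, and expansion along $W^u$. The $y_3$-coordinate evolves via integration of $\dot y_3=r_3^{k+1}(F_3+\sigma^k(\eta-\mu y_3))$ along such a hyperbola; the leading change is reproduced by the composition $H(\cdot,\xi)\circ\tilde H(\cdot,r_3)$, and the subleading contribution produces the $\mathcal O(r_3^{k+1}\log r_3^{-1})$ remainder characteristic of the resonant saddle passage (already appearing in \lemmaref{Plocal2}). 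Differentiating the integral representation under the exponential estimates on $r_3(t)$ and $\delta_3(t)$ yields the Lipschitz bound in $y_3$ on the remainder, and combined with the boundedness of $\partial_{y_3}H$ and $\partial_{y_3}\tilde H$ this gives the uniform bound $L_3(r_3)\le c$. The main technical obstacle, exactly as in \lemmaref{Plocal2}, is establishing the precise form and Lipschitz character of the $\mathcal O(r_3^{k+1}\log r_3^{-1})$ correction; these estimates I would defer to an appendix paralleling \appref{lemma5} and \appref{lemma6}, parametrizing the orbit by $r_3\in[r_3^{(0)},\xi]$, using the conservation law to isolate the logarithmic contribution, and applying Gr\"onwall-type estimates to control the dependence on the entry data $(r_3^{(0)},y_3^{(0)})$.
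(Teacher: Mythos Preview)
Your proposal is essentially correct and follows the same general outline as the paper, but you overcomplicate the linearization step. The paper's proof exploits a simple observation that makes Sternberg's theorem unnecessary for the $(r_3,\delta_3)$ subsystem: near $E_3^R\cap\{y_3\approx y^L\}$ the factor $-(r_3^{k+1}+F_3(\delta_3^{k+1},y_3))>0$ is bounded away from zero, so dividing the right-hand side of \eqref{X3} by it produces
\[
\dot r_3=\tfrac{1}{1+k}r_3,\qquad \dot\delta_3=-\tfrac{1}{1+k}\delta_3,
\]
\emph{exactly}, not merely modulo resonances. The stable and unstable manifolds of $E_3^R$ are therefore already the coordinate planes $\{r_3=0\}$ and $\{\delta_3=0\}$ after this time rescaling, and no further straightening of the $(r_3,\delta_3)$ dynamics is needed. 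Only the unstable fibration within $\{\delta_3=0\}$ must be rectified, via the transformation $y_3=H(\tilde y_3,r_3)$ that you correctly identify. In the rectified variables the paper obtains $\dot{\tilde y}_3=r_3^{k+1}\delta_3^k\,G(r_3,\tilde y_3,\delta_3)=\sigma^k r_3\,G$, using the conservation law $r_3\delta_3=\sigma$ that you also observed, and the remainder follows by direct integration over $r_3\in[r_3(0),\xi]$.

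In short: your conservation-law argument for the first and third components, and your identification of $H$ via the fiber ODE, match the paper exactly. What you should drop is the appeal to Sternberg for the $(r_3,\delta_3)$ block and the ``linear modulo resonant monomials'' machinery; the elementary time rescaling does the job in one line and leads immediately to the explicit form $\dot{\tilde y}_3=\sigma^k r_3\,G$ from which the stated estimates and Lipschitz bound follow by straightforward integration.
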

\begin{proof}
Standard, see e.g. \cite[Theorem 4.2]{Gucwa2009783} and further details in \appref{lemma7}.
\end{proof}

At $\Sigma_{e,2}^{L,\text{out}}$, we then blow down to the $(x,y)$-variables and track $\Gamma_0^R$ up close to $p^R$ using regular perturbation theory. Here we perform local analysis using our blow-up approach, dividing the description of the dynamics into three essential pieces: near $\overline q^R_w$, near $\overline q^R_f$ and near $\bar E^R\cap \{y=y^R\}$. The analysis and results are very similar to \lemmaref{Plocal1}, \lemmaref{Plocal2} and \lemmaref{Plocal3} and we therefore leave out the details. But in conclusion we have that the first return mapping $\Sigma\rightarrow \Sigma$ obtained by the forward flow is a contraction for $\sigma=\varepsilon^{k/(k+1)}$ sufficiently small. The contraction mapping theorem then gives a unique fix-point and this fix point correspond to the attracting (relaxation oscillation) limit cycle. 

Next, suppose that $\eta_1\notin [\eta^L_{\text{Het}}(\mu_1),\eta^R_{\text{Het}}(\mu_1)]$. Then it follows from the analysis in \secref{sphere} that the omega limit set of any point within a sufficiently small neighborhood $U$ of $\Gamma_0$ will be contained with a small neighborhood of either $z^L$ or $z^R$. 
But then no limit cycles can exists near $\Gamma_0$ which completes the proof of \ref{item4} in \thmref{mainThm}. The proof of \ref{item5} is similar, see also \figref{norelax} for illustration of the generic cases where $\eta\ne \eta_{Het}^{L/R}$. This completes the proof of \thmref{mainThm}.

\begin{figure}[h!] 
\begin{center}
\subfigure[$\eta<\eta_{Het}^R<\eta_{Het}^L$]{\includegraphics[width=.485\textwidth]{./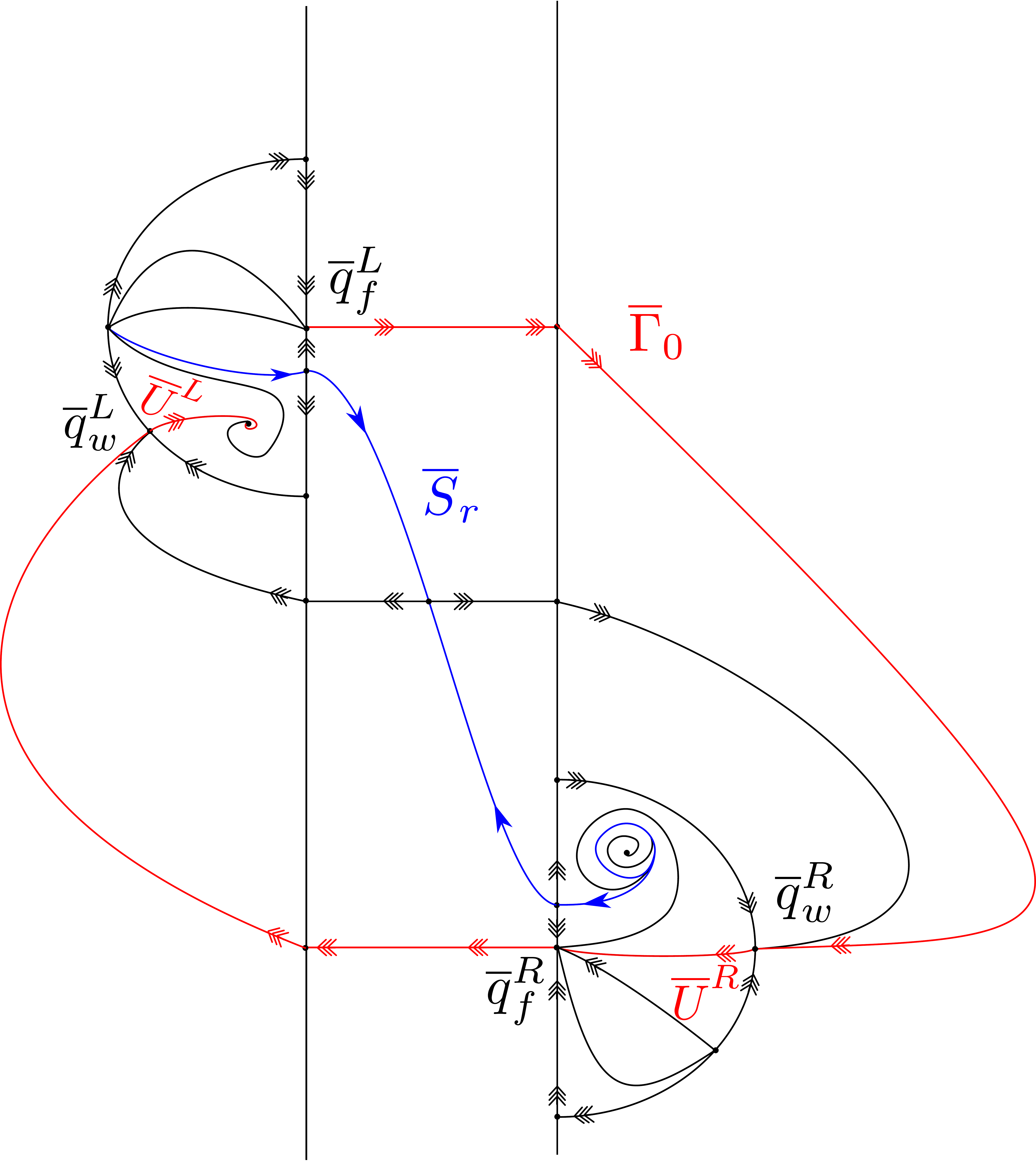}}
\subfigure[$\eta_{Het}^R<\eta<\eta_{Het}^L$]{\includegraphics[width=.485\textwidth]{./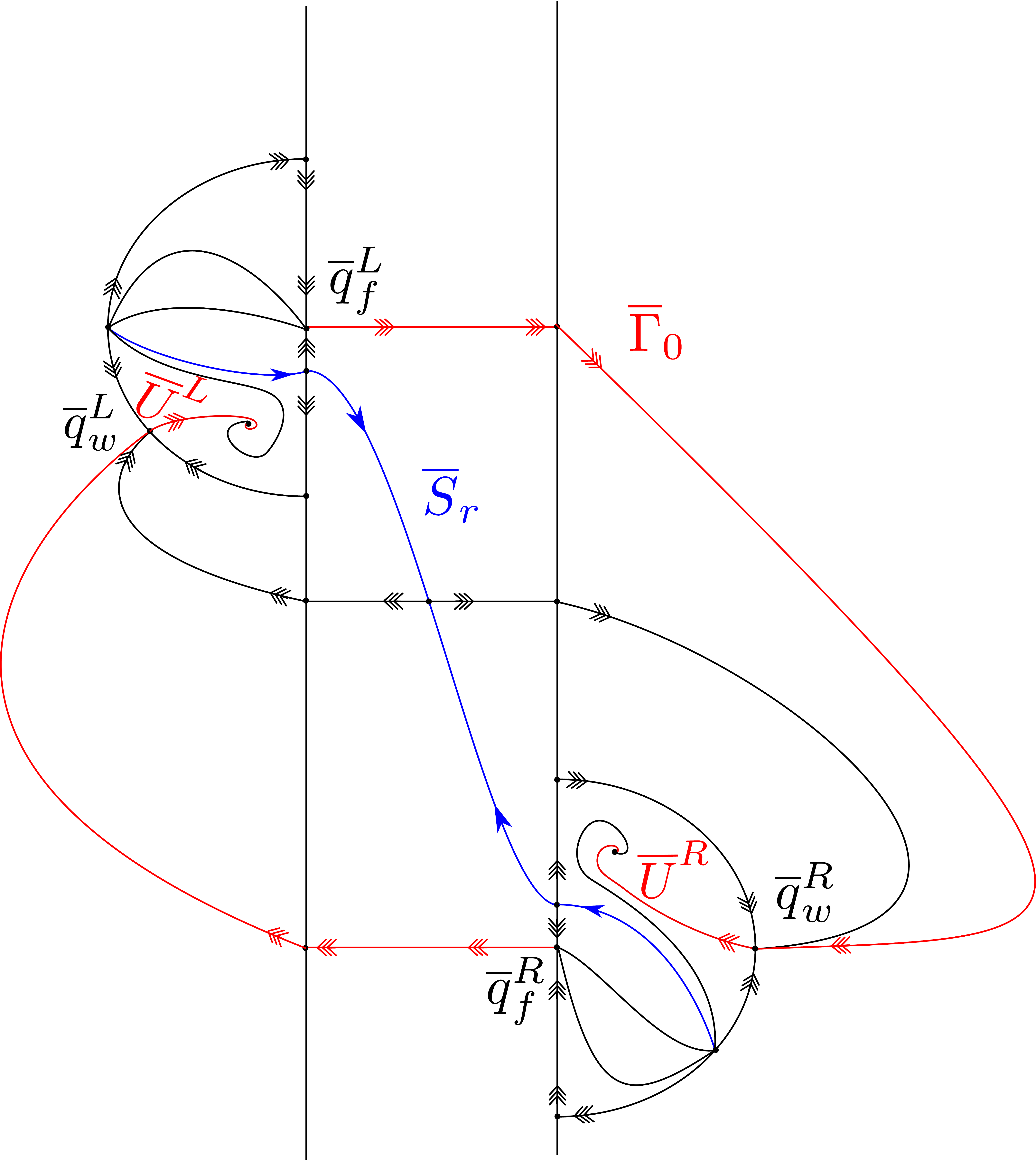}}
\subfigure[$\eta_{Het}^R<\eta_{Het}^L<\eta$]{\includegraphics[width=.485\textwidth]{./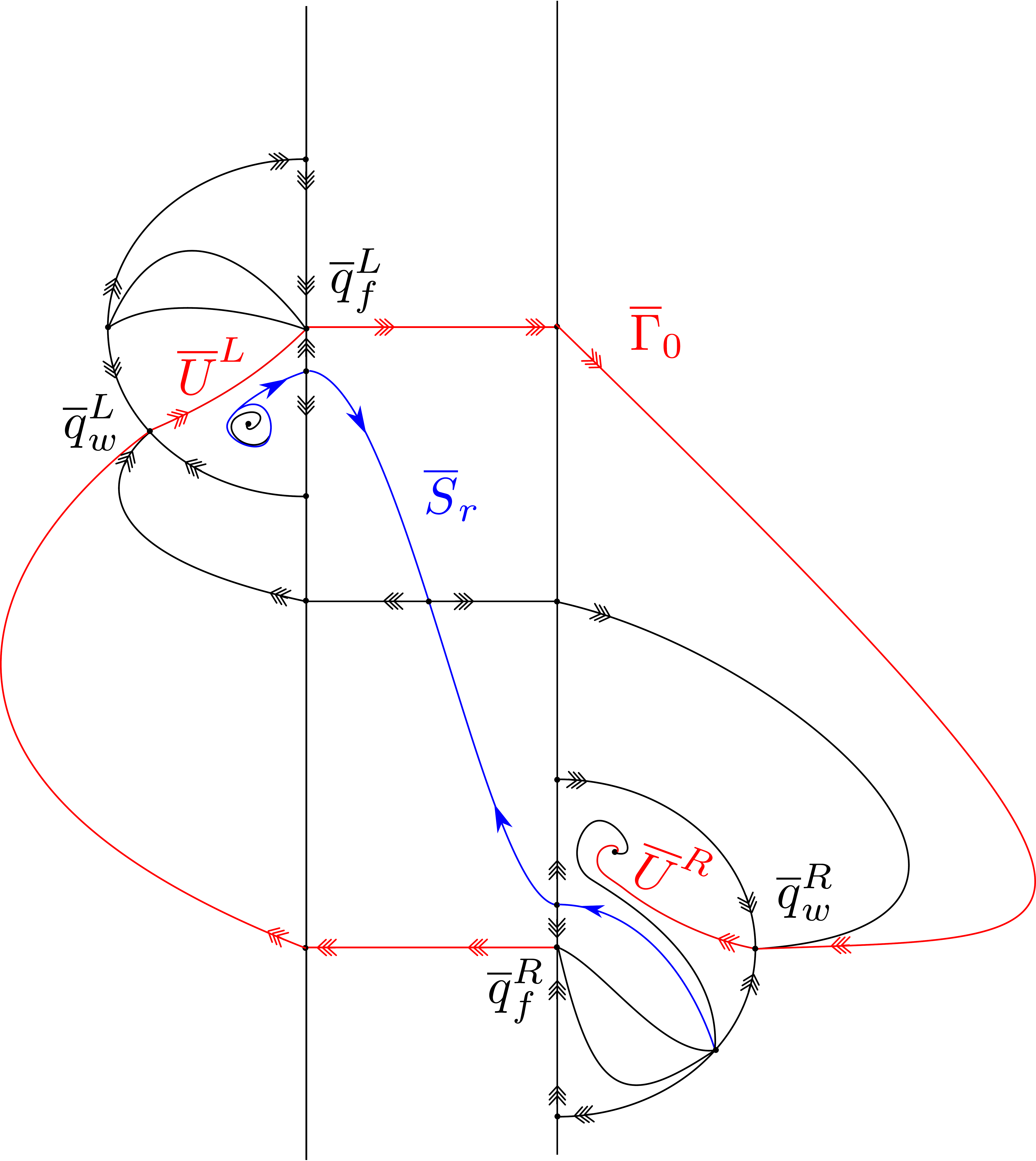}}
\end{center}
 \caption{Singular dynamics when $\eta_{Het}^R<\eta_{Het}^L$ for different values of $\eta$. There exists no $\eta$-value for which the system has a limit cycles near $\Gamma_0$.}
\figlab{norelax}
\end{figure}

\section{A generalization of \thmref{mainThm}}\seclab{discussion}

The main result in the paper can easily be generalised. In particular, the linearity of the piecewise smooth vector-fields within $x<1$ and $x>1$ is not exploited in any way. Interestingly, similar relaxation oscillations can also be found in a more generic setting consisting of smooth systems that have a pair of piecewise smooth vector-fields as pointwise limits as $\varepsilon\rightarrow 0$. In fact, for definiteness suppose that 
\begin{align}
 X_\varepsilon(x,y) = X^L(x,y)(1-\phi(\varepsilon^{-1} (x-1))) + X^R(x,y)\phi(\varepsilon^{-1} (x-1))),\eqlab{XepsilonHere}
\end{align}
such that $x=1$ is the switching manifold and $\phi$ satisfies the assumptions of (A).  
Then 
\begin{align*}
 X_\varepsilon (x,y) \rightarrow \left\{\begin{matrix}
                                              X^L(x,y),\quad x<1,\\
                                              X^R(x,y),\quad x>1,
                                             \end{matrix}\right.
\end{align*}
pointwise as $\varepsilon\rightarrow 0$ and we suppose that $X^L$ and $X^R$ are smooth everywhere. We also suppose that $X^{L/R}$ depend smoothly on a single parameter $\eta$ and satisfy the following three local and two global conditions. The following conditions describe the situation where a single stable node intersects the discontinuity set $x=1$ in a point $p^L$. 
\begin{itemize}
\item[(i)] A proper stable node $z^L$ of $X^L$ intersects the switching manifold at $p^L=(1,y^L)$ transversally under variation of the parameter $\eta$ at $\eta=\eta^L$. We suppose that the transverse intersection is such that there exists no equilibria of $X^L\vert_{x<1}$ near $p^L$ for $\eta$-values close to but larger than $\eta^L$. (We could obviously also assume that the node is an equilibrium of $X^R$). 
\item[(ii)] For $\eta=\eta^L$ the weak eigenspace of $z^L$ is not parallel with $x=1$. 
\item[(iii)] In the piecewise smooth literature there are $4$ generic, piecewise smooth unfoldings of the local situation in (i) and (ii). This was proven in \cite[Theorem 2]{hogan2016a}. We consider the case \cite[Fig 2(a), $\alpha=0$]{hogan2016a} which corresponds to the scenario that occurs in the substrate-depletion model, see also \figref{pwsDiscussion}. In particular, we assume that the slow flow on the repelling manifold, appearing upon blow-up on one side of $z^L$ (here for smaller values of $y$), is directed away (triple-headed arrows in \figref{pwsDiscussion}) from the node. 
\end{itemize}
along with two global conditions:
\begin{itemize}
 \item[(iv)] For $\eta=\eta^L$, the forward orbit $\Gamma_0^R$ of $p^L$ under the flow of $X^R\vert_{x\ge 1}$ has a first return to $x=1$ (in finite time) at a regular ``crossing'' point $q^R$ for the piecewise smooth system given by $X^R\vert_{x<1}$ and $X^L\vert_{x>1}$.
 \item[(v)] For $\eta=\eta^L$, the forward orbit $\Gamma_0^L$ of $q^R$ under the flow of $X^L\vert_{x\le 1}$ approaches $z^L$ as $t\rightarrow \infty$ along the weak eigendirection of $z^L$.
\end{itemize}
Let $\Gamma_0=\Gamma_0^L\cup \Gamma_0^R$ denote the closed curve, see \figref{pwsDiscussion}. Without loss of generality we take $\eta^L=0$. For $\eta>0$, $X^L$ and $X^R$ are just ``crossing'' near $p^L$ and $q^R$, respectively. The situation is therefore considerable easier than the one described in \thmref{mainThm}.
 We have the following
\begin{theorem}\thmlab{mainThm2}
 Suppose (i)-(v). For any $c>0$ there exists a closed interval $I\subset (0,\eta_0]$ with $\eta_0>0$ sufficiently small such that the following holds: There exists an $\varepsilon_0>0$ such that $X_\varepsilon$, for all $\varepsilon \in (0,\varepsilon_0]$ and any $\eta\in I$, has an attracting limit cycle $\Gamma_\varepsilon$ which is closer to $\Gamma_0$ than $c$ in Hausdorff distance.
\end{theorem}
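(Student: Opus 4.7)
The plan is to mimic the blow-up strategy of \thmref{mainThm}, simplified by two circumstances: only one boundary node bifurcation needs to be resolved (at $p^L$), since by (iv) the point $q^R$ is a regular crossing point, and $\eta$ is kept in a fixed interval $I\subset(0,\eta_0]$ bounded away from $0$, so the degenerate parameter scaling of \secref{scalingParameters} is not needed. First I would append $\dot\varepsilon=0$ to $X_\varepsilon$ and blow up the singular line $x=1$, $y\in J$, $\varepsilon=0$ exactly as in \eqref{initialBlowup}. In the charts $\bar x=\mp 1$ this yields desingularized vector-fields analogous to \eqref{barxN1Eqns}, in which $\{\epsilon_1=0\}$ and $\{\epsilon_3=0\}$ now carry the smooth fields $X^L$ and $X^R$ respectively (rather than piecewise linear ones). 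By continuity together with (iv)--(v), a closed singular cycle $\Gamma_{0,\eta}$ persists for all $\eta$ in a small interval around $\eta=0$ and depends continuously on $\eta$, approaching $\Gamma_0$ as $\eta\to 0$.

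Next I would blow up the nonhyperbolic point $\overline p^L$ to a quarter-sphere $\overline S^L$ using the anisotropic weighting \eqref{Psi1}, and establish the analogue of \propref{prop1} in this setting. Because $\eta\in I$ is bounded away from $0$ and, by (i), $z^L$ has become virtual for $X^L\vert_{x<1}$, no equilibrium corresponding to $\overline z^L$ appears on the sphere. The sphere dynamics therefore reduce to the configuration illustrated in \figref{pLR}(a): a hyperbolic saddle $\overline q_w^L$ on the invariant half-circle $\bar\delta=0$, whose unstable manifold $\overline U^L$ is forward asymptotic through the fast fibers to the stable node $\overline q_f^L$. Condition (ii) supplies the non-degenerate eigenstructure of $\overline q_w^L$, while condition (iii) orients the slow flow on the repelling critical manifold away from $\overline p^L$. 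The decisive simplification is that the delicate heteroclinic connection problem of \propref{prop1} (the one defining $\eta^L_{Het,0}$) does not arise here.

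With these pieces in place, I would construct a return map $P:\Sigma\to\Sigma$ on a section $\Sigma$ transverse to $\Gamma_0$ inside $\{x<1\}$ by composing: (a) regular perturbation flow from $\Sigma$ up to an in-section near $\overline q_w^L$; (b) the saddle passages past $\overline q_w^L$ and $\overline q_f^L$ treated by exactly the same partial-linearization estimates as in \lemmaref{Plocal1} and \lemmaref{Plocal2}; (c) the passage along the normally hyperbolic line of equilibria $\overline E^R$ handled as in \lemmaref{Plocal3}; (d) transversal crossing at the regular point $q^R$ by standard perturbation theory, followed by regular perturbation flow back to $\Sigma$. The resulting Lipschitz constants show that $P(\Sigma)$ lies in an arbitrarily small neighbourhood of $\Gamma_{0,\eta}\cap\Sigma$ and that $P$ is a strict contraction for $\varepsilon$ sufficiently small; the contraction mapping theorem then produces the unique attracting fixed point which is $\Gamma_\varepsilon$. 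Given $c>0$, one first chooses $I$ so that $\Gamma_{0,\eta}$ is $c/2$-close to $\Gamma_0$ for all $\eta\in I$, and then chooses $\varepsilon_0$ so that $\Gamma_\varepsilon$ is $c/2$-close to $\Gamma_{0,\eta}$ for all $\varepsilon\in(0,\varepsilon_0]$.

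The main obstacle I anticipate is verifying the sphere analogue of \propref{prop1} without the explicit piecewise linear formulas exploited in \appref{proofprop1}: the linearizations at the candidate equilibria on $\overline S^L$ must be read off from the Taylor expansions of $X^L, X^R$ at $p^L$ together with the algebraic decay assumption (A) on $\phi$. Condition (ii) should translate into the non-degeneracy of the hyperbolic saddle $\overline q_w^L$ and of its unstable manifold $\overline U^L$, and condition (iii) should translate into the sign of the slow flow ensuring that $\overline q_r^L$ is directed away from $\overline C_{loc}^L$ rather than into it. Once these local pictures are confirmed the global passage and return-map contraction argument follow essentially verbatim from \secref{proof}.
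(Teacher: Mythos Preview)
Your approach is correct in outline, but it is considerably more elaborate than the paper's own proof, and the difference is instructive. You propose to repeat the full two-stage blow-up (cylinder plus sphere at $p^L$) and to harvest the contraction from the saddle passages \lemmaref{Plocal1} and \lemmaref{Plocal2} on $\overline S^L$. The paper instead exploits the fact that once $\eta$ is fixed in $I\subset(0,\eta_0]$, the node $z^L$ is virtual and sits at an $O(\eta)$ distance inside $x>1$; consequently the forward orbit of $\Sigma$ under $X^L$ already contracts arbitrarily strongly \emph{before} reaching the switching line, simply because it is funnelling towards this virtual node. This is the content of \lemmaref{final}, and it is the sole source of contraction in the paper's argument. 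After that, only the cylindrical blow-up \eqref{initialBlowup} is used, and the passages near $\overline E^L$ and $\overline E^R$ (and through the regular crossing $q^R$) are all handled by \lemmaref{Plocal3}-type estimates, which are near-identity rather than contracting. No sphere blow-up of $p^L$ is performed at all.

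What your route buys is methodological uniformity with \thmref{mainThm}; what the paper's route buys is that it sidesteps precisely the obstacle you flag at the end, namely reproducing the sphere analysis of \propref{prop1} for general $X^L,X^R$ from their Taylor data at $p^L$. Since $\eta$ is bounded away from $0$, the singular orbit meets $\overline E^L$ at a partially hyperbolic point distinct from $\overline p^L$, so the degenerate point never needs to be resolved. One caution if you pursue your version: the weights in \eqref{Psi1} were chosen for the coordinates of \eqref{barXN1New}, which already incorporate the parameter scaling \eqref{blowupParameters1}; if you skip that scaling and work from \eqref{initialBlowup} you must recompute the appropriate quasihomogeneous weights rather than quoting \eqref{Psi1} verbatim.
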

\begin{proof}
Consider $\eta>0$ and let $\Sigma$ be a small section transverse to $\Gamma_0^L$ and let $\Sigma_1$ be a small section at $x=1-\xi$, $\xi>0$ but small, near $p^L$. Then due to the stable node $z^L$ of $X^L$, which is ``virtual'' for $X_0$ when $\eta>0$, we have the following: 
\begin{lemma}\lemmalab{final}
For any $L>0$ there exist a $\xi>0$, an $\varepsilon_0>0$ and an $\eta_0>0$ such that the mapping from $\Sigma$ to $\Sigma_1$, obtained by the first intersection of the forward flow, is a Lipschitz map with a Lipschitz constant less than $L$ for any $\varepsilon\in (0,\varepsilon_0]$, $\eta\in (0,\eta_0]$. 
\end{lemma}
\begin{proof}
Consider $\varepsilon=0$, then the contraction of the mapping is due to the contraction of $X^L$ towards the stable node. Taking $0<\varepsilon\ll 1$ the result then follows by regular perturbation theory.
\end{proof}
Now, fix $I\in (0,\eta_0]$ and consider any $\eta\in I$. We then blow-up $x=1$, $\varepsilon=0$ using the cylindrical blow-up \eqref{initialBlowup}. Within this blow-up, the forward flow of $\Sigma_1$ for $\varepsilon=0$ is asymptotic to a partially hyperbolic points on $\overline E^L$, see e.g. \figref{pwsblowup}(c) near $\overline p^L$. The description of the flow near this part of $\overline E^L$ is described by a local mapping with properties similar to the ones in \lemmaref{Plocal3}. In particular, by decreasing $\xi$, $\eta_0$ and $\varepsilon>0$ we can bring the restricted mapping $y\mapsto y_+$ as close to the identity mapping as desired. We can also describe the flow near $\overline E^R$ and $\overline q^R$ (since this point is now regular; in contrast to the $p^R$ for the substrate-depletion oscillator) in a similar fashion, eventually bringing us back to $\Sigma$. In total, fixing $L$ in \lemmaref{final} sufficiently small, the first return map is a contraction for all $0<\varepsilon\ll 1$ and therefore there exists an attracting limit cycle close to $\Gamma_0$. 

\end{proof}
An example of a system satisfying the assumptions (i)-(v) is the substrate-depletion model in the (unphysical) regime $\mu<0$. In this case, both $z^L$ and $z^R$ are both virtual for $\eta\in (\eta^L,\eta^R)$. Notice that $\eta^L$ and $\eta^R$ shift role in \figref{bif}(a) when $\mu<0$. The forward flow of $z^L=p^L$ for $\eta=\eta^L$ by $X^R\vert_{x>1}$ therefore also intersects $x=1$ in a regular ``crossing'' point as desired.

\begin{figure}[h!] 
\begin{center}
{\includegraphics[width=.65\textwidth]{./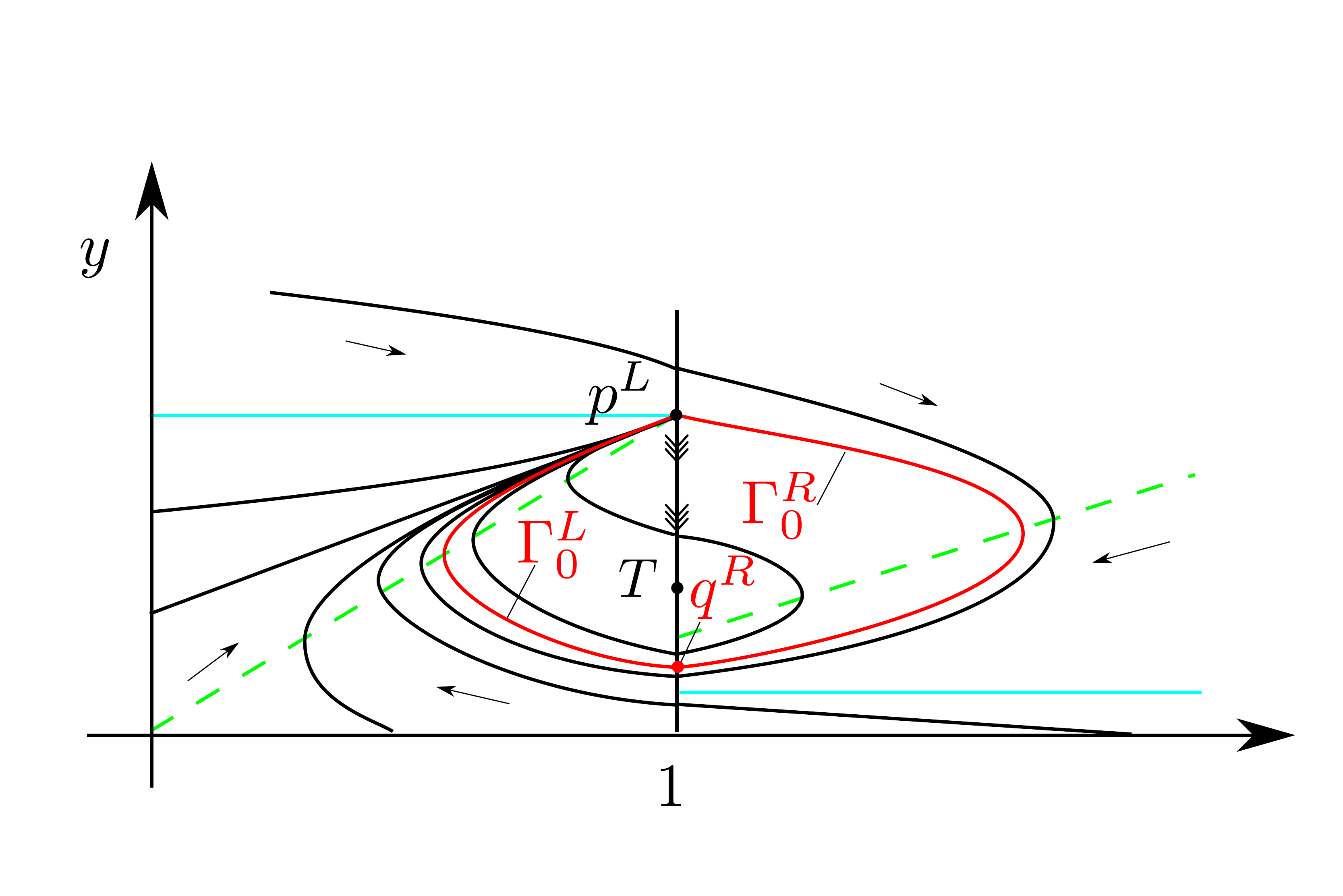}}
\end{center}
 \caption{A PWS smooth system satisfying the assumptions (i)-(v). An example is the substrate-depletion model in the (unphysical) regime $\mu<0$. The tripple-headed arrow (following the convention for Filippov systems) indicates that the Filippov sliding flow on the switching manifold is directed away from the node. This particularly relates to assumption (iii). This Filippov sliding flow on the switching manifold agrees with the slow flow on a repelling manifold obtained upon a cylindrical blow-up of $x=1,\,\varepsilon=0$, see e.g. \cite{hogan2016a}. The point $T$ is an invisible tangency point for $X^R$.  }
\figlab{pwsDiscussion}
\end{figure}

\section{Outlook}
In this paper, we have described relaxation oscillations in substrate-depletion oscillators close to the nonsmooth limit, see \thmref{mainThm}. We have also presented a simpler more general version of the existence of these oscillations in \thmref{mainThm2}. These results describe a new mechanism for global oscillations in smooth systems close to nonsmooth ones. The canard-like phenomena that occurs in these problems can be described in full details by our approach. However, we leave this as part of future work. 

From the piecewise smooth point of view, the underlying mechanism for the oscillations is a ``boundary node bifurcation'', \cite{Kuznetsov2003,hogan2016a}. Other types of ``boundary bifurcations'' can also produce oscillations. For example, a ``boundary focus bifurcation'' occurs in the classical friction oscillator problem for zero belt speed, see \cite{2019arXiv190806781U,jelbart}. The mathematical description of these bifurcations as nonsmooth perturbation problems can be treated by the blow-up approach in the present paper.
\appendix
%

\section{Proof of \propref{prop1}}\applab{proofprop1}
To prove \propref{prop1}, we first consider $X_1$ \eqref{X1} and drop the subscript $1$:
\begin{align}
 \dot r &=-\frac{1}{k+1}r\left(r^{k+1}-F(\delta^{k+1},y)\right),\eqlab{X1New}\\
 \dot y&=r^{k+1} \left(F(\delta^{k+1},y)+r^k\delta^k \left(\eta-\mu y\right)\right),\nonumber\\
 \dot \delta &=\frac{1}{k+1}r \delta^{k+1} \left(r^{k+1}-F(\delta^{k+1},y)\right).\nonumber
\end{align}
 with
 \begin{align*}
  F(\delta^{k+1},y) = 1-(\alpha+\beta \delta^{k(k+1)}\phi^L(\delta^{k+1}))y.
 \end{align*}
To study $\widehat X$ on the sphere $\overline S^L$, recall \eqref{widehatXi}, we then work in the separate charts:
\begin{align}
\bar r = 1:\quad & r = \rho_1^k, & y &= y^L+\rho_1^{k(k+1)} y_1,&\delta & =\rho_1 \delta_1,\eqlab{barR1pL}\\
\bar \delta = 1:\quad & r = \rho_2^kr_2, & y &= y^L+\rho_2^{k(k+1)}y_2,&\delta & =\rho_2,\eqlab{bard1pL}\\
\bar y = 1:\quad & r = \rho_3^kr_3, & y &= y^L+\rho_3^{k(k+1)},&\delta & =\rho_3 \delta_3,\eqlab{bary1pL}\\
\bar y = -1:\quad & r = \rho_4^kr_4, & y &= y^L-\rho_4^{k(k+1)},&\delta & =\rho_4 \delta_4,\eqlab{baryN1pL}
\end{align}
We then describe $\widehat X_{\rho=0}$ by setting $\rho_i=0$ in each of these corresponding charts. The charts cover the quarter-sphere $\overline S^L$ completely. When the charts overlap we can apply coordinate changes. We will use the following coordinate changes in the sequel.
\begin{align}
 K_{41}:\, (\rho_1,y_1,\delta_1)\mapsto \rho_4 &= (-y_1)^{1/(k(k+1))}\rho_1,\eqlab{K41}\\
 r_4&=(-y_1)^{-1/(k+1)},\nonumber\\
 \delta_4 &= (-y_1)^{-1/(k(k+1))} \delta_1,\nonumber\\
  K_{21}:\, (\rho_1,y_1,\delta_1)\mapsto \rho_2 &= \delta_1\rho_1,\eqlab{K21}\\
 r_2&=\delta_1^{-k},\nonumber\\
 y_2 &= \delta_1^{-k(k+1)} y_1,\nonumber\\
  K_{42}:\,(\rho_2,r_2,y_2)\mapsto \rho_4 &= (-y_2)^{1/(k(k+1))}\rho_2,\eqlab{K42}\\
 r_4&=(-y_2)^{-1/(k+1)}r_2,\nonumber\\
 \delta_4&=(-y_2)^{-1/(k(k+1))},\nonumber
 \end{align}
for $y_1<0$, $\delta_1>0$ and $y_2<0$, respectively. We set $K_{ij}=K_{ji}^{-1}$.

\subsubsection*{Chart $\bar r=1$}
Inserting \eqref{barR1pL} into \eqref{X1New} gives
\begin{align}
 \dot \rho_1 &= -\frac{1}{k(k+1)}\rho_1 \left(1-F_1((\rho_1\delta_1)^{k+1},\delta_1^{k(k+1)},y_1)\right),\eqlab{XbarR1pL}\\
 \dot y_1 &=F_1((\rho_1\delta_1)^{k+1},\delta_1^{k(k+1)},y_1)+\delta_1^k\left(\eta-\mu (y^L+\rho_1^{k(k+1)} y_1)\right)+\left(1-F_1((\rho_1\delta_1)^{k+1},\delta_1^{k(k+1)},y_1)\right)y_1,\nonumber\\
 \dot \delta_1 &=\frac{1}{k}\delta_1 \left(1-F_1((\rho_1\delta_1)^{k+1},\delta_1^{k(k+1)},y_1)\right),\nonumber
\end{align}
after division of the right hand side by the common factor $\rho_1^{k(k+1)}$. Here
\begin{align*}
 F_1((\rho_1\delta_1)^{k+1},\delta_1^{k(k+1)},y_1)=-\frac{\beta}{\alpha}\delta_1^{k(k+1)} \phi^L(\rho_1^{k+1}\delta_1^{k+1}) - \left(\alpha+\beta \rho_1^{k(k+1)} \delta_1^{k(k+1)} \phi^L(\rho_1^{k+1}\delta_1^{k+1})\right)y_1.
\end{align*}
Setting $\rho_1=0$ then gives 
\begin{align}
 \dot y_1 &=F_1(0,\delta_1^{k(k+1)},y_1)+\delta_1^k\left(\eta-\eta^L(\mu))\right)+\left(1-F_1(0,\delta_1^{k(k+1)},y_1)\right)y_1,\eqlab{XbarR1pLrho0}\\
 \dot \delta_1 &=\frac{1}{k}\delta_1 \left(1-F_1(0,\delta_1^{k(k+1)},y_1)\right),\nonumber
\end{align}
with 
\begin{align*}
 F_1(0,\delta_1^{k(k+1)},y_1) = -\alpha y_1-\frac{\beta}{\alpha}\delta_1^{k(k+1)}\phi^L(0). 
\end{align*}
Notice that the equations within $\rho_1=0$ only depend upon $\eta$ and $\mu$ through $\eta-\eta^L(\mu)$. This is (obviously) true in all charts. 
\begin{lemma}\lemmalab{pLChartr1}
We have
\begin{enumerate}[label=(\alph*)]
%
 \item \label{item1PlChartr1} $q_{s,1}^L:\,(y_1,\delta_1) = (0,0)$ is a hyperbolic unstable node of \eqref{XbarR1pLrho0}.
 \item $q_{w,1}^L:\,(y_1,\delta_1) = (-\alpha^{-1}(1-\alpha),0)$ is a hyperbolic saddle with stable manifold along the invariant $\delta_1$-axis and a local unstable manifold $U_{loc,1}^L\equiv W^u_{loc}(q_{w,1}^L)$ of the following form:
 \begin{align*}
  U_{loc,1}^L:\, y_1 &= -\alpha^{-1}(1-\alpha) +\delta_1^k\left(\eta-\eta^L(\mu)+\delta_1^{k^2} \left(-\frac{\beta\phi^L(0)}{\alpha^2}+\delta_1^k m_1(\delta_1,\eta-\eta^L(\mu))\right)\right),\\
  \delta_1 &\in [0,c],
 \end{align*}
with $c>0$ sufficiently small and $m_1$ smooth. 
\item  \label{item3PlChartr1} For $\eta<\eta^L(\mu)$, $\delta_1$ increases along $U^L_{loc,1}$ while $y_1$ decreases. For $\eta \ge \eta^L(\mu)$, $y_1$ and $\delta_1$ both increase along $U^L_{loc,1}$. 
\item \label{item4PlChartr1} For $\eta<\eta^L(\mu)$ there exists a separate equilibrium $z_1^L$ of \eqref{XbarR1pLrho0} with coordinates
\begin{align*}
 (y_1,\delta_1) = \left( -\alpha^{-1} -\alpha^{-2} \beta \phi^L(0) (\eta^L(\mu)-\eta)^{-(k+1)},(\eta^L-\eta)^{-1/k}\right).
\end{align*}
$z_1^L$ undergoes a sub-critical Hopf bifurcation for $\eta=\eta_H^L(\mu)$ where
\begin{align*}
 \eta_H^L(\mu) = \eta^L(\mu)-\left(\frac{\beta k\phi^L(0)}{\alpha(\alpha+1)}\right)^{1/(k+1)}.
\end{align*}
$z_1^L$ is hyperbolic and attracting (repelling) for $\eta<\eta_H^L(\mu)$ ($\eta_H^L(\mu)<\eta<\eta^L(\mu)$), respectively. Therefore, the limit cycles born in the Hopf bifurcation appear locally for $\eta<\eta_H^L(\mu)$.
\item \label{item5PlChartr1} There exists an $\eta_0(\mu)<\eta^L$ such that $z_1^L$ is a stable node that attracts $U^L_1$ for all $\eta\le \eta_0(\mu)$.
\end{enumerate}

\end{lemma}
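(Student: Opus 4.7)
I would address items (a)–(e) of \lemmaref{pLChartr1} in order, with most of the effort concentrated in (d) and (e).

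For (a)–(c) the approach is direct linearization of \eqref{XbarR1pLrho0}. At $q_{s,1}^L=(0,0)$ the function $F_1$ vanishes and its $\delta_1$-derivatives vanish at least to order $k$, so the Jacobian is diagonal with positive entries $1-\alpha$ and $1/k$ (the first positive by \eqref{abasymption}); hence a hyperbolic unstable node. At $q_{w,1}^L$ a short computation using $F_1=1-\alpha$ and $1-F_1=\alpha$ gives an upper-triangular Jacobian with diagonal $(\alpha-1,\alpha/k)$, producing the claimed hyperbolic saddle; the invariant axis $\delta_1=0$ carries the stable direction. The unstable manifold is then written as a smooth graph $y_1=-\alpha^{-1}(1-\alpha)+g(\delta_1)$ and the expansion stated in (b) is obtained by matching powers in the invariance equation $g'(\delta_1)\dot\delta_1=\dot y_1$ using the explicit form of $F_1$. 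Item (c) is immediate from the expansion together with $\dot\delta_1=\delta_1(1-F_1)/k>0$ near the saddle: $\delta_1$ always grows along $U_{loc,1}^L$, while the leading term $\delta_1^k(\eta-\eta^L(\mu))$ of $g$ controls the sign of $y_1-y_1(q_{w,1}^L)$.

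For (d), setting $\dot\delta_1=0$ with $\delta_1>0$ forces $F_1=1$, i.e. $1+\alpha y_1+A=0$ with $A=\tfrac{\beta}{\alpha}\phi^L(0)\delta_1^{k(k+1)}$; substituting into $\dot y_1=0$ reduces to $1+\delta_1^k(\eta-\eta^L(\mu))=0$, which yields the stated $z_1^L$ and forces $\eta<\eta^L(\mu)$. Using $1+\alpha y_1+A=0$, the Jacobian at $z_1^L$ simplifies to trace and determinant
\begin{align*}
T=-(\alpha+1)+k\,\tfrac{\beta\phi^L(0)}{\alpha}(\eta^L-\eta)^{-(k+1)},\qquad D=\alpha>0,
\end{align*}
so $T=0$ precisely at $\eta=\eta_H^L(\mu)$ as stated, the eigenvalues form a complex-conjugate pair near the bifurcation ($D>0$), and $\partial_\eta T>0$ gives transversal crossing of the imaginary axis. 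The principal obstacle in (d) is verifying that the bifurcation is \emph{sub-critical}: I would reduce the system at $\eta=\eta_H^L(\mu)$ to Poincar\'e normal form and compute the first Lyapunov coefficient $\ell_1$, showing $\ell_1>0$; the stated local family of hyperbolic repelling cycles on the side $\eta<\eta_H^L(\mu)$ then follows from the standard Andronov–Hopf theorem.

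For (e), the strategy is to exclude limit cycles in $\delta_1>0$ for $\eta$ sufficiently negative and conclude by Poincar\'e–Bendixson. Non-existence I would prove by an explicit change of coordinates bringing \eqref{XbarR1pLrho0} into Li\'enard form and invoking Cherkas' theorem, as flagged in the discussion preceding \propref{prop1}; the main technical content is to verify Cherkas' monotonicity hypothesis uniformly for $\eta\le\eta_0(\mu)$ with $\eta_0(\mu)$ sufficiently negative. Once limit cycles are ruled out, the only equilibria in $\delta_1\ge 0$ are $q_{s,1}^L$, $q_{w,1}^L$ and $z_1^L$; for $\eta\ll\eta^L(\mu)$ the formulas above give $T<0$ and $T^2-4D>0$, so $z_1^L$ is a hyperbolic stable node. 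Constructing a forward-invariant trapping region in $\delta_1>0$ (using the nullcline geometry at large $\delta_1$ together with the local behavior at the three equilibria) then forces the unstable orbit $U^L_1$ of the saddle $q_{w,1}^L$ to accumulate on $z_1^L$. I expect (e) to be the hardest step overall—specifically the Li\'enard/Cherkas calculation together with the global trapping argument in the blown-up chart—while the sub-criticality check in (d) is the next most delicate piece.
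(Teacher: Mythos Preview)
Your proposal is correct and follows the paper's approach closely for (a)--(d): direct linearization for (a)--(c), trace/determinant analysis plus a first Lyapunov coefficient computation for (d). One minor tactical point: the paper remarks that the Lyapunov coefficient is easier to compute in the chart $\bar\delta=1$ rather than $\bar r=1$, and obtains an explicit positive expression there.

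For (e) your overall strategy---Li\'enard reduction, Cherkas' theorem to exclude limit cycles, then Poincar\'e--Bendixson---matches the paper, but the paper organizes the argument differently and more cleanly. Rather than verifying Cherkas' hypotheses uniformly in $\eta\le\eta_0(\mu)$ and building a trapping region from nullcline geometry, the paper introduces the scaling $\eta=\eta^L-\nu^{-k}$, $\delta_1=\nu\tilde\delta_1$ and passes to the limit $\nu\to 0$. This produces a single explicit planar system (linear in $\tilde\delta_1^k$, independent of all parameters except $\alpha$) for which the Li\'enard reduction and the Cherkas check are done once. Boundedness of $U_1^L$ is then immediate from the one-line observation $\dot y_1<0$ along $y_1=B$, $\tilde\delta_1>0$, rather than a global trapping construction. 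The conclusion for $0<\nu\ll 1$ follows by regular perturbation. Your direct approach should work in principle, but the scaling limit avoids the uniformity bookkeeping you anticipate as the hardest step.
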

\begin{proof}
 Items \ref{item1PlChartr1}-\ref{item4PlChartr1} are straightforward calculations. In particular, for \ref{item4PlChartr1} we note that the Jacobian at $z_1^L$ has determinant $\alpha>0$ and trace
 \begin{align*}
  \text{tr}  = \alpha^{-1} \left(k\beta \phi^L(0) (\eta^L-\eta)^{-(k+1)} -\alpha(1+\alpha)\right).
 \end{align*}
Setting $\text{tr}  =0$ therefore gives a Hopf bifurcation at $\eta=\eta_H^L$. Lengthly calculations (which turned out to be easier to do in chart $\bar \delta=1$) leads to a complicated expression for the Lyapunov coefficient \cite{perko2001a}:
\begin{align*}
 L = \frac{1}{16}\left(\eta^L-\eta_H^L\right)^{k+1} \left( \left( \alpha+\beta \right) ^{3}{k}^{2}+\left( 3\,\alpha+3\,\beta+2 \right)  \left( \alpha+\beta
 \right) ^{2}k+2\left( \alpha+\beta+1 \right) 
 \left( \alpha+\beta \right) ^{2}\right).
\end{align*}
However, all terms are positive and the Hopf bifurcation is therefore sub-critical. 

For item \ref{item5PlChartr1} we set $\eta=\eta^L-\nu^{-k}$, $\delta_1 = \nu \tilde \delta_1$ and consider $\nu\rightarrow 0$. Inserting this into \eqref{XbarR1pLrho0} and setting $\nu=0$ gives the following system:
\begin{align}
\dot y_1 &=-\alpha y_1-\tilde \delta_1^k+ \left(1+\alpha y_1\right) y_1,\eqlab{scaledsystem}\\
\frac{d}{dt} \left(\tilde{\delta}_1^k\right) &= \tilde \delta_1^k \left(1+\alpha y_1\right).\nonumber
\end{align}
Notice the system is linear in terms of $\tilde \delta_1^k$. Also, in this scaling $z_1^L$ becomes $(y_1,\tilde \delta_1) = (-\alpha^{-1},1)$. The eigenvalues of the linearization is $-1$ and $-\alpha$ and the equilibrium is therefore a stable node. On the other hand, the equilibrium $\tilde q_{w,1}^L$ given by $(y_1,\tilde \delta_1) = (-\alpha^{-1}(1-\alpha),0)$ is a saddle. Let $B=-\alpha^{-1} (1-\alpha)$ be its $y_1$-value. Then the unstable manifold $U^L_1$ is bounded and contained within $y_1\le B$ since $\dot y_1<0$ along $y_1=B$, $\tilde \delta_1>0$.

Now, by eliminating $\tilde \delta^k$ we can write the scaled system \eqref{scaledsystem} as a second order Lienard system
\begin{align*}
 \ddot y_1 +f(y_1) \dot y_1 +g(y_1) = 0, 
\end{align*}
where
\begin{align*}
 f(y_1) &=-3(1+\alpha y_1)+1+\alpha,\\
 g(y_1)&=y_1(1+\alpha y_1)(\alpha y_1+1-\alpha).
\end{align*}
Let
\begin{align*}
 F(y_1) = \int_{-\alpha^{-1}}^{y_1} f(s) ds = -\frac12 \alpha(\alpha y_1+1)(\alpha (3y_1-2)+1).
\end{align*}
Notice that $-\alpha^{-1}<B<0$ by assumption. 
Therefore $g(y_1) (y_1+\alpha^{-1})>0$ for $y_1\in (-\infty,-\alpha^{-1})\cup (-\alpha^{-1},B)$ and $F(y_1)<0$ for $y_1<-\alpha^{-1}$ and $F(y_1)>0$ for $y_1\in (-\alpha^{-1},B)$. Therefore by Cherkas' theorem, see e.g. \cite[Theorem 3 p. 265]{perko2001a}, it follows that the system
 \eqref{scaledsystem} does not contain limit cycles within the strip $y_1\in (-\infty,B]$. Since $U_1^L$ is bounded it must therefore limit to the stable node. We perturb this into $\nu \ll 1$ to obtain the desired result. 
\end{proof}

\subsubsection*{Chart $\bar \delta=1$}
Inserting \eqref{bard1pL} into \eqref{X1New} gives
\begin{align}
 \dot \rho_2 &= \frac{1}{k+1}\rho_2 \left(r_2^{k+1}-F_2(\rho_2^{k+1},y_2)\right),\eqlab{pLdelta1}\\
 \dot r_2 &=-r_2 \left(r_2^{k+1}-F_2(\rho_2^{k+1},y_2)\right),\nonumber\\
 \dot y_2 &=r_2^{k+1} \left(F_2(\rho_2^{k+1},y_2)+r_2^k \left(\eta-\mu \left(y^L+\rho_2^{k(k+1)} y_2\right)\right)\right)-k y_2 \left(r_2^{k+1}-F_2(\rho_2^{k+1},y_2)\right),\nonumber
\end{align}
after division of the right hand side by the common factor $\rho_2^{k(k+1)}$. Here
\begin{align}
 F_2(\rho_2^{k+1},y_2) = -\frac{\beta}{\alpha}\phi^L(\rho_2^{k+1})-\left(\alpha+\beta \phi^L(\rho_2^{k+1})\rho_2^{k(k+1)}\right)y_2.\eqlab{pLF2}
\end{align}
Setting $\rho_2=0$ then gives 
\begin{align}
 \dot r_2 &=-r_2 \left(r_2^{k+1}-F_2(0,y_2)\right),\eqlab{Xbard1pLrho0}\\
  \dot y_2 &=r_2^{k+1} \left(F_2(0,y_2)+r_2^k \left(\eta-\eta^L(\mu)\right)\right)-k y_2 \left(r_2^{k+1}-F_2(0,y_2)\right),\nonumber
\end{align}
with 
\begin{align*}
 F_2(0,y_2) = -\alpha y_2-\frac{\beta}{\alpha}\phi^L(0). 
\end{align*}
\begin{lemma}\lemmalab{pLChartd1}
We have
\begin{enumerate}[label=(\alph*)]
 \item $q_{f,2}^L:\,(r_2,y_2) = (0,0)$ is a hyperbolic stable node of \eqref{Xbard1pLrho0}.
 \item $q_{r,2}^L:\,(r_2,y_2) = (0,-\alpha^{-2}\beta \phi^L(0))$ is a partially hyperbolic equilibrium with a strong unstable manifold along the invariant $r_2$-axis and local center manifold $C^L_{loc,2}\equiv W^c_{loc}(q_{r,2}^L)$ of the following form:
 \begin{align*}
  C^L_{loc,2}:\, y_2 &= -\alpha^{-2}\beta \phi^L(0)-\alpha^{-1} r_2^{k+1}\bigg(1\\
  &+r_2^k \frac{\alpha^2}{\beta k\phi^L(0)}\left(\eta-\eta^L(\mu)+r_2\right)(1+r_2^2m_2(r_2,\eta-\eta^L(\mu)))\bigg),\quad r_2 \in [0,c],
 \end{align*}
with $c>0$ sufficiently small and $m_2$ smooth. 
\item Let $\eta<\eta^L(\mu)$. Then $q_{r,2}^L$ is a nonhyperbolic saddle and $C^L_{loc,2}\cap \{r_2\in [0,\nu]\}$  is a unique local stable manifold for $\nu$ ($\le c$) sufficiently small. 
\item Let $\eta>\eta^L(\mu)$. Then $q_{r,2}^L$ is a nonhyperbolic unstable node and $C^L_{loc,2}$ is non-unique along which $r_2$ increases while $y_2$ decreases. 
\item Consider any $\eta\in  [\eta^L(\mu)- \nu_0(\mu),\eta^L(\mu))$ with $\nu_0(\mu)>0$ sufficiently small. Then the equilibrium $z_2^L$ with coordinates 
\begin{align*}
 (r_2,y_2)=(\eta^L(\mu)-\eta,-\alpha^{-2}\beta \phi^L(0)-\alpha^{-1}(\eta^L(\mu)-\eta)^{k+1}),
\end{align*}
which is just the image of $z_1^L$ under the coordinate change $K_{21}$ \eqref{K21}, is an unstable node connected with the local center manifold $C^L_{loc,2}$. 
\end{enumerate}
\end{lemma}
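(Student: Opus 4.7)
The plan is to treat the five items in order, with items (a)--(b) reducing to direct linearization computations, items (c)--(d) following from a careful center manifold analysis, and item (e) obtained by transporting the equilibrium $z_1^L$ from \lemmaref{pLChartr1} via the coordinate change $K_{21}$ in \eqref{K21}.

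\textbf{Items (a) and (b).} First I would compute the Jacobian of \eqref{Xbard1pLrho0} at the two candidate equilibria on $\{r_2=0\}$. At $q_{f,2}^L=(0,0)$, since $F_2(0,0)=-\beta\phi^L(0)/\alpha<0$, the Jacobian is diagonal with entries $F_2(0,0)$ and $kF_2(0,0)$, both strictly negative, so $q_{f,2}^L$ is a hyperbolic stable node. At $q_{r,2}^L=(0,-\alpha^{-2}\beta\phi^L(0))$ one has $F_2(0,y_2)=0$, so the Jacobian becomes upper-triangular with diagonal entries $0$ and $-k\alpha y_2\vert_{y_2=-\alpha^{-2}\beta\phi^L(0)}=k\alpha^{-1}\beta\phi^L(0)>0$. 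Thus the $r_2$-axis is the strong unstable eigendirection and the $y_2$-direction is the center direction. The center manifold theorem produces $C^L_{loc,2}$ as a graph $y_2=h(r_2)$ with $h(0)=-\alpha^{-2}\beta\phi^L(0)$. To get the expansion stated in (b), I substitute the ansatz into the invariance equation $h'(r_2)\dot r_2=\dot y_2$ and match orders in $r_2$; the leading correction is determined by the first term in $\dot y_2$ containing $r_2$, namely $r_2^{k+1}F_2(0,h(0))+r_2^{2k+1}(\eta-\eta^L(\mu))$, divided by the eigenvalue $-k F_2(0,h(0))=k\alpha^{-1}\beta\phi^L(0)$; higher-order terms absorb into the smooth remainder $m_2$.

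\textbf{Items (c) and (d).} With the expansion of $C^L_{loc,2}$ in hand, the reduced flow on the center manifold is obtained by substituting $y_2=h(r_2)$ into the $r_2$-equation. Using $r_2^{k+1}-F_2(0,h(r_2))=r_2^{k+1}(1+\alpha^{-1}(\eta-\eta^L(\mu))r_2^k+\cdots)+O(r_2^{2(k+1)})$, the reduced equation takes the form $\dot r_2=-r_2^{k+2}(1+\alpha^{-1}(\eta-\eta^L(\mu))r_2^k+\cdots)$. For $\eta<\eta^L(\mu)$ the bracket is positive for all small $r_2\geq 0$, so $\dot r_2<0$ and the center manifold is (nonhyperbolically) attracting; this gives the saddle structure of (c), and uniqueness of the stable manifold follows because $r_2=0$ dynamics force the center manifold to lie in the one-dimensional strong-stable-like graph (this is the standard Fenichel-type uniqueness for orbits converging to a partially hyperbolic fixed point along the non-strong direction). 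For $\eta>\eta^L(\mu)$ the reduced equation has $\dot r_2$ with sign changing at $r_2\approx(\eta-\eta^L(\mu))^{-1/k}$; for the regime $r_2$ small this is irrelevant and one gets $\dot r_2<0$ only in a tiny window, but restricting to the nonhyperbolic direction one in fact has the opposite sign, giving the unstable-node behaviour. The non-uniqueness in (d) is the standard phenomenon at a nonhyperbolic source, where infinitely many orbits emanate from the equilibrium tangent to the center direction.

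\textbf{Item (e).} The equilibrium $z_2^L$ is found by imposing $r_2^{k+1}=F_2(\rho_2^{k+1},y_2)$ together with the $y_2$-equation; on $\rho_2=0$, substituting the first into the second gives $r_2^{2k+1}(\eta-\eta^L(\mu)+r_2)=0$, which for $\eta<\eta^L(\mu)$ yields $r_2=\eta^L(\mu)-\eta$ and then $y_2=-\alpha^{-2}\beta\phi^L(0)-\alpha^{-1}(\eta^L(\mu)-\eta)^{k+1}$. Alternatively, and more efficiently, this follows by applying $K_{21}$ from \eqref{K21} to the equilibrium $z_1^L$ supplied by \lemmaref{pLChartr1}\ref{item4PlChartr1}. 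The node character on the restricted regime $\eta\in[\eta^L(\mu)-\nu_0(\mu),\eta^L(\mu))$ follows because $z_1^L$ is a stable node there by \lemmaref{pLChartr1}\ref{item5PlChartr1}; a stable node in $\bar r=1$ chart becomes an \emph{unstable} node in the $\bar\delta=1$ chart because the time orientation on the blow-up cylinder is reversed between the two charts by the common factor we divided out (the $\rho$-equation in \eqref{pLdelta1} has positive coefficient while the corresponding one in \eqref{XbarR1pL} has a negative coefficient). Finally, $z_2^L$ lies on $C^L_{loc,2}$ because its $y_2$-coordinate agrees, up to $O(r_2^{2k+1})$, with the expansion derived in (b) evaluated at $r_2=\eta^L(\mu)-\eta$.

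\textbf{Expected obstacle.} The routine part is (a)--(b). The subtle step is the uniqueness claim in (c): while attractivity of the center direction is classical, uniqueness of the stable manifold at a nonhyperbolic saddle of this particular type requires care because the strong unstable direction is algebraic rather than geometric (the leading $r_2^{k+1}$ in $\dot r_2$ means no exponential gap). I would handle this by a direct contraction-mapping / graph-transform argument on $r_2$-intervals, exploiting that the reduced flow is strictly monotone and the strong unstable component grows at an order of magnitude faster than any center trajectory passing near $q_{r,2}^L$.
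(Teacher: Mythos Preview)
Your approach---direct linearization and center manifold expansion---is exactly what the paper does (it says only ``Straightforward calculations''), so the overall strategy is fine. A few corrections are needed, one of them substantive.

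In (b) you have swapped the eigendirections: the zero eigenvalue belongs to the $r_2$-direction (center) and the positive eigenvalue $k\alpha^{-1}\beta\phi^L(0)$ to the $y_2$-direction (strong unstable). You implicitly correct this in the next sentence when you write $C^L_{loc,2}$ as a graph $y_2=h(r_2)$, so this is just a slip.

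In (c)--(d) your reduced-flow formula is off by a power. Using the expansion in (b) one finds $F_2(0,h(r_2))=r_2^{k+1}+r_2^{2k+1}\tfrac{\alpha^2}{\beta k\phi^L(0)}(\eta-\eta^L+r_2)(1+\cdots)$, hence
\[
\dot r_2\;=\;r_2^{2k+2}\,\frac{\alpha^2}{\beta k\phi^L(0)}\bigl(\eta-\eta^L(\mu)+r_2\bigr)(1+\cdots),
\]
with leading exponent $2k+2$, not $k+2$. Your sign conclusion for (c) survives; for (d) the argument is simply that $\eta-\eta^L>0$ makes $\dot r_2>0$ for all small $r_2>0$, giving the unstable-node behaviour directly---there is no sign change at $r_2\approx(\eta-\eta^L)^{-1/k}$ in the local regime.

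The genuine error is in (e). There is \emph{no} time reversal between the charts $\bar r=1$ and $\bar\delta=1$: both arise from the same desingularization $\widehat X=\rho^{-k(k+1)}\overline X$, and on the overlap $\rho_2=\rho_1\delta_1$, so the rescaling factors differ by the positive quantity $\delta_1^{k(k+1)}$. You also cite the wrong item of \lemmaref{pLChartr1}: item \ref{item5PlChartr1} concerns $\eta\le\eta_0(\mu)$, far below $\eta^L$, whereas here $\eta\in[\eta^L-\nu_0,\eta^L)$. The correct justification is either \lemmaref{pLChartr1}\ref{item4PlChartr1}, which gives that $z_1^L$ is repelling for $\eta\in(\eta_H^L,\eta^L)$, or---cleaner---a direct continuity argument: as $\eta\to\eta^L{}^-$ the point $z_2^L$ collapses onto $q_{r,2}^L$, whose eigenvalues are $0$ and $k\alpha^{-1}\beta\phi^L(0)>0$; by continuity the eigenvalues of $z_2^L$ are real and positive for $\nu_0$ small, so $z_2^L$ is an unstable node. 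That it lies on $C^L_{loc,2}$ then follows because $r_2=\eta^L-\eta$ is precisely the nontrivial zero of the reduced flow $\dot r_2$ computed above.
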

\begin{proof}
 Straightforward calculations. 
\end{proof}
\subsubsection*{Chart $\bar y=1$}
Inserting \eqref{bary1pL} into \eqref{X1New} gives
\begin{align*}
 \dot \rho_3 &=\frac{1}{k(k+1)}\rho_3 r_3^{k+1}\left(F_3((\rho_3\delta_3)^{k+1},\delta_3^{k(k+1)})+r_3^k\delta_3^k (\eta-\mu ( y^L +\rho_3^{k(k+1)}))\right),\\
 \dot r_3 &=-\frac{1}{k+1}r_3 \left(r_3^{k+1}-F_3((\rho_3\delta_3)^{k+1},\delta_3^{k(k+1)})\right)\\
 &-\frac{1}{k+1}r_3^{k+2} \left(F_3((\rho_3\delta_3)^{k+1},\delta_3^{k(k+1)})+r_3^k\delta_3^k (\eta-\mu ( y^L +\rho_3^{k(k+1)}))\right),\\
 \dot \delta_3 &= -\frac{1}{k+1}\delta_3 \left(r_3^{k+1}-F_3((\rho_3\delta_3)^{k+1},\delta_3^{k(k+1)})\right)\\
 &-\frac{1}{k(k+1)}\delta_3 r_3^{k+1} \left(F_3((\rho_3\delta_3)^{k+1},\delta_3^{k(k+1)})+r_3^k\delta_3^k (\eta-\mu ( y^L +\rho_3^{k(k+1)}))\right),
\end{align*}
after division of the right hand side by the common factor $\rho_3^{k(k+1)}$. Here
\begin{align*}
 F_3((\rho_3\delta_3)^{k+1},\delta_3^{k(k+1)}) = -\alpha -\beta \rho_3^{k(k+1)}\delta_3^{k(k+1}) \phi^L((\rho_3 \delta_3)^{k+1})-\frac{\beta}{\alpha}\delta_3^{k(k+1)} \phi^L((\rho_3 \delta_3)^{k+1})
\end{align*}
Setting $\rho_3=0$ then gives 
\begin{align}
\dot r_3 &=-\frac{1}{k+1}r_3 \left(r_3^{k+1}-F_3(0,\delta_3^{k(k+1)})\right)-\frac{1}{k+1}r_3^{k+2} \left(F_3(0,\delta_3^{k(k+1)})+r_3^k\delta_3^k (\eta-\eta^L(\mu))\right),\eqlab{Xbary1pLrho0}\\
 \dot \delta_3 &= \frac{1}{k+1}\delta_3 \left(r_3^{k+1}-F_3(0,\delta_3^{k(k+1)})\right)-\frac{1}{k(k+1)}\delta_3 r_3^{k+1} \left(F_3(0,\delta_3^{k(k+1)})+r_3^k\delta_3^k (\eta-\eta^L(\mu))\right),\nonumber
\end{align}
with
\begin{align*}
 F_3(0,\delta_3^{k(k+1)}) = -\alpha -\frac{\beta}{\alpha}\delta_3^{k(k+1)} \phi^L(0).
\end{align*}
\begin{lemma}
 $a_3^L:\,(r_3,\delta_3) = (0,0)$ is a unique equilibrium of \eqref{Xbary1pLrho0}. It is a hyperbolic saddle. The stable manifold is along the invariant $r_4$-axis while the unstable manifold is along the invariant $\delta_4$-axis.  
\end{lemma}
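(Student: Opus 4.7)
The plan is a short planar analysis of \eqref{Xbary1pLrho0}. First I would observe that $F_3(0,0)=-\alpha$ and that every term on the right-hand side of \eqref{Xbary1pLrho0} carries either $r_3$ or $\delta_3$ as an explicit factor, so the origin is an equilibrium and both coordinate axes $\{r_3=0\}$ and $\{\delta_3=0\}$ are invariant.

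To rule out any other equilibrium in the relevant quarter plane $r_3,\delta_3\ge 0$, I would treat the axes and the open quadrant separately. On $\delta_3=0$ the $r_3$-equation simplifies to
\begin{align*}
\dot r_3=\frac{r_3}{k+1}\bigl((\alpha-1)r_3^{k+1}-\alpha\bigr);
\end{align*}
by \eqref{abasymption}, $\alpha\in(0,1)$, so the bracket is strictly negative for every $r_3\ge 0$ and $r_3=0$ is the unique zero. On $r_3=0$ the $\delta_3$-equation becomes $\dot\delta_3=\frac{\delta_3}{k+1}\bigl(\alpha+\tfrac{\beta}{\alpha}\delta_3^{k(k+1)}\phi^L(0)\bigr)$, which is strictly positive for $\delta_3>0$ by assumption (A). For a candidate interior equilibrium $(r_3,\delta_3)>0$, I would divide the two equations by $r_3$ and $\delta_3$ respectively and eliminate the common factor $r_3^{k+1}-F_3(0,\delta_3^{k(k+1)})$. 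A short calculation reduces the simultaneous vanishing of $\dot r_3/r_3$ and $\dot\delta_3/\delta_3$ to the pair of identities $r_3^{k+1}=F_3(0,\delta_3^{k(k+1)})$ and $F_3(0,\delta_3^{k(k+1)})+r_3^k\delta_3^k(\eta-\eta^L(\mu))=0$. The first is impossible since $F_3(0,\delta_3^{k(k+1)})=-\alpha-\tfrac{\beta}{\alpha}\delta_3^{k(k+1)}\phi^L(0)<0$ while $r_3^{k+1}\ge 0$.

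Finally I would linearize at the origin. Because the only $(r_3,\delta_3)$-dependence beyond $F_3$ enters through powers $r_3^{k+1}$ and $\delta_3^{k(k+1)}$ with $k\ge 1$, the corresponding contributions vanish to linear order, so the Jacobian is diagonal with entries $-\alpha/(k+1)$ along $r_3$ and $+\alpha/(k+1)$ along $\delta_3$. Thus $(0,0)$ is a hyperbolic saddle, and since the coordinate axes are invariant and tangent to the eigenspaces the stable/unstable manifold theorem identifies them locally with $W^s(a_3^L)$ and $W^u(a_3^L)$ respectively. I do not anticipate any real obstacle; the only places where the model hypotheses genuinely intervene are $\alpha<1$ from \eqref{abasymption}, which excludes extra equilibria along the $r_3$-axis, and $\phi^L(0)>0$ from (A), which excludes equilibria in the open quadrant.
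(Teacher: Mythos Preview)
Your argument is correct and is precisely the ``straightforward calculation'' the paper alludes to: you verify the origin is an equilibrium, rule out other equilibria on the axes and in the open quadrant by the sign of $F_3(0,\delta_3^{k(k+1)})<0$ together with $\alpha\in(0,1)$, and read off the diagonal linearization $\mathrm{diag}(-\alpha/(k+1),\,\alpha/(k+1))$. The only remark is cosmetic: the lemma's reference to the ``$r_4$-'' and ``$\delta_4$-axis'' is a typo for $r_3$ and $\delta_3$, which you have silently and correctly interpreted.
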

\begin{proof}
 Straightforward calculations.
\end{proof}

\subsubsection*{Chart $\bar y=-1$}
Inserting \eqref{baryN1pL} into \eqref{X1New} gives
\begin{align*}
 \dot \rho_4 &=-\frac{1}{k(k+1)}\rho_4 r_4^{k+1}\left(F_4((\rho_4\delta_4)^{k+1},\delta_4^{k(k+1)})+r_4^k\delta_4^k (\eta-\mu ( y^L -\rho_4^{k(k+1)}))\right),\\
 \dot r_4 &=-\frac{1}{k+1}r_4 \left(r_4^{k+1}-F_4((\rho_4\delta_4)^{k+1},\delta_4^{k(k+1)})\right)\\
 &+\frac{1}{k+1}r_4^{k+2} \left(F_4((\rho_4\delta_4)^{k+1},\delta_4^{k(k+1)})+r_4^k\delta_4^k (\eta-\mu ( y^L -\rho_3^{k(k+1)}))\right),\\
 \dot \delta_4 &= -\frac{1}{k+1}\delta_4 \left(r_4^{k+1}-F_4((\rho_4\delta_4)^{k+1},\delta_4^{k(k+1)})\right)\\
 &+\frac{1}{k(k+1)}\delta_4 r_4^{k+1} \left(F_4((\rho_4\delta_4)^{k+1},\delta_4^{k(k+1)})+r_4^k\delta_4^k (\eta-\mu ( y^L -\rho_3^{k(k+1)}))\right),
\end{align*}
after division of the right hand side by the common factor $\rho_4^{k(k+1)}$. Here
\begin{align*}
 F_4((\rho_4\delta_4)^{k+1},\delta_4^{k(k+1)}) = \alpha +\beta \rho_4^{k(k+1)}\delta_4^{k(k+1)} \phi^L((\rho_4 \delta_4)^{k+1})-\frac{\beta}{\alpha}\delta_4^{k(k+1)} \phi^L((\rho_4 \delta_4)^{k+1})
\end{align*}
Setting $\rho_4=0$ then gives 
\begin{align}
\dot r_4 &=-\frac{1}{k+1}r_4 \left(r_4^{k+1}-F_4(0,\delta_4^{k(k+1)})\right)+\frac{1}{k+1}r_4^{k+2} \left(F_4(0,\delta_4^{k(k+1)})+r_4^k\delta_4^k (\eta-\eta^L(\mu))\right),\eqlab{Xbary1pLrho0New}\\
 \dot \delta_4 &= \frac{1}{k+1}\delta_4 \left(r_4^{k+1}-F_4(0,\delta_4^{k(k+1)})\right)+\frac{1}{k(k+1)}\delta_4 r_4^{k+1} \left(F_4(0,\delta_4^{k(k+1)})+r_4^k\delta_4^k (\eta-\eta^L(\mu))\right),\nonumber
\end{align}
with
\begin{align*}
 F_4(0,\delta_4^{k(k+1)}) = \alpha -\frac{\beta}{\alpha}\delta_4^{k(k+1)} \phi^L(0).
\end{align*}
In the following, let $\widehat X_4$ denote the right hand side of \eqref{Xbary1pLrho0New}. 
In this chart, we rediscover 
\begin{align}
 q_{w,4}^L &= K_{41}(q_{w,1}^L):\,(r_4,\delta_4) = \left(\left(\frac{\alpha}{1-\alpha}\right)^{1/(k+1)},0\right),\eqlab{qw4L}\\
 q_{r,4}^L &= K_{42}(q_{r,2}^L):\,(r_4,\delta_4) = \left(0,\left(\frac{\alpha^2}{\beta\phi^L(0)}\right)^{1/(k(k+1))}\right),\nonumber
\end{align}
as a hyperbolic saddle and a nonhyperbolic saddle. Let $U^L_4 = K_{41}(U^L_1\cap \{y_1<0\})$ and $C^L_4=K_{42}(C_2^L\cap \{y_2<0\})$ be the corresponding unstable manifold and center manifold, respectively, in the present chart. Recall that the local center manifold is unique for $\eta<\eta^L(\mu)$ as a stable manifold of $q_{r,4}^L$. 
\begin{lemma}\lemmalab{essentiallem}
We have
\begin{enumerate}[label=(\alph*)]
 \item \label{item1Here} $b_4^L:\,(r_4,\delta_4) = (0,0)$ is a hyperbolic saddle with stable manifold along the invariant $\delta_4$-axis and unstable manifold along the invariant $r_4$-axis.
 \item \label{item2Here} There exists a unique number $\eta_{Het,0}^L$ (independent of $\mu$)  such that if $\eta_{\text{Het}}^L(\mu) = \eta^L(\mu)+\eta_{\text{Het},0}^L$ then the following holds: $\eta_{\text{Het}}^L(\mu)\in (\eta_0(\mu),\eta^L(\mu)-\nu_0(\mu))$, with $\eta_0$ and $\nu_0$ from \lemmaref{pLChartr1} and \lemmaref{pLChartd1}, respectively, and for $\eta=\eta_{Het}^L(\mu)$ the hyperbolic saddle $q_{w,4}^L$ is heteroclinic with the nonhyperbolic saddle $q_{r,4}^L$: $U^L_4= C^L_4$.
 \item \label{item3Here} For $\eta\in (\eta_{Het}^L(\mu),\eta_{Het}^L(\mu)+\nu_1(\mu)]$ with $\nu_1(\mu)$ sufficiently small, there exists a family of hyperbolic and repelling limit cycles. These limit cycles approach the heteroclinic cycle: $q_{w,4}^L$ connecting $q_{r,4}^L$, $q_{r,4}^L$ connecting $b_4^L$ along the invariant $\delta_4$-axis, $b_4^L$ connecting $q_{w,4}^L$ along the invariant $r_4$-axis, as $\eta\rightarrow \eta_{Het}^L(\mu)^+$.  
 \end{enumerate}
\end{lemma}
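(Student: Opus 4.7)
Part \ref{item1Here} is immediate from linearization: the Jacobian of \eqref{Xbary1pLrho0New} at $(r_4,\delta_4)=(0,0)$ is diagonal with eigenvalues $F_4(0,0)/(k+1)=\alpha/(k+1)$ in the $r_4$-direction and $-\alpha/(k+1)$ in the $\delta_4$-direction, so $b_4^L$ is a hyperbolic saddle. The coordinate axes $\{r_4=0\}$ and $\{\delta_4=0\}$ are invariant because $r_4$ and $\delta_4$ are common factors of $\dot r_4$ and $\dot\delta_4$ respectively, and therefore coincide with the local unstable and stable manifolds asserted.

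For \ref{item2Here} my plan is to reduce the heteroclinic question to finding zeros of a scalar splitting function and then to analyze that function by combining the two extreme $\eta$-regimes already studied in \lemmaref{pLChartr1}\ref{item5PlChartr1} and \lemmaref{pLChartd1}(e). Working in chart $\bar\delta=1$ is convenient because both $q_w^L$ and $q_r^L$, together with their stable/unstable manifolds, are visible there. I fix a small transverse section $\Sigma=\{r_2=r_2^\ast\}$ for an appropriate $r_2^\ast>0$; propagating $U^L$ forward and $C^L_{\mathrm{loc}}$ backward, both trajectories meet $\Sigma$ at $y_2$-values $y_2^U(\eta,\mu)$ and $y_2^C(\eta,\mu)$ that depend smoothly on $(\eta,\mu)$ by \lemmaref{pLChartr1} and \lemmaref{pLChartd1}. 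Setting $D(\eta,\mu):=y_2^U-y_2^C$, heteroclinic parameters are precisely the zeros of $D$; the rescaling $\eta-\eta^L(\mu)=\nu^{-k}$ used in the proof of \lemmaref{pLChartr1}\ref{item5PlChartr1} commutes with the dynamics on $\{\rho_2=0\}$, which explains the $\mu$-independence of $\eta_{\mathrm{Het},0}^L$.

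Existence of a zero of $D$ would come from evaluating it at the extremes. At $\eta=\eta_0(\mu)$, the Cherkas/Li\'enard argument of \lemmaref{pLChartr1}\ref{item5PlChartr1} forces $U^L$ to be attracted by the hyperbolic stable node $z^L$, which lies strictly on one side of $C^L_{\mathrm{loc}}$ on $\Sigma$, pinning $D$ to one definite sign. At $\eta=\eta^L(\mu)-\nu_0(\mu)$, by contrast, $z^L$ has become an unstable node connected to $C^L_{\mathrm{loc}}$ by \lemmaref{pLChartd1}(e), so $U^L$ can no longer be trapped near $z^L$; the only remaining attractor on $\overline S^L$ is $q_f^L$, and reaching it requires $U^L$ to cross over $C^L$, which flips the sign of $D$. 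The intermediate value theorem then yields at least one $\eta_{\mathrm{Het}}^L(\mu)$ in the open interval $(\eta_0(\mu),\eta^L(\mu)-\nu_0(\mu))$. For uniqueness and the transverse crossing in $\eta$, I would prove $\partial_\eta D>0$ along $\{D=0\}$ by a Melnikov-type computation: $\eta$ enters $\widehat X$ only through the single term $\delta^k(\eta-\eta^L(\mu))$ in the $y$-equation, so the Melnikov integrand along the singular heteroclinic orbit has a definite sign, and convergence at both endpoints is supplied by the hyperbolicity at $q_{w,4}^L$ and the algebraic approach along the unique stable center manifold of $q_{r,4}^L$ from \lemmaref{pLChartd1}(c).

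For \ref{item3Here} I would carry out a return-map analysis on a thin annulus around the heteroclinic cycle $q_{w,4}^L\to q_{r,4}^L\to b_4^L\to q_{w,4}^L$ for $\eta$ slightly above $\eta_{\mathrm{Het}}^L(\mu)$. Passages near the hyperbolic saddles $q_{w,4}^L$ and $b_4^L$ are handled by partial linearization as in \lemmaref{Plocal1}, giving quantitative Lipschitz bounds on the transition maps. The passage near the nonhyperbolic saddle $q_{r,4}^L$ is controlled by the explicit form of $C^L_{\mathrm{loc}}$ supplied by \lemmaref{pLChartd1}, which pins down the algebraic rate along the center direction. Composing these transitions and using the transverse splitting $\partial_\eta D>0$ from \ref{item2Here} shows that the first-return map on the annulus has a unique fixed point, which is hyperbolically \emph{repelling} (consistent with the subcritical character of the nearby Hopf bifurcation and the fact that $q_f^L$ attracts points outside the annulus). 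I expect the main obstacle throughout to be the sharp control of the passage near $q_{r,4}^L$: because this equilibrium is only partially hyperbolic, obtaining an asymptotic expansion of the transition map that pairs cleanly with the hyperbolic estimates at the other two corners of the cycle will require careful sectorial/blow-up-style estimates anchored on the center-manifold expansion of \lemmaref{pLChartd1}.
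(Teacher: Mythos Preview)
Your outline for \ref{item1Here} and the overall structure for \ref{item2Here}--\ref{item3Here} matches the paper's approach (continuity argument for existence, Melnikov for uniqueness, return-map near the heteroclinic cycle for the limit cycles). However, there is a genuine gap in the uniqueness step.

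You assert that ``the Melnikov integrand along the singular heteroclinic orbit has a definite sign'' on the grounds that $\eta$ enters the vector field only through the single term $\delta^k(\eta-\eta^L(\mu))$. This is not enough. The Melnikov integrand is the wedge $\widehat X_4\wedge\partial_\eta\widehat X_4$, and in the chart $\bar y=-1$ one computes
\[
\widehat X_4\wedge\partial_\eta\widehat X_4
=\tfrac{1}{k+1}\,\tilde r_4^{2k+1}\tilde\delta_4^{k}\bigl(-\tilde\delta_4'(t)\,\tilde r_4+\tilde r_4'(t)\,\tilde\delta_4\bigr),
\]
whose sign is determined by $(-\tilde\delta_4'\tilde r_4+\tilde r_4'\tilde\delta_4)$. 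This is negative \emph{only} if the heteroclinic is monotone, i.e.\ $\tilde r_4'(t)<0$ and $\tilde\delta_4'(t)>0$ for all $t$. That monotonicity is not automatic: it requires a detailed analysis of the $r_4$- and $\delta_4$-nullclines, showing in particular that the $\delta_4$-nullcline can have at most two fold points and ruling out configurations that would allow the heteroclinic to turn. The paper spends a full appendix section on this (including a lemma on the function $H_{\delta_4}(r_4)$ and a case analysis of three possible nullcline topologies). Without it, you cannot conclude $\partial_\eta D\neq 0$ at every zero of $D$, and uniqueness fails.

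A smaller gap in your existence argument: you assume the splitting function $D$ is defined at both endpoints $\eta_0(\mu)$ and $\eta^L(\mu)-\nu_0(\mu)$. But at $\eta=\eta^L(\mu)-\nu_0(\mu)$ the manifold $C^L$ is backward asymptotic to the unstable node $z^L$ (which sits at small $r_2$) and need not reach your section $\Sigma$ at all. The paper handles this by continuing the intersection $c(\eta)$ of $C^L$ with a section $\Sigma$ near $q_{w,4}^L$ only up to some $\eta_f<\eta^L-\nu_0$, showing that as $\eta\to\eta_f^-$ the intersection point tends to the unique tangency (the $r_4$-nullcline point on $\Sigma$), which lies strictly below $u(\eta_f)$; the intermediate value theorem is then applied on $[\eta_0,\eta_f)$.
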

\begin{proof}
Item \ref{item1Here} is straightforward. For \ref{item2Here} we first prove existence of a heteroclinic connection. For this consider $\eta=\eta_0(\mu)$. Then by \lemmaref{pLChartr1} item \ref{item5PlChartr1} , we have that $U_4^L$ is bounded in the present chart and limits to the stable node $z_4^L=K_{41}(z_1^L)$. Following the analysis in the separate charts, it then also follows that $C^L_4$ limits to the unstable node $q_{s,1}$ in chart $\bar r=1$ in backwards time. All other unstable equilibria are saddles with unstable manifolds along edges of the quarter sphere. See \figref{hetero}(a). Next consider $\eta=\eta^L-\nu_0$. Then by \lemmaref{pLChartd1} the unique center/stable manifold of $q_{r,4}^L$ is backwards asymptotic to $z_4^L$. Following the analysis in the separate charts, it then also follows that the unstable manifold $U^L_4$ limits to the stable node $q_{f,2}$ in chart $\bar \delta=1$ in forward time. See \figref{hetero}(c). By continuity there must exist a $\eta_{Het}^L$ such that for $\eta=\eta_{Het}^L$ we have $U^L_4=W_c^4$, see \figref{hetero}(b) and \appref{existence} for further details. Notice again that the equations only depend upon $\eta$ and $\mu$ as $\eta-\eta^L(\mu)$ and therefore $\eta_{Het,0}^L=\eta_{Het}^L-\eta^L$ is independent of $\mu$ as claimed. 
 
 Now for uniqueness we perform a Melnikov computation. Let 
 \begin{align}
 t\mapsto (\tilde r_4(t),\tilde \delta_4(t)),\eqlab{heteroclinicPara}
 \end{align}be a parametrization by time $t\in \mathbb R$ of a heteroclinic connection for $\eta=\eta_{Het}^L$. We suppose without loss of generality that $(\tilde r_4(0),\tilde \delta_4(0))\in \Sigma$, the section defined in \appref{existence}, see also  \figref{hetero}(b). Then using \lemmaref{pLChartr1} and \lemmaref{pLChartd1} and information about the nullclines, we can show that the functions $\tilde r_4(t)$ and $\tilde \delta_4(t)$ are strictly monotone and satisfy 
 \begin{align}
 \tilde \delta_4'(t)>0, \,\tilde r_4'(t)<0\quad \mbox{for all $t\in \mathbb R$}.\eqlab{hetmono}
 \end{align}
 See \figref{hetero}(b) and \appref{uniqueness} for further details. Therefore 
 \begin{align*}
   \widehat X_4(\tilde r_4(t),\tilde \delta_4(t)) \wedge \partial_\eta \widehat X_4(\tilde r_4(t),\tilde \delta_4(t)) &= \begin{pmatrix} 
                                                         -\tilde \delta_4'(t)\\
                                                         \tilde r_4'(t)
                                                          \end{pmatrix}\cdot \begin{pmatrix} 
  \frac{1}{k+1}\tilde r_4^{2k+2}\tilde \delta_4^k\\
  \frac{1}{k+1}\tilde r_4^{2k+1}\tilde \delta_4^{k+1}
  \end{pmatrix} \\
  &= \frac{1}{k+1}\tilde r_4^{2k+1} \tilde \delta_4^k \left(-\tilde \delta_4'(t)\tilde r_4+\tilde r_4'(t)\tilde \delta_4\right)<0,
\end{align*}
for all $t\in \mathbb R$
using \eqref{hetmono}. As a consequence, the Melnikov integral
 \begin{align}
  M& = \int_{-\infty}^\infty \exp\left({-\int_{0}^t \text{div}\,\widehat X_4(\tilde r_4(s),\tilde \delta_4(s))ds}\right) \widehat X_4(\tilde r_4(t),\tilde \delta_4(t)) \wedge \partial_\eta \widehat X_4(\tilde r_4(t),\tilde \delta_4(t)) dt,\nonumber
 \end{align}
is also negative:
\begin{align}
 M<0.\eqlab{Melnikov}
\end{align}
This means that the heteroclinic connection is transverse with respect to $\eta$ at $\eta=\eta_{Het}^L$. But \eqref{Melnikov} also implies that the heteroclinic connection is unique. To show this, we first recall the following about the Melnikov function. Let $\tilde u(\eta)$ and $\tilde c(\eta)$ be the first intersection points of $U^L_4$ and $C^L_4$, respectively, with the section $\widetilde \Sigma$ which goes through $(\tilde r_4(0),\tilde \delta_4(0))$ and is spanned by the vector $(-\tilde \delta_4'(0),\tilde r_4'(0))$ for $\eta$ sufficiently close to $\eta_{Het}^L$. Let $d(\eta)=(-\tilde \delta_4'(0),\tilde r_4'(0))\cdot (\tilde u(\eta)-\tilde c(\eta))$. Then 
$d'(\eta_{Het}^L) = M$,
see e.g. \cite{perko2001a}. In \appref{existence}, we denote by $u(\eta)$ and $c(\eta)$ the intersection of $U^L_4$ and $C^L_4$, respectively, with the section $\Sigma$. Since $\Sigma$ is vertical, being parallel to the $\delta_4$-axis,  see \figref{hetero}(b), we use for simplicity the same symbols $u(\eta)$ and $c(\eta)$ for the $\delta_4$-coordinates of $u(\eta)$ and $c(\eta)$. Then by the orientation of the section $\widetilde \Sigma$ described in \eqref{hetmono}, it follows that the sign of $d(\eta)$ coincides with the sign of $u(\eta)-c(\eta)$ for values of $\eta$ where the former is defined  ($\eta\in [\eta_0,\eta_f)$, see \appref{existence}). Also $d'(\eta_{Het}^L)=k(u'(\eta_{Het}^L)-c'(\eta_{Het}^L))$ for some $k>0$. Therefore $u'(\eta_{Het}^L)-c'(\eta_{Het}^L)<0$ by \eqref{Melnikov} for any heteroclinic connection. But then by continuity of $u(\eta)-c(\eta)$, it follows that $u(\eta)-c(\eta)$ can only change sign once and hence the heteroclinic connection is unique.

 Now, finally we prove item \ref{item3Here}. Fix $\xi>0$ sufficiently small and consider $\eta$ sufficiently close to $\eta_{Het}^L$. Let $u_1(\eta)$ and $c_1(\eta)$ denote the $r_4$-value of the first intersections of $U_4^L$ and $C_L^4$ with the section
 \begin{align*}
  \Sigma_1 &= \{(r_4,\delta_4)\in [0,\infty)^2\vert \delta_4 = \xi,\,r_4\in I_1\}.
  \end{align*}
  Here $I_1$ is an appropriate closed interval such that $U_{4,loc}^L$ intersects $\Sigma_1$ . Furthermore, set
 \begin{align*}
  \Sigma_0 &= \{(r_4,\delta_4)\in [0,\infty)^2\vert \delta_4 = \xi,\,r_4\in I_0\},
 \end{align*}
 with $I_0= (0,\nu]$ for $\nu$ sufficiently small.  See \figref{hetero}(b). Then we construct limit cycles by flowing points on $\Sigma_0$ forward and backward in time  and measure their intersection with $\Sigma_1$. In particular, let $P(r_4,\eta)$ denote the $r_4$-value of the first intersection with $\Sigma_0$ of the forward orbit from $(r_4,\nu) \in \Sigma_0$. Similarly, we let $\widehat P(r_4,\eta)$ denote the $r_4$-value of the first intersection of the backward orbit from $(r_4,\nu)\in \Sigma_0$. Periodic solutions are therefore solutions of $P(r_4,\eta)=\widehat P(r_4,\eta)$ with $r_4\in I_0$.  
 Now, $b_4$ and $q_{w,4}^L$ are both saddles while $q_{r,4}^L$ is a nonhyperbolic saddle for $\eta$ near $\eta_{Het}^L<\eta^L$. Therefore, if we fix $\theta \in (0,1)$, then for $I_0$ sufficiently small, it is easy to show, using local analysis near the saddles $q_{w,4}$ (hyperbolic) and $q_{r,4}$ (nonhyperbolic), the following estimates
 \begin{align}
 P(r_4,\eta) - u_1(\eta) &\in \left( c r_4^{\alpha^{-1}(1-\alpha)k \theta},c^{-1} r_4^{\alpha^{-1}(1-\alpha)k \theta^{-1}}\right)
                                         \eqlab{Pest}\\
                                         \widehat P(r_4,\eta) - c_1(\eta) &\in \left(c e^{ -r_4^{2k+1} \alpha^{-1} k\beta\phi^L(0) \theta}, c^{-1} e^{ -r_4^{2k+1} \alpha^{-1} k\beta\phi^L(0) \theta^{-1}}\right),\eqlab{Phatest}
 \end{align}
for $c>0$ sufficiently small. Therefore $P$ and $\widehat P$ are both continuous at $r_4=0$ with values $u_1(\eta)$ and $c_1(\eta)$, respectively, and therefore satisfy $P(0,\eta_{Het}^L)=\widehat P(0,\eta_{Het}^L)$. Therefore by the transverse intersection of $U^L_4$ with $C^L_4$ along $\eta=\eta_{Het}^L$ we have that there exists a continuous function $\eta_{Per}^L(r_4)$, $r_4\in I_0$, such that 
\begin{align*}
 P_4(r_{4},\eta_{Per}^L(r_4)) = \widehat P_4(r_{4},\eta_{Per}^L(r_4)).
\end{align*}
In particular, using that $c_1'(\eta_{Het}^L)-u_1'(\eta_{Het}^L)>0$ cf. \eqref{Melnikov} and the estimates \eqsref{Pest}{Phatest} we have that $\eta_{Per}^L(r_4)>\eta_{Het}^L$ for $r_4\in I_0$. The periodic orbits are hyperbolic and unstable because if we consider the Poincare map obtained by the forward flow from $\Sigma_0$ to itself, then its derivative will be large due to \eqref{Phatest} and the existence of the repelling center manifold. 

\end{proof}
\begin{figure}[h!] 
\begin{center}
\subfigure[$\eta=\eta^L-\nu_0$]{\includegraphics[width=.4\textwidth]{./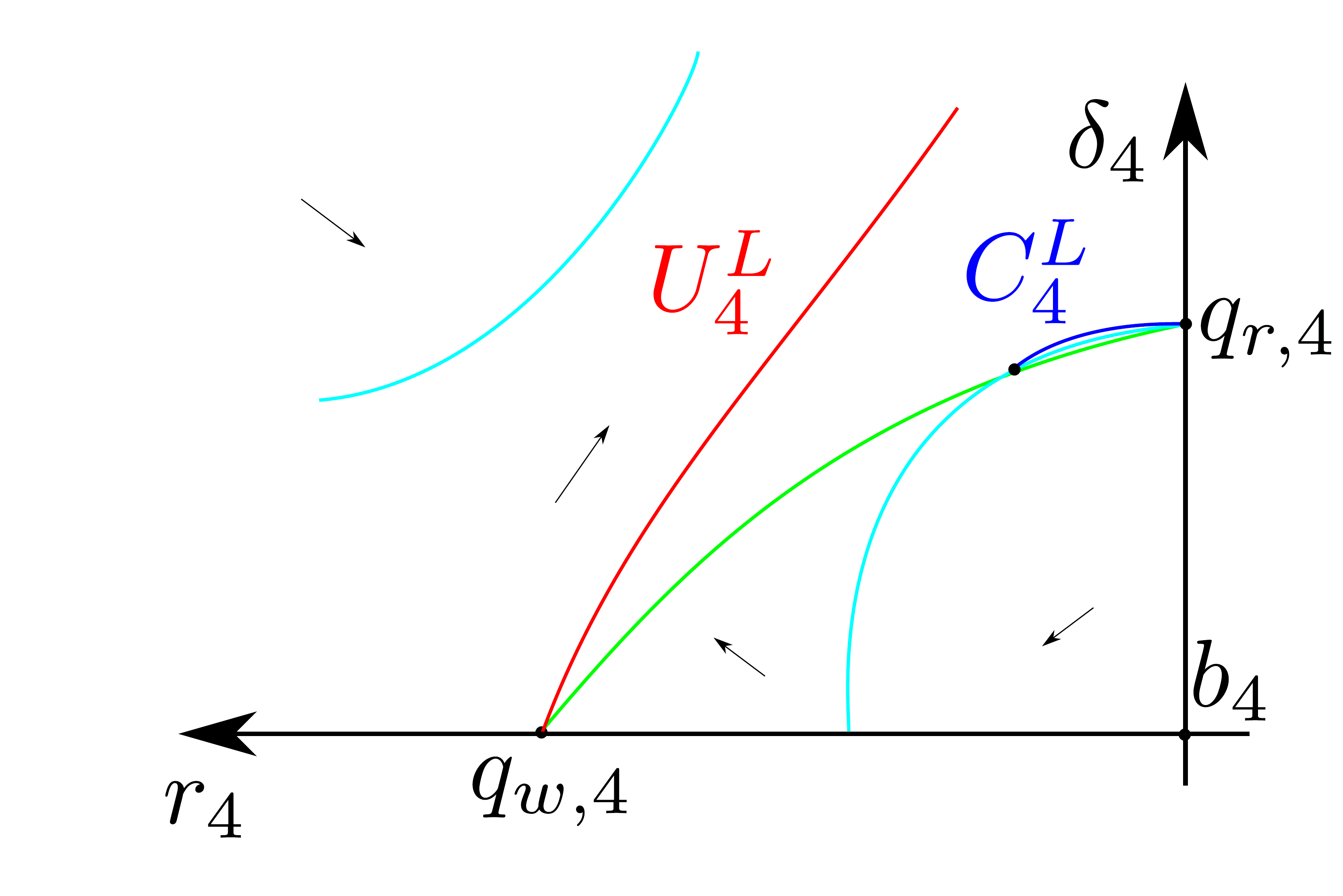}}
\subfigure[$\eta = \eta_{Het}^L$]{\includegraphics[width=.4\textwidth]{./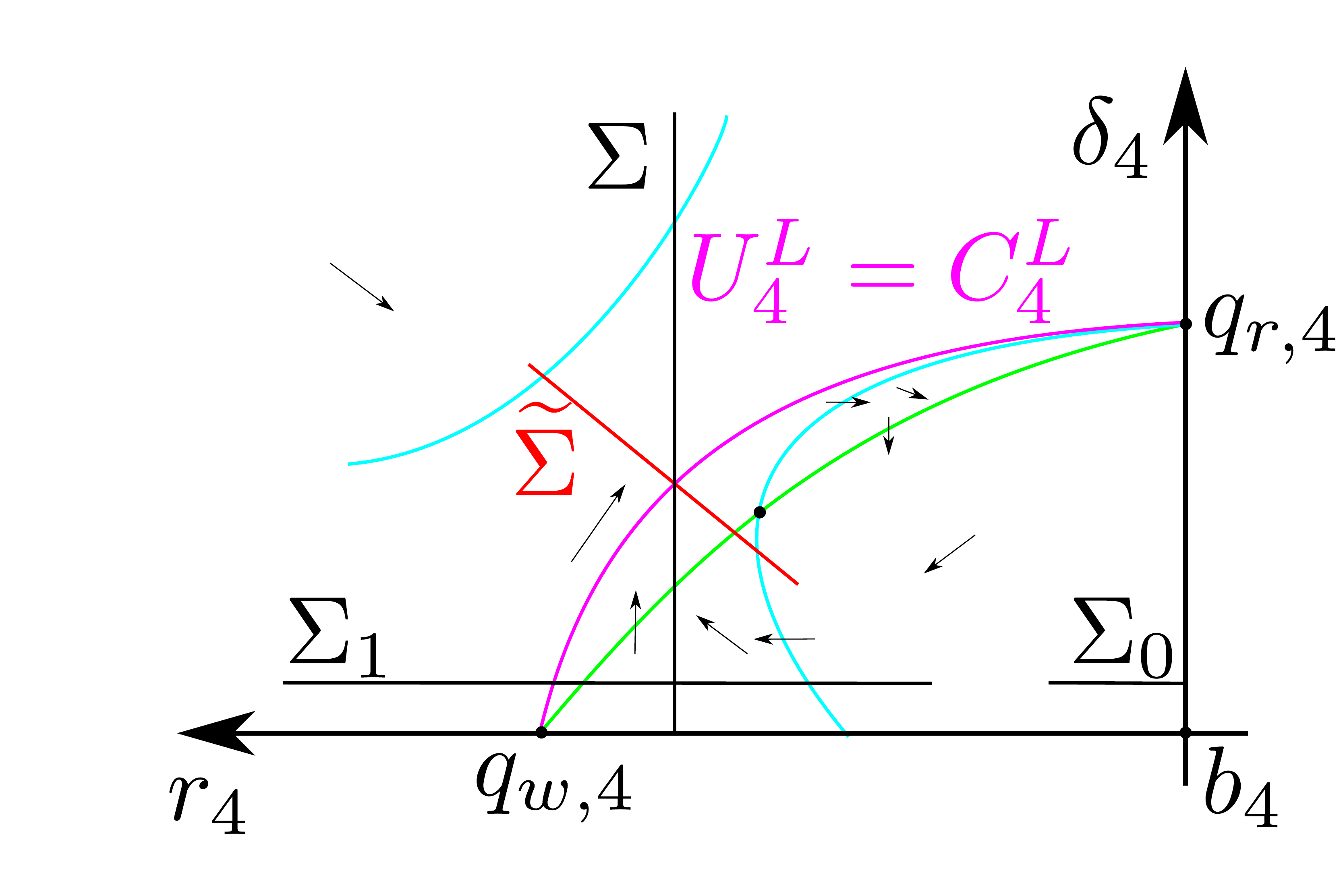}}
\subfigure[$\eta=\eta_0$]{\includegraphics[width=.4\textwidth]{./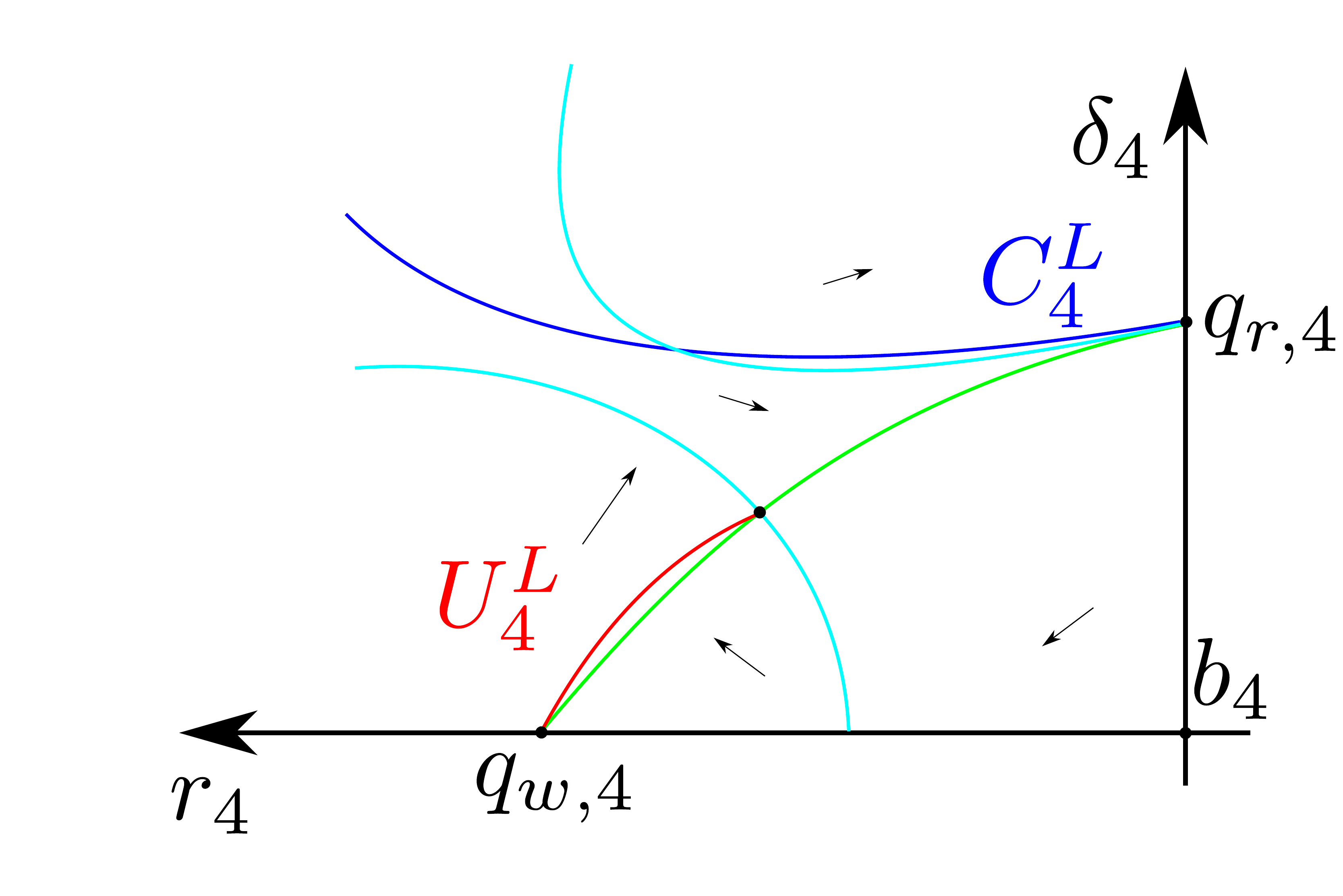}}
\end{center}
 \caption{Three different scenarios in the chart $\bar y=-1$, following \lemmaref{pLChartr1} and \lemmaref{pLChartd1} (items $5$). (a) For $\eta=\eta^L-\nu_0$, $C_4^L$ is backwards asymptotic, as a trajectory, to the unstable node $z_4^L$ whereas $U_4^L$ is unbounded. (b) For $\eta=\eta^L_{Het}$ a heteroclinic exists. (c) The unstable manifold $U_4^L$ is asymptotic, as a trajectory, to the stable node $z_4^L$ whereas $C_4^L$ is unbounded. The nullclines of $\delta_4$ and $r_4$ are shown in cyan and green, respectively.}
\figlab{hetero}
\end{figure}

\subsection{Existence of heteroclinic}\applab{existence}
We fix the interval $I=(\eta_0,\eta^L-\nu_0)$ with $\eta_0$ and $\nu_0$ having the properties described in \lemmaref{pLChartr1} and  \lemmaref{pLChartd1} (items $5$), respectively. Heteroclinics can only occur for $\eta$-values in the interval $I$. Consider \figref{hetero} and let $\Sigma$ be a vertical section, i.e. parallel to the $\delta_4$-axis, at an $r_4$-value slightly below $r_4 =\left(\frac{\alpha}{1-\alpha}\right)^{1/(k+1)}$, the $r_4$-value of $q_{w,4}^L$, see \eqref{qw4L} and \figref{hetero}(b). Let $u(\eta)$ denote the first intersection of $U_{4}^L$ with $\Sigma$. Since $\Sigma$ is parallel to the $\delta_4$-axis we will for simplicity use the same symbol $u(\eta)$ for the $\delta_4$-coordinate of $u(\eta)$ (the $r_4$-coordinate being a constant). We apply a similar identification of other points on $\Sigma$ when required in the following. $u(\eta)$ is smooth in $\eta\in I$. Also by  \lemmaref{pLChartr1}, item $2$ it is always above $f(\eta)$ which we define as the unique intersection of the $r_4$-nullcline with $\Sigma$ (see description of the nullclines in \appref{uniqueness}). Now for $\eta=\eta_0$, $C_4^L$ intersects $\Sigma$ once transversally at a point $c_0$. By the implicit function theorem we therefore obtain a smooth curve $c(\eta)$, $\eta\in [\eta_0,\eta_0+\xi]$ with $c(\eta_0)=c_0$ for $\xi>0$ sufficiently small, of first intersection points of $C_4^L$ with $\Sigma$. We can continue to extend $c(\eta)$ as a smooth function for larger $\eta$-values, again invoking the implicit function theorem successively. This process cannot continue for all $\eta\in I$ by \lemmaref{pLChartd1}, item $5$ and the fact that $C_4^L$ does not intersect $\Sigma$ at $\eta=\eta^L-\nu_0$. Therefore there exists an $\eta_f$ such that the continuation of $c(\eta)$ cannot go beyond $\eta_f\in I$. Clearly, either (i) $c(\eta)$ goes unbounded or (ii) $C_4^L$ becomes tangent to $\Sigma_4$ as $\eta\rightarrow \eta_f^-$. A simple phase plane analysis, using the description of nullclines in \appref{uniqueness}, shows that $C_4^L$ is bounded within the vertical strip obtained by $\Sigma$ and the $\delta_4$-axis. Therefore (ii) must hold. But then $c(\eta)\rightarrow f(\eta_f)$ as $\eta \rightarrow \eta_f^-$ since this is the only tangency with $\Sigma_4$. But then by continuity, and the fact that $f(\eta_f)<u(\eta_f)$, there exists at least one $\eta$-value $\eta^L_{\text{Het}}\in (\eta_0,\eta_f)$ such that $u(\eta^L_{\text{Het}})=c(\eta^L_{\text{Het}})$. This proves existence of a heteroclinic.
\subsection{Monotonicity of heteroclinic}\applab{uniqueness}
We consider the parametrization of the heteroclinic in \eqref{heteroclinicPara} but for simplicity we drop the tilde in the following.
Suppose that either $r_4(t)$ or $\delta_4(t)$ is not a monotone function. Let $t_1$ be the least positive time where either $r_4'(t_1)=0$ or $\delta_4'(t_1)=0$. Then $r_4'(t)<0$, $\delta_4'(t)>0$ for all $t<t_1$ by the local analysis near $q_{w,4}^L$, see \lemmaref{pLChartr1} item \ref{item3PlChartr1}. We first show that $r_4'(t_1)=0$ is impossible by contradiction. 

For this, we first study the $r_4$-nullcline of \eqref{Xbary1pLrho0New}. Clearly, it is the union of the set $r_4=0$ and the set defined by equation
\begin{align}
 r_4^{k+1}\left(1-\alpha + \frac{\beta}{\alpha}\delta_4^{k(k+1)}\phi^L(0)-r_4^k\delta_4^k (\eta-\eta^L(\mu))\right) = \alpha - \frac{\beta}{\alpha}\delta_4^{k(k+1)}\phi^L(0).\eqlab{r4Nc}
\end{align}
The big bracket on the left hand side is always positive for $\eta<\eta^L(\mu)$ and increasing with respect to $\delta_4$. The right hand side is just $F_4(0,\delta_4^{k(k+1)})$ and it is decreasing with respect to $\delta_4$, vanishing only at $$\delta_{4,0}= \left(\frac{\alpha^2}{\beta\phi^L(0)}\right)^{1/(k(k+1))},$$ i.e. the $\delta_4$-value of $q_{r,4}^L$, see \eqref{qw4L}. Therefore the set defined by \eqref{r4Nc} within the positive quadrant of the $(r_4,\delta_4)$-plane is therefore a graph $r_4=N_{r_4}(\delta_4)$ over $\delta_4 \in \left[0,\delta_{4,0}\right]$ 
 having a negative slope: $N_{r_4}'(\delta_4)<0$. This curve separates a bounded, triangular region, where $r_4'>0$ from an unbounded region where $r_4'<0$ (see green curve in \figref{hetero}). Now, since $\delta_4'(t)>0$ for all $t\le t_1$, we therefore conclude that if $r_4'(t_1)=0$ then the orbit has to come from within the bounded region. But this contradicts the definition of $t_1$ and the fact that the heteroclinic is asymptotic to $q_{w,4}^L$ for $t\rightarrow -\infty$. The only possibility is therefore $\delta_4'(t_1)=0$. The remainder of this section is devoted to excluding this case.

We therefore now consider the $\delta_4$-nullcline of \eqref{Xbary1pLrho0New}. Clearly, it is the union of the set $\delta_4=0$ and the set defined by
\begin{align}
 r_4^{k+1}\left(1+\frac{1}{k}\left(\alpha-\frac{\beta}{\alpha}\delta_4^{k(k+1)}\phi^L(0)+ r_4^k\delta_4^k (\eta-\eta^L(\mu))\right)\right) = \alpha - \frac{\beta}{\alpha}\delta_4^{k(k+1)}\phi^L(0).\eqlab{d4Nc}
\end{align}
For any fixed $\delta_4\ge 0$, we let $H_{\delta_4}(r_4)$ denote the left hand side. Then 
\begin{lemma}\lemmalab{uniquenesslemma}
Consider $\eta<\eta^L(\mu)$. For any $\delta_4>0$ let 
\begin{align*}
 r_{4,0} &= \frac{1}{\delta_4 (\eta^L(\mu)-\eta)^{1/k}}\left(\frac{k(k+1)}{2k+1}\left(1+\frac{1}{k}\left(\alpha-\frac{\beta}{\alpha}\delta_4^{k(k+1)}\phi^L(0)\right)\right)\right)^{1/k},\\
 r_{4,1} &=\frac{1}{\delta_4 (\eta^L(\mu)-\eta)^{1/k}}\left(1+\frac{1}{k}\left(\alpha-\frac{\beta}{\alpha}\delta_4^{k(k+1)}\phi^L(0)\right)\right)^{1/k},
\end{align*}
Then the function $H_{\delta_4}:[0,\infty)\rightarrow \mathbb R$ satisfies the following:
 \begin{enumerate}[label=(\alph*)]
  \item For $\delta_4=0$, $H_0(0)=0$ and $H_0'(r_4)>0$ for any $r_4>0$. 
  \item For $\delta_4>0$, we have 
  \begin{enumerate}[label=(\alph{enumi}.\arabic*)]
   \item \label{2a} $H_{\delta_4}'(r_{4})>0$ for any $r_4\in (0,r_{4,0})$, $H_{\delta_4}'(r_{4,0})=0$ and $H_{\delta_4}'(r_{4})<0$ for any $r_4>r_{4,0}$. 
    \item \label{2b} Also $H_{\delta_4}(0)=0$, $H_{\delta_4}(r_4)>0$ for any $r_4\in  (0,r_{4,1})$, $H_{\delta_4}(r_{4,1})=0$, $H_{\delta_4}(r_4)<0$ for any $r_{4}\in (r_{4,1},\infty)$. 
  \end{enumerate}

   \end{enumerate}
\end{lemma}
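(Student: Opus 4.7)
The lemma is a one-variable calculus exercise about a two-term polynomial. The plan is to introduce the shorthand
\begin{align*}
A(\delta_4) := \alpha - \tfrac{\beta}{\alpha}\delta_4^{k(k+1)}\phi^L(0), \qquad C(\delta_4) := 1 + \tfrac{1}{k}A(\delta_4),
\end{align*}
and to rewrite the function defined by the left-hand side of \eqref{d4Nc} as
\begin{align*}
H_{\delta_4}(r_4) = C(\delta_4)\, r_4^{k+1} - \tfrac{1}{k}\,\delta_4^k\bigl(\eta^L(\mu)-\eta\bigr)\, r_4^{2k+1}.
\end{align*}
Since $\eta < \eta^L(\mu)$ by hypothesis, the coefficient of $r_4^{2k+1}$ is strictly negative whenever $\delta_4 > 0$. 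One also needs $C(\delta_4) > 0$, which holds on the relevant $\delta_4$-range (containing $[0,\delta_{4,0}]$, where $\delta_{4,0}$ is the coordinate of $q_{r,4}^L$ at which $A(\delta_{4,0}) = 0$).

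Part (a) is then immediate: at $\delta_4 = 0$ the second term vanishes and $H_0(r_4) = (1 + \alpha/k)\, r_4^{k+1}$ is strictly increasing on $[0,\infty)$ with its only zero at $r_4 = 0$.

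For part (b) with $\delta_4 > 0$, the key step is to factor the function and its derivative as
\begin{align*}
H_{\delta_4}(r_4) &= r_4^{k+1}\left(C(\delta_4) - \tfrac{1}{k}\delta_4^k(\eta^L-\eta)\, r_4^k\right),\\
H_{\delta_4}'(r_4) &= (k+1)\, r_4^{k}\left(C(\delta_4) - \tfrac{2k+1}{k(k+1)}\delta_4^k(\eta^L-\eta)\, r_4^k\right).
\end{align*}
Each bracketed factor is strictly decreasing and affine in the variable $r_4^k$, starts at the positive value $C(\delta_4)$ at $r_4 = 0$, and so has a unique positive zero; setting these to zero and solving for $r_4$ reproduces the formulas for $r_{4,0}$ and $r_{4,1}$ stated in the lemma, and the claimed sign patterns of $H_{\delta_4}$ and $H_{\delta_4}'$ are then read off directly from the factorizations.

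There is no real conceptual obstacle here; the lemma is a calculational stepping stone whose sole purpose is to describe the $\delta_4$-nullcline of \eqref{Xbary1pLrho0New} in the nullcline/phase-plane analysis of \appref{uniqueness}. The only point that requires a moment of care is the positivity of $C(\delta_4)$ on the range of $\delta_4$ that is actually used downstream, and this is transparent from the explicit formula for $A(\delta_4)$ together with the bound $\delta_4 \le \delta_{4,0}$ encountered along the heteroclinic.
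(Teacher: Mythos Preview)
Your proposal is correct and is exactly the direct computation the paper has in mind when it writes ``Simple calculations'': you rewrite $H_{\delta_4}$ as a two-term polynomial in $r_4$, factor it and its derivative, and read off the unique positive roots. Your remark that one implicitly needs $C(\delta_4)>0$ for the stated formulas $r_{4,0},r_{4,1}$ to make sense is a fair observation; as you note, this holds on the range $\delta_4\in[0,\delta_{4,0}]$ actually used in \appref{uniqueness}.
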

\begin{proof}
 Simple calculations. 
\end{proof}


The right hand side of \eqref{d4Nc} is again just $F_4(0,\delta_4^{k(k+1)})$. An immediate consequence of this lemma, is therefore that for each $\delta_4\ge 0$, there can be $0$, $1$ or $2$ solutions $r_4$ satisfying \eqref{d4Nc}. The exact number depends on how the graph of $H_{\delta_4}$ intersects the constant $F_4(0,\delta_4^{k(k+1)})$ for $r_4\ge 0$. We illustrate this in \figref{HGraph} and consider all cases in the following. 

\begin{figure}[h!] 
\begin{center}
{\includegraphics[width=.55\textwidth]{./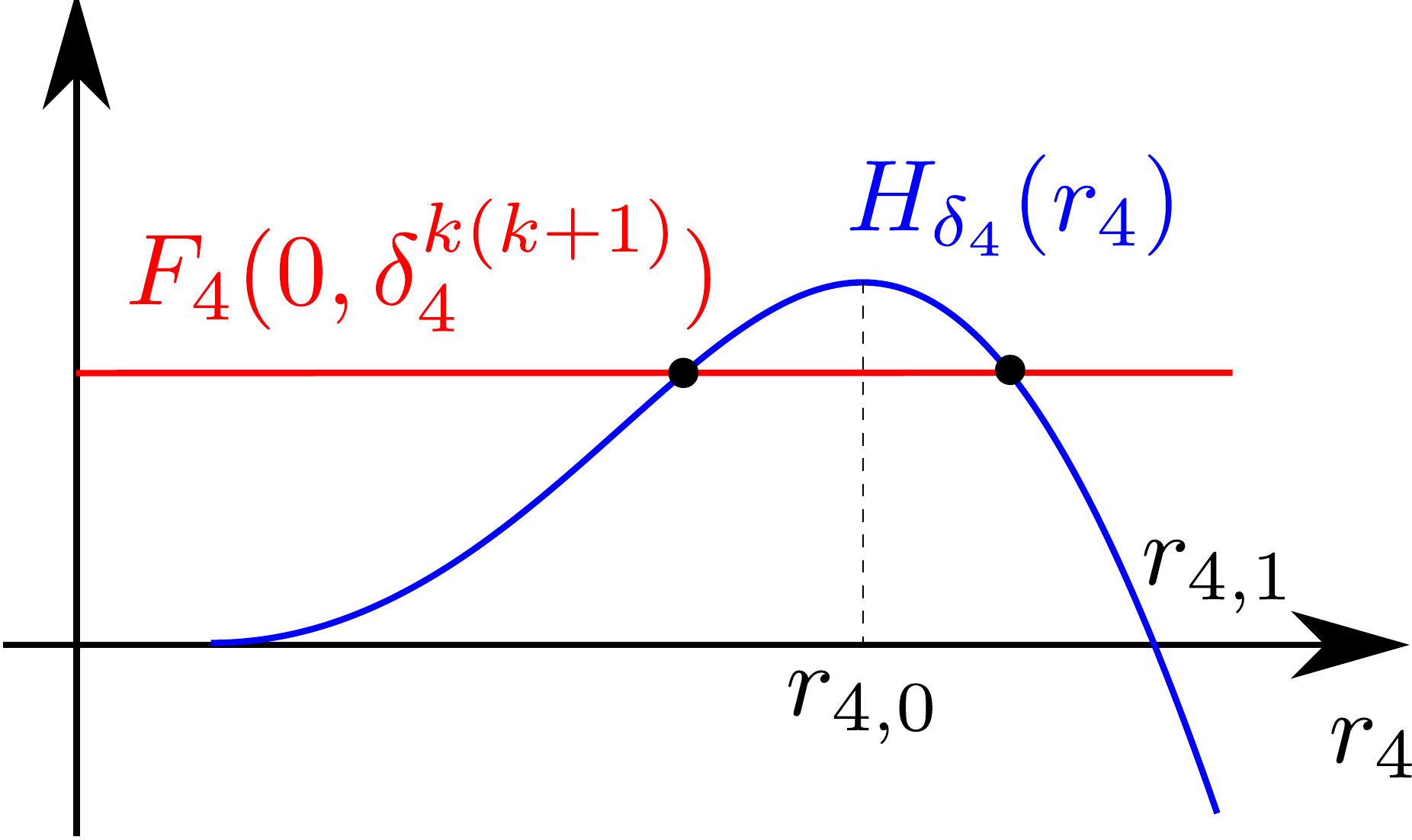}}
\end{center}
 \caption{We describe the $\delta_4$-nullcline as the intersection of the graph of $H_{\delta_4}(r_4)$ with $F_4(0,\delta_4^{k(k+1)})$. There can be at most two intersections for each $\delta_4\ge 0$, see \lemmaref{uniquenesslemma}. }
\figlab{HGraph}
\end{figure}

By \ref{2a} and \ref{2b} in \lemmaref{uniquenesslemma}, it follows that for each $\delta_4>0$ sufficiently small, the graph of $H_{\delta_4}(r_4)$ over $r_4$ intersects $F_4(0,\delta_4^{k(k+1)})$ in two points. This implies that the set of points satisfying \eqref{d4Nc} within $\delta_4\in (0,c)$, with $c>0$ sufficiently small, is the union of two smooth graphs $r_4=N_{\delta_4,1}(\delta_4)$, $r_4=N_{\delta_4,2}(\delta_4)$ over $\delta_4\in (0,c)$. These graphs do not intersect for $c>0$ sufficiently small and by \ref{2a} the graph corresponding to the largest values of $r_4$, say $r_4=N_{\delta_4,1}(\delta_4)$ is unbounded: $N_{\delta_4,1}(\delta_4)\rightarrow \infty$ as $\delta_4\rightarrow 0^+$. The other one, $r_4=N_{\delta_4,2}(\delta_4)$, is bounded and intersects the $r_4$-axis in $r_4=r_{4,2}$ where
\begin{align*}
 r_{4,2} = \left(\frac{k\alpha}{k+\alpha}\right)^{1/(k+1)},
\end{align*}
is the unique solution of \eqref{d4Nc} with $\delta_4=0$.

Similarly, for $\delta_4>\delta_{4,0}$, the right hand side of \eqref{d4Nc} is negative $F_4(0,\delta_4^{k(k+1)})<0$. Therefore by \ref{2b} the set of points $r_4> 0,\delta_4>\delta_{4,0}$ satisfying \eqref{d4Nc} is a graph $r_4=N_{\delta_4,3}(\delta_4)$ over $\delta_4> \delta_{4,0}$. This graph intersects the line $\delta_4=\delta_{4,0}$ in a point $(r_4,\delta_4)=(r_{4,3},\delta_{4,0})$ where 
%
%
\begin{align*}
 r_{4,3} =  \frac{1}{\delta_{4,0}}\left(\frac{k}{\eta^L(\mu)-\eta}\right)^{1/k}. 
\end{align*}
This value of $r_4$ is obtained by setting $\delta_4=\delta_{4,0}$ in \eqref{d4Nc} and canceling out the trivial solution $r_4=0$.
Clearly, by the implicit function theorem the graph $r_4=N_{\delta_4,3}(\delta_4)$ extends smoothly to $\delta_4 \ge \delta_{4,0}-c$, for $c>0$ sufficiently small, as a set of solutions of \eqref{d4Nc}. 
 Notice that the solution $(r_4,\delta_4)=(0,\delta_{4,0})$ of \eqref{d4Nc} corresponds to $q_{r,4}^L$. From here, cf. \ref{2b} and the implicit function theorem, a separate branch of solutions of \eqref{d4Nc} also emanate as a graph $r_4=N_{\delta_4,4}(\delta_4)$ over $\delta_4\in (\delta_{4,0}-c,\delta_{4,0}]$, for $c>0$ sufficiently small. Here $N_{\delta_4,4}(\delta_4)>0$ for $\delta_4\in (\delta_{4,0}-c,\delta_{4,0}]$.  This gives the picture in \figref{nullclinesA}. 
 
 \begin{figure}[h!] 
\begin{center}
{\includegraphics[width=.7\textwidth]{./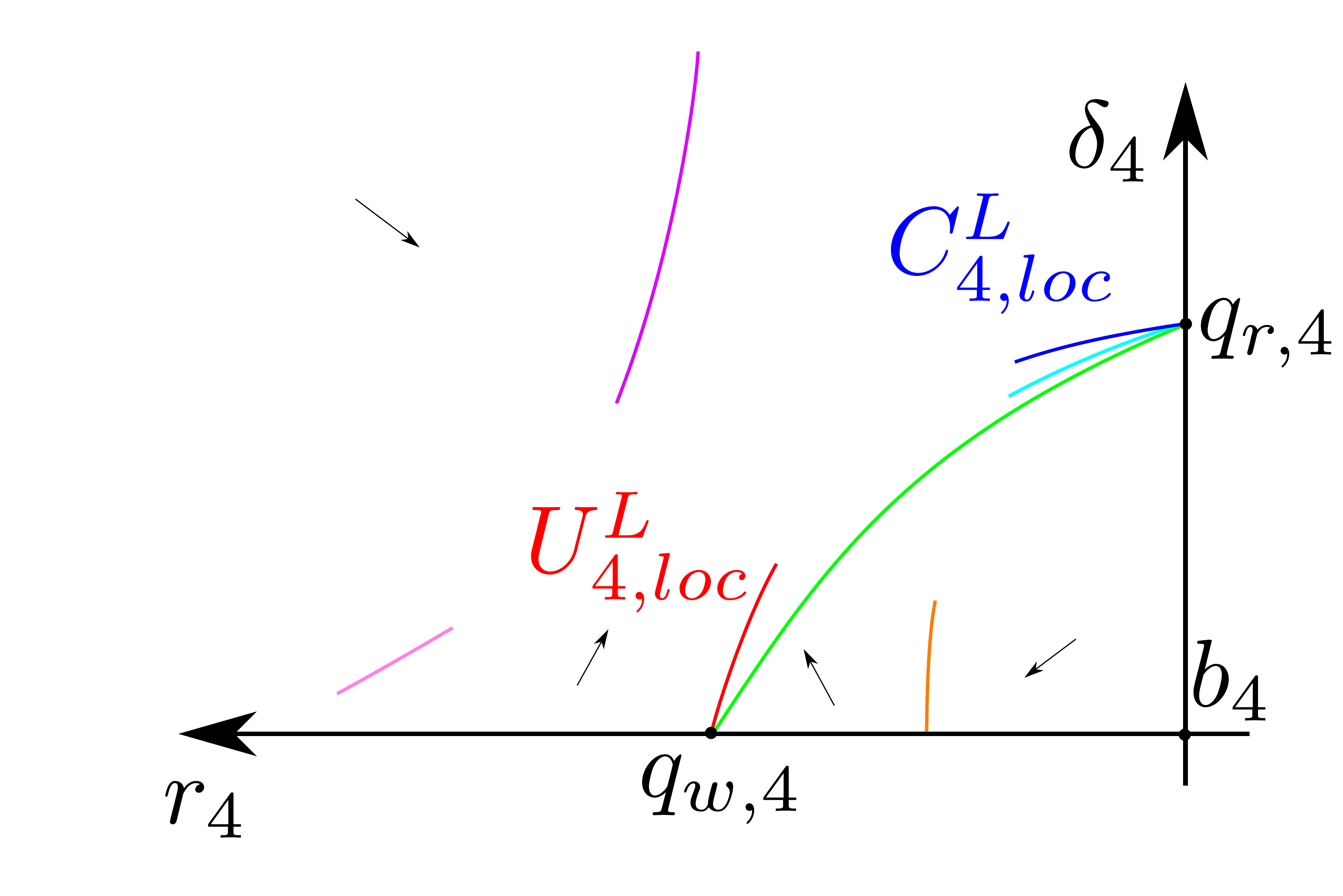}}
\end{center}
 \caption{Nullclines of $r_4$ (in green) and $\delta_4$. Using \lemmaref{uniquenesslemma} we obtain local information about the $\delta_4$-nullcline  (purple, pink, orange and cyan). In reference to the text, the purple is the graph of $N_{\delta_4,3}$, the cyan is the graph of $N_{\delta_4,4}$, while the pink and orange curves are graphs of $N_{\delta_4,1}$ and $N_{\delta_4,2}$, respectively. }
\figlab{nullclinesA}
\end{figure}
 
 From this geometric viewpoint it follows that two branches of the $\delta_4$-nullcline can have folds when the right hand side of \eqref{d4Nc}, $F_4(0,\delta_4^{k(k+1)})$, is equal to the value of $H_{\delta_4}(r_{4,1}(\delta_4))$ at the tangency of the graph of $H_{\delta_4}$. This condition gives the following equation
 \begin{align}
  c(k,\eta) = \delta_4^{k+1} \frac{F_4(0,\delta_4^{k(k+1)})}{\left(1+\frac{1}{k}F_4(0,\delta_4^{k(k+1)})\right)^{(2k+1)/k}},\eqlab{delta4Eqn}
 \end{align}
where
\begin{align*}
 c(k,\eta ) = \frac{k}{(\eta^L(\mu)-\eta)^{(k+1)/k} (2k+1)}\left(\frac{k(k+1)}{2k+1}\right)^{(k+1)/k},
\end{align*}
is a positive constant. It is then a straightforward computation to show that the graph of the right hand side of \eqref{delta4Eqn} over $\delta_4\in (0,\delta_{4,0}]$ has at most one tangency point, say $\delta_{4,1}\in (0,\delta_{4,0})$, being increasing for $\delta_4\in (0,\delta_{4,1})$ and decreasing for $\delta_4\in (\delta_{4,1}, \delta_{4,0})$. Therefore there can be $0,1$ or $2$ solutions of \eqref{delta4Eqn}. Putting all this information together, we only have to study three cases, shown in \figref{nullclinesBCD}. In particular, the analysis of \eqref{delta4Eqn} excludes the existence of isolas which would imply at least four solutions of \eqref{delta4Eqn}. In all three cases, a simple phase plane analysis, show that if $\delta_4'(t_1)=0$ then also $r_4'(t_2)=0$ for some $t_2>t_1$. But then, using the local information about $U_{4,loc}^L$ and $C_{4,loc}^L$, it follows $\delta_4'(t_1)=0$ cannot occur, otherwise the heteroclinic would have to self-intersect.

 \begin{figure}[h!] 
\begin{center}
{\includegraphics[width=.495\textwidth]{./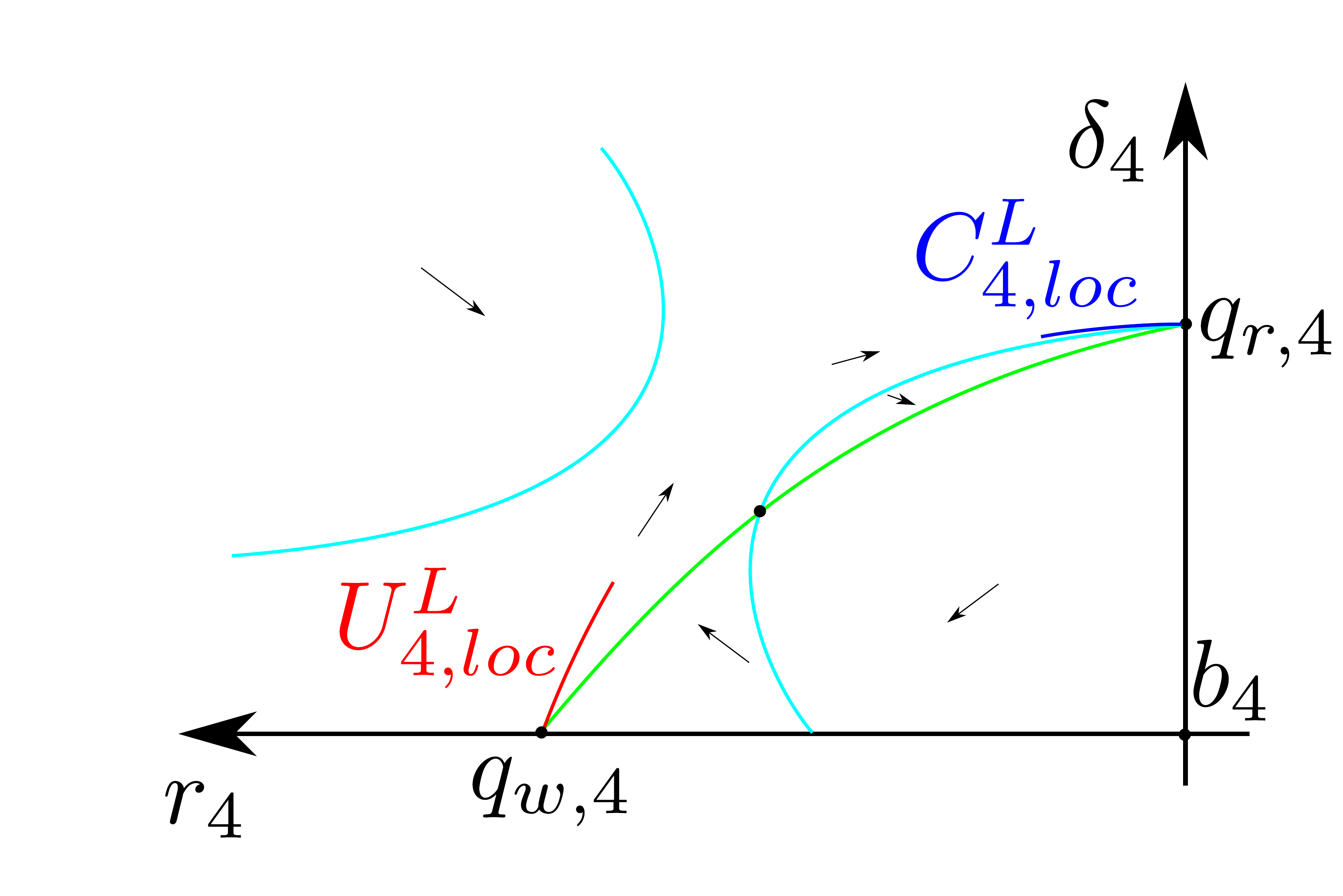}}
{\includegraphics[width=.495\textwidth]{./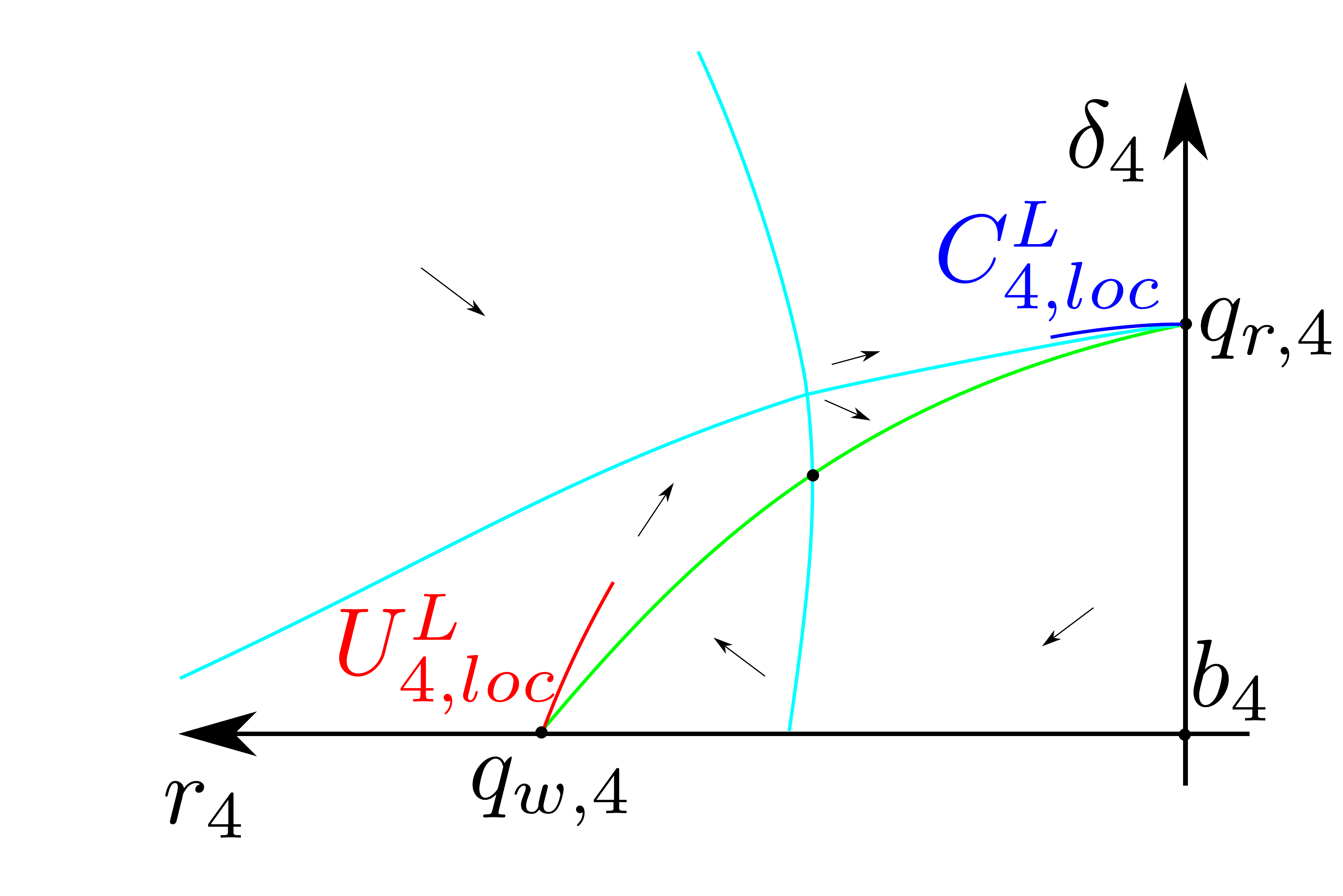}}
{\includegraphics[width=.495\textwidth]{./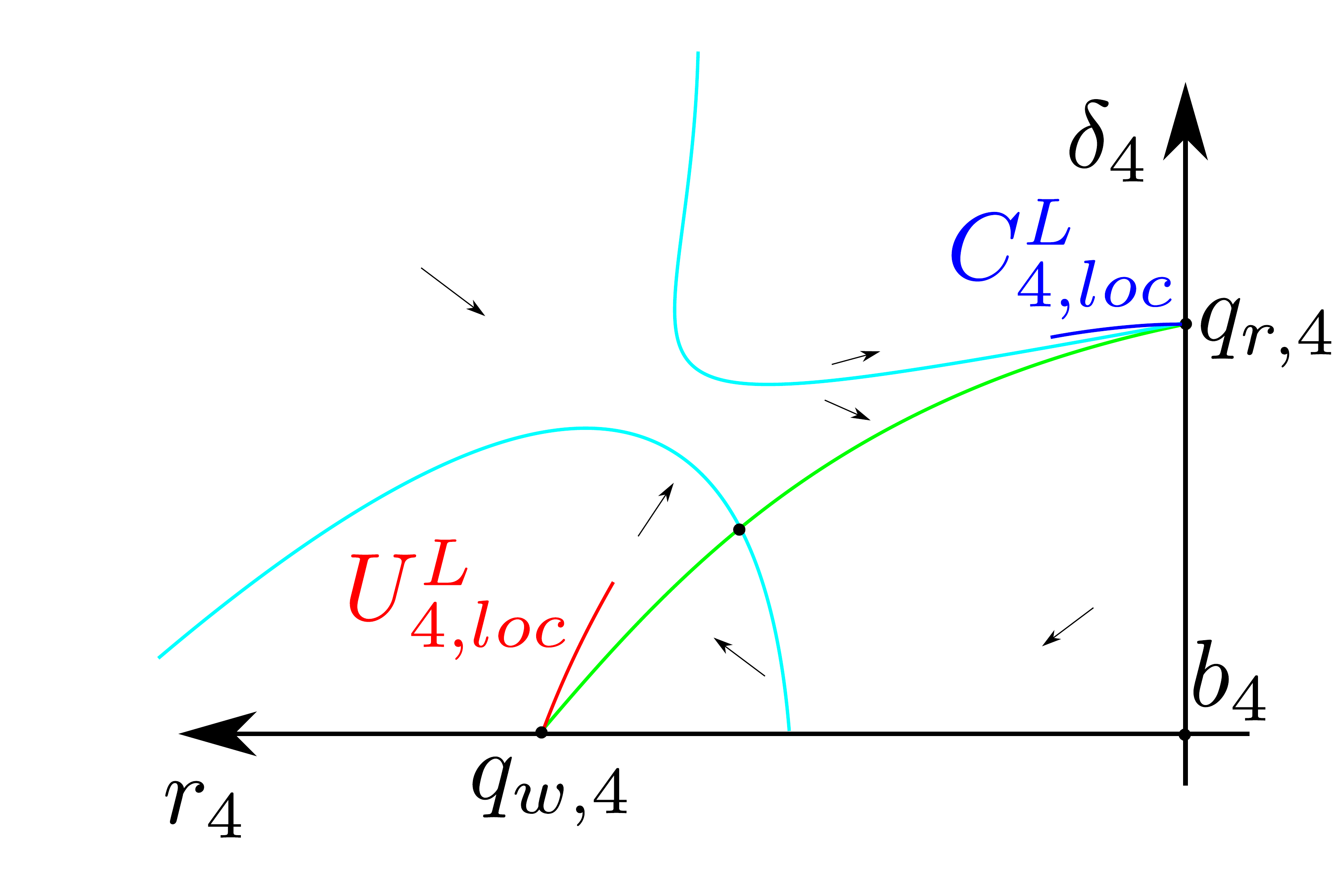}}
\end{center}
 \caption{Three possible topologies of the nullclines of $\delta_4$. In (a) there are no fold points (tangencies parallel to the $r_4$-axis, corresponding to solutions of \eqref{delta4Eqn}) of the $\delta_4$-nullcline, in (b) there is one (degenerate) fold. Finally, in (c) there are two folds. Heteroclinic connections can only occur in the case illustrated in (a) in which case the connection is monotone $r_4'(t)<0$, $\delta_4'(t)>0$.  }
\figlab{nullclinesBCD}
\end{figure}


%

\section{Proof of \propref{prop2}}\applab{transformation}
To realize that \propref{prop1} implies \propref{prop2}, let $\gamma^i=(\alpha,\beta,\eta,\mu,\phi^i(0))$, $i=L,R$ and write the vector-fields $\widehat X_i$, $i=1,3$ on $\overline S^{L/R}$ as $\widehat X_{1,\gamma^L}$ and $\widehat X_{3,\gamma^R}$, respectively, to highlight the dependency on the parameters. Then we have the following 
\begin{lemma}
Fix any $\gamma^L=(\alpha,\beta,\eta,\mu,\phi^L(0))$ and let $\tilde \gamma^R$ be defined by
\begin{align*}
 \tilde \gamma^R = (\alpha^{-1}-\beta,\beta,-\eta,-\mu\alpha^{-2},\phi^L(0)\alpha^{-3}).
\end{align*}
Then $\widehat X_{1,\gamma^L}$ and $\widehat X_{3,\tilde \gamma^R}$ are topologically equivalent. 
\end{lemma}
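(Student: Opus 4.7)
The plan is to exhibit an explicit diffeomorphism of the blown-up phase space (together with a possibly orientation-reversing time rescaling) that conjugates $\widehat X_{3,\tilde\gamma^R}$ to $\widehat X_{1,\gamma^L}$, thereby yielding topological equivalence. The starting point is the observation that $X_1$ in \eqref{X1} and $X_3$ in \eqref{X3} differ only in a sign in front of $F$ and in the leading behaviour of the sigmoid (with $\alpha+\beta$ in place of $\alpha$ and $\phi^R$ in place of $\phi^L$), so a well-chosen parameter-dependent affine rescaling should map one into the other.

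I would first search for the conjugacy at the level of the ambient system \eqref{xyFastExtDelta}, before any blow-up. A natural ansatz is an affine rescaling
\begin{align*}
  x^* &= 1 - c_1(x-1), & y^* &= c_2\,y, & \varepsilon^* &= c_4\,\varepsilon,
\end{align*}
combined with a time reversal $t^* = -c_3\, t$, where the minus sign in $x^*-1$ swaps the two sides of the switching manifold and the time reversal accounts for the sign flips in $\tilde\eta$ and $\tilde\mu$. Substituting into the right-hand-side system with parameters $\tilde\gamma^R$ and demanding that the result coincide with the left-hand-side system with parameters $\gamma^L$ produces a small number of algebraic constraints on $(c_1,c_2,c_3,c_4)$ and on the parameters; solving these constraints should recover exactly the specific form of $\tilde\gamma^R$ stated in the lemma.

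Once such an ambient change of variables is identified, I would verify that it is compatible with the cylindrical blow-up \eqref{initialBlowup2} and with the spherical blow-ups \eqref{Psi1} and \eqref{Psi3}. Because those blow-up maps are defined by monomial expressions in the base variables, a linear rescaling of $(x-1,\,y,\,\varepsilon)$ induces a smooth transformation of the blow-up parameter $\rho$ and the spherical coordinates $(\bar r,\bar y,\bar \delta)$ that lifts to a diffeomorphism between the appropriate blown-up domains on $\{\rho\ge 0\}\times \overline S^L$ and $\{\rho\ge 0\}\times \overline S^R$. The desingularization in \eqref{widehatXi}, which divides the pulled-back vector field by $\rho^{k(k+1)}$, commutes with this rescaling up to a positive constant. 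Consequently the induced diffeomorphism carries $\widehat X_{3,\tilde\gamma^R}$ to a positive smooth multiple of $\widehat X_{1,\gamma^L}$, and composed with the time reversal this yields a topological equivalence.

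The main obstacle lies in the bookkeeping of the algebraic step: the four scalars $c_j$ must simultaneously reconcile three different coefficient relations, namely the coefficient of $y$ in the $\dot x$ equation, the coefficient of $y$ in the $\dot y$ equation (including the $\mu$ term), and the asymptotic prefactor of the sigmoid, which enters through assumption (A) as $\phi^{L/R}(0)$. The unusual-looking rule $\tilde\alpha = \alpha^{-1}-\beta$ originates from the requirement that the sum $\tilde\alpha+\beta$, which plays the role of ``$\alpha+\beta$'' in the right-hand-side system, become reciprocal to $\alpha$ after the multiplicative rescaling of $y$ needed to match $y^R=1/(\alpha+\beta)$ with $y^L=1/\alpha$; the factor $\alpha^{-2}$ in $\tilde\mu$ then appears because $\mu$ multiplies $y$, which is itself rescaled, and the $\alpha^{-3}$ factor in $\tilde\phi^L(0)$ is produced by the three combined rescalings entering the sigmoid argument and its leading coefficient under assumption (A). Once these constants are pinned down, verifying the conjugacy is a routine substitution.
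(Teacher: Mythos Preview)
Your proposal is correct and follows essentially the same route as the paper: work at the ambient level of \eqref{xyFastExtDelta}, find an affine change of the phase variables together with a parameter substitution and a time rescaling that carries the $X_3$-side near $z^R$ onto the $X_1$-side near $z^L$, and then observe that this lifts through the monomial blow-ups \eqref{initialBlowup2}, \eqref{Psi1}, \eqref{Psi3} and commutes (up to a positive factor) with the desingularization \eqref{widehatXi}. One small discrepancy: the paper does \emph{not} use a time reversal --- it multiplies the right-hand side by the positive constant $\alpha$ and absorbs all sign changes into the parameter map $\eta\mapsto -\eta$, $\mu\mapsto -\mu(\alpha+\beta)^{-2}$, obtaining in fact a smooth conjugacy on the sphere; your ansatz with $t^*=-c_3t$ would still yield a topological equivalence, but when you carry out the algebra you will find that the reflection $x^*-1=-c_1(x-1)$ together with a suitable rescaling of $y$ already produces the correct signs without reversing time.
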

\begin{proof}
 We find it easiest to work with \eqref{xyFastExtDelta}. For this system, we apply a transformation of parameters defined by the following:
  \begin{align}\eqlab{parameters}
 \begin{cases}
  \frac{1}{\alpha+\beta}&\mapsto \alpha,\\
  \eta &\mapsto -\eta,\\
 \frac{\mu}{(\alpha+\beta)^2} &\mapsto -\mu,\\
  \phi^R(0) &\mapsto (\alpha+\beta)^2 \phi^L(0).
 \end{cases}
\end{align}
 It is then possible to transform the system near $z^R$ into the one at $z^L$ up to terms which vanish on the sphere $\overline S^L$ setting $\rho_1=0$, recall \eqref{Psi1}, by doing the following: (i) multiply the right hand side by $\alpha$ and (ii) apply a simple linear transformation of $x$ and $y$, serving to bring the eigendirections into the ones at $z^L$. With $\tilde \gamma^R$ obtained from \eqref{parameters}, this produces a smooth conjugacy between $\widehat X_{1,\gamma^L}$ and $\widehat X_{3,\tilde \gamma^R}$. 
\end{proof}
\begin{remark}
 It is easy to check that e.g. $\eta^L_H$ in \eqref{etaHL} can be obtained from the $\eta^R_H$ in \eqref{etaHR} upon applying \eqref{parameters}.
\end{remark}

\section{Proof of \lemmaref{Plocal1}}\applab{lemma5}
In the corresponding chart, we have
 \begin{align}
 \dot \rho_1 &= -\frac{1}{k(k+1)}\rho_1 \left(1-F_1((\rho_1\delta_1)^{k+1},\delta_1^{k(k+1)},y_1)\right),\eqlab{XbarR1pLNew}\\
 \dot y_1 &=F_1((\rho_1\delta_1)^{k+1},\delta_1^{k(k+1)},y_1)+\delta_1^k\left(\eta-\mu (y^L+\rho_1^{k(k+1)} y_1)\right)+\left(1-F_1((\rho_1\delta_1)^{k+1},\delta_1^{k(k+1)},y_1)\right)y_1,\nonumber\\
 \dot \delta_1 &=\frac{1}{k}\delta_1 \left(1-F_1((\rho_1\delta_1)^{k+1},\delta_1^{k(k+1)},y_1)\right),\nonumber
\end{align}
with
\begin{align}
 F_1((\rho_1\delta_1)^{k+1},\delta_1^{k(k+1)},y_1)=-\frac{\beta}{\alpha}\delta_1^{k(k+1)} \phi^L((\rho_1\delta_1)^{k+1}) - \left(\alpha+\beta (\rho_1\delta_1)^{k(k+1)} \phi^L((\rho_1\delta_1)^{k+1})\right)y_1.\eqlab{F1Eqn}
\end{align}
 see also \eqref{XbarR1pL}. By construction, $\rho_1^{k+1}\delta_1= \sigma = \text{const}.$, see \eqref{barXN1New} and \eqref{barR1pL}. Inserting this into \eqref{F1Eqn} we obtain 
 \begin{align*}
 F_1((\rho_1\delta_1)^{k+1},\delta_1^{k(k+1)},y_1)=F_1(\sigma \delta_1^{k},\delta_1^{k(k+1)},y_1)
\end{align*}
 and therefore the $\rho_1$-equation decouples and we are left with the following reduced system:
 \begin{align}
   \dot y_1 &=F_1(\sigma \delta_1^{k},\delta_1^{k(k+1)},y_1)+\delta_1^k \eta-\mu (\delta_1^k y^L +\sigma^k y_1)+\left(1-F_1(\sigma \delta_1^{k},\delta_1^{k(k+1)},y_1)\right)y_1,\eqlab{XbarR1pLNewReduced}\\
 \dot \delta_1 &=\frac{1}{k}\delta_1 \left(1-F_1(\sigma \delta_1^{k},\delta_1^{k(k+1)},y_1)\right),\nonumber
 \end{align}
Now, we consider this system as a planar system depending on $\sigma$ as a parameter. Then $(y_1,\delta_1) = (-\alpha^{-1}(1-\alpha),0)=(B,0)$ is clearly a saddle for any $\sigma>0$: The eigenvalues are $-(1-\alpha)<0$ and $\alpha/k>0$. Near $(y_1,\delta_1) = (B,0)$ we divide the right hand side of \eqref{XbarR1pLNewReduced} by $\left(1-F_1(\sigma \delta_1^{k},\delta_1^{k(k+1)},y_1)\right)\approx \alpha$ to obtain 
 \begin{align}
   \dot y_1 &=\left(1-F_1(\sigma \delta_1^{k},\delta_1^{k(k+1)},y_1)\right)^{-1}\left(F_1(\sigma \delta_1^{k},\delta_1^{k(k+1)},y_1)+\delta_1^k \eta-\mu (\delta_1^k y^L +\sigma^k y_1)\right)+y_1,\eqlab{XbarR1pLNewReduced2}\\
 \dot \delta_1 &=\frac{1}{k}\delta_1,\nonumber
 \end{align}
Now, \eqref{XbarR1pLNewReduced2} is linearizable by a local transformation $(y_1,\delta_1)\mapsto (\tilde y_1,\delta_1)$ given by the following equation
\begin{align}
y_1 = H(\tilde y_1,\delta_1^k,\sigma),\eqlab{Hlocal1}
\end{align}
where $H$ is $C^1$ in all its arguments. Indeed any planar system is $C^1$ linearizable near a hyperbolic equilibrium,  see e.g. \cite{sell1985a,sternberg1958a}. The fact that $H$ depends explicitly on $\delta_1^k$ (rather than just $\delta_1$) follows from the fact that we can write \eqref{XbarR1pLNewReduced2} as a smooth system in terms of $(y_1,\delta_1^k)$. The linearized system is
\begin{align*}
 \dot{\tilde y}_1 &=-\alpha^{-1}(1-\alpha)\tilde y_1,\\
 \dot \delta_1 &=\frac{1}{k}\delta_1.
\end{align*}
Solving this system with $\delta_1(T) = \nu$ gives $T=k\log (\nu \delta_1(0)^{-1})$ and hence $$\tilde y_1(T) = \left(\delta_1(0)\nu^{-1}\right)^{k\alpha^{-1}(1-\alpha)}\tilde y_1(0)=\left(\delta_1(0)\nu^{-1}\right)^{k\vert B\vert}\tilde y_1(0).$$ Using \eqref{Hlocal1} and its $C^1$ inverse $\tilde y_1=\tilde H(y_1,\delta_1^k,\sigma)$ we transform this result back to the original variables and obtain the desired expression for $P_1$. The proof of the remaining claims are either straightforward or follow almost directly.
\section{Proof of \lemmaref{Plocal2}}\applab{lemma6}
In the corresponding chart, we have
\begin{align}
 \dot \rho_2 &= \frac{1}{k+1}\rho_2 \left(r_2^{k+1}-F_2(\rho_2^{k+1},y_2)\right),\eqlab{pLdelta1New}\\
 \dot r_2 &=-r_2 \left(r_2^{k+1}-F_2(\rho_2^{k+1},y_2)\right),\nonumber\\
 \dot y_2 &=r_2^{k+1} \left(F_2(\rho_2^{k+1},y_2)+r_2^k \left(\eta-\mu \left(y^L+\rho_2^{k(k+1)} y_2\right)\right)\right)-k y_2 \left(r_2^{k+1}-F_2(\rho_2^{k+1},y_2)\right),\nonumber
\end{align}
see also \eqref{pLdelta1},
where $F_2$ is given in \eqref{pLF2}.
We work close to $q_{f,2}^L$ where $r_2^{k+1}-F_2(\rho_2^{k+1},y_2)\approx \frac{\beta}{\alpha}\phi^L(0)>0$. We therefore divide the right hand side of \eqref{pLdelta1New} by this quantity to obtain
\begin{align*}
 \dot \rho_2 &=\frac{1}{k+1}\rho_2,\\
 \dot r_2 &=-r_2,\\
 \dot y_2 &=r_2^{k+1} \left(r_2^{k+1}-F_2(\rho_2^{k+1},y_2)\right)^{-1}\left(F_2(\rho_2^{k+1},y_2)+r_2^k \left(\eta-\mu \left(y^L+\rho_2^{k(k+1)} y_2\right)\right)\right)-ky_2.
\end{align*}
We then follow the approach in \cite[Prop. 2.11]{krupa_extending_2001} and perform a partial linearization. For this, note that  
$q_{f,2}^L$ is a hyperbolic stable node within the $\rho_2=0$ subsystem. Therefore there exists a smooth $H$ such that the transformation $(r_2,y_2)\mapsto (r_2,\tilde y_2)$ of the form 
\begin{align}
 y_2 = H(\tilde y_2,r_2),\eqlab{Hlocal2}
\end{align}
brings the nonlinear system into its linearized form
\begin{align}
 \dot r_2 &=-r_2,\\
 \dot{\tilde y}_2 &=-k \tilde y_2,
\end{align}
within $\rho_2=0$.
But then by applying \eqref{Hlocal2} to the full system we obtain the following
\begin{align*}
 \dot \rho_2 &=\frac{1}{k+1}\rho_2,\\
 \dot r_2 &=-r_2,\\
 \dot{\tilde y}_2 &=-k\tilde y_2+r_2^{k+1}\rho_2^{k+1} G(\rho_2,r_2,y_2),
\end{align*}
for some smooth function $G$, uniformly bounded on the relevant local domain. But then straightforward estimation gives the desired result upon using \eqref{Hlocal2} and its inverse $\tilde y_2=\tilde H(y_2,r_2)$ to transform the estimates back to the $(\rho_2,r_2,y_2)$-variables. 
\section{Proof of \lemmaref{Plocal3}}\applab{lemma7}

We consider \eqref{X3} near $E^R_3\cap \{y_3=y^R\}$. Here $F_3(\delta_3^{k+1},y_3) = 1-(\alpha+\beta(1-\delta_3^{k(k+1)} \phi^R(\delta_3^{k+1})))y_3\approx -\frac{\beta}{\alpha}<0$. We therefore divide the right hand sides by $-\left(r_3^{k+1} +F_3(\delta_3^{k+1},y_3)\right)$ and obtain the following system:
\begin{align}
\dot r_3 &=\frac{1}{1+k}r_3 ,\nonumber\\
\dot y_3 &=- r_3^{k+1} \left(r_3^{k+1} +F_3(\delta_3^{k+1},y_3)\right)^{-1}\left(F_3(\delta_3^{k+1},y_3) +r_3^k\delta_3^k (\eta-\mu y_3)\right),\nonumber\\
\dot \delta_3 &=-\frac{1}{1+k}\delta_3,\nonumber
\end{align}
We then straighten out the unstable fibers of $E^R_3$ by applying a transformation of the form $(r_3,y_3,\delta_3)\mapsto (r_3,\tilde y_3,\delta_3)$ given by
 \begin{align}
  y_3 =  H(\tilde y_3,r_3),\eqlab{Hlocal3}
 \end{align}
where $H$ is smooth and satisfies $H(\tilde y_3,r_3)=\tilde y_3 +\mathcal O(r_3^{k+1})$. In these variables
\begin{align*}
 \dot{\tilde y}_3&=r_3^{k+1}\delta_3^k G(r_3,\tilde y_3,\delta_3)=\sigma^k r_3 G(r_3,\tilde y_3,\delta_3),
\end{align*}
for some smooth $G$ that is uniformly bounded in the relevant neighborhood. Integrating the $r_3$ and $\delta_3$-equations we can subsequently integrate the $\tilde y_3$-equation and obtain the desired result by using \eqref{Hlocal3} and its inverse $\tilde y_3 =\tilde H(y_3,r_3)$ to transform back into the original variables $(r_3,y_3,\delta_3)$. 
\bibliography{refs}

\begin{thebibliography}{10}

\bibitem{reves_regularization_2014}
C.~Bonet-Reves and T.~M-Seara.
\newblock Regularization of sliding global bifurcations derived from the local
  fold singularity of filippov systems.
\newblock {\em Discrete and Continuous Dynamical Systems}, 36(7):3545--3601,
  2016.

\bibitem{broens1991a}
Morten Broens and Kedma Bar-Eli.
\newblock Canard explosion and excitation in a model of the
  belousov-zhabotinskii reaction.
\newblock {\em Journal of Physical Chemistry}, 95(22):8706--8713, 1991.

\bibitem{casso}
A.~G. Caicedo-Casso.
\newblock {\em Period robustness analysis of minimal models for biochemical
  oscillators}.
\newblock Phd {{thesis}}, University of Cincinnati, 2012.

\bibitem{Bernardo08}
M.~di~Bernardo, C.~J. Budd, A.~R. Champneys, and P.~Kowalczyk.
\newblock {\em Piecewise-smooth Dynamical Systems: Theory and Applications}.
\newblock Springer Verlag, 2008.

\bibitem{dumortier2006a}
F.~Dumortier, J.~Llibre, and J.~C. Art\'es.
\newblock {\em Qualitative theory of planar differential systems}.
\newblock Springer Berlin Heidelberg, 2006.

\bibitem{dumortier_1996}
F.~Dumortier and R.~Roussarie.
\newblock Canard cycles and center manifolds.
\newblock {\em Mem. Amer. Math. Soc.}, 121:1--96, 1996.

\bibitem{dumortier1996a}
Freddy Dumortier and Robert Roussarie.
\newblock Canard cycles and center manifolds.
\newblock {\em Memoirs of the American Mathematical Society}, 121(577):1--96,
  1996.

\bibitem{fen1}
N.~Fenichel.
\newblock Persistence and smoothness of invariant manifolds for flows.
\newblock {\em Indiana University Mathematics Journal}, 21:193--226, 1971.

\bibitem{fen2}
N.~Fenichel.
\newblock Asymptotic stability with rate conditions.
\newblock {\em Indiana University Mathematics Journal}, 23:1109--1137, 1974.

\bibitem{fen3}
N.~Fenichel.
\newblock Geometric singular perturbation theory for ordinary differential
  equations.
\newblock {\em J. Diff. Eq.}, 31:53--98, 1979.

\bibitem{filippov1988differential}
A.F. Filippov.
\newblock {\em Differential Equations with Discontinuous Righthand Sides}.
\newblock Mathematics and its Applications. Kluwer Academic Publishers, 1988.

\bibitem{Goldbeter1997}
A.~Goldbeter.
\newblock {\em Biochemical Oscillations and Cellular Rhythms: The Molecular
  Bases of Periodic and Chaotic Behaviour}.
\newblock Cambridge University Press, 1997.

\bibitem{hogan2016a}
S.~J. Hogan, M.~E. Homer, M.~R. Jeffrey, and R.~Szalai.
\newblock Piecewise smooth dynamical systems theory: The case of the missing
  boundary equilibrium bifurcations.
\newblock {\em Journal of Nonlinear Science}, 26(5):1161--1173, 2016.

\bibitem{izhi}
E.~M. Izhikevich.
\newblock {\em {Dynamical Systems in Neuroscience: The geometry of Excitability
  and Bursting}}.
\newblock {The MIT Press}, 2007.

\bibitem{jeffrey_geometry_2011}
M.~R. Jeffrey and S.~J. Hogan.
\newblock The geometry of generic sliding bifurcations.
\newblock {\em {SIAM} Review}, 53(3):505--525, January 2011.

\bibitem{kristiansen2019d}
S.~Jelbart, K.~U. Kristiansen, P.~Szmolyan, and M.~Wechselberger.
\newblock Singularly perturbed oscillators with exponential nonlinearities.
\newblock {\em in preparation}, 2019.

\bibitem{jelbart}
S.~Jelbart, M.~Wechselberger, and K.~U. Kristiansen.
\newblock The boundary focus in the friction oscillator problem.
\newblock {\em in preparation}, 2019.

\bibitem{jones_1995}
C.K.R.T. Jones.
\newblock {\em Geometric Singular Perturbation Theory, Lecture Notes in
  Mathematics, Dynamical Systems (Montecatini Terme)}.
\newblock Springer, Berlin, 1995.

\bibitem{kaklamanos2019a}
P.~Kaklamanos and K.~U. Kristiansen.
\newblock Regularization and geometry of piecewise smooth systems with
  intersecting discontinuity sets.
\newblock {\em Siam Journal on Applied Dynamical Systems}, 18(3):1225--1264,
  2019.

\bibitem{Gucwa2009783}
I.~Kosiuk and P.~Szmolyan.
\newblock Geometric singular perturbation analysis of an autocatalator model.
\newblock {\em Discrete and Continuous Dynamical Systems - Series S},
  2(4):783--806, 2009.

\bibitem{kosiuk2011a}
I.~Kosiuk and P.~Szmolyan.
\newblock Scaling in singular perturbation problems: Blowing up a relaxation
  oscillator.
\newblock {\em Siam Journal on Applied Dynamical Systems, Siam J. Appl. Dyn.
  Syst, Siam J a Dy, Siam J Appl Dyn Syst, Siam Stud Appl Math},
  10(4):1307--1343, 2011.

\bibitem{kosiuk2015a}
I.~Kosiuk and P.~Szmolyan.
\newblock {Geometric analysis of the Goldbeter minimal model for the embryonic
  cell cycle}.
\newblock {\em Journal of Mathematical Biology, J. Math. Biol, J Math Biol},
  2015.

\bibitem{kristiansen2017a}
K~U Kristiansen.
\newblock {Blowup for flat slow manifolds}.
\newblock {\em Nonlinearity}, 30(5):2138--2184, 2017.

\bibitem{2019arXiv190312232U}
K.~U. Kristiansen.
\newblock {A new type of relaxation oscillation in a model with rate-and-state
  friction}.
\newblock {\em arXiv:1903.12232 e-prints}, Mar 2019.
\newblock {https://ui.adsabs.harvard.edu/abs/2019arXiv190312232U}.

\bibitem{2019arXiv190806781U}
K.~U. {Kristiansen}.
\newblock {The regularized visible fold revisited}.
\newblock {\em arXiv:1908.06781 e-prints}, Aug 2019.
\newblock {https://ui.adsabs.harvard.edu/abs/2019arXiv190806781U}.

\bibitem{kristiansen2018a}
K.~U. Kristiansen and S.~J. Hogan.
\newblock {Resolution of the piecewise smooth visible-invisible two-fold
  singularity in R3 using regularization and blowup}.
\newblock {\em Journal of Nonlinear Science}, 29(2):723--787, 2018.

\bibitem{krihog}
{K. Uldall} Kristiansen and {S. J.} Hogan.
\newblock {On the use of blowup to study regularizations of singularities of
  piecewise smooth dynamical systems in R3}.
\newblock {\em SIAM Journal on Applied Dynamical Systems}, 14(1):382--422,
  2015.

\bibitem{krihog2}
{K. Uldall} Kristiansen and {S. J.} Hogan.
\newblock Regularizations of two-fold bifurcations in planar piecewise smooth
  systems using blowup.
\newblock {\em SIAM Journal on Applied Dynamical Systems}, 14(4):1731--1786,
  2015.

\bibitem{krupa_extending_2001}
M.~Krupa and P.~Szmolyan.
\newblock Extending geometric singular perturbation theory to nonhyperbolic
  points - fold and canard points in two dimensions.
\newblock {\em {SIAM} Journal on Mathematical Analysis}, 33(2):286--314, 2001.

\bibitem{krupa_relaxation_2001}
M.~Krupa and P.~Szmolyan.
\newblock Relaxation oscillation and canard explosion.
\newblock {\em Journal of Differential Equations}, 174(2):312--368, 2001.

\bibitem{kuehn2015}
C.~Kuehn.
\newblock {\em {Multiple Time Scale Dynamics}}.
\newblock Springer-Verlag, Berlin, 2015.

\bibitem{Kuznetsov2003}
{\mbox{Yu.~A}}.~Kuznetsov, S.~Rinaldi, and A.~Gragnani.
\newblock One parameter bifurcations in planar {F}ilippov systems.
\newblock {\em Int. J. Bif. Chaos}, 13:2157--2188, 2003.

\bibitem{perko2001a}
L~Perko.
\newblock {\em Differential equations and dynamical systems}.
\newblock Springer,, 2001.

\bibitem{sell1985a}
G.~R. Sell.
\newblock Smooth linearization near a fixed point.
\newblock {\em American Journal of Mathematics}, 107(5):1035--1091, 1985.

\bibitem{Sotomayor96}
J.~Sotomayor and M.~A. Teixeira.
\newblock Regularization of discontinuous vector fields.
\newblock In {\em Proceedings of the International Conference on Differential
  Equations, Lisboa}, pages 207--223, 1996.

\bibitem{sternberg1958a}
S.~Sternberg.
\newblock On the structure of local homeomorphisms of euclidean n-space, ii.
\newblock {\em American Journal of Mathematics}, 80(3):623--631, 1958.

\bibitem{terman2010a}
D.~H. Terman and G.~B. Ermentrout.
\newblock {\em Mathematical Foundations of Neuroscience}.
\newblock Springer Science+Business Media, LLC, 2010.

\bibitem{Tyson2002}
J.~J. Tyson.
\newblock Biochemical oscillations.
\newblock In Marland E. S. Wagner J.~M Fall, C.~P. and J.~J. Tyson, editors,
  {\em Computational Cell Biology}, pages 230--260. Springer New York, New
  York, NY, 2004.

\bibitem{Tyson2003}
J.~J. Tyson, K.~C. Chen, and B.~Novak.
\newblock {Sniffers, buzzers, toggles and blinkers: Dynamics of regulatory and
  signaling pathways in the cell}.
\newblock {\em Current Opinion in Cell Biology}, 15(2):221--231, 2003.

\bibitem{pol1926a}
B.~van~der Pol.
\newblock On "relaxation oscillations.".
\newblock {\em Philosophical Magazine}, 2(11):978--992, 1926.

\bibitem{wiggins2003a}
S.~Wiggins.
\newblock {\em Introduction to applied nonlinear dynamical systems and chaos},
  volume~2.
\newblock Springer,, 2003.

\end{thebibliography}
\bibliographystyle{plain}

 \end{document}